\title[Small data non-linear wave equation numerology]{\large Small data non-linear wave equation numerology \\\small  The role of asymptotics}
\author{Istvan Kadar${}^{{*}}$}
\thanks{$^{*}$\texttt{ik338@cam.ac.uk} \\
	\phantom{1 } Centre for Mathematical Sciences, Wilberforce Road, Cambridge, CB3 0WA, UK}
\numberwithin{equation}{subsection}
\theoremstyle{definition}
\newtheorem*{theorem*}{Theorem}
\newtheorem*{idea}{Idea}
\newtheorem*{conjecture*}{Conjecture}
\theoremstyle{plain}
\newtheorem{theorem}{Theorem}[section]
\newtheorem{corollary}{Corollary}[theorem]
\newtheorem{lemma}[theorem]{Lemma}
\newtheorem{prop}[theorem]{Proposition}
\newtheorem{cor}{Corollary}[theorem]
\theoremstyle{definition}
\newtheorem{definition}[theorem]{Definition}
\theoremstyle{remark}
\newtheorem{remark}[theorem]{Remark}
\crefname{lemma}{Lemma}{Lemmas}
\crefname{prop}{Proposition}{Proposition}
\crefname{conjecture}{Conjecture}{Conjecture}
\crefname{cor}{Corollary}{Corollary}
\crefname{remark}{Remark}{Remark}
\crefname{defi}{Definition}{Definition}
\newcommand{\R}{\mathbb{R}}
\newcommand{\scri}{\mathcal{I}}
\newcommand{\abs}[1]{\left\lvert #1\right\rvert}
\newcommand{\jpns}[1]{\langle #1 \rangle}
\newcommand{\norm}[1]{\left\lVert #1\right\rVert}
\renewcommand{\d}{\mathrm{d}}
\DeclareMathOperator{\supp}{supp}
\newenvironment{taggedsubequations}[1]
{%
\addtocounter{equation}{-1}%
\begin{subequations}%
	\def\@currentlabel{#1}%
	%
}
{\end{subequations}}
\begin{document}
	\maketitle
	\setcounter{tocdepth}{1}
	\begin{abstract}
		Systems of wave equations may fail to be globally well posed, even for small initial data. Attempts to classify systems into well and ill-posed categories work by identifying structural properties of the equations that can work as indicators of well-posedness. The most famous of these are the null and weak null conditions. As noted by Keir,  related formulations may fail to properly capture the effect of undifferentiated terms in systems of wave equations. We show that this is because null conditions are good for categorising behaviour close to null infinity, but not at timelike infinity. In this paper, we propose an alternative condition for semilinear equations that work for undifferentiated non-linearities as well. We illustrate the strength of this new condition by proving global well and ill-posedness statements for some systems of equation that are not \textit{critical} according to the our classification. Furthermore, we given two examples of systems satisfying the weak null condition with global ill-posedness due to undifferentiated terms, thereby disproving the weak null conjecture as stated in \cite{deng_global_2018}.
		
	\end{abstract}
	\tableofcontents
	\section{Introduction}
	\subsection{Overview}
	In this paper, we consider the small data global existence for \textit{semi-linear} system of wave ($m=0$) and Klein-Gordon ($m\neq0$) equations
	\begin{equation}\label{generic model}
		\begin{gathered}
			(\Box-m_i)w_i:=(\partial_t^2-\Delta+m_i)w_i=F_i(w,\partial w)\\
			w:\R^{n+1}\to\R^m,
		\end{gathered}
	\end{equation}
	where $\Delta=\sum_j\partial_j\partial_j$ is the Laplacian in $n$ dimensions. We will recall some motivational results in the general case, then change focus to the $m=0$, where after analysing previous results, we present a unified approach to such problems.
	
	The main point will be to understand how the global ill/well-posedness depends on the asymptotics of the linear inhomogeneous problem:
	\begin{equation}\label{linear equation}
		\begin{gathered}
			\Box w=F(x,t).
		\end{gathered}
	\end{equation}
	Among other non-linearities, we find an optimal condition (see \cref{n variable Strauss condition}) on the powers $c_{ij}$ for the system
	\begin{equation*}
		\begin{gathered}
			\Box w_i=\sum_j a_{ij}\abs{w_j}^{c_{ij}},
		\end{gathered}
	\end{equation*}
	to be globally well-posed.
	
	In the past 15 years, there's been an explosion of works on system of wave Klein-Gordon equations. Such systems are of great physical interest, here we only mention the culmination of these efforts that produced the proof of stability of Minkowski space with a massive scalar field \cite{lefloch_global_2017} \cite{ionescu_einstein-klein-gordon_2022}. For more references see these two. The plethora of conditions on the non-linearities and corresponding results can appear daunting and it is certainly hard to judge when should one expect stability of the zero solution. An \textit{algebraic} or a \textit{simplified} condition on the non-linearities -- similar to the weak null condition -- that suggests global well posedness is a goal of this line of research. This appears as a rather difficult task, here we restrict ourselves to the purely wave part of the problem, that is $m_i=0$ for the rest of the paper.
	
	One motivating line of research for the author starts with \cite{lindblad_weak_2003}. In special wave systems -- with dominant behaviour happening near null infinity ($\scri$) -- a \textit{general} requirement on the non-linear structure for global existence was first formulated under the name of weak null condition by analysing a reduced system.  In particular, one only needs to understand the solution of a 2 dimensional PDE or an ODE, that corresponds to the behaviour of the system towards $\scri$. One of the main legacies of this discovery was to find the \textit{structural} difference between the $(\partial_t\phi)^2$ and $\partial\phi\cdot\partial\phi$ nonlinearities,  which allows to quickly determine if one should expect global existence. However, we note here, that this condition was never explicitly used in the works of the above mentioned authors, its purpose was purely motivational. For further developments on the weak null condition see \cite{alinhac_semilinear_2006},\cite{keir_weak_2018}, \cite{deng_global_2018} and \cite{keir_global_2019} and references therein. In this work, we discuss a similar structural property for equations with undifferentiated components that could also lead to new insight for anisotropic equations (see \cref{anisotropic section}).
	
	\begin{idea}\label{idea}
		Roughly speaking the method is as follows (for more details see \cref{role of asymptotics}-\cref{finding critical exponents}). Instead of solving the nonlinear problem  \eqref{generic model} with $m_i=0$, consider the iterative linear problems
		\begin{equation}\label{generic iterative model}
			\begin{gathered}
				\Box w^{(n)}_i=F_i(w^{(n-1)},\partial w^{(n-1)}).
			\end{gathered}
		\end{equation}
		If there exists $N$ such that for all $n>N$ the leading order behaviour of the solutions to \eqref{generic iterative model} are the same at infinity, then the corresponding nonlinear problem is globally well-posed. The notion of infinity is crucial, and we will see that it has two components\footnote{in a multispeed, ie. anisotropic case, infinity has even more components \cref{anisotropic section}}.
	\end{idea}

	\begin{definition}\label{asymptotic decay condition}
		A system of wave equation \cref{generic model} ($m_i=0$) is said to satisfy the \textit{asymptotic decay condition} if there exists $N$ such that for all $m>N$ the solutions $w^{(m)}_i$ have the same decay rate toward future timelike ($I^+$) and null infinity ($\scri$) as $w^{(N)}_i$.
	\end{definition}
	
	As it was done originally for the weak null condition, we do not prove a general  theorem for a system satisfying our definition, we merely use it to predict what the exponents for a specific example should be and prove it using unrelated methods. Furthermore, we also analyse in \cref{finding critical exponents} many examples of \cref{generic model} from existing literature and show that global well posedness completely overlaps with the \textit{asymptotic decay condition}.
	
	To compare our method with the weak null condition, consider $\Box\phi=\abs{\partial_t\phi}^q$ (see \cref{Glassey}). The reduced system yields the ODE $\partial_s\Psi=s^{1-q}\abs{\Psi}^q$, which has no non-trivial global solutions for $q\leq2$, showing that $q=2$ is a borderline case. Alternatively, one can say, that the iterative solutions $\partial_s\Psi^{(n)}=s^{1-q}\abs{\Psi^{(n-1)}}^q$ have the same asymptotic behaviour as $s\to\infty$ (they stay bounded) only for $q>2$. Indeed, this second approach also gives the critical exponent for the equation $\Box\phi=\abs{\phi}^q$ (\cref{Strauss}) which is harder to understand via the reduced system, for more details see \cref{comparison to weak null}.
	
	\begin{remark}
		Many different numerologies arise in the study of PDEs. In case of Sobolev embeddings, checking the exponents is easy via testing on functions of the form $x^\alpha$. Next, there are the Strichartz estimates, which also have an intuitive explanation in the homogeneous case via interpolation (at least for the Schrodinger and wave equations). Already here, the strict inhomogeneous case was proved later  (see \cite{harmse_lebesgue_1990}, \cite{foschi_inhomogeneous_2005}), with surprising numerology arising. Similarly, optimal decay for geometric equations, such as the wave equation, can have intuitive reasons (see \cite{anderson_global_2021}). Moving away from linear problems, there are the critical Sobolev spaces for scaling invariant PDEs. These exponents provide a very important distinction for potentially very different behaviour (see \cite{tao_low-regularity_2002} for a great intuitive exposition, or \cite{tao_nonlinear_2006} for more detail). Finally, let us mention the different speeds of blow-up for non-linear evolution equations. In this case, no universal heuristic is available at the moment, but understanding the possible speeds of blow-ups and their relation to the parameters in the equation is an exciting area a research. The possible explanations include usage of matching conditions (see eg. \cite{berg_formal_2003}), spectral problems (see \cite{herrero_explosion_1994}) or shooting problem for ODEs (see \cite{merle_blow_2019}).  These motivational ideas don't always work (see \cite{lindblad_sharp_1993} for failure of local well-posedness down to scale invariance), but they provide an important measuring stick against which to measure one's progress and provide a guide for the otherwise scattered results.
	\end{remark}

	Before outlining the rest of the introduction, we emphasis that the aim of this study is to make a connection between an (easy) a priori algebraic calculation for the non-linear system  (without understanding the possible \textit{fine} structure of the equation) and global well-posedness. An argument for the connection is the main idea in this paper and is contained in \cref{role of asymptotics} and \cref{finding critical exponents}, however we provide no definite proof of this correspondence. This is in part due to regularity issues (see \cref{regularity vs decay}). Therefore we content ourselves with some special forms of $F$ that still highlight the use of such a heuristic framework. This is contrasted to the work of Keir \cite{keir_weak_2018}\footnote{see \cref{null condition} for a comparison with his hierarchical condition}, where he treats very general non-linearities in one fell swoop. Finally, we note that we find a novel part of the critical curve for a system considered in \cite{hidano_combined_2016} (see \cref{strauss glassey main theorem}).
	
	The introduction is structured as follows. In \cref{comparison to weak null}, we review the weak null condition, than in \cref{fine structures in non-linearities}, we explain what we mean by fine structure of the non-linearity, and compare the nonlinear wave equation to nonlinear Klein-Gordon and Schroedinger equations. In \cref{zoology}, we present state of the art understanding of critical exponents via examples found in the literature. These will naturally lead to a digression about regularity of the nonlinearity which we address in \cref{regularity vs decay}. Finally, we present the main results in \cref{main results}.

	\subsection{Comparison to weak null condition}\label{comparison to weak null}
	The discussion here closely follows the introduction of \cite{deng_global_2018} and chapter 5 of \cite{keir_weak_2018}.
	
	For the system \cref{generic model} ($m_i=0$) one can associate a reduced system, by using the ansatz
	\begin{equation*}
		\begin{gathered}
			w_i(t,x)=\frac{\epsilon}{r}W_i(q,s,\omega),\qquad q=t-\abs{x},\, s=\epsilon\log t,\, \omega=\frac{x}{\abs{x}}
		\end{gathered}
	\end{equation*}
	and throwing away terms of order $\epsilon^3$. A system is said to satisfy the weak null condition (WNC) if this reduced system has global solution from small initial data. Note, that $\partial_v$ derivatives introduce extra factors of $\epsilon$, thus all non-linear terms with $\partial_v$ derivatives are disregarded. This is motivated by the fact that such terms decay faster toward $\scri$. As discussed under conjecture 5.3.1 \cite{keir_weak_2018}, it is naive to suppose that this condition is sufficient to guarantee global well-posedness. Indeed, one can show (Proposition 5.4.1 \cite{keir_weak_2018}) that the system
	\begin{equation*}
		\begin{gathered}
			\Box\phi_1=0\\
			\Box\phi_2=\partial_t\phi_2\partial_t\phi_1\\
			\Box\phi_3=\partial_t\phi_2\partial_t\phi_3
		\end{gathered}
	\end{equation*} 
	admit solutions that grow exponentially towards $\scri$, even from small initial data. This exponential growth implies linear instability, in particular, additional nonlinear terms cannot be treated perturbatively. It is also discussed in the above work, that nonlinear perturbations of such systems are unlikely to admit global solutions. This motivates the less naive version
	\begin{conjecture*}(Weak null conjecture, 5.3.2 \cite{keir_weak_2018})
		
		If the asymptotic system corresponding to a system of wave
		equations containing only differentiated semilinear terms admits global solutions for sufficiently small initial data, and if those solutions grow no faster than $r\partial_v\phi\sim r^{c\epsilon}$, then the system of wave equations also admits global solutions for sufficiently small initial data.
	\end{conjecture*}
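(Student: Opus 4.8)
The plan is to run a continuity/bootstrap argument on a hierarchy of weighted energy estimates in which the weights are dictated by the asymptotic system, in the spirit of \cite{lindblad_weak_2003} and of the hierarchical scheme of \cite{keir_weak_2018}. Work in the physical case $n=3$ and fix a small parameter $\epsilon>0$. First I would solve the asymptotic system with the ansatz $\phi_i=\tfrac{\epsilon}{r}W_i(q,s,\omega)$, $q=t-\abs x$, $s=\epsilon\log t$, $\omega=x/\abs x$; by hypothesis the profiles $W_i$ exist globally and obey $r\partial_v\phi_i\sim r^{c\epsilon}$, so the transversal derivatives lose at most a power $\epsilon$-close to $0$. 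I would then promote $W_i$ to a genuine approximate solution $\phi_i^{\mathrm{app}}$ on a neighbourhood of $\scri$ and verify that $\Box\phi_i^{\mathrm{app}}-F_i(\phi^{\mathrm{app}},\partial\phi^{\mathrm{app}})$ is one power of $r^{-1}$ better than the nonlinearity itself, while keeping the same $t^{C\epsilon}$ growth.

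Second, I would set up the bootstrap ansatz: for an ordered set of commuting fields $Z\in\{\partial_\mu,\ S=t\partial_t+r\partial_r,\ \Omega_{ij},\ L_i\}$ compatible with the equation, assume
\[
\sum_{\abs I\le k}\norm{w_Z\,\partial Z^I(\phi-\phi^{\mathrm{app}})(t)}_{L^2_x}\ \le\ C_k\,\epsilon\,t^{C_k\epsilon},
\]
with an Alinhac ghost weight $w_Z$ (\cite{alinhac_semilinear_2006}) producing the extra spacetime integral of the good derivative $\bar\partial=\partial_t+\partial_r$. The assumed growth $r^{c\epsilon}$ of the asymptotic solution is exactly what keeps the exponents $C_k\epsilon$ small, so that the Klainerman--Sobolev inequality still gives the pointwise decay $\abs{\partial\phi}\lesssim\epsilon\,t^{-1}(1+\abs q)^{-1}t^{C\epsilon}$ near $\scri$. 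Third, I would close the estimate by commuting $Z^I$ through the equation and splitting each $F_i$ into a ``good'' part carrying a factor $\bar\partial$, absorbed by the ghost weight, and a genuinely ``bad'' part (of type $(\partial_t\phi)^2$, permitted in the weak null regime); for the bad part I would substitute $\phi=\phi^{\mathrm{app}}+(\phi-\phi^{\mathrm{app}})$ and use that $\partial\phi^{\mathrm{app}}$ inherits the controlled growth of the asymptotic solution, so the source of $\Box(\phi-\phi^{\mathrm{app}})$ is no worse than $r^{-2}t^{C\epsilon}$ plus lower-order self-interactions. A Gr\"onwall argument in the variable $s=\epsilon\log t$ would then recover the constants $C_k$, closing the bootstrap and the theorem.

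The hard part, and the step I expect to be the genuine obstacle, is the behaviour of $\phi-\phi^{\mathrm{app}}$ in the interior region $\{\abs q\gtrsim t\}$, i.e.\ near future timelike infinity $I^+$: the asymptotic system is derived entirely from the geometry near $\scri$ and gives no information there, while a null/ghost-weight gain is useless once $\bar\partial$ and the transversal derivative are comparable. So the energy flux through the interior — and hence the exponent in $t^{C_k\epsilon}$ — cannot be bounded from the asymptotic data alone, and to run the Gr\"onwall step one is forced to replace the hypothesis on the asymptotic system by the stronger \emph{asymptotic decay condition} of \cref{asymptotic decay condition}, which also controls the iterated linear solutions toward $I^+$. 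A secondary difficulty is the mismatch between decay and regularity noted in \cref{regularity vs decay}: the pointwise decay extracted above is tight, so any loss in the commutation step is fatal, and the purely differentiated, semilinear setting of the statement is already the most favourable case for this scheme. Making the interior step work — or pinpointing why it cannot — is where the real content lies.
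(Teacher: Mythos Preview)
This statement is a \emph{conjecture}, quoted verbatim from \cite{keir_weak_2018}, and the paper does not attempt to prove it. There is no ``paper's own proof'' to compare against: the conjecture is cited purely for context, immediately after which the paper discusses how Keir established it only under an additional hierarchical hypothesis, and then explains why the analogous conjecture without the restriction to differentiated nonlinearities (as in \cite{deng_global_2018}) is in fact false.

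Your proposal is therefore not a proof of a theorem but a sketch of a possible attack on an open problem, and you yourself correctly flag the point where it stalls. The obstruction you isolate --- that the asymptotic system carries no information about behaviour toward $I^+$, so the interior energy flux cannot be controlled from the weak null hypothesis alone --- is precisely the phenomenon the paper is built around. Indeed the paper's thesis is that the weak null condition classifies behaviour near $\scri$ but not near $I^+$, and that the correct replacement is the asymptotic decay condition of \cref{asymptotic decay condition}; your final paragraph arrives at exactly this conclusion. So rather than having proved the conjecture, you have rediscovered the reason the paper proposes a different condition in its place. Note also that the version of the conjecture stated here already restricts to purely differentiated semilinear terms, which sidesteps the $I^+$ issue you raise (derivatives give extra $u$-decay toward $I^+$, cf.\ the discussion after the conjecture in \cref{comparison to weak null}); the genuine failure the paper exhibits is for the undifferentiated variant.
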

	
	A similar conjecture also appears in \cite{deng_global_2018}, without the restriction for derivative semilinear non-linearities, but the authors restrict the system to only include quadratic terms, even though cubics are usually assumed to be of less relevance (see section 1.2 of \cite{keir_weak_2018}).
	
	In his exhaustive treatment of hierarchical weak null systems Keir shows that under an additional condition to the WNC the conjecture is true, moreover each field decays at least like $\abs{w_i}\leq t^{-1+\epsilon_i}$, for $\epsilon_i$ small depending on the place of $w_i$ in a predetermined hierarchy. Once, he establishes semi-global existence, ie. existence in bounded retarded time, the solution is propagated easily towards timelike infinity ($i^+$), as any partial derivative yields extra decay towards $i^+$. It's important to contrast this to the presence of undifferentiated nonlinear terms.
	
	Consider the following two systems of equations
	\begin{equation*}
		\begin{aligned}
			&\Box\phi_1=0&&&&\Box\eta_1=0\\
			&\Box\phi_2=(\partial_t\phi_1)^2&&&&\Box\eta_2=\eta_1^2\\
			&\Box\phi_3=(\partial_t\phi_2)^2&&&&\Box\eta_3=\eta_2^2.
		\end{aligned}
	\end{equation*}
	It is shown in Keir's work, that $\phi_i$ have $L^\infty$ estimates of the form $t^{-1+\epsilon_i}$, with each additional derivative yielding extra decay towards $i^+$, ie. extra factor of $\frac{1}{u}$. Indeed, this assumption can be used iteratively to show that the leading order decay towards $i^+$ is generated by the behaviour of the forcing near $\scri$. This is unlike the situation for $\eta_i$. $\eta_2$ behaves similarly as $\phi_2$, but due to the lack of derivatives, $\eta_2^2\sim u^2 (\partial_t\phi_2)^2$. This additional $u^2$ term leads to a growth for $\eta_3$ towards $i^+$, and indeed if the sequence is iterated further, the growth can be arbitrarily fast polynomial rate. The same way as exponential growth towards $\scri$ causes serious problems for nonlinear perturbations, the same holds here, see \cref{main results}. Coupling this growth nonlinearly via $\partial_v$ derivatives may result in global ill-posedness, while still satisfying WNC. Indeed, we show that the form of the weak null conjecture as stated in \cite{deng_global_2018} is false, see \cref{main results}.

	\subsection{Critical exponents and fine structure}\label{fine structures in non-linearities}	
	 There is a heuristic analysis that one can perform for the Klein-Gordon equation which suggests that $(\Box-1)\phi=\abs{\phi}^q$ is globally well-posed for small data if $q>q_c=1+2/n$ (\cite{keel_small_1999}). This is understood as follows: the usual energy estimate is roughly
	\begin{equation*}
		\begin{gathered}
			\norm{\phi(t)}_{H^{n}}\leq\norm{\phi(0)}_{H^n}+\int_0^t\d s \norm{\abs{\phi}^q}_{H^{n-1}}\leq\norm{\phi(0)}_{H^n}+\int_0^t\d s\, \norm{\phi}_{L^\infty}^{q-1}\norm{\phi}_{H^{n-1}}.
		\end{gathered}
	\end{equation*}
	One could close a bootstrap if $\norm{\phi}_{L^\infty}^{q-1}\lesssim t^{-1-\epsilon}$. Assuming the same decay as for the linear equation, we arrive at $q_c=1+2/n$\footnote{Note, that the same analysis would yield $1+2/(n-1)$ for the same non-linearity for the wave equation, which is \text{false}.}. Indeed the global existence part of the heuristic has been proved in many cases (see references in \cite{keel_small_1999}), and is very robust to the structure of the non-linearity, it only uses its leading order behaviour. Global ill-posedness is more delicate, as one may have obstructions, such as a coercive energy or simply a Hamiltonian structure. We illustrate this subtlety with the following examples.
	\begin{itemize}
		\item The KG equation $(\Box-1)\phi=\phi^2$ is globally well-posed \cite{ozawa_global_1996}, with the proof using absence of space-time resonances via a normal form transformation. This is a structure in the nonlinearity beyond its leading order behaviour as is shown by the blow up solutions of \cite{keel_small_1999}, which also have quadratic growing non-linear terms.
		\item For the nonlinear Schrodinger equation, the critical exponent analysis is the same as for KG above. Due to the conserved quantities, one cannot have solutions that blow up for power nonlinearities in the predicted regime ($p<1+2/n$), in particular, for pure power non-linearities with $U(1)$ symmetry all solutions are global in the mass subcritical range. Breaking the $U(1)$ symmetry, eg. replacing $\abs{u}^{p-1}u$ nonlinearity with $\abs{u}^p$  can lead to finite time blow up from small data \cite{ikeda_small_2012}.
		\item In the simplest case for the wave equation, this fine structure occurs for focusing/defocusing classification in case of power non-linearity ${\Box w=\pm w\abs{w}^{p-1}}$. In the defocusing case, one  can use low regularity existence with the conserved quantity to get global solutions, while in the focusing case, the energy is not coercive and indeed small data solutions may blow up for $p$ sufficiently small. Indeed, the explanation for such powers is the main point of this work.
		\item 	A more involved example for waves, might be the system
		\begin{equation*}
			\begin{gathered}
				\Box\phi_1=(\partial_t\phi_2)^3\\
				\Box\phi_2=\alpha(\partial_t\phi_1)^3
			\end{gathered}
		\end{equation*}
		in $\R^{2+1}$. The asymptotic system of the weak null condition is 
		\begin{equation*}
			\begin{gathered}
				\partial_v\Phi_1=\frac{1}{v}\Phi_2^3\\
				\partial_v\Phi_2=\frac{1}{v}\alpha\Phi_1^3,
			\end{gathered}
		\end{equation*}
		which has bounded solution only for $\epsilon<0$. Therefore the heuristic of the weak null condition predicts that only $\epsilon\leq0$ system has global solutions. A similar system of 3 waves in $\R^{3+1}$, was studied in detail by Keir \cite{keir_global_2019}.
	\end{itemize}
		
	Finally, let us mention, that singularity formation from small initial data requires much structure from the equation. All ill-posedness results quoted in this paper use the Kato mechanism \cite{keel_small_1999}, which uses positivity for integrated quantities, closely related to the Virial type argument. Another two quasilinear mechanisms are: shock formation for fluid equations and the short pulse method in general relativity. Indeed, this is the reason why the quadratic counter example to the weak null conjecture is essentially a linear problem, as we found no non-linear mechanism consistent with WNC to produce blow-up.
		
	\subsection{The zoology}\label{zoology}
	In this section, we recall some of the works concerning global well-posedness of nonlinear wave equations. We will work with non-linearities ($F$) that are not smooth to observe a large range of possible equations, but there is a price to pay, since this introduces a novel problem regarding regularity, see \cref{regularity vs decay}. Indeed, some of the subtle transitions are missed if one works with only smooth nonlinearities.
	
	In his pioneering work \cite{john_blow-up_1979}, Fritz John showed that the equation
	\begin{equation}\label{Strauss}
		\begin{gathered}
			\Box\phi=\phi\abs{\phi}^{q-1}
		\end{gathered}
	\end{equation}
	for $\phi:\R^{3+1}\to\R$ the $\phi=0$ solution is not stable\footnote{A stronger result holds: the equation has no nontrivial global solutions} for $q<q_c=1+\sqrt{2}$, but stable for $q>q_c$. Here, we are not concerned with the behaviour at $q=q_c$, but see \cite{wang_recent_2012} for references in that case. Subsequently it was conjectured by Strauss (\cite{strauss_nonlinear_1981}) that the same result holds in general dimensions with critical power $q_c(n)$ the positive root of $(n-1)q_c^2-(n+1)q_c-2=0$. The conjecture was fully resolved for $n=2$ by Glassey
	\cite{glassey_existence_1981} \cite{glassey_finite-time_1981}, and mostly resolved for $n\geq4$ \cite{georgiev_weighted_1997}, \cite{lindblad_long-time_1996} with many works preceding \footnote{For a comprehensive literature review see \cite{wang_recent_2012}}. 
	
	The exact value of $q_c$ is somewhat puzzling and the only work known to the author that gives an intuitive reason for it is \cite{tao_johns_2007}, where the $n=3$ flat background case is analysed. The purpose of this paper is to initiate a systematic understanding of the critical powers in equations of the form \eqref{generic model}. 
	
	There are natural generalisations to the above problem. One may introduce obstacles in the interior of spacetime, modify the geometry to slightly relax Minkowski space to almost flat near infinity (with many possible definitions \cite{baskin_asymptotics_2018}, \cite{sogge_concerning_2010}) or introduce drastic change such as a black hole \cite{lindblad_strauss_2014}. Importantly, these changes do not change the value of $q_c$, for partial resolutions of the conjecture in these settings see references in \cite{wang_recent_2012}.
	
	\begin{remark}[Low frequency nature]
		All these generalisations depend on two crucial, but separate properties of the equations (see a similar discussion in \cite{lindblad_strauss_2014}). First, a decay estimate for the solution that depends on the far away region being asymptotically flat (with various notions available in literature). Second, an integrated decay (Morawetz) estimate in the interior. Indeed, such statement can be proven for low frequency part of the solution under very mild restrictions on the spacetime under consideration (see \cite{vasy_morawetz_2013}, \cite{moschidis_logarithmic_2016}). Indeed, this part of the solution provides the decay that will be important in our work. To contrast this, the high frequency part sees the finer structure of the geometry and involves more care, \footnote{moreover, this sensitivity applies also if one modifies the equation with short range potential that fall-off sufficiently fast in $r$, for details see \cite{moschidis_rp_2016}} indeed quantitative mode stability for Kerr black holes \cite{shlapentokh-rothman_quantitative_2015} or the lack of such statement for black strings \cite{benomio_stable_2021} are highly non-trivial statements. Our results would be only applicable to different geometric settings where the decay of the low frequency part is the slowest.
	\end{remark}

	An alternative way to generalise the problem, is to introduce system of wave equations, such as 
	\begin{equation}\label{2 variable Strauss}
		\begin{gathered}
			\Box\phi=\psi\abs{\psi}^{q_1-1}\\
			\Box\psi=\phi\abs{\phi}^{q_2-2}.
		\end{gathered}
	\end{equation}
	In this case, there is no longer a critical value, but a critical curve (\cite{del_santo_global_1997})
	\begin{equation}\label{2 variable Strauss curve}
		\begin{gathered}
			\Sigma_{crit}=\bigg\{\max\Big(\frac{q_1+2+q_2^{-1}}{q_1q_2-1},\frac{q_2+2+q_1^{-1}}{q_1q_2-1}\Big)=\frac{n-1}{2}\bigg\}
		\end{gathered}
	\end{equation}
	with stability on one side and instability on the other. In fact, there is a second restriction concerning local well-posedness related to the fact that the non-linearity is not smooth (see \cref{regularity vs decay}). The first results concerning this system draw intuition from the elliptic realm, and compare the hyperbolic case to the Lane-Emden equation ($\Box\to-\Delta$ in \eqref{2 variable Strauss}) where the critical curve is
	\begin{equation*}
		\begin{gathered}
			\Sigma_{crit}=\bigg\{\max\Big(\frac{q_1+1}{q_1q_2-1},\frac{q_2+1}{q_1q_2-1}\Big)=\frac{n-2}{2}\bigg\}.
		\end{gathered}
	\end{equation*} 
	Still, knowing this result, there is no obvious way to arrive to the critical curve for the hyperbolic case without going through the weighted Strichartz estimates and bootstrap (or contraction mapping) argument. A natural continuation of this program would be to consider the system
	\begin{equation}\label{n variable Strauss}
		\begin{gathered}
			\Box \phi_i=\sum_{j}a_{ij}\phi_j\abs{\phi_j}^{c_{ij}-1}\qquad i\in\{1,2,...,d\},
		\end{gathered}
	\end{equation}
	where knowing the critical set in advance would potentially help to understand the conditions on $c_{ij}$ for global well posedness. For results concerning this system, see \cref{main results}.
	
	One can alternatively introduce derivative non-linearities (we restrict our-selves to semi-linear problems) and consider the equation
	\begin{equation}\label{Glassey}
		\begin{gathered}
			\Box\phi=\abs{\partial_t\phi}^q.
		\end{gathered}
	\end{equation}
	In \cite{john_blow-up_1981}, the author showed that the vacuum ($\phi=0$) is unstable for the above equation with $q\leq2$. Similar to \eqref{Strauss}, there is a power $q_c=1+\frac{2}{n-1}$ that is conjectured (due to Glassey \cite{glassey_finite-time_1981}) to separate stable and unstable regimes. There have been many works on this equation as well, see details in \cite{hidano_glassey_2012}. In particular, the instability for $q<q_c$ is known in all dimension while stability holds for $n=2,3$ without symmetry and for $n\geq4$ under spherical symmetry.
	
	Combining \eqref{Strauss} and \eqref{Glassey}, one may instead consider
	\begin{equation}\label{Strauss Glassey}
		\begin{gathered}
			\Box\phi=\abs{\partial_t\phi}^{q_1}+\abs{\phi}^{q_2}.
		\end{gathered}
	\end{equation}
	Part of the critical curve (see \cite{hidano_combined_2016} for details) is given by
	\begin{equation*}
		\begin{gathered}
			\Sigma_{crit}=\partial\Big\{q_1>q_{\text{Glassey}},q_2>q_{\text{Strauss}},(q_2-1)((n-1)q_1-2)>4\Big\}.
		\end{gathered}
	\end{equation*}
	The results in this case are restricted to $n=2,3$, without symmetry assumptions.

	Alternatively, one may combine \eqref{Strauss} and \eqref{Glassey} into a system
	\begin{equation}\label{Strauss Glassey system}
		\begin{gathered}
			\Box\phi=\abs{\psi}^{q_1}\\
			\Box\psi=\abs{\partial_t\phi}^{q_2}.
		\end{gathered}
	\end{equation}
	In this case, part of the critical curve (\cite{hidano_life_2016}) is 
	\begin{equation*}
		\begin{gathered}
			\Sigma_{crit}=\bigg\{\Big(\frac{n-1}{2}q_2-1\Big)(q_2q_1-1)=q_2+2\bigg\}.
		\end{gathered}
	\end{equation*}
	
	Similarly, adding a term motivated by the null condition (see below), we may consider the system
	\begin{equation}\label{Strauss null}
		\begin{gathered}
			\Box\phi=\abs{\psi}^{q_1}\\
			\Box\psi=\abs{\partial_t\phi}^{1+2/(n-1)}+\abs{\phi}^{q_2}.
		\end{gathered}
	\end{equation}
	In this case, part of the critical curve in $n=3$ ( \cite{hidano_global_2022}) is 
	\begin{equation*}
		\begin{gathered}
			\Sigma_{crit}=\big\{q_1q_2 = 2q_2 + 3\big\}
		\end{gathered}
	\end{equation*}
	which differs from \eqref{2 variable Strauss}. Importantly, note that the point $q_1=q_2=3$ is on the critical curve, which is also physically relevant see \cite{hidano_global_2022}.  This observation is crucial for the ill-posedness result related to the weak null condition presented in \cref{main results}.
	
	Considering analytic non-linearities, we must mention the celebrated null condition of Klainerman \cite{klainerman_long-time_1982}, which in particular gives stability for \footnote{this nonlinear equation is integrable, but is the simplest to exhibit the condition}
	\begin{equation}\label{integrable null}
		\begin{gathered}
			\Box\phi=\partial\phi\cdot\partial\phi=(\partial_t\phi)^2-\nabla_x\phi\cdot\nabla_x\phi.
		\end{gathered}
	\end{equation}
	Importantly, this results shows that imposing structure on the differentiated part of the non-linearity can separate globally well-posed and ill-posed problems. This was extended to the weak null condition \cite{lindblad_weak_2003} to get global existence for
	\begin{equation}\label{weak null}
		\begin{gathered}
			\Box\phi=(\partial_t\psi)^2\\
			\Box\psi=\partial\phi\cdot\partial\phi.
		\end{gathered}
	\end{equation}
	This null condition was further extended using the space-time resonance method by \cite{Pusateri2013}.
	
	Let's return momentarily to the original goal, to understand wave-Klein Gordon systems. The Klein Gordon field has a frequency dependent speed of propagation.\footnote{This statement can be made quantitative with stationary phase method.} Therefore, as a first proxy, one may study multispeed wave systems to gain some intuition to the problem.
	
	Moving away from constant speed wave equations allows one to consider a much larger family of equation with smooth non-linearity. Here, we do not wish to give as a detailed overview as for the fixed speed case, we just mention some surprising works:
	\begin{itemize}
		\item In \cite{yokoyama_global_2000} it was shown that in $\R^{3+1}$
		\begin{equation*}
			\begin{gathered}
				\Box_1\phi=(\partial_t\psi)^2\\
				\Box_2\psi=(\partial_t\phi)^2
			\end{gathered}
		\end{equation*}
		has global solutions where $\Box_c=-\partial_t^2+c\Delta$. This is in strong contrast with single speed system.
		\item In \cite{ohta_counterexample_2003}, Ohta proved that the vacuum in $\R^{3+1}$ is unstable for the system
		\begin{equation*}
			\begin{gathered}
				\Box_1\phi=\psi\partial_t\psi\\
				\Box_2\psi=(\partial_t\phi)^2.
			\end{gathered}
		\end{equation*}
		\item In \cite{kubo_small_2000}, it was shown that for the 2 component Strauss problem \eqref{2 variable Strauss} with unequal speeds the vacuum is unstable under the same critical curve as for the same speed system.
	\end{itemize}
	 There are many more results concerning such systems, see \cite{katayama_global_2006},  \cite{anderson_global_2021}, \cite{hidano_global_2022} references therein. For physical motivation behind such systems, see \cite{anderson_global_2021}.
	 
	\subsection{The problem of regularity}\label{regularity vs decay}
	In this section, we argue, that the critical exponents discussed above form a separate problem from the issues of regularity of the equation and are worth a \textit{separate} study.
	
	As mentioned earlier, there are restrictions beyond the critical exponents (curves) stated above that influence where stability results are available. These, we claim, are related to regularity and in this paper we wish to separate these from what we will call decay. To be more specific, consider the Glassey conjecture \eqref{Glassey} where the critical power is $q_c(2)=3,q_c(3)=2,q_c(n)=1+2/(n-1)=1+2(1/n+1/n^2)+\mathcal{O}(1/n^3)$, in particular $q_c(n)<2\, \forall n>3$. The scale invariant Sobolev exponent (for it's importance see \cite{tao_nonlinear_2006}) for the problem is $s_c(q)=n/2+\frac{q-2}{q-1}$, so $s_c(q_c)=1/2$. Therefore, one could expect local well-posedness in $H^1$ for $q$ slightly above $q_c$. However, since the important ill-posedness results of Lindblad \cite{lindblad_sharp_1993}, we know that well posedness does not hold down to the critical exponent, indeed one requires $s>\max(s_c(q),\frac{n+5}{4})$. When $q$ is not an integer, the right hand side of the equation has at most $H^{q+1/2}$ regularity, so by standard energy estimate the solution can have at most $H^{q+3/2}$. For $n>6$, we have $q_c+3/2<\frac{n+5}{4}$, so it seems extremely hard task to prove the conjecture in any non-symmetric function space. In particular, it's not even clear if the equation near the critical power is locally well-posed in \textit{any} non-symmetric function space. Indeed, the authors of \cite{hidano_glassey_2012} proved the conjecture for $n\geq4$ under spherical symmetry, which effectively reduced the system to a problem in $\R^{1+1}$, for which much less regularity is sufficient.
	
	For another example, note that the critical curve for \eqref{2 variable Strauss} given in \cite{del_santo_global_1997} is not quite the one stated above. In particular, they show the following theorem
	\begin{theorem*}[\cite{del_santo_global_1997}]
		
		\eqref{2 variable Strauss} has stable vacuum above the \eqref{2 variable Strauss curve} curve if
		\begin{itemize}
			\item $n=2,3$ and $q_1,q_2\in(5-n,6-n]$.
			\item $n\geq4$ and $\frac{\min(q_1,q_2)-1}{q_1q_2-1}>\frac{n-1}{2(n+1)}$
		\end{itemize}
	 	
	 	In \eqref{2 variable Strauss} the vacuum is not stable in the set of $\mathcal{C}^2$ solutions below the curve \eqref{2 variable Strauss curve}.
	\end{theorem*}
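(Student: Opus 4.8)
I would split the statement into its two halves, which call for genuinely different machinery: a weighted contraction-mapping argument (built on sharp weighted pointwise/Strichartz bounds for $\Box$) for the stability assertion, and a Kato-type averaged-functional blow-up argument for the instability assertion. The curve $\Sigma_{crit}$ appears twice, from two independent computations, and the endpoint hypotheses belong to the stability half only.

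\emph{Stability above the curve.} Let $L$ denote the forward solution operator of $\Box$ with zero data, and recall the sharp two-weight pointwise bound of Georgiev--Lindblad--Sogge / Kubo type: a forcing term satisfying $|F(t,x)|\lesssim\langle t+|x|\rangle^{-p}\langle t-|x|\rangle^{-\kappa}$, with $(p,\kappa)$ in an admissible range depending on $n$, yields $|L[F](t,x)|\lesssim\langle t+|x|\rangle^{-a}\langle t-|x|\rangle^{-b}$ with $(a,b)$ explicitly determined by $(p,\kappa,n)$. I would run a fixed point in the set $X_{M\epsilon}$ of pairs $(\phi,\psi)$ with $|\phi|\le M\epsilon\,\langle t+r\rangle^{-a_1}\langle t-r\rangle^{-b_1}$ and $|\psi|\le M\epsilon\,\langle t+r\rangle^{-a_2}\langle t-r\rangle^{-b_2}$. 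Inserting this ansatz into the nonlinearities $|\psi|^{q_1}$ and $|\phi|^{q_2}$ and applying the pointwise bound produces a finite system of linear inequalities relating $(a_1,b_1,a_2,b_2)$ to $(q_1,q_2,n)$, together with the admissibility constraints on the weights; a short computation shows this system has a solution in the admissible range precisely when $\max\bigl(\tfrac{q_1+2+q_2^{-1}}{q_1q_2-1},\tfrac{q_2+2+q_1^{-1}}{q_1q_2-1}\bigr)<\tfrac{n-1}{2}$, i.e. strictly above $\Sigma_{crit}$. The endpoint hypotheses enter exactly here: the lower endpoint $q_i>5-n$ ($n=2,3$) and the ratio condition $\tfrac{\min(q_1,q_2)-1}{q_1q_2-1}>\tfrac{n-1}{2(n+1)}$ ($n\ge4$) are the ranges in which the weighted estimate is available and in which $|\phi|^{q_i}$ has enough H\"older regularity for the iteration map to be well defined and contracting; contraction itself follows by applying the same estimates to differences via $\bigl||a|^{q}-|b|^{q}\bigr|\lesssim(|a|^{q-1}+|b|^{q-1})|a-b|$.

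\emph{Instability below the curve.} Suppose, for contradiction, that for arbitrarily small smooth compactly supported data there is a global $\mathcal{C}^2$ solution $(\phi,\psi)$. I would test the two equations against a positive weight $\Theta(t,x)$ — either the John eigenfunction weight built from $\int_{S^{n-1}}e^{x\cdot\omega}\,d\omega$ times a function of $t$, or more simply a cutoff to the light cone refined by a power of $\langle t-|x|\rangle$ chosen to mirror the expected $L^\infty$ profile — and set $F(t)=\int\phi\,\Theta\,dx$, $G(t)=\int\psi\,\Theta\,dx$. Integrating by parts twice (here the $\mathcal{C}^2$ assumption is used) and using finite speed of propagation together with Jensen/H\"older on the cone slice gives a coupled system of differential inequalities $F''\gtrsim\langle t\rangle^{-\mu_1}|G|^{q_1}$ and $G''\gtrsim\langle t\rangle^{-\mu_2}|F|^{q_2}$, where $\mu_i$ depends on $n$, $q_j$ and the chosen weight (the weight is what produces the $q_i^{-1}$ corrections in the final curve). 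After the standard reduction making $F,G$ eventually positive and increasing, substituting each inequality into the other yields a single closed inequality, say $F''\gtrsim\langle t\rangle^{-\nu}F^{q_1q_2}$ with $\nu$ computed from $\mu_1,\mu_2,q_1,q_2$; one then checks that $\max\bigl(\tfrac{q_1+2+q_2^{-1}}{q_1q_2-1},\tfrac{q_2+2+q_1^{-1}}{q_1q_2-1}\bigr)>\tfrac{n-1}{2}$ is precisely the range of $\nu$ for which this ODE inequality forces finite-time blow-up of $F$, contradicting global existence. The asymmetry of the two arguments of the maximum emerges from which of the two fields one chooses to eliminate.

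\emph{Main obstacle.} I expect the genuine difficulty to lie entirely on the stability side: establishing the sharp two-weight estimate for $\Box$ and verifying that the polytope of admissible weight quadruples $(a_1,b_1,a_2,b_2)$ is nonempty exactly on the correct side of $\Sigma_{crit}$, all while the non-smoothness of $|\phi|^{q}$ forbids any naive differentiation of the nonlinearity. This is why the theorem must carry the extra endpoint restrictions, and — as the surrounding discussion stresses — why the purely asymptotics-driven curve $\Sigma_{crit}$ does not by itself decide well-posedness: the regularity budget intervenes. The blow-up half, by contrast, is robust once the differential-inequality system is in place; the ODE comparison is essentially forced and requires no structure beyond positivity.
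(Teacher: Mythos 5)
You should first note that the paper does not prove this theorem at all: it is quoted verbatim from \cite{del_santo_global_1997} in the discussion of regularity, so the only fair comparison is with the cited work and with this paper's own blow-up arguments for the analogous systems. Your stability half is indeed essentially the route taken there and endorsed in this paper (the weighted Strichartz/pointwise estimates of \cite{georgiev_weighted_1997} plus a contraction in a two-weight decay class, with the endpoint restrictions coming from admissibility of the weights and the limited H\"older regularity of $|\phi|^{q}$), so on that side your outline is fine.

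The genuine gap is in the instability half. You assert that a suitable weight $\Theta$ in the averaged functionals is what "produces the $q_i^{-1}$ corrections", and that substituting one second-order differential inequality into the other closes the argument exactly below $\Sigma_{crit}$. Neither step is right as stated: an averaged-functional argument with H\"older on the cone slice, run once, only yields the cruder region (no $q_i^{-1}$ terms), and one cannot simply substitute $G''\gtrsim\langle t\rangle^{-\mu_2}|F|^{q_2}$ into $F''\gtrsim\langle t\rangle^{-\mu_1}|G|^{q_1}$ — a system-level comparison is needed, which is exactly what the generalised Gronwall lemma (\cref{gronwall}) supplies here. The sharp corrections enter in a different place: one first establishes iterated pointwise lower bounds ("tail creation") for the solution inside the cone via positivity of the Duhamel kernel (\cref{lower bound}, and its iterated use in \cref{iterative tail generation}), and only then feeds the resulting polynomial growth of the \emph{unweighted} functionals $H_i(t)=\int\phi_i\,dx$ at late times into the ODE system; this is how \cref{blow up for n strauss theorem} and \cref{blow up for Strauss Glassey system} reach the full subcritical region, and it is also how the positivity needed for your "standard reduction making $F,G$ eventually positive and increasing" is actually obtained. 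Without that preliminary lower-bound iteration your scheme proves blow-up only in a strictly smaller parameter range, so the key idea that makes the curve sharp is missing from your proposal.
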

	
	Note, that the stability theorems require that $q_i$ are bounded below irrespective of the other. The upper bounds are expected to be of technical nature as the improved result (for $n=3$) in \cite{agemi_critical_2000} show.
	
	As we see that regularity is a somewhat separate issue from the decay of the non-linear terms, one may also test our heuristic using additional $t$ and $u$ weight on the non-linearities. One natural question might be the maximum value of $\alpha,\beta$ such that the equations
	\begin{equation*}
		\begin{gathered}
			\Box\phi=t^\alpha u^\beta \partial\phi\cdot\partial\phi\\
			\Box\phi=t^\alpha u^\beta (\partial_t\phi)^2
		\end{gathered}
	\end{equation*}
	have global solutions provided small enough initial data $\norm{\phi_0,\phi_1}_{H^{N+1}\times H^N}\leq\epsilon$. The second equation was analysed in $\R^{1+1}$ in the recent work \cite{kitamura_semilinear_2022}.
	
	\subsection{Main results}\label{main results}
	The main theorems of the paper concern various nonlinear systems that we selected to best represent the usage of heuristics described in \cref{role of asymptotics} and \cref{finding critical exponents}.
		
	\begin{theorem*}[Stability results]
		The vacuum is stable in the following scenarios:
		\begin{enumerate}
			\item 	For the equation
			\begin{equation}\label{partial v}
				\begin{gathered}
					\Box\phi=\abs{\partial_v\phi}^q\\
					\phi(0)=\phi_0,\partial_t\phi_(0)=\phi_1\\
					\phi:\R^{n+1}\to\R
				\end{gathered}
			\end{equation}
			under spherical symmetry with $n\in\{3\}\cup\{n\geq5\}$ and $\frac{4}{n-2}+1>q>q_{c-v}$, where the critical exponent is the one given by the analysis in \cref{role of asymptotics}, ie. the positive root of $q(q-1)\frac{n+1}{2}=1$. (\cref{partival v main theorem})
			\item  For the system of equations \cref{n variable Strauss}
			\begin{equation*}
				\begin{gathered}
					\Box\phi_i=\sum_{j=1}^na_{ij}\abs{\phi_j}^{c_{ij}}\\
					\phi_i:\R^{3+1}\to\R
				\end{gathered}
			\end{equation*}
			with $a_{ij}\in\R$ and $c_{ij}>2$ the $0$ solution is stable if $c_{ij}$ satisfy \cref{n variable Strauss condition}, that is:
			\begin{equation*}
				s_i=\min_{j|a_{ij}\neq0}(c_{ij}-2+\min(0,s_jc_{ij}-1))
			\end{equation*}
			has a solution for $s_i$. (\cref{n strauss main theorem})
			\item  For the system \eqref{Strauss Glassey system}
			\begin{equation*}
				\begin{gathered}
					\Box\phi=\abs{\psi}^{q_1}\\
					\Box\psi=\abs{\partial_t\phi}^{q_2}.
				\end{gathered}
			\end{equation*}
			under spherical symmetry in $\R^{3+1}$ ($n=3$) with $1<q_2q_1\big((q_2-2)+q_2(q_1-2)\big)$ and $q_1<2$.  (\cref{strauss glassey main theorem})
		\end{enumerate}
	\end{theorem*}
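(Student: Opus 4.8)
My plan is to treat the three parts by one common template and then record the part‑specific bookkeeping; all three are instances of a weighted Duhamel iteration in which the weights are the decay rates dictated by the asymptotic analysis of \cref{role of asymptotics}–\cref{finding critical exponents}. First I would use the spherical symmetry to pass to a reduced radial problem — in $n=3$ the exact one‑dimensional reduction $\phi_i\mapsto r\phi_i$, and in the higher dimensions needed only in part (1) the weighted radial estimates playing the same role — so that the forcing enters a one‑dimensional Duhamel integral over a backward characteristic triangle. Next I read off the decay rates predicted for $\phi_i$ and its first derivatives: the positive root of $q(q-1)\tfrac{n+1}{2}=1$ in part (1); the solution $(s_i)$ of the monotone recursion in \cref{n variable Strauss condition} in part (2); and, in part (3), the pair consisting of $\psi\sim\langle t+r\rangle^{-1}$ together with a more slowly decaying, interior‑spreading $\phi$. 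I then set up the complete metric space $X$ of tuples $(\phi_i,\partial\phi_i)$ with weighted sup‑norm
\[
\|(\phi,\partial\phi)\|_X=\sup_i\sup_{t,r}\Big(\langle t+r\rangle^{\sigma_i}\langle t-r\rangle^{\tau_i}|\phi_i|+\langle t+r\rangle^{\sigma_i'}\langle t-r\rangle^{\tau_i'}|\partial\phi_i|\Big),
\]
the exponents being exactly those predicted rates, and run the Duhamel map on a ball of radius $\sim\epsilon=\|(\text{data})\|_{H^{N+1}\times H^N}$.

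\emph{Linear input, and where the hypotheses come from.} The workhorse is a weighted $L^\infty_{t,r}\!\to\!L^\infty_{t,r}$ estimate (a John‑type pointwise bound, equivalently a weighted Strichartz estimate) for the radial inhomogeneous wave operator with zero data: if $|F|\lesssim\langle t+r\rangle^{-a}\langle t-r\rangle^{-b}$, up to logarithms, then the solution is controlled by a product power weight whose exponents are explicit piecewise‑linear in $(a,b)$, the kink sitting at $a=2$, i.e. according as $F$ is or is not integrable along outgoing null rays. In part (1), inserting $|\partial_v\phi|^q$ with the good‑derivative rate closes precisely for $q>q_{c-v}$; the upper bound $q<1+\tfrac{4}{n-2}$ and the exclusion of $n=4$ are what keep the reduction and the later local theory compatible. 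In part (2), the hypothesis $c_{ij}>2$ is exactly what makes the $\langle t+r\rangle$‑integral in the Duhamel formula converge, and the residual interior ($I^+$) bookkeeping in the $\langle t-r\rangle$‑weight is precisely the recursion defining \cref{n variable Strauss condition}, so its solvability is what is needed. In part (3), $q_1<2$ puts the \emph{undifferentiated} forcing $|\psi|^{q_1}$ (with $\psi\sim\langle t+r\rangle^{-1}$) on the non‑integrable side, so $\phi$ decays only like $\langle t+r\rangle^{-(q_1-1)}$ and spreads into the interior carrying nontrivial $\langle t-r\rangle$‑weights — this is exactly the undifferentiated‑terms‑near‑timelike‑infinity effect that makes the $\eta$‑system of \cref{comparison to weak null} blow up; the difference is that here it is tamed, because $q_2$ large forces $|\partial_t\phi|^{q_2}$ back onto the integrable side, returns $\psi\sim\langle t+r\rangle^{-1}$, and the round trip (interior weights included) contracts exactly when $1<q_1q_2\big((q_2-2)+q_2(q_1-2)\big)$.

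\emph{Nonlinear closure and globalisation.} With the linear estimate in hand, the Duhamel map preserves the ball because a product of power weights is a power weight and the exponents were chosen so the output weight dominates the input weight with room to spare; the contraction estimate uses the elementary inequality $\big||a|^q-|b|^q\big|\lesssim(|a|^{q-1}+|b|^{q-1})|a-b|$ valid for $q\ge1$. The derivative components of $X$ (and, for the derivative nonlinearity in parts (1) and (3), a bounded number of higher derivatives) are carried along by differentiating the reduced equation and using it to trade $\partial_t^2$ for $\partial_r^2$ plus lower order, so their weighted bounds follow from the same linear estimate. The resulting a priori bound, together with local well‑posedness in $H^{N+1}\times H^N$ for $N$ large and the standard continuation criterion, gives global existence; the restriction to spherical symmetry and to the stated exponent windows is exactly what places the rough power nonlinearities in a function space where this local theory holds — the separation‑of‑issues point of \cref{regularity vs decay}.

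\emph{Main obstacle.} The hard part is the sharp weighted linear estimate in the non‑integrable regime together with genuine control of the interior $r\ll t$ near timelike infinity. For part (3) this means showing that the interior spreading of $\phi$ produced by the undifferentiated term carries only a bounded power of $\langle t-r\rangle$, controlled by $q_1$; that $\partial_t\phi$ still decays there; and that the large exponent $q_2$ turns $|\partial_t\phi|^{q_2}$ into an interior‑integrable source rather than one whose $\langle t-r\rangle$‑weights compound along the iteration — the failure mode of the $\eta$‑system. Getting the constants, and not merely the exponents, to close is the delicate step, and is where one must use the strict inequality rather than the borderline equality. The secondary obstacle, common to all three parts, is matching the weighted contraction space against a space in which the non‑smooth nonlinearity is locally well posed, which is precisely what forces the symmetry hypotheses and the upper restrictions on the exponents.
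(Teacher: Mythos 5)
Your template — reduce by spherical symmetry, put the heuristic exponents into weighted sup-norms, and close a Duhamel contraction — is essentially the paper's own route for parts (2) and (3): part (2) is proved by a bootstrap on $\norm{v^{\alpha}u^{\beta_i}\phi_i}_{L^\infty}$ using John's pointwise estimate \cref{john l infty}, and part (3) by the $1{+}1$-dimensional kernel estimates of \cref{l infinite theory} and a contraction in weighted $L^\infty$. Two caveats there: in part (2) the theorem actually proved (\cref{n strauss main theorem}) needs the condition \cref{n variable Strauss condition} to be solvable for all nearby exponents $\tilde c_{ij}$, and the ``room to spare'' you invoke is manufactured by a non-trivial combinatorial step (\cref{minmax convergence}, \cref{loop lemma}: the maximal solution's graph has no loops, which lets one order the fields and distribute the $\epsilon$-losses hierarchically); solvability alone, on the boundary of the admissible set, is not enough for your weights to dominate strictly. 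In part (3) the paper also has to carry $\partial_v\psi_2$ in the norm (the $\partial_t\psi_1$ bound needs $(v\partial_v)$ of the forcing) and runs the contraction in a rougher $Y$-norm while only propagating boundedness in $X$; your sketch is compatible with this but glosses over it.

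The genuine gap is part (1). There the paper does \emph{not} use a pointwise John/weighted-Strichartz iteration at all: \cref{partival v main theorem} is proved at $H^2$ regularity with an $L^2$ framework — the $r^p$ hierarchy (\cref{rp}, extended to $p\in[-2,4)$ in higher dimensions), Morawetz and energy decay (\cref{morawetz}, \cref{energy decay p<2}, \cref{energy decay p<4}), and the radial Sobolev embedding \cref{radial Sobolev inequality} to convert fluxes into the pointwise decay of \cref{l infty estimates from l2} — and this choice is forced, not cosmetic. The exponent $q$ can be barely above $q_{c-v}$ (about $1.37$ in $n=3$, tending to $1+2/n$), so $\abs{\partial_v\phi}^q$ has only slightly more than one derivative of regularity; your plan to get the improved ``good-derivative'' decay of $\partial_v\phi$ by differentiating the reduced equation and feeding it back through a pointwise $L^\infty\to L^\infty$ bound requires weighted sup-control of second derivatives of $\phi$, hence differentiating the rough nonlinearity twice, which is not available — this is exactly the regularity obstruction of \cref{regularity vs decay}, and it is why the paper pairs $L^\infty$ bounds on first derivatives with only $L^2$ ($E^{p+2}[\partial_v^2\psi]$, $E^I[\partial\partial_v\phi]$) control of second derivatives in the nonlinear estimates. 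In addition, the John-type estimate you rely on is only quoted for $n=2,3$, whereas part (1) covers all $n\geq5$, and the sharp $v$-decay of the good derivative is extracted there from the $r^p$ commuted hierarchy (the quantity $\Psi=r^2\partial_v\psi$), not from any uncommuted pointwise bound on the inhomogeneous problem. So as written your argument does not close for part (1); it would need to be replaced by (or supplemented with) the flux-based machinery the paper uses, together with the upper bound $q<1+\tfrac{2}{n-2}$ and the exclusion of $n=4$, which enter precisely through that $H^2$-level local theory and the higher-dimensional $r^p$ estimate.
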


	\begin{theorem*}[Instability results]
		In $\R^{3+1}$ the vacuum is unstable for the following equations
		\begin{itemize}
			\item \eqref{n variable Strauss} if $a_{ij}\geq0$ and the condition \eqref{n variable Strauss condition} does not hold. ( \cref{blow up for n strauss theorem})
			\item  \eqref{Strauss Glassey system} for $q_1<2$ and $q_2q_1((q_2-2)+q_2(q_1-2))<1$. (\cref{blow up for Strauss Glassey system})
			\end{itemize}
	
		The system 
	\begin{equation}\label{counter example to WNC}
		\begin{gathered}
			\Box\phi_1=0\\
			\Box\phi_2=\phi_1^2\\
			\Box\phi_3=\phi_2^2\\
			\Box\phi_4=\phi_3^2\\
			(1\pm\partial_v\phi_4)\Box\eta=(\partial_v\phi_4)^2
		\end{gathered}
	\end{equation}
	satisfies the weak null condition, but there exists arbitrarily small initial data such that \cref{counter example to WNC} admits no smooth global solutions. (\cref{WNC cor})
	\end{theorem*}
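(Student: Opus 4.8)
I would prove the two assertions — that \eqref{counter example to WNC} satisfies the weak null condition, and that it admits arbitrarily small data with no global $C^2$ solution — separately. (The instabilities of \eqref{n variable Strauss} and of \eqref{Strauss Glassey system} below their critical thresholds I would handle separately by the Kato-type argument, as in \cite{keel_small_1999}; here I concentrate on \eqref{counter example to WNC}.) For the first assertion, substitute the ansatz $w_i=\tfrac{\epsilon}{r}W_i(q,s,\omega)$ with $q=t-r$, $s=\epsilon\log t$, and keep only the leading terms. The first four equations reduce to $\partial_q\partial_s W_1=0$ and $\partial_q\partial_s W_i=\tfrac12 W_{i-1}^2$ for $i=2,3,4$, solved one after another: $W_1=W_1(q)$, and $W_2,W_3,W_4$ are polynomials in $s$ (of degrees $1,3,7$) whose coefficients are functions of $q$. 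Each is global in $s$ and grows only polynomially in $s=\epsilon\log t$, so $r\partial_v\phi_i=O\big((\log t)^7\big)=o(t^{c\epsilon})$ for every $c>0$, well within the growth rate permitted by the conjecture of \cite{deng_global_2018}. For the $\eta$ equation, the weak null reduction discards every nonlinear term carrying a $\partial_v$ derivative, so both the coefficient correction $\partial_v\phi_4$ and the source $(\partial_v\phi_4)^2$ are dropped and the reduced equation is simply $\partial_q\partial_s W_\eta=0$. Hence the full reduced system has global solutions from small data and the weak null condition holds.

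For the second assertion, observe that the equations $\Box\phi_1=0$ and $\Box\phi_i=\phi_{i-1}^2$, read in order, are linear inhomogeneous wave equations; so for spherically symmetric compactly supported data of size $\epsilon$ for $\phi_1$ and zero data for $\phi_2,\dots,\phi_4,\eta$, the fields $\phi_1,\dots,\phi_4$ exist globally, are smooth, and satisfy $\phi_i\ge 0$ for $i\ge2$ by positivity of the retarded fundamental solution. The crux is then to show, via the iterated Duhamel analysis of \cref{finding critical exponents}, that $\phi_4$ grows \emph{polynomially} toward timelike infinity $I^+$. Tracking the computation: $\phi_1$ is confined (strong Huygens) to the wave zone with $\phi_1\sim\epsilon/t$; $\phi_2\sim\epsilon^2/t$ has no interior growth; $\phi_3$ acquires only logarithmic interior growth, $\phi_3(t,0)\sim\epsilon^4(\log t)^2$; but the square of this $\log$-growing interior tail is no longer integrable-with-a-gain against the Duhamel kernel, so $\phi_4(t,0)\sim c\,\epsilon^8\,t^2(\log t)^4$ with $c>0$ on the symmetry axis. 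Exactly four levels are needed: at the third level $\partial_v\phi_3\to0$, while at the fourth $\partial_v\phi_4(t,0)=\tfrac12\partial_t\phi_4(t,0)\to+\infty$; this transition is the numerology encoded by the critical point $q_1=q_2=3$ of \eqref{Strauss null}. In particular $\partial_v\phi_4$ is unbounded.

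To conclude I would take the minus sign (the plus sign is treated analogously, arranging instead that $\partial_v\phi_4$ become large and negative in an interior region). Since $\partial_v\phi_4$ vanishes on $\{t=0\}$ and $\partial_v\phi_4(t,0)\to+\infty$, there is a first time $T=T(\epsilon)<\infty$ with $\partial_v\phi_4(T,0)=1$, and $T(\epsilon)\to\infty$ as $\epsilon\to0$. Near $t=0$ the coefficient $1-\partial_v\phi_4$ is close to $1$, so the $\eta$ equation is a well-posed wave equation — indeed its right-hand side $\tfrac{(\partial_v\phi_4)^2}{1-\partial_v\phi_4}$ is a fixed smooth function of the $\phi_i$ there — and $\eta$ exists smoothly for $t<T(\epsilon)$. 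But at $(T(\epsilon),0)$ the equation forces $0\cdot\Box\eta=(\partial_v\phi_4)^2=1$, which is impossible for any $\eta\in C^2$ near that point, since $(1-\partial_v\phi_4)\Box\eta\to0$ there while the right-hand side tends to $1$. So the maximal smooth solution has lifespan at most $T(\epsilon)<\infty$: no global smooth solution exists, although the data can be made arbitrarily small — disproving the weak null conjecture in the form stated in \cite{deng_global_2018}.

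The main obstacle is the sharp growth estimate for $\phi_4$: one needs two-sided control of a four-fold iterated Duhamel integral and, in particular, a genuine \emph{lower} bound $\phi_4(t,0)\gtrsim\epsilon^8 t^{2}$ toward $I^+$. This requires separating carefully the behaviour near null infinity (only poly-logarithmic growth — precisely the part the weak null condition detects) from the behaviour toward timelike infinity (polynomial growth), propagating the coupled $r$-weight and $u$-growth cleanly through all four levels, and verifying that positivity is not washed out by oscillation along the relevant cones. Secondary points are to confirm the local well-posedness and continuation criterion for the (quasilinear but $\eta$-decoupled) $\eta$ equation, so that "no global solution" rather than "no local solution" is the correct conclusion, and to supply the interior analysis of $\partial_r\phi_4$ for the plus-sign case.
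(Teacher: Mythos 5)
Your treatment of the third claim (the counterexample \eqref{counter example to WNC}) is essentially the paper's argument: verify the weak null condition through the asymptotic system (the paper merely asserts this; your reduced-system computation with polynomial-in-$s$ growth is a fine way to record it), obtain a superlinear lower bound for $\phi_4$ in the interior by pushing positivity of the retarded kernel through the four-level cascade (this is exactly \cref{phi growth}, built on iterated use of \cref{lower bound}), and then derive the contradiction $0=(\partial_v\phi_4)^2=1$ at a point where the coefficient $1\mp\partial_v\phi_4$ vanishes, which is the mechanism behind \cref{WNC cor}. One genuine difference is welcome: the paper converts growth of $\phi_4$ into pointwise growth of $\partial_v\phi_4$ via a polyhomogeneity/asymptotic-expansion argument (\cref{lemma asymptotic of phi4}, relying on Hintz's machinery plus an ODE computation excluding the exceptional profile), whereas you deduce unboundedness of $\partial_v\phi_4$ on the axis directly from $\phi_4(t,0)\gtrsim \epsilon^8 t^2$ by a mean-value argument ($\partial_v\phi_4(t,0)=\partial_t\phi_4(t,0)$ under spherical symmetry, and a bounded derivative would force linear growth); combined with continuity and the initial value $0$ this gives a point where $\partial_v\phi_4=1$, which is all the contradiction needs. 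That is a legitimate simplification, provided you state it as unboundedness (your claim $\partial_v\phi_4(t,0)\to+\infty$ is more than the mean-value argument yields, and more than you need). Two smaller inaccuracies do not affect the structure: $\phi_3$ does not grow like $(\log t)^2$ in the interior (it tends to a positive constant; the growth first appears at the fourth level, consistently with $\phi_2\sim t^{-1}$, $\phi_3\sim 1$, $\phi_4\sim t^2$ up to logarithms), and your claim that the plus sign can be handled "analogously" by arranging $\partial_v\phi_4$ large and negative is unsupported (all $\phi_i$ are nonnegative and growing); the paper, and the theorem as stated, only require that at least one choice of sign fails, which your minus-sign argument already delivers.

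The genuine gap is that the first two bullets of the theorem are part of the statement and are not proved by a one-line appeal to "the Kato-type argument as in \cite{keel_small_1999}". For \eqref{Strauss Glassey system} the paper needs, in addition to the averaged-quantity blow-up step, the quantitative tail-generation lower bounds (the seed $\partial_t\phi_1$ or $\phi_2$ produces $r\partial_t\phi_1\gtrsim t^{1-q_1}$ and then $\phi_2\gtrsim t^{-1}\jpns{t-r}^{-s_2}$), and a generalised Gronwall lemma (\cref{gronwall}) for the functionals $H_i(t)=\int\phi_i\,\d x$ whose exponent bookkeeping is precisely where the hypothesis $q_1<2$, $q_1q_2\big((q_2-2)+q_2(q_1-2)\big)<1$ enters. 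For \eqref{n variable Strauss} there is an additional combinatorial layer with no analogue in the scalar Kato argument: one must show that failure of \eqref{n variable Strauss condition} forces the iteratively defined exponents $s_i^{(m)}$ of \cref{iterative tail generation} to diverge along a \emph{cycle} of indices (this is the content of \cref{minmax convergence} and \cref{loop lemma}), and it is only along such a cycle that the Gronwall system closes. Without these ingredients — the lower-bound iteration, the cyclic structure, and the verification that the resulting ODE exponents satisfy $\sum_i\alpha_i\leq n$ exactly when the algebraic condition fails — the instability claims in the first two bullets remain unproved.
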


	\begin{theorem*}[Instability on the critical curve, \cref{failure of weak null condition}]
		In $\R^{3+1}$ the vacuum is unstable for \cref{critical problem without abs}, that is, for
		\begin{equation*}
			\begin{gathered}
				\Box\phi_1=\phi_3^3\\
				\Box\phi_2=(\partial_t\phi_1)^2\\
				\Box\phi_3=(\partial_t\phi_2)^2+\phi_1^3
			\end{gathered}
		\end{equation*}	
	\end{theorem*}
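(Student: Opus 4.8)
The plan is to exhibit, for every $\epsilon>0$, data of size $\epsilon$ whose local solution must leave $C^2$ in finite time; as with the other instability results of the paper the engine is Kato's positivity argument, here in the logarithmically corrected form appropriate to a critical exponent. Abbreviate the nonlinearities as $N_1=\phi_3^3$, $N_2=(\partial_t\phi_1)^2$, $N_3=(\partial_t\phi_2)^2+\phi_1^3$, and take $\phi_i(0,\cdot)=0$, $\partial_t\phi_i(0,\cdot)=\epsilon g_i$ with $g_i\geq 0$ smooth, supported in the unit ball and $\int g_i>0$. Since the forward fundamental solution of $\Box$ on $\R^{3+1}$ is a non-negative measure and the position data vanish, the Picard iteration $\Box\phi_i^{(n)}=N_i(\phi^{(n-1)})$ preserves non-negativity: if $\phi_1^{(n-1)},\phi_3^{(n-1)}\geq 0$ then all three sources are $\geq 0$ --- the two quadratic ones automatically, the two cubic ones because their arguments are --- so $\phi_i^{(n)}\geq 0$, and in the limit $\phi_1,\phi_2,\phi_3\geq 0$ on the maximal interval of existence. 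On that interval the system coincides with the one obtained by inserting absolute values in $N_1,N_3$, which is a multi-step variant --- with $\phi_2$ replacing $\phi_1$ in the derivative slot --- of the system \eqref{Strauss null} at the point $(q_1,q_2)=(3,3)$, i.e.\ exactly on its critical curve $q_1q_2=2q_2+3$. Finite speed of propagation confines $\phi_i(t,\cdot)$ to $\{|x|\leq t+1\}$.

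Next I would locate the obstruction using the heuristic of \cref{role of asymptotics}-\cref{finding critical exponents}. Near $\scri$ nothing goes wrong: the undifferentiated cubics $\phi_1^3,\phi_3^3$ decay one power of $r$ faster than the differentiated quadratics, the iterated radiation fields stabilise, and one recovers the hierarchical weak-null picture of \cref{comparison to weak null} --- $\phi_1\sim\epsilon/r$, $\phi_2\sim\epsilon^2\log r/r$, $\phi_3\sim\epsilon^4(\log r)^3/r$. The instability lives at $I^+$, and it is driven precisely by the terms the asymptotic system of the weak null condition discards: $\phi_2$ carries one more logarithm than $\phi_1$, so the pump $\phi_1\to\phi_2\to\phi_3$ through the derivative terms feeds extra logarithmic growth into $\phi_3$, and sending this back through the critical cubic around the loop $\phi_1\to\phi_3\to\phi_1$ amplifies the interior growth rate at each turn instead of reabsorbing it. This is exactly why the weak null condition does not detect the blow-up.

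To turn this into a proof I would run Kato's test-function method. Let $\psi_0>0$ solve $\Delta\psi_0=\psi_0$ on $\R^3$, so $\psi_0(x)\sim c|x|^{-1}e^{|x|}$, and set $F_i(t)=\int\phi_i(t,x)\,dx\geq 0$ and $G_i(t)=\int\phi_i(t,x)\psi_0(x)\,dx\geq 0$; the velocity data give $F_i\gtrsim\epsilon t$, $F_i'\gtrsim\epsilon$ and $G_i(t)\gtrsim\epsilon e^t$ for $t\geq 1$, while $F_i''=\int N_i\,dx$ and $G_i''-G_i=\int N_i\psi_0\,dx$. For the cubic terms, H\"older over $\{|x|\leq t+1\}$ gives $\int\phi_j^3\,dx\gtrsim F_j^3(1+t)^{-6}$ and $\int\phi_j^3\psi_0\,dx\gtrsim G_j^3(1+t)^{-2}e^{-2t}$. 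For the derivative terms I would combine Cauchy--Schwarz, $\int(\partial_t\phi_j)^2\,dx\geq (F_j')^2(1+t)^{-3}$, with a lower bound on $\int(\partial_t\phi_j)^2\,dx$ coming from the outgoing energy flux of the linear part; since $\partial_t\phi_j$ is not sign-definite even though $\phi_j\geq 0$, this step uses an integration by parts in $t$ in the style of John rather than pointwise positivity. Chaining these coupled differential inequalities once around the cycle $\phi_1\to\phi_2\to\phi_3\to\phi_1$, and carefully bookkeeping the logarithmic factors forced by the criticality, collapses the system to a single inequality for a combination $y$ of the $F_i$ and $G_i$ that has no global solution --- a logarithmically corrected Riccati blow-up of the kind used for the critical Strauss equation. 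Hence some $\phi_i$ blows up at a finite time $T_\epsilon$, necessarily with $T_\epsilon\to\infty$ as $\epsilon\to 0$, and since $\epsilon>0$ was arbitrary the vacuum is unstable.

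The main difficulty is the criticality. With $(q_1,q_2)=(3,3)$ exactly on the critical curve, every H\"older and Cauchy--Schwarz step above loses precisely the amount that separates global existence from blow-up, so a naive chaining of the inequalities merely closes rather than diverging; the argument has to be run at the level of the sharp, slowly (logarithmically) growing test functions, and one must verify that the net gain around the loop --- which comes entirely from the extra logarithms that the undifferentiated terms inject through the intermediary $\phi_2$ --- is strictly positive. Interlocked with this is the control of $(\partial_t\phi_1)^2$ and $(\partial_t\phi_2)^2$: lower bounds on their spatial integrals must be extracted from an energy-flux / integration-by-parts argument rather than from positivity, and this has to be done without spending the already razor-thin critical margin.
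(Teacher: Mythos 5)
Your reduction of the sign-free system to the absolute-value system via positivity of the forward kernel and non-negative data is exactly the paper's \cref{failure of weak null condition}, and your diagnosis that the obstruction lives at $I^+$ and is invisible to the asymptotic system is also the paper's. But the core of your argument -- Kato functionals $F_i(t)=\int\phi_i\,dx$, $G_i(t)=\int\phi_i\psi_0\,dx$ chained once around the loop $\phi_1\to\phi_2\to\phi_3\to\phi_1$ with a ``logarithmically corrected Riccati'' closure -- is precisely the part you do not carry out, and as sketched it does not close. With whole-ball H\"older the cubic terms lose $t^{-6}$ and the squares lose $t^{-3}$; since $F_j''\ge0$ only gives $F_j'\gtrsim\epsilon$, your bound $\int(\partial_t\phi_1)^2\,dx\ge (F_1')^2(1+t)^{-3}$ yields $F_2\gtrsim\epsilon^2 t$, then $\int(\partial_t\phi_2)^2\,dx\gtrsim\epsilon^4t^{-1}$, $F_3\gtrsim\epsilon^4 t\log t$, and feeding this back through $\phi_3^3$ gives $F_1''\gtrsim\epsilon^{12}t^{-3}\log^3 t$, which is integrable: the loop closes without diverging, exactly as you concede. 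The two steps you flag as ``the main difficulty'' -- a usable lower bound on the derivative-square terms without pointwise positivity, and a strictly positive net logarithmic gain around the critical loop -- are the entire proof, and no mechanism for either is supplied; the solution's concentration near the cone, which is where the logarithms are generated, is simply not seen by these spatially averaged functionals.

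The paper's proof sidesteps the critical-Kato difficulty altogether. After restricting to spherical symmetry and rescaling $\psi_i=r\phi_i$, the $1{+}1$-dimensional representation formula (\cref{l infinite theory}) shows that along outgoing null rays $\partial_t\psi_1$ is bounded below by its initial value plus non-negative forcing contributions, so the seed condition $(r\phi_1^{(1)}-\partial_r(r\phi_1^{(0)}))|_{r_0}>0$ gives $\partial_t\psi_1\gtrsim\epsilon$ on a null strip with no sign or flux argument needed; integrating $1/r$ along the strip gives $\partial_t\psi_2\gtrsim\epsilon^2\log v$ there, and then \cref{lower bound}-type positivity gives $\psi_3\gtrsim\log^2(t/R)$ in the interior region $t-r>1$. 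The derivative quadratics are used only for this seeding. The blow-up itself is run on the cubic subsystem alone, with the $1/r$-weighted functionals $H_i=\int\psi_i\,dr$ (so the H\"older loss is $t^{-4}$, not $t^{-6}$): $H_1''\gtrsim t^{-4}H_3^3$, $H_3''\gtrsim t^{-4}H_1^3$, and once $H_3$ is unbounded this system is no longer borderline -- a mean-value bootstrap gives linear growth and the generalised Gronwall lemma \cref{gronwall} forces finite-time blow-up of these averaged quantities, contradicting a global $\mathcal{C}^2$ solution. In short, the paper converts the critical problem into an effectively non-critical one by first manufacturing a growing interior tail, whereas your plan stays at the critical exponent and defers the decisive estimate to an unspecified sharp-test-function argument; as it stands this is a genuine gap rather than an alternative proof.
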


	A few remarks are in order.
	\begin{remark}
		All instability results are proved in spherical symmetry. We find conditions on smooth spherically symmetric initial data such that there cannot be global $\mathcal{C}^2$ solution. We think, that the instability is not related to spherical symmetry, moreover the generic behaviour (even if not for all data) should be finite time blow-up in all instability cases considered. For further details, see \cref{blow up results}.
	\end{remark}

	\begin{remark}
		The upper bound $1+\frac{4n}{n-2}$ in the first stability result is the same as in \cite{hidano_glassey_2012} and it appears because both they and us work at $H^2$ regularity. This is suboptimal, since the nonlinearity has more than 1 order of differentiability, therefore, we expect that the upper bound in both results can be improved using fractional Sobolev spaces or other low regularity methods.
	\end{remark}

	\begin{remark}(Weak null conjecture)
		The global ill-posedness of \cref{counter example to WNC} disproves Conjecture 1.1 of \cite{deng_global_2018}. However, we find the instability for \cref{critical problem without abs} to be much more striking. Indeed, ignoring cubic terms satisfies every possible notion of weak null condition as it is a system of inhomogeneous wave equations. In that sense, it is a new type of singular behaviour coming from the undifferentiated wave components and shows -despite the common belief- that cubic terms can be important for stability question in $\R^{3+1}$. Note also, that removing any one of the nonlinear terms gives global solutions. Without $\psi_3^3$ or $(\partial_t\psi_1)^2$, the system decouples to linear waves. Removing $(\partial_t\phi_2)^2$ it decouples to a linear equation and a system of type \cref{strauss system}. Removing $\phi_1^3$ gives a system similar to \cref{strauss glassey system}, for which the exponents are away from criticality, thus we expect global solutions.
	\end{remark}
	
	\begin{remark}[Failure of heuristics]
	To the author's knowledge, there is no known result in the literature that falsifies the heuristics given in \cref{role of asymptotics}. The only similar statement is the one from the work \cite{keel_small_1999} studying the critical exponent $p=1+2/n$ for nonlinear KG equation. The authors say \enquote{it is likely that one has blow-up for $q=1+2/n+\epsilon$ for sufficiently high $n$}. Note, that by blow-up, one probably means that there is a function space $X$ where the equation is well-posed, but there is no $\delta>0$ such that $\norm{\phi_0}_X<\delta\implies \text{global solution}$. Therefore, in light of \cite{lindblad_sharp_1993}, a suitable setting to test the stability for this problem is under spherical symmetry (as in \cite{hidano_glassey_2012}).
	\end{remark}

	\begin{remark}[Quasi linear equations]
		The advantage of using the weak null condition is that it is also applicable to quasi linear equations. In particular, one may use it to understand stability of $\Box\phi=\phi\Delta\phi$ from modification to characteristics of the flow and instability of $\Box \phi=\partial_t\Delta\phi$ resulting from a Burger's type behaviour. In both cases, the non-linearities change the characteristics of the flow, and create non-perturbative effects in the sense that solutions will not scatter linearly. Such effects cannot be captured by the analysis presented in this paper, however, note that even in the significant work \cite{keir_weak_2018}, Keir separates the problem of quasi-linear behaviour from the semilinear part. 
	\end{remark}

	\begin{remark}
		Throughout this paper, we work with compactly supported initial data, but the heuristics from \cref{role of asymptotics} and \cref{finding critical exponents} suggest optimal conditions on the fall-off for the initial data. For more details see \cref{initial data critical exponent}.
	\end{remark}
	\subsubsection*{Notation and coordinates}\label{notation}
	Throughout the paper we will use $A\lesssim_{a,b,...,c} B$ to denote the existence of a constant $C$ depending on $a,b,...,c$ such that $A\leq CB$. Furthermore, the dependence on parameters that are fixed in the statement to be proven, such as dimension or coefficients in an equation, will be treated implicitly. We will also use $A\sim B$ notation to mean $A\lesssim B$ and $B\lesssim A$.
	
	We will work exclusively on Minkowski spacetime $\R^{n+1}$ with coordinates $t,x_1,...x_n$. We introduce the (usual) radial, advanced/retarded time functions 
	\begin{equation*}
		\begin{gathered}
			r^2=x_1^2+...+x_n^2,\,v=\frac{t+r}{2},\, u=\frac{t-r}{2}.
		\end{gathered}
	\end{equation*}
	\subsubsection*{Outlook}
	The rest of the paper contains 3 main parts. In section \cref{role of asymptotics} we are going to describe the asymptotics of linear wave in Minkowski spacetime, understand how \textit{tails} are generated and give a more precise definition for outgoing and incoming radiation. Afterwards, in \cref{finding critical exponents} we will show how one can recover all the critical exponents discussed in \cref{zoology} using the idea of tail generation. Finally, in the following chapters, we are going to develop optimal decay techniques and prove the theorems in \cref{main results}.
	
	\paragraph{\textbf{Acknowledgement:}} The author would like to thank Claude Warnick, Leonhard Kehrberger and Jason Joykutty for  many helpful discussions on both the technical and conceptual parts of the work. This project was founded by EPSRC.

	\section{The role of asymptotics}\label{role of asymptotics}
	The presentation in this section draws heavily from the linear wave part of \cite{hintz_stability_2020}, presenting the material in a much simplified context.

	In this section, we will give an overview of linear wave propagation in settings of increasing difficulty, with a particular focus on the leading order asymptotic behaviour of the solution. Afterward we present some of the most used estimates in the literature and discuss their optimality with respect to the expected behaviour. We are mainly interested in the case of generic decay of the inhomogeneity, but we make extra remarks how the situation can differ in certain special case, eg. generate logarithmic difference. We wish to stress the following points:
	\begin{itemize}
		\item the role and relationship between \textit{incoming} and \textit{outgoing} radiation, see \cref{intuition from 1+1}. This is a related observation to the way the Newman-Penrose constant is responsible for the creation of tails, see \cite{angelopoulos_late-time_2018}, \cite{gajic_relation_2022}.
		\item the generation of above mentioned radiation by inhomogeneity (=non-linearity). This effect is the one described in \cite{luk_tale_2021}.
		\item the similarity across \textit{all} dimensions for global existence problems. In particular, the tail creation from resonances (\cref{even odd dimensions}) and exceptional cancellations (\cref{exceptional cancellation}) that are relevant in the work of \cite{luk_tale_2021} do not show up in most problems considered here. However, note that in one of the equations studied in this paper, we do observe the importance of this effect \cref{importance of dimension in even case}. To the author's knowledge, this is the first equation where this phenomena is relevant for global existence.
	\end{itemize}
	
	\subsection{Intuition from $\R^{1+1}$}\label{intuition from 1+1}

	Consider the wave equation in the simplest possible setting \footnote{We impose "radial" symmetry to have only 1 asymptotic end, as in higher dimensions.}
	\begin{equation}\label{1+1 linear wave}
		\begin{gathered}
			\Box\phi=(\partial_t^2-\partial_r^2)=F\\
			\phi(0)=\phi_0,\partial_t\phi(0)=\phi_1:\R\to\R.\\
			\phi(t,r)=\phi(t,-r),\, F(t,r)=F(t,-r.)
		\end{gathered}
	\end{equation}
	
	People often represent the above spacetime on Penrose diagram as on \cref{fig: relativist}, for details see eg. \cite{Dafermos2008}, \cite{wald_general_1984}. This corresponds to using coordinate $2T=\tan^{-1}(t+r)+\tan^{-1}(t-r)\,,$ $2R=\tan^{-1}(t+r)-\tan^{-1}(t-r)$ and is a conformal compactification of Minkowski spacetime with conformal factor $\frac{1}{\cos^2(T+R)\cos^2(T-R)}$. The main advantage of the conformal point of view is that it preserves the angle of null geodesics, thus clearly showing the causal relations. This in turn helps understand the propagation of singularities \cite{baskin_asymptotics_2018}, obstruction to decay via unstable \cite{Dafermos2008} or stable trapping \cite{benomio_stable_2021} and many more phenomena related to the high frequency behaviour of waves.  It also highlights the asymptotic region of waves propagating in Minkowski space (see below).
	
	An alternative compact visualisation of Minkowski space is via the approach of \cite{baskin_asymptotics_2018} shown on \cref{fig: analyst}. In this setting, one first radially compactifies Minkowski (attaching a ball at $\abs{t}+\abs{x}=\infty$) and than blows up (zooms in) on the geometrically important part, which is the light cone in case of Minkowski. 
	
	This alternative view highlights all important parts of \textit{infinity}, in the sense, that solutions to the equation we are interested in will be  \textit{smooth} with respect to a geometric choice of vector fields, in particular their behaviour close to the boundary (late time in case of future boundary) is given by an asymptotic expansion. This notion of smoothness is called polyhomogeneity. We give two results from the literature to highlight the generality of this approach. Let's use  coordinates $\rho=x/t$ on $I^+$ (as shown on \cref{fig: analyst}) and $t$ we work in the region $\abs{x/t}\in[1/4,3/4],t\to\infty$. 
	\begin{itemize}
		\item The spherically symmetric linear wave equation on a $3+1$ dimensional black hole backgrounds have asymptotics 
		\begin{equation*}
			\begin{gathered}
				\bigg(1+\sum_{j=1}^k\big(\frac{1-\rho}{1+\rho}\big)^j\bigg)\frac{1}{(1-\rho)^{k+1}(1+\rho)t^{k+1}}
			\end{gathered}
		\end{equation*}
		where $k$ depends on the number of vanishing Newman-Penrose constants. Furthermore, derivatives normal to $I^+$ yield additional decay. Similar results hold for higher $l$ modes. (\cite{angelopoulos_late-time_2018}).
		\item Solutions to the wave equation on spacetimes that are asymptotically Minkowski have conormal/polyhomogeneous solutions if the initial data is of respective type (\cite{baskin_asymptotics_2018}). For definitions of conormal and polyhomogeneous, see \cite{baskin_asymptotics_2018}. The first refers to functions that decay faster when differentiated towards the conformal boundary, while the latter says that it has a certain expansion near the boundary. 
	\end{itemize}
	
	Finally, let us mention that the Klein Gordon equation also has a \textit{nice} expansion, but this is only apparent up to an oscillatory phase, ie. it is an oscillating factor times something polyhomogeneous. In particular, the homogeneous linear Klein-Gordon equation has leading order asymptotics 
	\begin{equation*}
		\begin{gathered}
			e^{it\sqrt{1-\rho^2}}t^{-n/2}(1-\rho^2)^{-(n+2)/2}\hat{\phi}(-\frac{\rho}{\sqrt{1-\rho^2}}),
		\end{gathered}
	\end{equation*}
	where $\hat{\phi}$ is related to the Fourier transform of the initial data, see \cite{hormander_lectures_1997}). Importantly, $\partial_t\phi$ will have the same rate in $\{r\leq 1\}$ of as $\phi$, thus we expect no difference between the critical exponent for $(\Box-1)\phi=\abs{\phi}^q$ and $(\Box-1)\phi=\abs{\partial_t\phi}^q$.
	
	\begin{figure}[h]
		\centering
		\subfloat[\centering Relativist's Minkowski diagram\label{fig: relativist}]{\resizebox{.3\textwidth}{!}{
\begingroup%
  \makeatletter%
  \providecommand\color[2][]{%
    \errmessage{(Inkscape) Color is used for the text in Inkscape, but the package 'color.sty' is not loaded}%
    \renewcommand\color[2][]{}%
  }%
  \providecommand\transparent[1]{%
    \errmessage{(Inkscape) Transparency is used (non-zero) for the text in Inkscape, but the package 'transparent.sty' is not loaded}%
    \renewcommand\transparent[1]{}%
  }%
  \providecommand\rotatebox[2]{#2}%
  \newcommand*\fsize{\dimexpr\f@size pt\relax}%
  \newcommand*\lineheight[1]{\fontsize{\fsize}{#1\fsize}\selectfont}%
  \ifx\svgwidth\undefined%
    \setlength{\unitlength}{226.77165354bp}%
    \ifx\svgscale\undefined%
      \relax%
    \else%
      \setlength{\unitlength}{\unitlength * \real{\svgscale}}%
    \fi%
  \else%
    \setlength{\unitlength}{\svgwidth}%
  \fi%
  \global\let\svgwidth\undefined%
  \global\let\svgscale\undefined%
  \makeatother%
  \begin{picture}(1,1)%
    \lineheight{1}%
    \setlength\tabcolsep{0pt}%
    \put(0,0){\includegraphics[width=\unitlength,page=1]{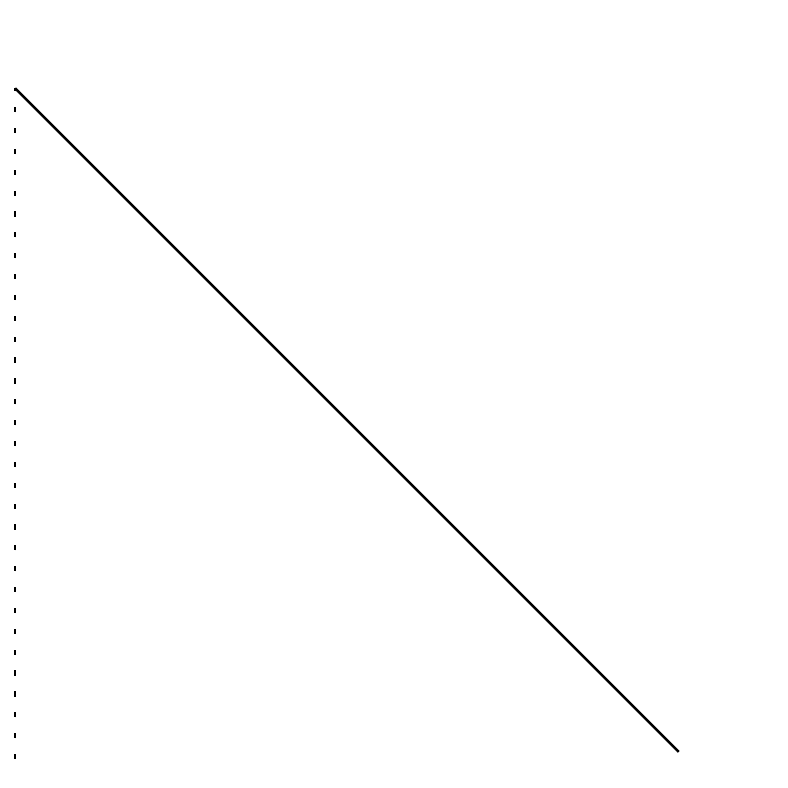}}%
    \put(0,0.92850527){\color[rgb]{0,0,0}\makebox(0,0)[lt]{\lineheight{1.25}\smash{\begin{tabular}[t]{l}$i^+$\end{tabular}}}}%
    \put(0.3829469,0.58201494){\color[rgb]{0,0,0}\makebox(0,0)[lt]{\lineheight{1.25}\smash{\begin{tabular}[t]{l}$\scri$\end{tabular}}}}%
    \put(0.89264297,0.05838833){\color[rgb]{0,0,0}\makebox(0,0)[lt]{\lineheight{1.25}\smash{\begin{tabular}[t]{l}$i^0$\end{tabular}}}}%
  \end{picture}%
\endgroup%
}}
		\qquad
		\subfloat[\centering Analyst's Minkowski diagram\label{fig: analyst}]{\resizebox{.3\textwidth}{!}{
\begingroup%
  \makeatletter%
  \providecommand\color[2][]{%
    \errmessage{(Inkscape) Color is used for the text in Inkscape, but the package 'color.sty' is not loaded}%
    \renewcommand\color[2][]{}%
  }%
  \providecommand\transparent[1]{%
    \errmessage{(Inkscape) Transparency is used (non-zero) for the text in Inkscape, but the package 'transparent.sty' is not loaded}%
    \renewcommand\transparent[1]{}%
  }%
  \providecommand\rotatebox[2]{#2}%
  \newcommand*\fsize{\dimexpr\f@size pt\relax}%
  \newcommand*\lineheight[1]{\fontsize{\fsize}{#1\fsize}\selectfont}%
  \ifx\svgwidth\undefined%
    \setlength{\unitlength}{226.77165354bp}%
    \ifx\svgscale\undefined%
      \relax%
    \else%
      \setlength{\unitlength}{\unitlength * \real{\svgscale}}%
    \fi%
  \else%
    \setlength{\unitlength}{\svgwidth}%
  \fi%
  \global\let\svgwidth\undefined%
  \global\let\svgscale\undefined%
  \makeatother%
  \begin{picture}(1,1)%
    \lineheight{1}%
    \setlength\tabcolsep{0pt}%
    \put(0,0){\includegraphics[width=\unitlength,page=1]{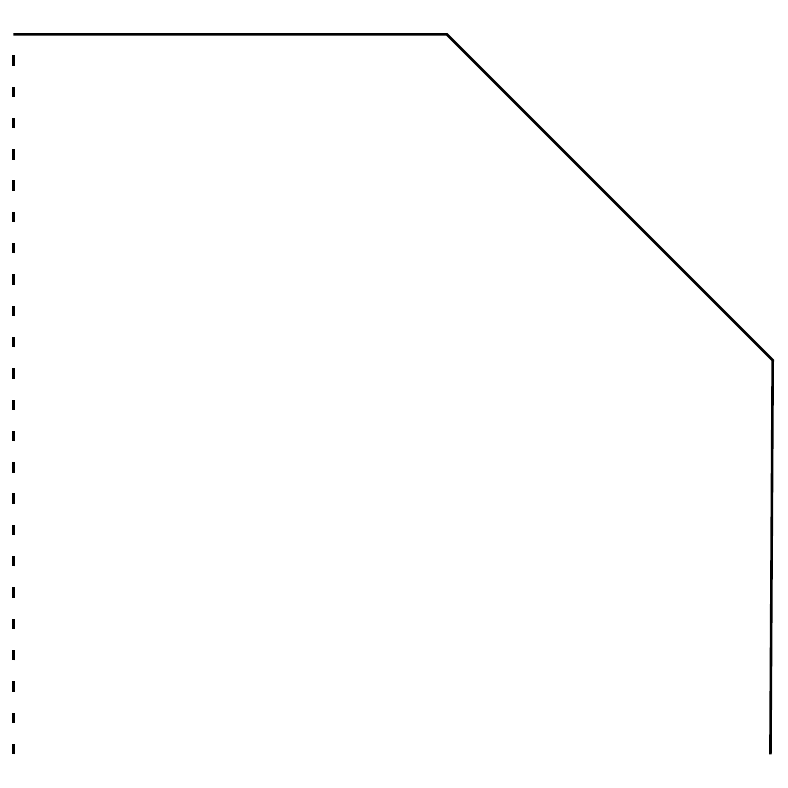}}%
    \put(0.1332356,0.90019854){\color[rgb]{0,0,0}\makebox(0,0)[lt]{\lineheight{1.25}\smash{\begin{tabular}[t]{l}$I^+$\end{tabular}}}}%
    \put(0.73808985,0.78582783){\color[rgb]{0,0,0}\makebox(0,0)[lt]{\lineheight{1.25}\smash{\begin{tabular}[t]{l}$\scri$\end{tabular}}}}%
    \put(0.86392202,0.28139229){\color[rgb]{0,0,0}\makebox(0,0)[lt]{\lineheight{1.25}\smash{\begin{tabular}[t]{l}$I^0$\end{tabular}}}}%
  \end{picture}%
\endgroup%
}}
		\label{fig:diagram comparison}
	\end{figure}
	
	An important part of the wave solution is the radiation field $\psi(u)=\lim_{t\to\infty}\phi|_{t-r=u}$ (for higher dimensions we have to rescale by $t^{\frac{n-1}{2}}$) which has non-trivial limit for non-zero solution. Due to the strong Huygens' principle, \eqref{linear equation} with $F=0,\,\supp\{\phi_0,\phi_1\}\subset \{r<1\}$ has solution with $\supp\phi\subset \{-1\leq t-r\leq 1\}$, in particular $\supp\psi\subset(-1,1)$. Similarly, using the exact solution ($\phi=f(t-r)+g(t+r)$ for some $f,g$) we have $\supp\partial_v\phi|_{t=0}\subset\{r<1\}\implies \psi|_{u>2}=0$. Indeed, one can interpret $\partial_u\phi$ as \textit{outgoing radiation}, while $\partial_v\phi$ \textit{incoming} because the wave operator is simply a transport equation for these two quantities along null geodesics (see \cref{fig: radiation compact}).\footnote{The result of Yang (\cite{yang_global_2013}) can thus be interpreted as large data solution with largeness coming from outgoing radiation} Due to radial symmetry, incoming waves turn to outgoing waves as they pass through the origin (same happens in higher dimensions without radial symmetry, as there's only one asymptotic end). Therefore, if $\partial_v\phi$ doesn't vanish in a neighbourhood of $\scri$, it will generate outgoing radiation: $\partial_v\phi|_{u\in[u_0,u_1]}\sim v^{-q}\implies\psi\sim u^{1-q}$ (see \cref{fig: radiation tail}). The latter is frequently called a \textit{tail} (\cite{angelopoulos_asymptotics_2018}).

	\begin{figure}[h]
		\centering{
		\subfloat[\centering Support of the homogeneous solution from initial data supported on $\Sigma$. a) and c) represent outgoing part ($\partial_u\phi=0$), b) ingoing ($\partial_v\phi=0$) part of the solution\label{fig: radiation compact}]{\resizebox{.4\textwidth}{!}{
\begingroup%
  \makeatletter%
  \providecommand\color[2][]{%
    \errmessage{(Inkscape) Color is used for the text in Inkscape, but the package 'color.sty' is not loaded}%
    \renewcommand\color[2][]{}%
  }%
  \providecommand\transparent[1]{%
    \errmessage{(Inkscape) Transparency is used (non-zero) for the text in Inkscape, but the package 'transparent.sty' is not loaded}%
    \renewcommand\transparent[1]{}%
  }%
  \providecommand\rotatebox[2]{#2}%
  \newcommand*\fsize{\dimexpr\f@size pt\relax}%
  \newcommand*\lineheight[1]{\fontsize{\fsize}{#1\fsize}\selectfont}%
  \ifx\svgwidth\undefined%
    \setlength{\unitlength}{226.77165354bp}%
    \ifx\svgscale\undefined%
      \relax%
    \else%
      \setlength{\unitlength}{\unitlength * \real{\svgscale}}%
    \fi%
  \else%
    \setlength{\unitlength}{\svgwidth}%
  \fi%
  \global\let\svgwidth\undefined%
  \global\let\svgscale\undefined%
  \makeatother%
  \begin{picture}(1,1)%
    \lineheight{1}%
    \setlength\tabcolsep{0pt}%
    \put(0,0){\includegraphics[width=\unitlength,page=1]{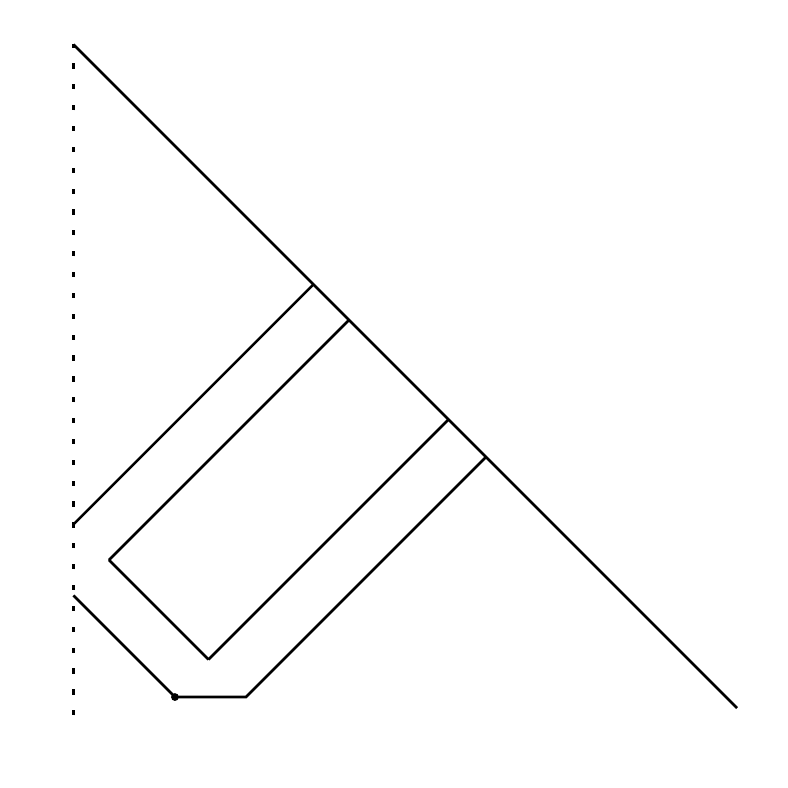}}%
    \put(0.35522587,0.20761113){\color[rgb]{0,0,0}\makebox(0,0)[lt]{\lineheight{1.25}\smash{\begin{tabular}[t]{l}a)\end{tabular}}}}%
    \put(0.15556669,0.22005229){\color[rgb]{0,0,0}\makebox(0,0)[lt]{\lineheight{1.25}\smash{\begin{tabular}[t]{l}b)\end{tabular}}}}%
    \put(0.15751486,0.37933373){\color[rgb]{0,0,0}\makebox(0,0)[lt]{\lineheight{1.25}\smash{\begin{tabular}[t]{l}c)\end{tabular}}}}%
    \put(0.24564855,0.07282658){\color[rgb]{0,0,0}\makebox(0,0)[lt]{\lineheight{1.25}\smash{\begin{tabular}[t]{l}$\Sigma$\end{tabular}}}}%
  \end{picture}%
\endgroup%
}}
		\qquad
		\subfloat[\centering Initial data supported on $\Sigma$ with fall-off $v^{-q}$ will have a tail $u^{-q}$ on $\scri$ \label{fig: radiation tail}]{\resizebox{.4\textwidth}{!}{
\begingroup%
  \makeatletter%
  \providecommand\color[2][]{%
    \errmessage{(Inkscape) Color is used for the text in Inkscape, but the package 'color.sty' is not loaded}%
    \renewcommand\color[2][]{}%
  }%
  \providecommand\transparent[1]{%
    \errmessage{(Inkscape) Transparency is used (non-zero) for the text in Inkscape, but the package 'transparent.sty' is not loaded}%
    \renewcommand\transparent[1]{}%
  }%
  \providecommand\rotatebox[2]{#2}%
  \newcommand*\fsize{\dimexpr\f@size pt\relax}%
  \newcommand*\lineheight[1]{\fontsize{\fsize}{#1\fsize}\selectfont}%
  \ifx\svgwidth\undefined%
    \setlength{\unitlength}{226.77165354bp}%
    \ifx\svgscale\undefined%
      \relax%
    \else%
      \setlength{\unitlength}{\unitlength * \real{\svgscale}}%
    \fi%
  \else%
    \setlength{\unitlength}{\svgwidth}%
  \fi%
  \global\let\svgwidth\undefined%
  \global\let\svgscale\undefined%
  \makeatother%
  \begin{picture}(1,1)%
    \lineheight{1}%
    \setlength\tabcolsep{0pt}%
    \put(0,0){\includegraphics[width=\unitlength,page=1]{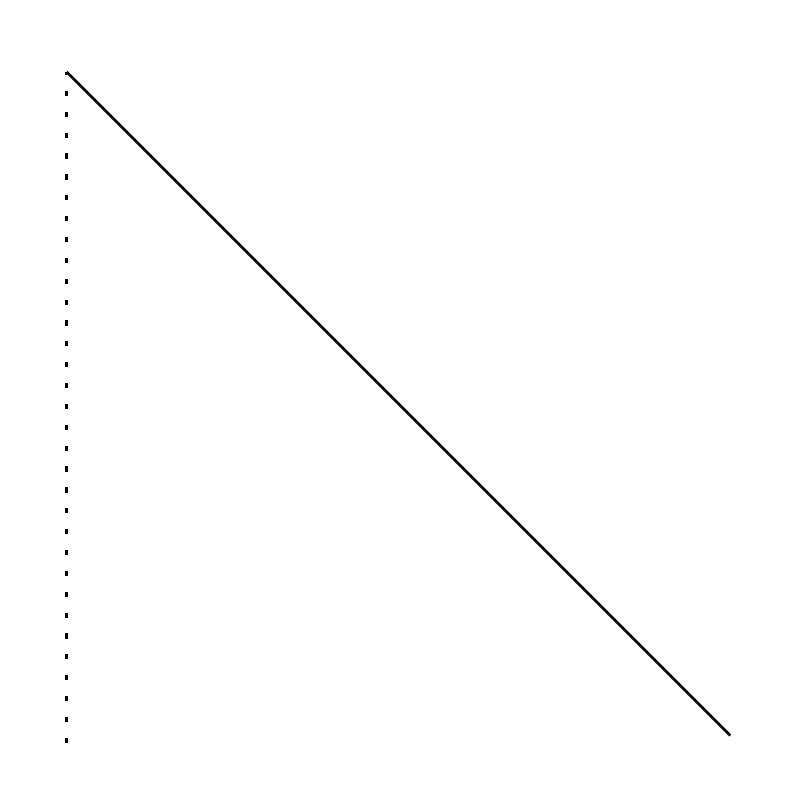}}%
    \put(0.19233691,0.11744938){\color[rgb]{0,0,0}\makebox(0,0)[lt]{\lineheight{1.25}\smash{\begin{tabular}[t]{l}$\Sigma$\end{tabular}}}}%
    \put(0,0){\includegraphics[width=\unitlength,page=2]{in-outgoing_tail.pdf}}%
    \put(0.25970552,0.22582192){\color[rgb]{0,0,0}\makebox(0,0)[lt]{\lineheight{1.25}\smash{\begin{tabular}[t]{l}$v=v_0$\end{tabular}}}}%
    \put(0.4559464,0.30293832){\color[rgb]{0,0,0}\makebox(0,0)[lt]{\lineheight{1.25}\smash{\begin{tabular}[t]{l}$v=2v_0$\end{tabular}}}}%
    \put(0.02939896,0.75448155){\color[rgb]{0,0,0}\makebox(0,0)[lt]{\lineheight{1.25}\smash{\begin{tabular}[t]{l}$u=2v_0$\end{tabular}}}}%
    \put(0.14175582,0.64656303){\color[rgb]{0,0,0}\makebox(0,0)[lt]{\lineheight{1.25}\smash{\begin{tabular}[t]{l}$u=v_0$\end{tabular}}}}%
  \end{picture}%
\endgroup%
}}

		\label{fig:incoming and outgoing waves}
		}
	\end{figure}

	In non-linear applications, it's more important to study the formation of tails from inhomogeneities ($F\neq0$). We study these under the assumption that the support of $F$ is in one of the four regions shown in \cref{fig:formation of tails}. 
	
	Using Huygens' principle, we have $\supp F \subset(\mathcal{N}_{\text{comp}})\implies\supp\psi$ is compact. For $F$ supported on a \textit{compact} neighbourhood of  $\scri$ ($\supp F\subset \mathcal{N}_\scri$), say $F|_{\mathcal{N}_\scri}\sim v^{-q}$, $q>1$ we have an indirect creation of tails. First $F$ generates incoming radiation because 
	\begin{equation*}
		\begin{gathered}
			\partial_u\partial_v\phi=F
			\implies \lim_{v\to\infty}r^q\partial_v\phi(v,u_2)=\lim_{v\to\infty}\Big(r^q\partial_v\phi(v,u_1)+\int_{u_1}^{u_2}\d u r^qF(v,u)\Big)
		\end{gathered}
	\end{equation*}
	which implies that for large retarded times $\partial_v\phi\sim v^{-q}\int\d u (r^qF)|_{\scri}$. Note, that the vanishing of $\int\d u (r^qF)$ gives a cancellation, but this is a codimension 1 requirement, ie. non-generic. This in turn \textit{reflects} off the origin to outgoing $\psi\sim u^{1-q}\int\d u (r^qF)|_{\scri}$. 
	\begin{remark}[Creation of logarithm near $\scri$.]\label{log from scri}
		The same analysis works for $q<1$, but in this case, the correct quantity to look at near $\scri$ is $\lim_{v\to\infty} r^{1-q}\phi(v,u)$, because $\phi|_{\mathcal{N}_\scri}\sim r^{1-q}$ grows toward $\scri$. This new rescaled radiation will be some function on $\scri$, which in general will tend to a nonzero limit as $u\to\infty$ and gives the leading order behaviour at timelike infinity $\phi|_{\mathcal{N}_+}\sim t^q$. The case $q=1$ will be different from the rest, because the leading behaviour is $\phi|_{\mathcal{N}_\scri}\sim \log(r)$, however there is no logarithm near $I^+$ and we get  $\phi|_{\mathcal{N}_+}\sim \log(r/t)$.
	\end{remark}
	
	For $F$ supported near timelike infinity, $F$ will directly generate the outgoing radiation $F_{\mathcal{N}_+}\sim t^{-q}\implies \psi\sim u^{2-q}$. This follows from the explicit solution, but one can understand it as in \cite{baskin_asymptotics_2018}: $\Box$ restricted to $I^+$ is the Laplace operator on 1 dimensional hyperbolic space times $1/t^2$ plus faster decaying terms. Inverting the leading term gives the asymptotic behaviour of $\phi$. 
	
	\begin{remark}[Creation of logarithm near $\scri\cap I^+$]\label{log from corner}
	There is a special behaviour of $F$ that leads to additional logarithms in the asymptotic of $\phi$ at $I^+$. Fix $F$ supported in a compact region of $\scri\cap I^+$ with leading order behaviour $F\sim v^{-q}u^{-1},\,q>0$. The exact solution yield $\phi|_{\mathcal{N}_+}\sim\log(t)t^{1-q}$.
	\end{remark}
	
	We won't detail the effect of $F$ supported near $I^0$, as this will not play a role in the present section due to finite speed of propagation and compact data\footnote{In systems with multiple speeds, this effect will be important for compactly supported data too.}. For details see \cite{hintz_stability_2020}. 
	
	If $F$ is conormal or it has a polyhomogeneous expansion (\cite{baskin_asymptotics_2018} for definitions), than it's immediate from the exact solution, that $\phi$ will be so, and each term can be found as described in the previous paragraphs. However, we are only interested in leading order decay not the full expansion. The former is relatively stable under perturbations of the background \footnote{which explains why the Strauss exponent \cref{Strauss} is insensitive to changes of the background} while latter will depend on the fine structure of spacetime, not only the leading order \textit{b-structure}.

	\begin{remark}[Exceptional cancellation $\R^{1+1}$]
		If $\phi$ has initial data on a cone tending towards $\scri$ (\cref{fig: radiation tail}) with fall off $r^{-p},\,p=0$, than $r\partial_v\phi$ is lower order in decay, ie. the incoming radiation is weaker than for general $p$. Indeed, solving the problem with no lower order decaying components, one finds instead of $t^{-0}$ decay for $\phi$, it has compact in $u$ support. As we've seen, an initial data fall of is generated by one faster decaying forcing. However, for $F\sim r^{-1}$, we don't get vanishing tail, instead $\phi|_{\mathcal{N}_+}\sim \log\frac{r}{t}$.
	\end{remark}
	
	\begin{figure}[h]
		\centering{
			\resizebox{.4\textwidth}{!}{
\begingroup%
  \makeatletter%
  \providecommand\color[2][]{%
    \errmessage{(Inkscape) Color is used for the text in Inkscape, but the package 'color.sty' is not loaded}%
    \renewcommand\color[2][]{}%
  }%
  \providecommand\transparent[1]{%
    \errmessage{(Inkscape) Transparency is used (non-zero) for the text in Inkscape, but the package 'transparent.sty' is not loaded}%
    \renewcommand\transparent[1]{}%
  }%
  \providecommand\rotatebox[2]{#2}%
  \newcommand*\fsize{\dimexpr\f@size pt\relax}%
  \newcommand*\lineheight[1]{\fontsize{\fsize}{#1\fsize}\selectfont}%
  \ifx\svgwidth\undefined%
    \setlength{\unitlength}{226.77165354bp}%
    \ifx\svgscale\undefined%
      \relax%
    \else%
      \setlength{\unitlength}{\unitlength * \real{\svgscale}}%
    \fi%
  \else%
    \setlength{\unitlength}{\svgwidth}%
  \fi%
  \global\let\svgwidth\undefined%
  \global\let\svgscale\undefined%
  \makeatother%
  \begin{picture}(1,1)%
    \lineheight{1}%
    \setlength\tabcolsep{0pt}%
    \put(0,0){\includegraphics[width=\unitlength,page=1]{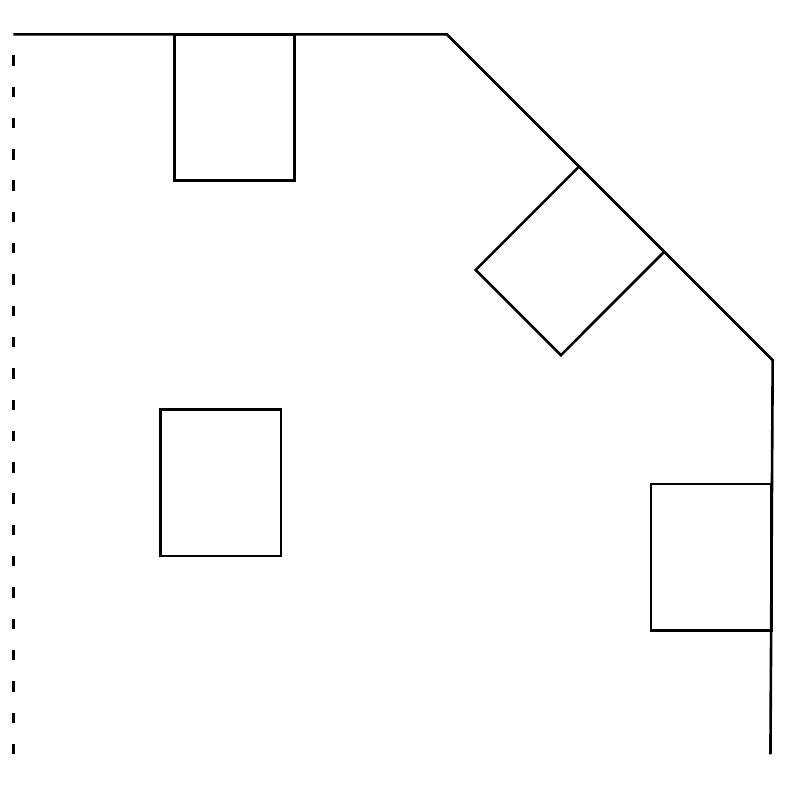}}%
    \put(0.23581321,0.37462602){\color[rgb]{0,0,0}\makebox(0,0)[lt]{\lineheight{1.25}\smash{\begin{tabular}[t]{l}$\mathcal{N}_{\text{comp}}$\end{tabular}}}}%
    \put(0.872264,0.28016562){\color[rgb]{0,0,0}\makebox(0,0)[lt]{\lineheight{1.25}\smash{\begin{tabular}[t]{l}$\mathcal{N}_0$\end{tabular}}}}%
    \put(0.68694949,0.68364024){\color[rgb]{0,0,0}\makebox(0,0)[lt]{\lineheight{1.25}\smash{\begin{tabular}[t]{l}$\mathcal{N}_\scri$\end{tabular}}}}%
    \put(0.26452713,0.87737195){\color[rgb]{0,0,0}\makebox(0,0)[lt]{\lineheight{1.25}\smash{\begin{tabular}[t]{l}$\mathcal{N}_+$\end{tabular}}}}%
  \end{picture}%
\endgroup%
}
			\caption{Asymptotic regions in Minkowski,  $\mathcal{N}_0=\{t/r\in[1/4,3/4],r>1\},\mathcal{N}_\scri=\{t-r\in[1,2],t>1\},\mathcal{N}_+=\{r/t\in[1/4,3/4],t>1\},IV=\{t,r\in[1,2]\}$}
			\label{fig:formation of tails}
		}
	\end{figure}

	\subsection{Higher dimensions}
	The conclusions of the $\R^{1+1}$ case more or less generalise to higher dimensions, but the techniques are very different, see \cite{baskin_asymptotics_2018}. 	If $F$ is supported in region $\mathcal{N}_\scri$ creation of incoming radiation is still the same.  The field $\psi=t^{(n-1)/2}\phi$ at leading order (in decay) satisfies an equation $\partial_u\partial_v\psi=t^{(n-1)/2}F+\frac{1}{t^2}\slashed{\Delta}\psi$ near $\scri$. In particular, the $\slashed{\Delta}$ part is simply a perturbation in this region, thus $\partial_v\psi$ will behave similarly as in the $\R^{1+1}$ case. The \textit{reflection} from the origin however changes significantly. In particular, even for compactly supported initial data (or inhomogeneity in $\mathcal{N}_{\text{comp}}$) there may be non-trivial tails present in $\psi$. In $\R^{n+1}$ with $n$ even, one can read this off from the Green's function, but for more general asymptotically flat metrics, the  resonances of the Laplacian associated to the rescaled metric restricted to timelike infinity (called normal operator) give non-trivial tails.
	
	\begin{remark}[Even and odd dimensions]\label{even odd dimensions}
		As shown in \cite{baskin_asymptotics_2018}, a way to understand the failure of strong Huygens' principle in even dimensions is the existence of resonances (of the above mentioned normal operator) in even dimensional hyperbolic space, while in odd dimensions no such resonances exist. Indeed, for generic compactly supported data, we have $\psi\sim u^{-(n-1)/2}$ in $\R^{n+1}$ $n$ even. This also follows from the Green's function, see eg. \cite{evans_partial_2010} Section 2.4. In most cases, this does not play a role (indeed we will ignore this contribution) as this provides a rather fast decay, but note \cref{importance of dimension in even case}. Note, that this is consistent with the fact that higher even dimensions can be reduced to the $n=2$ high angular mode case.
	\end{remark}
	
	Under radial symmetry in exact odd dimensional Minkowski space, one can use exact representation of 1D solution (see \cite{evans_partial_2010}) to get the leading order decays for inhomogeneous problems from:
	\begin{equation}\label{exact formula in n dim}
		\begin{gathered}
			(\partial_t^2-\partial_r^2-\frac{n-1}{r}\partial_r)\phi=F\implies\\ (\partial_t^2-\partial_r^2)\bigg(\big(\frac{1}{r}\partial_r\big)^{\frac{n-3}{2}}(r^{2k-1}\phi)\bigg)=\big(\frac{1}{r}\partial_r\big)^{(n-3)/2}(r^{n-2}F).
		\end{gathered}
	\end{equation}
	Similar result holds for even dimensions by reducing to the 2D case. The conclusion is that the results from the previous section extend. All this is summarised in \cref{table:decays}.

	\begin{table}
		\begin{center}
			\begin{tabular}{|c|c|c|c|}\hline
				$\alpha$ & $\psi|_{\mathcal{N}_0}$ & $\psi|_{\mathcal{N}_\scri}$ & $\psi|_{\mathcal{N}_+}$  \\ \hline
				$F|_{\mathcal{N}_0}\sim v^{-q}$ & $q-\frac{n-1}{2}-2$  & $\min(0,q-\frac{n-1}{2}-2)$& $q-\frac{n-1}{2}-2$ \\ \hline
				$F|_{\mathcal{N}_\scri}\sim v^{-q}$ &  no support & $\min(0,q-\frac{n-1}{2}-1)$ & $q-\frac{n-1}{2}-1$ \\ \hline
				$F|_{\mathcal{N}_+}\sim v^{-q}$ & no support & $0$ & $q-\frac{n-1}{2}-2$ \\ \hline
				$\supp(F)\subset IV$ & no support & $0$ & $\infty$ \\ \hline
			\end{tabular}
		\end{center}
		\caption{Leading order decay of $\psi\sim t^{-\alpha}$ at different infinite regions (see \cref{fig:formation of tails}) generated by decay of $F$. }
		\label{table:decays}
	\end{table}

	\begin{remark}[Higher $l$ modes and decay]
		As Minkowski space is spherically symmetric, one may restrict to fixed $l$ mode solutions ($\phi^{(l)}$). For such a solution, it is known, that in $\R^{3+1}$ an initial data on a cone with fall-off $\phi^{(l)}_0\sim r^{-k}$ ($k$ not belonging to an exceptional set) produces $\phi^{(l)}|_{r\leq1}\sim t^{-k-l}$ (\cite{luk_tale_2021}). This is hardly surprising if we accept the above picture, as such an $l$ mode solution has to vanish to order $l$ at the origin, and its (rescaled) restriction to $I^+$ will vanish to the same order in $\frac{r}{t}$ coordinates. Thus, the solution will be $t^{-k}h(\frac{r}{t})$ near $I^+$ with $h$ vanishing to order $l$. Restricting to compact regions of $r$ yields the extra decay.
	\end{remark}
	\begin{remark}[Exceptional cancellation]\label{exceptional cancellation}
		For $l\geq1$ modes in $\R^{3+1}$ (reducing higher odd dimensions to this case yield similar results) as for the $\R^{1+1}$ case, we have some exceptional cancellations. The $\phi^{(1)}_0\sim ar^{-1}+br^{-2}+cr^{-3}$ initial data supported on $l=1$ mode will create a tail proportional to $c$. The first two terms do not create a tail. This does not extends to forcing, just like before as can be seen from  \eqref{exact formula in n dim}. 
	\end{remark}
	
	Finally, let us note, that the conormality statement passes also to the nonlinear case (see \cite{hintz_semilinear_2015},\cite{hintz_stability_2020}).

	\subsection{A digression about anisotropic systems}\label{anisotropic section}
	In a system with multiple speeds, it is no longer sufficient to consider the asymptotics in the 3 regions $\mathcal{N}_0,\mathcal{N}_\scri,\mathcal{N}_+$. Indeed consider a system with wave speeds $1,2$ and corresponding field $\phi_1,\phi_2$. To get a full description of the asymptotic regions, we need to compactify $\R^{n+1}$ radially and than blow up the spheres at infinity described by $\abs{x}/t\in{1,2}$, see \cref{fig:anisotropic diagram}. Assuming that the solution is going to be conormal function on this manifold, we see that some of the necessary inhomogeneous problems have already been studied in \cref{table:decays}. In particular, note that $F|_{\mathcal{N}_{e)}}$ is fully understood. For such forcing, even though $\phi_1|_{\mathcal{N}_{d)}}$ is not included in \cref{table:decays}, that's only because its expansion is simply constant with respect to $2t-r$ coordinate at $r/t=2$, and the fall-off is same as at $\phi_1|_{\mathcal{N}_{e)}}$. The inhomogeneous problem not considered before, is if $F$ has singular support to the future or past of a given wave's null infinity corresponding to $\text{tail}(F|_{\mathcal{N}_{d)}})\implies\text{tail}(\phi_1|_{\mathcal{N}_{a)}-\mathcal{N}_{d)}}),\,\text{tail}(F|_{\mathcal{N}_{b)}})\implies\text{tail}(\phi_2|_{\mathcal{N}_{a)}-\mathcal{N}_{d)}})$.

	\begin{figure}[h]
	\centering{
		\resizebox{.4\textwidth}{!}{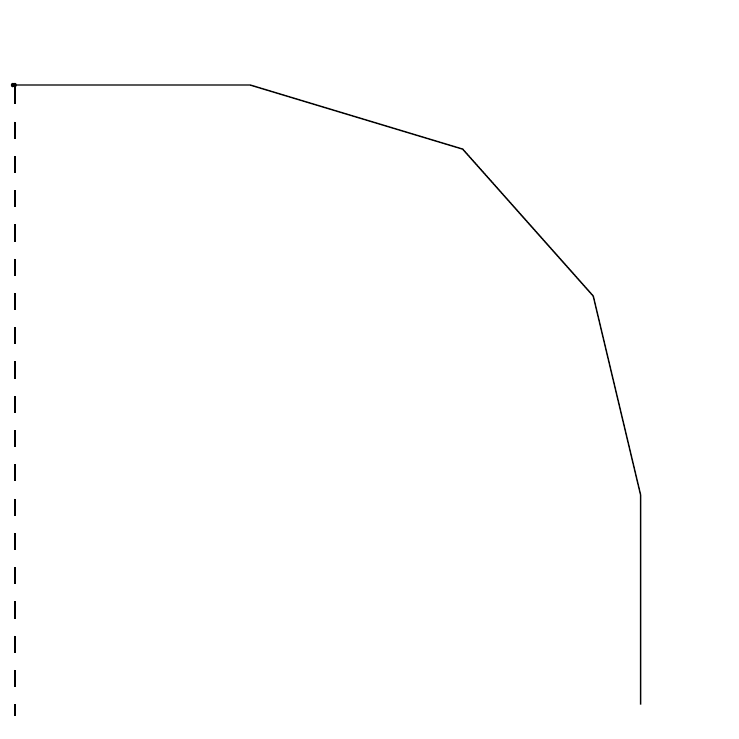}
		\caption{Important asymptotic regions in Minkowski corresponding to a two speed wave system.}
		\label{fig:anisotropic diagram}
	}
	\end{figure}
	
	\subsection{Estimates in higher dimensions}
	In this section, we will present some of the known estimates for wave propagation and discuss their consistency and optimality with respect to the heuristic above, in particular compare them to \cref{table:decays}.
	\subsubsection{John's $L^\infty$ estimates}\label{johns estimate}
	In \cite{john_blow-up_1979} and \cite{glassey_existence_1981} (see also \cite{del_santo_global_1997}) John and Glassey proved\footnote{ Indeed, these results can be read off from Kirchhoff's formula.} that the solution for the inhomogeneous wave equation in $\R^{n+1}$, $n=2,3$ with $\phi_0=\phi_1=0,\supp F\subset\{r<t\}$ satisfies
	\begin{equation}
		\begin{gathered}\label{john l infty}
			\norm{\jpns{v}^\alpha\jpns{u}^\beta\phi}_{L^\infty}\lesssim_{\alpha,\beta,\gamma,\delta}\norm{\jpns{v}^{\gamma}\jpns{u}^{\delta}F}_{L^\infty}
		\end{gathered}
	\end{equation}
	for $\alpha,\beta,\gamma,\delta>0$, $\alpha<(n-1)/2$ and
	\begin{equation*}
		\begin{gathered}
			\beta=\gamma-\frac{n+3}{2}+\min(1,\delta)
		\end{gathered}
	\end{equation*}
	with the additional constraint $\beta<\frac{1}{2}$ for $n=2$.
	
	Let's see how estimate \cref{john l infty} fits with the heuristics of \cref{table:decays}. Fix $F=v^{-s}u^{-\sigma}$. For $\sigma>1$, we argued that we have $\phi\sim v^{-\frac{n-1}{2}}u^{\frac{n+1}{2}-s}$. This saturates the inequality, with the caveat, that $\alpha=\frac{n-1}{2}$ case is not covered. Furthermore, the inequality also shows that $\sigma<1$ implies that the decay is lowered by $\min(0,\sigma-1)$, exactly as described in the previous section. In particular $F|_{\mathcal{N}_+}$ near $I^+$ generates late time behaviour and not $F|_{\mathcal{N}_\scri}$ near $\scri$ in this case.
	
	\subsubsection{Strichartz estimates: inhomogeneous and weighted}
	\paragraph{Inhomogeneous:}
	The original inhomogeneous Strichartz estimates say
	\begin{equation*}
		\begin{gathered}
			\norm{\phi}_{L_t^qL_x^r}\lesssim_{q,r}\norm{F}_{L_t^{\bar{q}^\prime}L_x^{\bar{q}^\prime}}
		\end{gathered}
	\end{equation*}
	where the dimensional condition
	\begin{equation*}
		\begin{gathered}
			\frac{1}{q}+\frac{n}{r}=\frac{1}{\bar{q}^\prime}+\frac{n}{\bar{r}^\prime}-2
		\end{gathered}
	\end{equation*}
	and the admissibility 
	\begin{equation*}
		\begin{gathered}
			\frac{1}{q}+\frac{n-1}{2r}\leq\frac{n-1}{4}\\
			q,r\geq2
		\end{gathered}
	\end{equation*}
	(same for $\bar{q},\bar{r}$) hold. This result, including the endpoint case was proven in the seminal paper of Keel and Tao \cite{keel_endpoint_1998}, and is proved via the Christ Kiselev lemma from the homogeneous counterpart. 
	
	The estimate is strongly attached to the $t,x$ foliation, so it's harder to compare to the heuristics.	In the case $q=r$, the norms are foliation independent, thus the results are more clear. The only admissible exponent is $q=r=2\frac{n+1}{n-1}$. This means that $\phi$ (up to $\epsilon$ loss) must decay like $t^{-\frac{n(n-1)}{2(n+1)}}$ toward $\scri$ and $t^{-\frac{n-1}{2}}$ towards $I^+$ when $F$ has decay like $t^{-\frac{n(n+3)}{2(n+1)}}$ and $t^{-\frac{n+3}{2}}$ respectively. According to the previous discussion, we see that this is optimal towards $I^+$ since $\frac{n+3}{2}-2=\frac{n-1}{2}$, however suboptimal near $\scri$ as $\frac{n(n-1)}{2(n+1)}<\frac{n-1}{2}+\min(0,\frac{n(n+3)}{2(n+1)}-\frac{n-1}{2}-1)$. Indeed, $\phi$ is expected to decay like $t^{-\frac{n-1}{2}}$ there.
	 
	The results of \cite{harmse_lebesgue_1990} (see \cite{foschi_inhomogeneous_2005} also) improve the situation near $\scri$ with an estimate
	\begin{equation*}
		\begin{gathered}
			\norm{\phi}_{L^{p}}\lesssim_p\norm{F}_{L^{\tilde{p}^\prime}}
		\end{gathered}
	\end{equation*}
	where $2\frac{n+1}{n-1}\geq p>\frac{2n}{n-1}$ and $\frac{1}{\tilde{p}^\prime}=1-\big(\frac{n-1}{n+1}-\frac{1}{p}\big)$. Indeed, such an inequality cannot be derived from the homogeneous counterpart. Analysing this estimate at the endpoint case (where it does not hold), shows that this improved estimate captures optimal decay both at $\scri$ and $I^+$. Indeed up to arbitrary small losses, the end point case is saturated by the following asymptotics
	\begin{equation*}
		\begin{gathered}
			F|_{\mathcal{N}_\scri}\sim t^{-\frac{n^2+4n-1}{2(n+1)}}\quad F|_{\mathcal{N}_+}\sim t^{-\frac{n^2+4n-1}{2n}}\\
			\phi|_{\mathcal{N}_\scri}\sim t^{-\frac{n-1}{2}}\quad \phi|_{\mathcal{N}_+}\sim t^{-\frac{(n+1)(n-1)}{2n}}.\\	
		\end{gathered}
	\end{equation*}
	Note, that although, this is optimal decay for $\phi$ at both $I^+$ and $\scri$, $F$ could decay less quickly toward $\scri$. In order to capture optimality in this second sense too, we need to introduce weights.
	
	\paragraph{Weighted:} The weighted Strichartz estimate, made to cure the problems of the previous section are proved in \cite{georgiev_weighted_1997} (see \cite{del_santo_global_1997}):
	\begin{equation*}
		\begin{gathered}
			\norm{\jpns{v}^\alpha\jpns{u}^\beta\phi}_{L^\mu}\lesssim_{\alpha,\beta,\gamma,\delta} \norm{\jpns{v}^\gamma\jpns{u}^\delta F}_{L^\lambda}
		\end{gathered}
	\end{equation*}
	provided the dimensional and Strichartz conditions hold
	\begin{equation*}
		\begin{gathered}
			\frac{1}{\lambda}+\frac{1}{\mu}=1,\qquad \space \frac{n-3}{2}<\frac{n}{\mu}-\lambda
		\end{gathered}
	\end{equation*}
	with $\alpha,\beta,\gamma,\delta>0$, $\alpha<(n-1)/2-n/\mu$ and
	\begin{equation*}
		\begin{gathered}
			\beta=\gamma-\alpha-2+\frac{n}{\lambda}-\frac{n+1}{\mu}+\min(1,\delta+\frac{1}{\lambda})<\frac{n-1}{2}-\frac{n}{\mu}.
		\end{gathered}
	\end{equation*}
	These are isotropic, thus are easier to understand their relation ot the heuristics, but we can further factor out some $u,v$ terms and introduce b-Lebesgue spaces $\norm{\cdot}_{L^\mu}=\norm{\jpns{v}^{n/\mu}u^{1/\mu}\cdot}_{L_b^{\mu}}$ and set $\bar{\alpha}=\alpha-\frac{n}{\mu},\bar{\beta}=\beta-\frac{1}{\mu},\bar{\gamma}=\gamma-\frac{n}{\lambda},\bar{\delta}=\delta-\bar{\delta}$. Then, the above inequality says
	\begin{equation*}
		\begin{gathered}
			\norm{\jpns{v}^{\bar{\alpha}}\jpns{u}^{\bar{\beta}}\phi}_{L^\mu_b}\lesssim_{\alpha,\beta,\gamma,\delta}\norm{\jpns{v}^{\bar{\gamma}}\jpns{u}^{\bar{\delta}}F}_{L^\lambda_b}
		\end{gathered}
	\end{equation*}
	for $\bar{\alpha}<(n-1)/2$
	\begin{equation*}
		\begin{gathered}
			\bar{\beta}=\bar{\gamma}-\bar{\alpha}-2+\min(1,\bar{\delta})<\frac{n-1}{2}-\frac{n-1}{\mu}.
		\end{gathered}
	\end{equation*}
	This is the same as John's $L^\infty$ estimate up to an arbitrary small loss in $\bar{\alpha},\bar{\beta}$ and stronger restriction on maximum for $\bar{\beta}$.
	
	\subsubsection{Morawetz estimate}
	There are many form of this estimate, one particular case is for $\phi$ solution of linear inhomogeneous equation with trivial data (\eqref{linear})  satisfies
	\begin{equation*}
		\begin{gathered}
			\int\d x\d t\, \frac{\abs{\bar{\partial}\phi}^2}{\jpns{r}^{1+\epsilon}}\lesssim_\epsilon\int\d x\d t\, F^2\jpns{r}^{1+\epsilon},
		\end{gathered}
	\end{equation*}
	with $\bar{\partial}=(\partial,1/r)$ for all $\epsilon>0$. For $F\sim v^{-s}u^{-\sigma}$, the right hand side converges only if $2s>n+1$ and $2(\sigma+s)>n+1$. Let's take $\sigma$ large. This in turn yields that $\partial\phi$ cannot decay slower than $v^{-\frac{n-1-\epsilon}{2}}$ towards $\scri$ and $v^{-\frac{n-\epsilon}{2}}$ towards $I^+$. This is optimal near $\scri$,  but doesn't yield best decay at $I^+$, therefore one can only improve on it with additional weights.
	
	\subsubsection{$r^p$ estimate}
		The $r^p$ method of Dafermos and Rodnianski \cite{dafermos_new_2010} works extremely well with the \cref{table:decays}. Consider first the radial case in $\R^{3+1}$ for the inhomogeneous problem with 0 initial data. Then, we have
		\begin{equation*}
			\begin{gathered}
				\sup_u\bigg(\int_{\Sigma_u} \d v\, r^p(\partial_v(r\phi))^2\bigg)^{1/2}\lesssim_p\int\d u \bigg(\int_{\Sigma_u}\d v\, r^p(rF)^2\bigg)^{1/2}.
			\end{gathered}
		\end{equation*}
		Which bounds the incoming radiation in term of the forcing at exactly the right rate (up to $\epsilon$ loss). Obtaining optimal decay for the energy and bounding the other components ($\partial_u\phi,\phi$) is given by a usage of energy, Morawetz and Hardy estimates.
		
		Outside of radial symmetry, and in different dimensions, one has to use $(r^2\partial_v)^j\psi$ and its equation of motion to get almost sharp decay (\cite{angelopoulos_late-time_2018}).

	\section{Finding critical exponents}\label{finding critical exponents}
	This section is a heuristic guide to find critical exponents for non-linear wave equations. The statements here are not precise, and the equations are only schematic. We will find that each system studied has a global solution iff it satisfies the asymptotic decay condition \cref{asymptotic decay condition}.
	
	We will find using the above heuristics the critical exponents for the equations discussed in the introduction. Let's use the notation $\psi=t^{\frac{n-1}{2}}\phi$, so that $\Box\phi=f\implies\tilde{\Box}\psi=t^{\frac{n-1}{2}}f$, where $\tilde{\Box}$ is the operator $t^{-2}L$ in \cite{hintz_stability_2020}. Importantly, $\tilde{\Box}$ has leading order term (in terms of decay) $\partial_v\partial_u$ near $\scri$. Furthermore, let's focus on odd dimensions, to ignore tail effects coming from resonances/Green's function of the spacetime. Nevertheless, most of the results survive outside this setting, as these resonances produce quickly decaying contributions.
	
	We will use a simply lemma to reproduce the curves presented in \cref{zoology}.
	\begin{lemma}\label{min max lemma}
		For real numbers $a,b,c$ with $c>1$, the equation
		\begin{equation*}
			\begin{gathered}
				x=a+\min(0,b+cx)
			\end{gathered}
		\end{equation*}
		has solutions iff $b+ca\geq0$.
		
	\end{lemma}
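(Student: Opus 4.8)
The plan is to recast the equation as a zero-finding problem for the continuous, piecewise-linear function
\[
g(x) := a + \min(0,\,b+cx) - x,
\]
so that a real $x$ solves $x = a + \min(0,b+cx)$ exactly when $g(x)=0$. The statement to prove then becomes: $g$ has a zero if and only if $b+ca \ge 0$.

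First I would identify the two affine branches of $g$. On the half-line $\{b+cx \le 0\}$ one has $g(x) = (a+b) + (c-1)x$, which is strictly increasing because $c>1$; on $\{b+cx \ge 0\}$ one has $g(x) = a - x$, strictly decreasing. Hence $g$ is concave, has a single kink at $x_0 := -b/c$, and $g(x)\to -\infty$ as $x\to\pm\infty$. Consequently $g$ attains its maximum at $x_0$, and
\[
g(x_0) = a + \min(0,0) - x_0 = a + \frac{b}{c} = \frac{ca+b}{c}.
\]

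Next I would invoke the intermediate value theorem: a continuous function on $\R$ tending to $-\infty$ at both ends attains its maximum $M$ and takes every value in $(-\infty,M]$, so it has a zero iff $M \ge 0$. Combined with the computation above and $c>0$, this yields that the equation is solvable iff $\tfrac{ca+b}{c}\ge 0$, i.e. iff $b+ca\ge 0$, which is the claim. As a cross-check I would note that under $b+ca\ge 0$ the explicit value $x=a$ solves the equation (since then $\min(0,b+ca)=0$), and the other branch produces the root $x=(a+b)/(1-c)$, which indeed lies in $\{b+cx\le 0\}$ precisely when $b+ca\ge 0$; these explicit roots also make transparent which branch of the nonlinear system the heuristic in \cref{finding critical exponents} selects.

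I do not anticipate a genuine obstacle: the argument is elementary. The only points needing a touch of care are keeping the inequality direction correct when dividing by $c>0$ (harmless), and checking that the maximum of $g$ sits at the kink rather than in the interior of a branch — which is immediate once the slopes $c-1>0$ and $-1<0$ are recorded. I would present the zero-finding argument as the proof and mention the explicit solutions only as a remark.
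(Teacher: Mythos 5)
Your proof is correct and takes essentially the same elementary route as the paper, which argues via the intersection of the line $y=x$ with the concave piecewise-linear graph of $a+\min(0,b+cx)$ and notes that $x=a$ is the solution on the flat branch precisely when $b+ca\geq 0$. Your rendering through the maximum $g(x_0)=(ca+b)/c$ of the concave function $g$ at the kink, together with the intermediate value theorem, is simply a more explicit and complete write-up of that same graphical argument.
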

	\begin{proof}
		The solutions are the $x$ values of the intersection  $\{y=x\}\cap\{y=a+\min(0,b+cx)\}$. As $c>1$, there must be an intersection while the minimum is attained at $0$. Thus $x=a$ is a solution with $\min(0,b+cx)=0$, which implies $b+ca\geq0$.
	\end{proof}
	
	\subsection{Strauss conjecture}
	Rewriting \eqref{Strauss}, we get 
	\begin{equation*}
		\begin{gathered}
			\tilde{\Box}\psi=t^{\frac{n-1}{2}}\abs{\psi}^q t^{-q(n-1)/2}.
		\end{gathered}
	\end{equation*}
	The asymptotic decay condition is satisfied, if iterative solutions have the same decay at infinite. Alternatively, if the exact solution has asymptotic consistent with tail generation from \cref{table:decays}. Say, $\psi$ has a tail $\sim  u^{-s}$, that is $\psi|_{\mathcal{N}_+}\sim u^{-s}$, with well defined radiation field (ie. $\psi_{\mathcal{N}_\scri}\sim1$). The constraints on the fall-off from $\abs{\psi}^q$ term are $s\leq (q-1)\frac{n-1}{2}-1$ from $\scri$ implies , while the contribution form $I^+$ says $s\leq (q-1)\frac{n-1}{2}-2+qs$. The correct value of $s$ will saturate one of these inequalities, in conclusion, we get
	\begin{equation*}
		\begin{gathered}
			s=(q-1)\frac{n-1}{2}-1+\min(0,qs-1).
		\end{gathered}
	\end{equation*}
	\cref{min max lemma}  implies that a solution exists only if $((q-1)\frac{n-1}{2}-1)q>1$, which coincide with the Strauss exponent.
	\subsection{Glassey}
	Rewriting \eqref{Glassey}, ignoring terms where the derivative acts on the rescaling, we have $\tilde{\Box}\psi=t^{\frac{n-1}{2}(1-q)}\abs{\partial\psi}^q$. Solving this iteratively
	\begin{equation*}
		\begin{gathered}
			\tilde{\Box}\psi_0=0\\
			\tilde{\Box}\psi_1=t^{\frac{n-1}{2}(1-q)}\abs{\partial\psi_0}^q\\
			\tilde{\Box}\psi_2=t^{\frac{n-1}{2}(1-q)}\abs{\partial\psi_1}^q
		\end{gathered}
	\end{equation*}
	we see from \cref{table:decays} that $\psi_i|_{\mathcal{N}_+}\sim t^{\sigma_i}$ with $\sigma_0=0,\, \sigma_1=\max(0,1+\frac{n-1}{2}(1-q)+\sigma_0 q),\sigma_2=\max(0,1+\frac{n-1}{2}(1-q)+\sigma_1 q)$. It's easy to see, that this sequence terminates iff $\sigma=\max(0,1+\frac{n-1}{2}(1-q)+\sigma q)$ has a solution.
	, assuming $\psi$ has asymptotic $v^{\sigma},\sigma\geq0$ at $\scri$, \cref{table:decays} implies that the nonlinearity forces $\sigma=\max(0,1+\frac{n-1}{2}(1-q)+\sigma q)$. By \cref{min max lemma}, this is equivalent to $\frac{n-1}{2}(1-q)<-1$. 
	
	To check that this is sufficient condition, say $\psi$ has $u^{-s}$ tail, $\psi|_{\mathcal{N}_+}\sim t^{-s}$. Than, conormality assumption implies that $\partial\psi$ decays one order faster and \cref{role of asymptotics} implies $s=(q-1)\frac{n-1}{2}-1-\max(0,1-q(s+1))$ which has a solution $s=(q-1)\frac{n-1}{2}$ if $q>q_{Glassey}$.
	
	\begin{remark}\label{difference between Strauss and Glassey}
		This shows the strong contrast between the Strauss and Glassey conjectures. The second is more of the flavour of a \textit{weak null condition}, as it relies on the asymptotic towards $\scri$ only. Furthermore, as solutions for \eqref{Strauss} with $q_{Glassey}<q<q_{Strauss}$ only fail to exist because of condition on behaviour toward $I^+$, one expects that these have semi global solution in regions $\{t-r\in[0,u_0]\}$ for small enough data, while the same is not true for \eqref{Glassey} with $q<q_{Glassey}$.
	\end{remark}
	\subsection{Strauss system}\label{strauss system}
	Let's start with the analysis of the more involved known results, that is the 2 component system \cref{2 variable Strauss}
	\begin{equation*}
		\begin{gathered}
			\tilde{\Box}\psi_1=t^{\frac{n-1}{2}(1-q_1)}\abs{\psi_2}^{q_1}\\
			\tilde{\Box}\psi_2=t^{\frac{n-1}{2}(1-q_2)}\abs{\psi_1}^{q_2}.
		\end{gathered}
	\end{equation*}
	If the system has global solution which is conormal, then, we can assume $\psi_i|_{\mathcal{N}_\scri}\sim t^{\sigma_i},\psi_i|_{\mathcal{N}_+}\sim t^{-s_i}$ at leading order. The asymptotic decay condition only holds, if these rates are consistent with the tail creation of \cref{table:decays}
	\begin{equation*}
		\begin{gathered}
			\sigma_1=\max(0,\frac{n-1}{2}(1-q_1)+1+\sigma_2q_1)\\
			\sigma_2=\max(0,\frac{n-1}{2}(1-q_2)+1+\sigma_1q_2)\\
			s_1=\frac{n-1}{2}(q_1-1)-1+\min(-q_1\sigma_2,q_1s_2-1)\\
			s_2=\frac{n-1}{2}(q_2-1)-1+\min(-q_2\sigma_1,q_2s_1-1)\\.
		\end{gathered}
	\end{equation*}
	The claim, is that the PDE has global solutions iff this equation has a solution. To see that this gives the correct curve, we may assume without loss of generality that $q_1\geq q_2>1$. Then, its clear that the first two equations only have a solution if $\sigma_1=0$ \footnote{which implies  $\frac{n-1}{2}(1-q_1)+1+q_1\max(0,\frac{n-1}{2}(1-q_2)+1)<0$ must hold, in particular $q_1\geq q_{\text{Glassey}}$}, so $\sigma_2=\max(0,\frac{n-1}{2}(1-q_2)+1)$. Also,
	\begin{equation*}
		\begin{gathered}
			\min(-q_1\sigma_2,q_1s_2-1)=\min(0,q_1(\frac{n-1}{2}(q_2-1)-1),q_1s_2-1)=\min(0,q_1s_2-1),
		\end{gathered}
	\end{equation*}
	thus we are left with
	\begin{equation*}
		\begin{gathered}
			s_1=a+\min(0,q_1s_2-1)\\
			s_2=b+\min(0,q_2s_1-1),
		\end{gathered}
	\end{equation*}
	with $a=\frac{n-1}{2}(q_1-1)-1,b=\frac{n-1}{2}(q_2-1)-1>0$. It's easy to see that $q_1>q_2\implies q_2a\geq q_1b$. Substituting for $s_1$ gives
	\begin{equation*}
		\begin{gathered}
			s_2=b+\min(0,q_2a-1,q_2a-1-q_2+q_1q_2s_2).
		\end{gathered}
	\end{equation*} 
	If $q_2a\geq1$, then \cref{min max lemma} yields a solution if $q_2a-1-q_2+q_2q_1b\geq0$. For $q_2a<1$, there is a solution if $-q_2+q_2q_1(b+q_2a-1)$ is non-negative. However, we can use $q_2a\geq q_1b$ to conclude that $-q_2+q_2q_1(b+q_2a-1)\leq q_2(1+q_1)(q_2a-1)<0$, which is a contradiction. Therefore, we have a solution in the region 
	\begin{equation*}
		\begin{gathered}
			\{q_1>q_2\}\cap\{q_2a>1\}\cap\{q_2a-q_2(1-q_1b)>1\}=\{q_1>q_2\}\cap\{q_2a-q_2(1-q_1b)>1\}
		\end{gathered}
	\end{equation*}
	where the equality can be checked by a few lines of algebra. One may check that in this region $s_2=b,\,s_1=a+\min(0,q_1b-1)$ solve the original equation. Furthermore, $q_2a-q_2(1-q_1b)>1\iff (q_1q_2-1)\frac{n-1}{2}>2+q_1+q_2^{-1}$ is the condition proved in \cite{del_santo_global_1997}.\footnote{Of course, there are additional requirements due to well-posedness of the equation, see \cref{regularity vs decay}.}
	
	Turning to a general system \cref{n variable Strauss}:
	\begin{equation*}
		\begin{gathered}
			\tilde{\Box}\psi_i=\sum_j a_{ij}\abs{\psi_j}^{c_{ij}}t^{\frac{n-1}{2}(1-c_{ij})}
		\end{gathered}
	\end{equation*}
	where $a_{ij}\in\{0,1\}$. Let's assume conormal solution, with $\psi_i|_{\mathcal{N}_\scri}\sim t^{\sigma_i},\psi_i|_{\mathcal{N}_+}\sim t^{-s_i}$ leading order. The asymptotic decay condition holds if the system
	\begin{equation}\label{n variable Strauss condition}
		\begin{gathered}
			\sigma_i=\max\bigg(\{0\}\cup\Big\{\frac{n-1}{2}(1-c_{ij})+1+\sigma_jc_{ij}:a_{ij}\neq0\Big\}\bigg)\\
			s_i=\min\Big\{\frac{n-1}{2}(c_{ij}-1)-1+\min(0,-c_{ij}\sigma_j,c_{ij}s_j-1):a_{ij}\neq0\Big\}
		\end{gathered}
	\end{equation}
	has a solution. Indeed, we prove that this is equivalent to the small data well-posedness of \eqref{n variable Strauss} in a regime. For well posedness see \cref{n strauss main theorem}, while for singularity formation see \cref{blow up for n strauss theorem}.
	\subsection{Strauss Glassey mix}
	\subsubsection{Scalar}
	Let's start with discussing \eqref{Strauss Glassey}:
	\begin{equation*}
		\begin{gathered}
			\tilde{\Box}\psi=t^{\frac{n-1}{2}(1-q_1)}\abs{\partial_t\psi}^{q_1}+t^{\frac{n-1}{2}(1-q_1)}\abs{\psi}^{q_2}
		\end{gathered}
	\end{equation*}
	where we neglected terms that differentiate the weights in $\partial_t\phi$. Assume global solution with boundary regularity and leading term $\psi|_{\mathcal{N}_\scri}\sim t^{\sigma},\psi|_{\mathcal{N}_+}\sim t^{-s}$. The system satisfies asymptotic decay condition near $\scri$ is equivalent to the solvability of
	\begin{equation*}
		\begin{gathered}
			\sigma=\max(0,\frac{n-1}{2}(1-q_1)+1+q_1\sigma,\frac{n-1}{2}(1-q_2)+q_2\sigma)
		\end{gathered}
	\end{equation*}
	This has a solution if $q_1,q_2>q_{\text{Glassey}}$ and $\sigma=0$. Near $I^+$, we need
	\begin{equation*}
		\begin{gathered}
			s=\min\bigg(a+\min\big(0,q_1(s+1)-1\big),b+\min\big(0,q_2s-1\big)\bigg),
		\end{gathered}
	\end{equation*}
	with $a,b$ as above. For $q_1\geq q_2$, it's not hard to check that $q_2\geq q_{\text{Strauss}}$ is a necessary and sufficient condition for solutions to exist. For $q_1<q_2$, the $s$ equation gives
	\begin{equation*}
		\begin{gathered}
			s=a+\min(0,q_1(s+1)-1,b-a+q_2s-1).
		\end{gathered}
	\end{equation*}
	By \cref{min max lemma}, we have a solution if \begin{equation*}
		\begin{gathered}
			q_1(a+1)-1,b-a+q_2a-1\geq0.
		\end{gathered}
	\end{equation*}
	The first is trivially satisfied, because $a\geq0$, while a few lines of algebra shows that the second is exactly the critical curve discussed in \cref{zoology}.
	
	\subsubsection{System}\label{strauss glassey system}
	We can rewrite \eqref{Strauss Glassey system} as
	\begin{equation*}
		\begin{gathered}
			\tilde{\Box}\psi_1=t^{\frac{n-1}{2}(1-q_1)}\abs{\psi}^{q_1}\\
			\tilde{\Box}\psi_2=t^{\frac{n-1}{2}(1-q_2)}\abs{\partial_t\psi_1}^{q_2}.
		\end{gathered}
	\end{equation*}
	Let's assume fall-off $\psi_i|_{\mathcal{N}_\scri}\sim t^{\sigma_i},\psi_i|_{\mathcal{N}_+}\sim t^{-s_i}$ at leading order. The system satisfies the asymptotic decay condition if
	\begin{equation*}
		\begin{gathered}
			\sigma_1=\max(0,\sigma_2q_1-a)\\
			\sigma_2=\max(0,\sigma_1q_2-b)\\
			s_1=a+\min(-q_1\sigma_2,q_1s_2-1)\\
			s_2=b+\min(-q_2\sigma_1,q_2(s_1+1)-1),
		\end{gathered}
	\end{equation*}
	is solvable where $a,b$ as before. Frome here, it's straightforward to find the allowed region of $q_1,q_2$, but we included it for completeness. Focus on the $q_1>q_2$ range. As in the Strauss system, we get $\sigma_1=0,\,\sigma_2=\max(0,-b)$  and $\min(-q_1\sigma_2,q_1s_2-1)=\min(0,q_1s_2-1)$ simplifying the system to
	\begin{equation*}
		\begin{gathered}
			s_1=a+\min(0,q_1s_2-1)\\
			s_2=b+\min(0,q_2(s_1+1)-1).
		\end{gathered}
	\end{equation*}
	As for the Strauss system \cref{strauss system}, we may substitute for $s_1$
	\begin{equation*}
		\begin{gathered}
			s_2=b+\min(0,q_2(a+1)-1,q_2(a+1)-1-q_2+q_1q_2s_2)=b+\min(0,q_2(a+1)-1-q_2+q_1q_2s_2),
		\end{gathered}
	\end{equation*}
	as $q_2(a+1)-1\geq0$. Using \cref{min max lemma} we conclude the condition $q_2(a+1)-1-q_2+q_1q_2b>0$. Once again, an explicit solution is found in this region by setting $s_2=b$. This agrees with the curve found in \cite{hidano_combined_2016}.
	
	On the range $q_1<q_2$, the simplification for $\sigma$ similarly gives $\sigma_2=0,\,\sigma_1=\max(0,-a)$, but the rest of the system is
	\begin{equation*}
		\begin{gathered}
			\begin{cases}
				s_1=a+\min(0,q_1s_2-1)\\
				s_2=b+\min(0,q_2a,q_2(s_1+1)-1)
			\end{cases}\\
			\implies s_2=b+\min(0,q_2a,q_2(a+1)-1-q_2+q_2q_1s_2)
		\end{gathered}
	\end{equation*}
	Note, that the $s_1+1$ instead of $s_1$ prohibits the previous simplifications in the second minimum. As long as $a>0$, we can use the previous argument to find the same allowed region. For $a<0$ we find
	\begin{equation*}
		\begin{gathered}
			s_2=b+aq_2+\min(0,-1+q_1q_2s_2).
		\end{gathered}
	\end{equation*}
	
	There is a solution if  $q_2q_1(b+q_2a)-1>0$. \textit{To the author's knowledge, this is the first place where this part of the critical curve is derived for the above problem.} The region of study in \cite{hidano_combined_2016} is detailed in the caption of \cref{fig: Glassey Strauss system}. In this region, setting $s_2=b+q_2a$ indeed gives a solution for $s_1,s_2$. At this point, it's good practice, to check that the condition for the existence of $\sigma_i$, that is $b+q_2a>0$ is still satisfied. Indeed this is the case.

	\begin{figure}[h]
		\centering{
			\resizebox{.4\textwidth}{!}{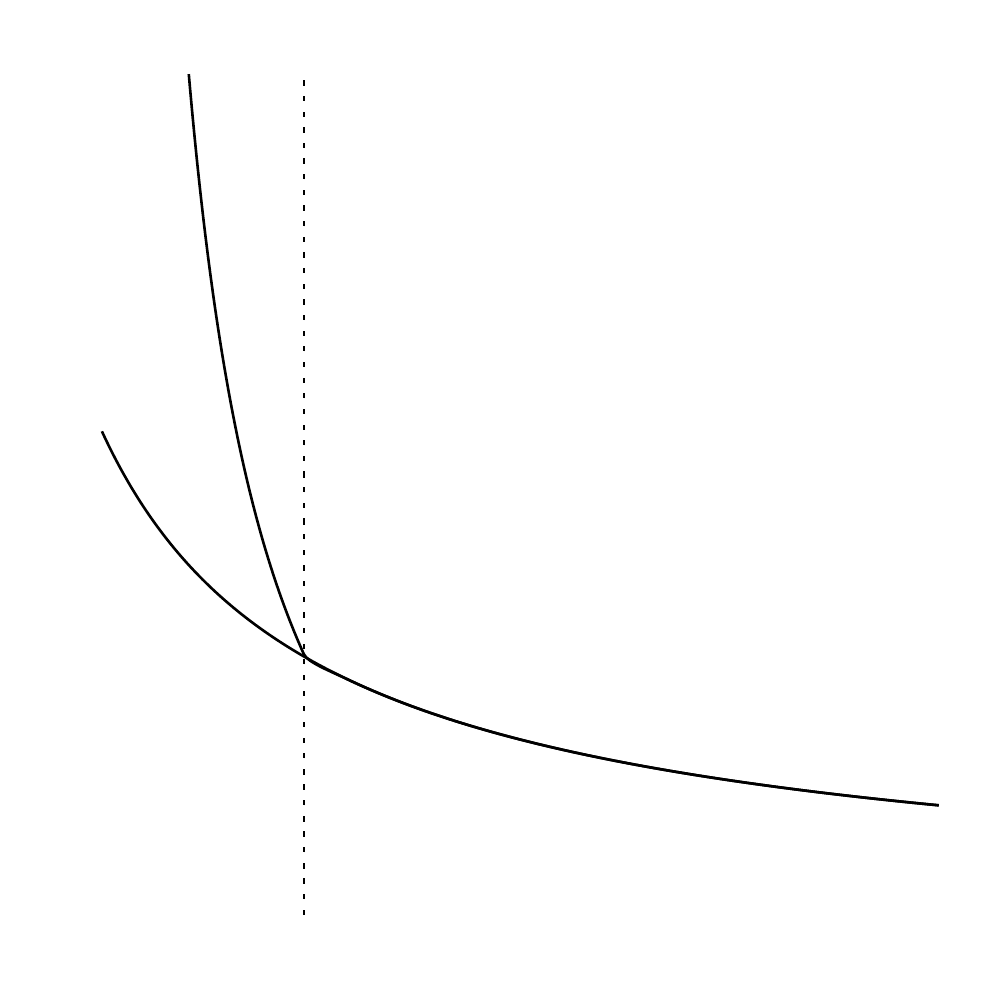}
			\caption{Critical curve for Glassey Strauss system \eqref{strauss glassey system} in $n=3$. Global solutions are expected using the heuristics above curve b), but not below. This was proved for $q_1>q_{Glassey}$ and below the curve a), however $q_1<q_{Glassey}$ and above curve a) have not been investigated so far.}
			\label{fig: Glassey Strauss system}
		}
	\end{figure}
	
	\subsection{Null conditions}\label{null condition}
	
	\paragraph{a)}As a warm-up, consider \eqref{integrable null}:
	\begin{equation*}
		\begin{gathered}
			\tilde{\Box}\psi=t^{-\frac{3-1}{2}(2-1)}\partial_v\psi\partial_u\psi+t^{-\frac{3-1}{2}(2-1)-2}\slashed{\nabla}\psi\cdot\slashed{\nabla}\psi
		\end{gathered}
	\end{equation*}
	where as usual, we dropped terms where the derivative hit the weight. We see, that $\psi|_{\mathcal{N}_\scri}\sim 1,\psi|_{\mathcal{N}_+}\sim t^{-1}$ is a good leading order behaviour. 
	
	\paragraph{b)} The weak null system \eqref{weak null}
	\begin{equation*}
		\begin{gathered}
			\tilde{\Box}\psi_1=t^{-1}(\partial_t\psi_2)^2\\
			\tilde{\Box}\psi_2=t^{-1}\partial_v\psi_1\partial_u\psi_1+t^{-3}\slashed{\nabla}\psi_1\cdot\slashed{\nabla}\psi_1
		\end{gathered}
	\end{equation*}
	has the following consistent leading order behaviour (under the discussion of \cref{role of asymptotics})	$\psi_1|_{\mathcal{N}_\scri}\sim \log t,\psi_1|_{\mathcal{N}_+}\sim \log t,\psi_2|_{\mathcal{N}_\scri}\sim1,\psi_2|_{\mathcal{N}_+}\sim (\log t)^2t^{-1}$. In particular, note that, both systems are far from being borderline.
	
	\paragraph{c)}
	A typical hierarchical weak null system considered in $\R^{3+1}$ by Keir \cite{keir_global_2019} is
	\begin{equation*}
		\begin{gathered}
			\Box\phi_1=(\partial_t\phi_2)^3\\
			\Box\phi_2=\partial_t\phi_1\partial_t\phi_2+(\partial_t\phi_1)^2\\
		\end{gathered}
	\end{equation*}
	which yields
	\begin{equation*}
		\begin{gathered}
		\tilde{\Box}\psi_1=t^{-2}(\partial_t\psi_2)^3\\
		\tilde{\Box}\psi_2=t^{-1}\partial_t\psi_1\partial_t\psi_2+t^{-1}(\partial_t\psi_1)^2
		\end{gathered}
	\end{equation*}

	This system \textit{does not} satisfy the asymptotic decay condition, because the leading order behaviour for $\psi_2$ has growing logarithmic term at each iteration. However, note that for fixed $\psi_1$, the equation for $\psi_2$ is linear, with leading order behaviour $-\partial_u\partial_v\psi_2=t^{-1}\partial_u\psi_1\partial_u\psi_2$. For small data problem and bounded conormal behaviour for $\psi_1$ this ODE yields $v^\epsilon$ growth for $\partial_u\psi_2$. Similar conclusion applies for $\psi_3$. Indeed, this is a paraphrasing of the discussion found in \cite{keir_weak_2018}.
	
	We conclude, that the asymptotic decay condition does not see the \textit{almost linear} structure present in the above equation, which is a key for global wellposedness.

	\paragraph{d)} Finally, turn to \eqref{Strauss null}:
	\begin{equation*}
		\begin{gathered}
			\tilde{\Box}\psi_1=t^{-\frac{n-1}{2}(q_1-1)}\abs{\psi_2}^{q_1}\\
			\tilde{\Box}\psi_2=t^{-1}\abs{\partial_t\psi_1}^{\frac{n+1}{n-1}}+t^{-\frac{n-1}{2}(q_2-1)}\abs{\psi_1}^{q_2}	.
		\end{gathered}
	\end{equation*}
	with assumed asymptotics $\psi_j|_{\mathcal{N}_\scri}\sim t^{\sigma_j},\,\psi_j|_{\mathcal{N}_+}\sim t^{-s_j},\, j\in\{1,2\}$. The asymptotic decay condition holds if\footnote{note, there are logarithmic losses that are explicitly introduced by the $\partial_t\psi_1$ term, but as we are not interested in border line cases, we do not discuss this. Indeed, by putting assuming power type behaviour, we implicitly ignore this contribution.}
	\begin{equation*}
		\begin{gathered}
			\sigma_1=\max(0,-a+q_1\sigma_2)\\
			\sigma_2=\max(0,-b+q_2\sigma_1,\frac{n+1}{n-1}\sigma_1)\\
			s_1=a+\min(0,-q_1\sigma_2,q_1s_2-1)\\
			s_2=\min\bigg(\min\Big(0,\frac{n+1}{n-1}(1+s_1)-1\Big),b+\min\Big(0,-q_2\sigma_1,q_2s_1-1\Big)\bigg)
		\end{gathered}
	\end{equation*}
	has a solution. The asymptotic at $\scri$ only close if $\sigma_1=0$, which implies $a\geq0$. Focus first on the case $b\geq0$, ie. $\sigma_2=0$. Then, as $s_2\leq0$, the system simplifies to
	\begin{equation*}
		\begin{gathered}
			s_2=\min\bigg(0,\frac{n+1}{n-1}a-1+q_1\frac{n+1}{n-1}s_2,b-1+q_2(a-1)+q_1q_2s_2\bigg).
	\end{gathered}
	\end{equation*}
	This only has a solution if
	\begin{equation}\label{hidano null}
		\begin{gathered}
			\frac{n+1}{n-1}a-1\geq0\implies q_1\geq1+\frac{4n}{n^2-1}\\
			b-1+q_2(a-1)\geq0\implies\frac{n-1}{2}(q_1q_2-1)-2q_2>2,
		\end{gathered}
	\end{equation}
	which defines the critical curve for $b\geq0$.The second equation gives the curve found in \cite{hidano_global_2022}. Note however, that in \cite{hidano_global_2022}, the authors restrict to the set $q_1>1+\frac{3}{n-1}$ with $n=2,3$, which is necessary in light of the first condition above.

	For $q_2<q_{\text{Gl}}=\frac{n+1}{n-1}$, $\sigma_2=-b>0$ and $\min(-q_1\sigma_2,q_1s_2-1)=q_1s_2-1$, so the system simplifies to
	\begin{equation*}
		\begin{gathered}
			s_1=a-1+q_1s_2\\
			s_2=b+\min(0,q_2s_1-1).
		\end{gathered}
	\end{equation*}
	which implies
	\begin{equation*}
		\begin{gathered}
			s_2=b+\min(0,q_2(a-1)-1+s_2q_1q_2).
		\end{gathered}
	\end{equation*}
	A solution exists only if $q_2(a-1+q_1b)-1>0$. This is part of the curve for the 2-Strauss system \eqref{2 variable Strauss}.

	\begin{figure}[h]
		\centering{
			\resizebox{.4\textwidth}{!}{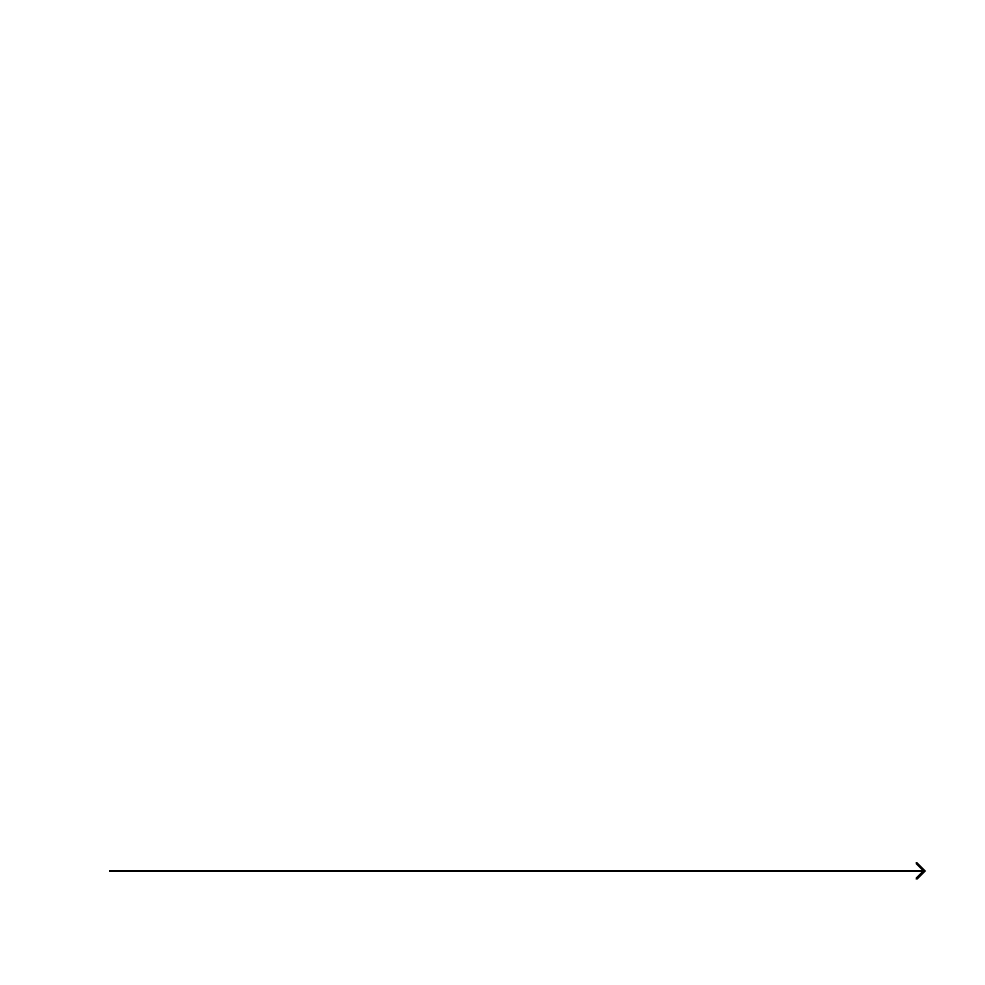}
			\caption{Critical curve for \eqref{Strauss} in $n=3$. Global solutions are expected using the heuristics above solid line, but not below. Existence was shown for $q_2\in[2,6]$ above the solid line, but breakdown of global solutions only below the dashed one in \cite{hidano_global_2022}.}
			\label{fig: hidano nullsystem}
		}
	\end{figure}

	\subsection{Weighted problems}
	\paragraph{Null type:}
	To quantify the amount of sub criticality (in terms of decay) let us investigate
	\begin{equation*}
		\begin{gathered}
			\Box\psi=t^{\alpha-1} u^\beta t^{-1}\psi u^{-1}\psi.
		\end{gathered}
	\end{equation*} 
	in $\R^{3+1}$ which corresponds to $\Box\phi=t^{\alpha}u^{\beta}\partial\phi\cdot\partial\phi$ where we ignored all the non-dominant terms. Assuming asymptotics $\psi|_{\mathcal{N}_\scri}\sim v^{\sigma},\,\psi|_{\mathcal{N}_+}\sim t^{-s}$, we get (using \cref{role of asymptotics})
	\begin{equation*}
		\begin{gathered}
			\sigma=\max(0,\alpha-1+2\sigma)\\
			s=1-\alpha+\min(0,s+(s+1)-1-\beta).
		\end{gathered}
	\end{equation*}
	This has solution if both $\sigma=0(\iff\alpha<1)$ and $s=1-\alpha(\iff \beta+2\alpha<2)$ hold.
	\paragraph{$\partial_t\phi$ nonlinearity:}
	In the recent work \cite{kitamura_semilinear_2022}, Kitamura studied the equation
	\begin{equation*}
		\begin{gathered}
			\Box\phi=t^\alpha u^\beta \big(\partial_t\phi\big)^q
		\end{gathered}
	\end{equation*}
	in $\R^{1+1}$. The corresponding rescaled system is approximately
	\begin{equation*}
		\begin{gathered}
			\tilde{\Box}\psi=t^{\alpha-\frac{n-1}{2}(q-1)}u^{\beta-q}\abs{\psi}^q.
		\end{gathered}
	\end{equation*}
	Assuming fall off $\psi|_{\mathcal{N}_+}\sim t^{-s},\, \psi|_{\mathcal{N}_\scri}\sim t^{\sigma}$ the asymptotic decay condition requires a solution for
	\begin{equation*}
		\begin{gathered}
			\sigma=\max\Big(0,1-\frac{n-1}{2}(q-1)+\alpha+q\sigma\Big)\\
			s=\frac{n-1}{2}(q-1)-1-\alpha+\min(0,q(s+1)-1-\beta).
		\end{gathered}
	\end{equation*}
	These have solution only if $\sigma=0$ and $s=\frac{n-1}{2}(q-1)-1-\alpha$ is a solution implying
	\begin{equation*}
		\begin{gathered}
			\frac{n-1}{2}(q-1)-1-\alpha\geq0\\
			q\bigg(\frac{n-1}{2}(q-1)-1-\alpha\bigg)+q-1-\beta\geq0.
		\end{gathered}
	\end{equation*} 
	Indeed, in the $n=1$ case this agrees with \cite{kitamura_semilinear_2022}.
	
	\subsection{A new exponent appears}
	Finally, consider the (not yet studied) system \cref{partial v}:
	\begin{equation*}
		\begin{gathered}
			\tilde{\Box}\psi=t^{-\frac{n-1}{2}(q-1)}\abs{\partial_v\psi}^q+t^{-\frac{n-1}{2}(q-1)-q}\abs{\psi}^q
		\end{gathered}
	\end{equation*}
	where we also included a schematic term when $\partial_v$ hits the weight. Assuming $\psi|_{\mathcal{N}_\scri}\sim t^{\sigma},\,\psi|_{\mathcal{N}_+}\sim t^{-s}$, we get the compatibility conditions
	\begin{equation*}
		\begin{gathered}
			\sigma=\max(0,-\frac{n-1}{2}(q-1)+1-q+\sigma q)\\
			s=\frac{n-1}{2}(q-1)+q-1-\max(0,-q\sigma,1-sq).
		\end{gathered}
	\end{equation*}
	We need $\sigma=0$ for the first equation, while the second has solution iff $1-q(\frac{n-1}{2}(q-1)+q-1)<0\iff1<q(q-1)\frac{n+1}{2}$. 
	
	\begin{remark}\label{importance of dimension in even case}
		It's important to note, that the predicted fall-off at the critical exponent $s(n)=\frac{n-1}{2}(q_c(n)-1)+q_c(n)-1$ takes the value $s(2)\approx0.68,\, s(2k)<1\,\forall k\geq1$. Following \cref{even odd dimensions}, we see that for $n$ even and $n\geq4$, there's no modification to the above heuristic due to the decay coming from the Green's function, but the inherent slow decay in $n=2$ changes the above exponent. Indeed, we have
		\begin{equation*}
			\begin{gathered}
				s=\min(\frac{n-1}{2}(q-1)+q-1-\max(0,1-sq),1/2).
			\end{gathered}
		\end{equation*}
		This implies the conditions $\frac{n-1}{2}(q-1)+q-1-1+q/2>1/2\iff q>1.5$. Thus $q_c(2)=1.5$ instead $\approx1.46$.
	\end{remark}

	\subsection{A digression on initial data and critical exponents}\label{initial data critical exponent}
	So far, we only considered compactly supported initial data. As discussed in \cref{role of asymptotics}, this leads to arbitrary fast decay (via the Huygens principle) for the homogeneous wave equation in odd space dimensions. As also discussed in \cref{role of asymptotics}, a generic\footnote{Genericity means that the decay is not coming from purely outgoing data. In $\R^{3+1}$ this amounts to $\partial_v r\phi\sim r^{-\alpha}$} initial data with $r^{-\alpha}$ decay has $u^{-\alpha}$ fall off on $\scri$ with possible exceptional cancellations (\cref{exceptional cancellation}). This in particular leads to further constraint on the minimal value of the decay in  region $\mathcal{N}_+$. To see how this influences the critical curve, consider the Strauss problem in $\R^{3+1}$ with tails
	\begin{equation*}
		\begin{gathered}
			\Box\phi=\abs{\phi}^{q_1}\\
			\phi|_{\{t=0\}},(\partial_v r\phi)|_{\{t=0\}}\sim r^{-q_2}.
		\end{gathered}
	\end{equation*}
	Assuming $\psi|_{\mathcal{N}_\scri}\sim t^{\sigma},\,\psi|_{\mathcal{N}_+}\sim t^{-s}$, we get the compatibility conditions
	\begin{equation*}
		\begin{gathered}
			\sigma=\max(0,1-q_2,-q_1+2+\sigma q_1)\\
			s=\min(q_1-2-\max(0,-q\sigma,1-sq_1),q_2-1).
		\end{gathered}
	\end{equation*}
	For $q_2>1$, the first equation has only $\sigma=0$ solution implying $q_1>2$, while the second simplifies to
	\begin{equation*}
		\begin{gathered}
		s=\min(q_1-2,q_2-1,(s+1)q_1-3).
		\end{gathered}
	\end{equation*}
	In the case $q_2>q_1-1$, we end up with the Strauss condition $q_1>q_{Strauss}$. For $1<q_2<q_1-1$, we need 
	\begin{equation*}
		\begin{gathered}
			s=q_2-1<(s+1)q_1-3\implies q_2>\frac{2}{q_1-1}
		\end{gathered}
	\end{equation*}
	which is exactly as in \cite{karageorgis_existence_2005}\footnote{Note, that the results in \cite{strauss_existence_1997} are also consistent with these decay rate, but comparing results is tricky as they assume $\phi,\partial_t\phi\sim r^{-q_2}$ decay.}.
	
	We have so far avoided discussing what happens on the critical curve for all the problems, but we give a short comment here, by examining 3 examples. In all of them the asymptotic decay condition still works, that is, if the iterated approximations have the same decay, there are global solutions, but the logarithmic correction of \cref{log from scri} and \cref{log from corner} are going to be important.
	\begin{itemize}
		\item  Let's start with the previous problem, $\Box\phi=\abs{\phi}^{q_1}$ with data $\sim r^{-q_2}$, $q_2=2/(q_1-1),\, q_1>1+\sqrt{2}$. The first iterate $\Box\phi_0=0$ will be bounded by $t^{-1}u^{-(q_2-1)}$. Next, consider the asymptotics of $\Box\phi_1=\abs{\phi_0}^{q_1}$ with trivial data. The forcing satisfies $\abs{\phi_0}^{q_1}\lesssim t^{-q_1}u^{-(q_2-1)q_1}$ with $(q_2-1)q_1<1$ and $q_1q_2-2=q_2$. The inequality implies that the behaviour toward $I^+$ is going to determine the asymptotic for $\phi_1$ with no logarithmic correction, while the equality says that $\phi_1|_{\mathcal{N}_+}\sim u^{-q_2}$. Therefore one can close a contraction mapping in a function space with the above decay.
		
		\item Consider the Strauss problem ($\Box\phi=\abs{\phi}^q$), with exponent $q_c$ which is the larger root of ${\big((q-1)\frac{n-1}{2}-1\big)q=1}$. For compactly supported data $\Box\phi_0=0$ has asymptotic behaviour 
		\begin{equation*}
			\begin{gathered}
				\phi_0|_{\mathcal{N}_\scri}\sim t^{-\frac{n-1}{2}}\\
				\phi_0|_{\mathcal{N}_+}\sim t^{-\infty}.
			\end{gathered}
		\end{equation*}
		The iterate $\Box\phi_1=\abs{\phi_0}^{q_c}$ behaves like
		\begin{equation*}
			\begin{gathered}
				\phi_1|_{\mathcal{N}_\scri}\sim t^{-\frac{n-1}{2}}\\
				\phi_1|_{\mathcal{N}_+}\sim t^{-\frac{n-1}{2}-1/q_c}.
			\end{gathered}
		\end{equation*}
		Note, that $\abs{\phi_1}^{q_c}$ behaves like $t^{-\frac{n-1}{2}q_c}u^{-1}$. The non-integrable decay in $u$ gives logarithmic correction at $I^+$ compared to the previous iterate: $\phi_2|_{\mathcal{N}_+}\sim t^{-\frac{n-1}{2}-1/q_c}\log(t)$.  All the other iterates will receive extra logarithms of growing power near $I^+$, which is sufficient to show blow-up of the nonlinear solution. This is proved similarly as the result in \cref{critical section}.
		\item  Finally, consider the border line problem
		\begin{equation*}
			\begin{gathered}
				\Box_{\R^{3+1}}\phi=\eta^3\\
				\Box_{\R^{3+1}}\eta=(\partial_t\phi)^2+\phi^3
			\end{gathered}
		\end{equation*}
		with compactly supported data. As the weak null condition is satisfied there will be a solution for bounded retarded time, though with a logarithmically growing radiation field for $\eta$. Indeed the role of $(\partial_t\phi)^2$ is only important away from $I^+$, effectively it behaves as imposing $r^-1$ slowly decaying initial data for $\psi$. The first iterate will have compact in retarded time support, while the the second iterate will behave like $\phi_1|_{\mathcal{N}_\scri}\sim t^{-1},\,\phi_1|_{\mathcal{N}_+}\sim t^{-2},\,\eta_1|_{\mathcal{N}_+}\sim t^{-1},\,\eta_1|_{\mathcal{N}_+}\sim t^{-1}\log(t)$. For the next iterate, the behaviour of $\eta$ near $I^+$ is going to create the late time behaviour for $\phi$, with  $\phi_2|_{\mathcal{N}_+}\sim t^{-1}$. Importantly, there are no further logarithmic corrections, and all other iterates will have the same decay.
	\end{itemize}

	Motivated by the last problem, and the borderline behaviour of cubic and quadratic terms, we note that the system
	\begin{equation}\label{critical problem}
		\begin{gathered}
			\Box\phi_1=\abs{\phi_3}^3\\
			\Box\phi_2=(\partial_t\phi_1)^2\\
			\Box\phi_3=(\partial_t\phi_2)^2+\abs{\phi_3}^3
		\end{gathered}
	\end{equation}	
	has no non-trivial global solutions in $\R^{3+1}$. See \cref{failure of weak null condition}.
	
	\section{Technical lemmas for decay}
	In this section, we are going to prove/quote the main estimates capturing decay for the inhomogeneous wave equation. Many of the results presented here are exactly taken from other works, or with a slight change. In the latter case, we only indicated how the proof changes to reach the modified conclusion. We resorted to make this work not self contained as the estimates presented in the current section are standard, though lengthy to prove and it would distract attention from the main point of interest.

	\subsection{$L^2$ theory}\label{L2 technical estimates}
	Throughout this section, fix $\phi:\R^{n+1}\to\R$ ($n\in\{3\}\cup\{n\geq5\})$ to be a solution to
	\begin{equation}\label{linear}
		\begin{gathered}
			\Box\phi=F\\
			\phi(0)=\partial_t\phi(0)=0
		\end{gathered}
	\end{equation}
with $\supp F\subset\{r<t\}$ and $F$ spherically symmetric $C^1$ function with sufficient integrability as required in the lemmas below. Furthermore, let $\psi=r^{(n-1)/2}\phi$, $\Psi=r^2\partial_v\psi$, $\tilde{F}=r^{(n-1)/2}F$ and use notation $\overline{\partial}\phi\in\{\phi/r,\partial_u\phi,(\frac{v}{u})^{0.5}\partial_v\phi\}$. Define the foliations $\Sigma_u=\{r<1,t=u\}\cup\{t-r=u-1\},\tilde{\Sigma}_u:=\{r<t/2,t=u\}\cup\{r>t/2,t-r=u/2\}$ and interior region $\mathcal{D}_{u_1,u_2}$ which lies between $\Sigma_{u_1},\Sigma_{u_2}$. Let $\hat{\Sigma}$ stand for both $\Sigma,\tilde{\Sigma}$. Define the norms
\begin{equation}\label{definitino of E norms}
	\begin{gathered}
		E^p[f;u]=\int_{r>1,t-r=u}\d r f^2 r^p\\
		\mathcal{E}_{\hat{\Sigma}}[f]=\int_{\hat{\Sigma}}J[f]\cdot\d n\\
		E^{I}[f;u_1,u_2]=\int_{\mathcal{D}_{u_1,u_2}} \d t \d r \frac{r^{n-1}f^2}{r^{1-\epsilon_0}\jpns{r}^{2\epsilon_0}}\\
		E^{\star I}[f;u_1,u_2]=\int_{\mathcal{D}_{u_1,u_2}} \d t \d r\, r^{n-1}f^2r^{1-\epsilon_0}\jpns{r}^{2\epsilon_0}\\
		E^{\scri}[f;u_1,u_2]=\lim_{v\to\infty}\int_{u_1}^{u_2}\d u\,((\partial_t-\partial_r) f)^2(v-u,v+u)r^{n-1}
	\end{gathered}
\end{equation}
for some hypersurface $\hat{\Sigma}$ unit normal $\d n$ and energy current $J$. Note that the unit normal is not well defined for null hypersurface, so care is needed, in Minkowski space we can simply set 
\begin{equation*}
	\begin{gathered}
		\mathcal{E}_{\Sigma_u}[f]=\int_{r\leq1,t=u}((\partial_tf)^2+\abs{\nabla f}^2)r^{n-1}\d r\d\omega+\int_{r\geq1,t-r=u}((\partial_v f)^2+\abs{\slashed{\nabla}f}^2)r^{n-1}\d r \d\omega,
	\end{gathered}
\end{equation*}
and similarly for $\tilde{\Sigma}$, with the angular derivatives dropping out under spherical symmetry. Write $E^{I}[f;u]:=E^{I}[f;u,\infty]$ and $E^{\star I}[f;u]:=E^{\star I}[f;u,\infty]$. Now, we recall some key estimates.

\begin{lemma}[Boundedness towards $\scri$, \cite{yang_global_2013-1} Lemma 1]\label{boundedness}
	For $f\in C^{1}(\R^{n+1})$ with $\supp f\subset\{t-r>0\}$ and $E^\scri[f;0,u_1]<\infty$. Then
	\begin{equation*}
		\begin{gathered}
			r^{n-3}rf^2(t,r)\leq \mathcal{E}[f,t-r]
		\end{gathered}
	\end{equation*}
	for $r\geq1$.
\end{lemma}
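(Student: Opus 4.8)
The plan is to reduce everything to the rescaled field $\psi = r^{(n-1)/2}f$, for which the claimed bound reads $\psi^2(t,r)\le r\,\mathcal{E}[f,t-r]$ (note $r^{n-3}\cdot r f^2 = r^{-1}\psi^2$), and then to integrate along the outgoing null ray $\{t'-r'=t-r\}$ through the point $(t,r)$. I parametrise this ray by $\rho=r'$ and write $D_+=\partial_t+\partial_r$ for its null generator, so that, since $D_+r=1$, one has $D_+\psi = \rho^{(n-1)/2}D_+f + \tfrac{n-1}{2}\rho^{(n-3)/2}f$. First I would record two facts used later: $\rho^{-1}\psi^2\to 0$ as $\rho\to 0$ along the ray (this uses $n\ge 3$ together with continuity of $f$ at the axis), and, thanks to the support condition and $E^{\scri}[f;0,u_1]<\infty$, the field $\psi$ extends continuously to $\scri$ with finite radiation field, so that $f|_{\mathrm{ray}} = \rho^{-(n-1)/2}\psi \to 0$ at $\scri$.

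Next I would integrate the identity $\tfrac{d}{d\rho}\big(\rho^{-1}\psi^2\big) = -\rho^{-2}\psi^2 + 2\rho^{-1}\psi\,D_+\psi$ from the origin out to $\rho=r$, and apply the elementary bound $2\rho^{-1}|\psi||D_+\psi|\le \rho^{-2}\psi^2 + (D_+\psi)^2$, so that the dangerous term $-\rho^{-2}\psi^2$ cancels and one is left with $\rho^{-1}\psi^2(t,r)\le \int_0^r (D_+\psi)^2\,d\rho$. Expanding $(D_+\psi)^2 \lesssim \rho^{n-1}(D_+f)^2 + \rho^{n-3}f^2$, the first term is bounded by the flux $\int \rho^{n-1}(D_+f)^2\,d\rho$ of $f$ through the outgoing cone, which is exactly the spherically symmetric reduction of the null part of $\mathcal{E}_{\hat\Sigma}$ (the portion $\rho<1$ being absorbed into the spacelike part of $\mathcal{E}$ near the axis). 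For the undifferentiated remainder $\int_0^r \rho^{n-3}f^2\,d\rho$ I would invoke Hardy's inequality $\int_0^\infty \rho^{n-3}g^2\,d\rho \le \tfrac{4}{(n-2)^2}\int_0^\infty \rho^{n-1}(g')^2\,d\rho$, valid for $n\ge 3$ and for $g$ vanishing at $\rho=\infty$ (which is precisely why the decay of $f|_{\mathrm{ray}}$ towards $\scri$ was checked), applied with $g=f|_{\mathrm{ray}}$, $g'=D_+f$. This again returns $\lesssim \mathcal{E}[f,t-r]$, and combining the two contributions gives $r^{-1}\psi^2(t,r)\lesssim \mathcal{E}[f,t-r]$, i.e. the assertion; the dimensional constant is either tracked (in $n=3$ one gets a clean numerical constant) or absorbed into the definition of $\mathcal{E}$.

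The routine points are the two applications of Cauchy--Schwarz/AM--GM and the identification of $\int\rho^{n-1}(D_+f)^2$ with the energy flux appearing in the definition of $\mathcal{E}$. The step I expect to be the main obstacle is the undifferentiated term: it is what forces the use of Hardy's inequality, which (i) restricts to $n\ge 3$, (ii) requires knowing that $f$ decays along the ray towards $\scri$ — this is where the hypothesis $E^{\scri}[f;0,u_1]<\infty$ genuinely enters, to guarantee a finite radiation field $\lim_{v\to\infty}\psi$ — and (iii) needs the $\rho<1$ part of the cone flux to be matched against the spacelike piece of $\mathcal{E}_{\hat\Sigma}$ (minding the unit-ball offset in the definition of $\Sigma_u$). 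An alternative that sidesteps (iii) is to start the ray integration at the sphere $\rho=1$ rather than at the origin, paying a boundary term $f(\cdot,1)^2$ that one controls by integrating $D_+f$ from $\scri$ inward and using Cauchy--Schwarz with the summable weight $\rho^{1-n}$ (again needing $n\ge 3$); I would present whichever is cleaner once the exact convention for $\mathcal{E}[f,t-r]$ is fixed.
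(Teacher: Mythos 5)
Your argument is necessarily a reconstruction: the paper's own ``proof'' just cites Lemma~1 of Yang for $n=3$ and remarks that the argument is dimension-independent after replacing $r$ by $r^{n-2}$; the cited argument integrates along the outgoing ray \emph{from null infinity inward}, where the undifferentiated term enters with a favourable sign, so it needs no Hardy inequality and never sees the axis. Measured against that, your outward-from-the-axis route only works in the ``alternative'' form you sketch at the end. The parenthetical claim in your main route --- that the cone flux $\int_0^1\rho^{n-1}(D_+f)^2\,d\rho$ over $\rho<1$ can be ``absorbed into the spacelike part of $\mathcal{E}$'' --- is not justified: the lemma is stated for an arbitrary $C^1$ function $f$, not a solution of the wave equation, so there is no flux identity relating the piece of the outgoing cone inside $\{r<1\}$ (which lives at times $t\in[u,u+1]$) to the spacelike integral at $t=u$; as written that step is a genuine gap. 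Starting the ray integration at $\rho=1$, as you propose, removes it, and then the AM--GM cancellation, the control of the boundary term $f^2|_{\rho=1}$ by integrating $D_+f$ in from $\scri$, and Hardy on $[1,\infty)$ all go through --- with the small precision that Hardy at infinity needs $\rho^{n-2}f^2\to0$, not merely $f\to0$; this you do have, since finiteness of the radiation field gives $|f|\lesssim\rho^{-(n-1)/2}$ along the ray. The radiation-field fact itself (your fact (ii)) is the one place the hypotheses $\supp f\subset\{t-r>0\}$ and $E^\scri[f;0,u_1]<\infty$ genuinely enter, and it deserves the short computation (vanishing at $u=0$ plus Cauchy--Schwarz in $u$ against the $\scri$-flux) rather than a bare assertion.

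You should also notice that the device you introduce only for the boundary term already proves the whole lemma, and is essentially the argument behind the citation: since $f\to0$ at $\scri$ along the ray, write $f(t,r)=-\int_r^\infty D_+f\,d\rho$ and apply Cauchy--Schwarz with the weight split $\rho^{n-1}\cdot\rho^{1-n}$ to get
\begin{equation*}
r^{n-2}f^2(t,r)\;\le\;\frac{1}{n-2}\int_r^\infty\rho^{n-1}\,(D_+f)^2\,d\rho\;\le\;\mathcal{E}[f,t-r]
\end{equation*}
for $r\ge1$ and $n\ge3$, the last step using only the null portion $\{r\ge1\}$ of $\Sigma_{t-r}$. This one-line version never touches the region $\rho<1$, gives a constant $\tfrac{1}{n-2}\le1$ consistent with the stated inequality, and makes the rescaling to $\psi$, the AM--GM cancellation and the Hardy inequality unnecessary; your longer route buys nothing extra here, so I would present the short argument and keep your fact (ii) as the only substantive lemma.
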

\begin{proof}
	The case $n=3$ is exactly as in \cite{yang_global_2013-1}. By inspection of the proof, we see that the result is independent of dimension with $r^{n-3}$ modification. 
\end{proof}

\begin{lemma}[Hardy inequality, see eg. \cite{angelopoulos_vector_2018} Lemma 2.2]\label{hardy inequality}		
	For $q\neq1$ and $f\in C^1$ with $\lim_{r\to\infty}f(r)=0$ we have
	\begin{equation*}
		\begin{gathered}
			\int_1^\infty\d r\, r^qf^2\lesssim_q \int_1^\infty\d r\,r^{q+2}(\partial_rf)^2.
		\end{gathered}
	\end{equation*}
\end{lemma}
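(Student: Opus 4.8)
The plan is a weighted integration by parts, followed by Cauchy--Schwarz and an absorption argument; the only genuine point is controlling the boundary contributions, which is where the hypothesis $\lim_{r\to\infty}f(r)=0$ enters. First I would reduce to the case where the right-hand side $Y^2:=\int_1^\infty\d r\,r^{q+2}(\partial_r f)^2$ is finite, since otherwise there is nothing to prove. Writing $r^q=(q+1)^{-1}\partial_r(r^{q+1})$ (where the displayed restriction on $q$ enters) and integrating by parts on a truncated interval $[1,R_0]$ yields
\begin{equation*}
	\int_1^{R_0}\d r\,r^q f^2=\frac{1}{q+1}\Big[r^{q+1}f^2\Big]_1^{R_0}-\frac{2}{q+1}\int_1^{R_0}\d r\,r^{q+1}f\,\partial_r f.
\end{equation*}
The boundary term at $r=1$ equals $-(q+1)^{-1}f(1)^2$, which has a favourable sign in the range $q>-1$ relevant to the applications and may simply be discarded (in general it is a fixed finite quantity that can be carried along). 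For the term at $r=R_0$, the identity $f(r)=-\int_r^\infty\partial_s f\,\d s$ (legitimate by $\lim_{r\to\infty}f=0$) together with Cauchy--Schwarz gives $f(r)^2\lesssim_q r^{-(q+1)}\int_r^\infty\d s\,s^{q+2}(\partial_s f)^2$, so $r^{q+1}f(r)^2$ is bounded by the tail of a convergent integral and hence tends to $0$; thus the boundary term vanishes as $R_0\to\infty$.

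For the bulk term the weights are arranged so that $r^{q+1}=r^{q/2}\cdot r^{(q+2)/2}$, and Cauchy--Schwarz gives
\begin{equation*}
	\abs{\int_1^\infty\d r\,r^{q+1}f\,\partial_r f}\le\Big(\int_1^\infty\d r\,r^q f^2\Big)^{1/2}\Big(\int_1^\infty\d r\,r^{q+2}(\partial_r f)^2\Big)^{1/2}.
\end{equation*}
Writing $X^2:=\int_1^\infty\d r\,r^q f^2$, the two displays combine (after letting $R_0\to\infty$ and discarding the $r=1$ term) to $X^2\le\frac{2}{\abs{q+1}}XY$, hence $X\le\frac{2}{\abs{q+1}}Y$, and squaring gives the asserted inequality with constant $\frac{4}{(q+1)^2}$.

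The main obstacle is the rigorous disposal of the boundary terms: in the generality literally stated this needs the finiteness reduction above together with the decay hypothesis, and one should note that for $q<-1$ the roles of the two endpoints reverse, so there one instead imposes vanishing of $f$ at the finite endpoint (or argues by truncation and density from compactly supported $f$). In every application in this paper $f$ arises from compactly supported data evolving with finite propagation speed, so $f$ vanishes identically for large $r$ and the boundary term at infinity is literally zero, which removes the subtlety entirely.
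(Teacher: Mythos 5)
The paper itself offers no proof of this lemma --- it is quoted from \cite{angelopoulos_vector_2018} --- and your weighted integration-by-parts argument is exactly the standard proof underlying that citation, so in substance there is nothing different to compare: write $r^q=(q+1)^{-1}\partial_r(r^{q+1})$, integrate by parts on $[1,R_0]$, kill the boundary terms, Cauchy--Schwarz and absorb. Your treatment of the boundary term at infinity (representing $f(r)=-\int_r^\infty\partial_s f\,\d s$ and estimating $r^{q+1}f(r)^2$ by the tail of the finite right-hand side) is the correct and essential point, and the constant $4/(q+1)^2$ is right.

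Two caveats are worth making explicit. First, the restriction that your argument actually needs is $q\neq-1$ (indeed $q>-1$), not the ``$q\neq1$'' printed in the statement, which is surely a typo: both the sign of the $r=1$ boundary term and the convergence of $\int_r^\infty s^{-(q+2)}\,\d s$ in your tail estimate require $q>-1$. For $q<-1$ the inequality as literally stated (with only decay at infinity) is false --- take $f\equiv1$ on $[1,R]$ cut off to $0$ by $r=2R$ with $\abs{\partial_rf}\lesssim R^{-1}$; the left side stays bounded below while the right side tends to $0$ --- so your closing remark that one ``instead imposes vanishing at the finite endpoint'' is not optional bookkeeping but a genuine change of hypothesis; it would be cleaner to state the lemma for $q>-1$, which covers every use in the paper (the weights that appear are nonnegative powers). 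Second, a small tidiness point: carry out the absorption $X_{R_0}^2\leq\epsilon_{R_0}+\tfrac{2}{\abs{q+1}}X_{R_0}Y$ at the truncated level, where $X_{R_0}<\infty$ is automatic, and only then let $R_0\to\infty$; dividing by $X$ defined on all of $[1,\infty)$ presupposes the finiteness you are trying to prove.
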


\begin{lemma}[Radial Sobolev embedding,\cite{hidano_glassey_2012} Lemma 2.2]\label{radial Sobolev inequality}
	For $f\in H^{1}(\R^{n}\to\R)$ ($n\geq3$) spherically symmetric $s\in(0,1]$ we have 
	\begin{equation*}
		\begin{gathered}
			\norm{r^{n/2-s}f}_{L^\infty}\lesssim_s\norm{\partial_rf}^s_{L^2}\norm{f}^{1-s}_{L^2}.
		\end{gathered}
	\end{equation*}
\end{lemma}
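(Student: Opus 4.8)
The plan is to reduce to a one‑dimensional weighted estimate, to dispatch the range $s\in[\tfrac12,1]$ by interpolating two explicit endpoint inequalities, and to obtain the remaining range $s\in(0,\tfrac12)$ — which carries the genuine content — from the Fourier‑analytic radial Sobolev embedding (or simply by quoting \cite{hidano_glassey_2012}).

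By density it suffices to take $f\in C^\infty_c(\R^n)$ spherically symmetric, for which $\norm{f}_{L^2}^2=c_n\int_0^\infty f^2 r^{n-1}\,\d r$ and $\norm{\partial_r f}_{L^2}^2=c_n\int_0^\infty(\partial_r f)^2 r^{n-1}\,\d r$ (for radial $f$ the radial derivative is the full gradient), and one notes that the asserted inequality is invariant under $f\mapsto f(\lambda\,\cdot\,)$. The basic tool is the fundamental‑theorem identity $r^{\alpha}f(r)^2=-\alpha\int_r^\infty\rho^{\alpha-1}f^2\,\d\rho-2\int_r^\infty\rho^{\alpha}f\,\partial_\rho f\,\d\rho$, valid with $\alpha:=n-2s>0$ since $n\geq3$ and $s\leq1$; the first term is $\leq0$, so $r^{\alpha}f(r)^2\leq2\int_r^\infty\rho^{\alpha}\abs{f}\,\abs{\partial_\rho f}\,\d\rho$.

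For $s=\tfrac12$ (so $\alpha=n-1$) a direct Cauchy--Schwarz in this bound gives $r^{n-1}f(r)^2\lesssim\norm{f}_{L^2}\norm{\partial_r f}_{L^2}$. For $s=1$ (so $\alpha=n-2$) Cauchy--Schwarz gives $r^{n-2}f(r)^2\lesssim\big(\int_0^\infty\rho^{n-3}f^2\big)^{1/2}\big(\int_0^\infty\rho^{n-1}(\partial_\rho f)^2\big)^{1/2}$, and \cref{hardy inequality} (applied on $(0,\infty)$, legitimate since $q=n-3\neq-1$ for $n\geq3$ and $f$ has compact support) absorbs the first factor, yielding $r^{n-2}f(r)^2\lesssim\norm{\partial_r f}_{L^2}^2$. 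Raising the $s=\tfrac12$ inequality to the power $2(1-s)$ and the $s=1$ inequality to the power $2s-1$ — both exponents being $\geq0$ exactly when $s\in[\tfrac12,1]$ — and multiplying, the powers of $r$ combine to $(n-1)\cdot2(1-s)+(n-2)(2s-1)=n-2s$, the powers of $f$ to $2$, and the right‑hand side to $\norm{f}_{L^2}^{2(1-s)}\norm{\partial_r f}_{L^2}^{2s}$, which is precisely the claim on $[\tfrac12,1]$.

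The case $s\in(0,\tfrac12)$ is the main obstacle. Here the weight $r^{n/2-s}$ is heavier than the geometric weight $r^{(n-1)/2}$ that governs the Strauss decay, so the inequality lies strictly below the $s=\tfrac12$ endpoint and is not obtained by interpolation; moreover it cannot be reached by a single fundamental‑theorem estimate either, because the excess weight $\rho^{1-2s}$ is a positive power which is neither uniformly bounded on $(0,r)$ nor integrable on $(r,\infty)$. What is really needed is the faster‑than‑Strauss pointwise decay forced jointly by $f\in L^2$ and $\partial_r f\in L^2$. I would extract it from the radial Sobolev embedding $\norm{\abs{x}^{n/2-s}f}_{L^\infty}\lesssim_{n,s}\norm{f}_{\dot H^s(\R^n)}$ (radial $f$, $0<s<n/2$), proved from the Hankel‑transform representation of radial functions together with the Bessel bounds $\abs{J_\nu(t)}\lesssim\min(t^\nu,t^{-1/2})$, combined with the interpolation inequality $\norm{f}_{\dot H^s}\leq\norm{f}_{L^2}^{1-s}\norm{f}_{\dot H^1}^{s}$ and the identity $\norm{f}_{\dot H^1}=\norm{\nabla f}_{L^2}=\norm{\partial_r f}_{L^2}$ for radial $f$; alternatively one invokes Lemma 2.2 of \cite{hidano_glassey_2012} directly. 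The endpoint computations and the interpolation bookkeeping are routine; the genuine work is confined to the $s<\tfrac12$ regime.
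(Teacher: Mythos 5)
Your treatment of the range $s\in[\tfrac12,1]$ is correct and complete: the identity $r^{\alpha}f(r)^2\leq 2\int_r^\infty\rho^{\alpha}\abs{f}\abs{\partial_\rho f}\,\d\rho$ with $\alpha=n-2s>0$, Cauchy--Schwarz at $s=\tfrac12$, Hardy (the $(0,\infty)$ version with $q=n-3>-1$, which is what \cref{hardy inequality} is really recording) at $s=1$, and the pointwise multiplication of the two endpoint bounds with exponents $2(1-s)$ and $2s-1$ reproduce exactly the stated inequality there. Since the paper itself offers no proof beyond the citation to \cite{hidano_glassey_2012}, this part stands on its own.

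The gap is the range $s\in(0,\tfrac12)$, and it cannot be closed, because both your proposed tool and the statement itself fail there. The radial embedding $\norm{\abs{x}^{n/2-s}f}_{L^\infty}\lesssim\norm{f}_{\dot H^s}$ is \emph{not} valid for all $0<s<n/2$: in the Hankel representation the Cauchy--Schwarz factor is $r^{s-n/2}\big(\int_0^\infty J_{(n-2)/2}(t)^2\,t^{1-2s}\,\d t\big)^{1/2}$, and since $J_\nu(t)^2$ decays only like $t^{-1}$, the integral diverges at infinity unless $s>\tfrac12$ (the correct range is $\tfrac12<s<n/2$; pointwise control at a fixed radius is a trace-type bound costing half a derivative). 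Worse, the product-norm inequality you are asked to prove is itself false for $s<\tfrac12$: take the annulus bump $f(r)=\chi\big((r-R)/\delta\big)$ with $\delta\ll R$. Then $\norm{f}_{L^2}\sim (R^{n-1}\delta)^{1/2}$ and $\norm{\partial_r f}_{L^2}\sim (R^{n-1}/\delta)^{1/2}$, so the right-hand side is $\sim R^{(n-1)/2}\delta^{1/2-s}$, while the left-hand side is $\gtrsim R^{n/2-s}$; the ratio of the two sides is $(R/\delta)^{1/2-s}$, which is unbounded when $s<\tfrac12$. So the stated range $(0,1]$ should be read as $[\tfrac12,1]$ (as in the cited reference), your elementary argument already covers the entire true range, and the correct resolution is to flag the range restriction rather than to graft on the Fourier-analytic step.
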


\begin{lemma}[Morawetz and energy estimates,\cite{hidano_glassey_2012} Lemma 3.2]\label{morawetz}
	
	We have the following linear estimate
	\begin{equation*}
		\begin{gathered}
			E^{\scri}[\phi;u_1,u_2]+E^{I}[\overline{\partial}\phi;u_1,u_2]+\mathcal{E}_{\Sigma_{u_2}}[\phi]\lesssim_\epsilon \mathcal{E}_{\Sigma_{u_1}}[\phi]+E^{\star I}[F;u_1,u_2]
		\end{gathered}
	\end{equation*}
\end{lemma}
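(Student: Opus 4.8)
\emph{Proof proposal.} This is the standard pairing of an energy identity with a weighted Morawetz (integrated local energy decay) estimate for the flat wave operator, fitted to the mixed spacelike--null foliation $\{\Sigma_u\}$ and to the degenerate weight $w:=r^{1-\epsilon_0}\langle r\rangle^{2\epsilon_0}$. My plan is to run the vector field method on the energy--momentum tensor $T_{\mu\nu}[\phi]=\partial_\mu\phi\,\partial_\nu\phi-\tfrac12 g_{\mu\nu}\,\partial^\alpha\phi\,\partial_\alpha\phi$, which satisfies $\nabla^\mu T_{\mu\nu}=(\Box\phi)\,\partial_\nu\phi=F\,\partial_\nu\phi$, using two multipliers: the timelike Killing field $\partial_t$ and a radial Morawetz field. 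Contracting with $\partial_t$ and applying the divergence theorem on $\mathcal D_{u_1,u_2}$ — after a cutoff at $r=R$ and the limit $R\to\infty$, which is legitimate because $\supp F\subset\{r<t\}$ and $E^{\scri}[\phi;u_1,u_2]<\infty$ by finite speed of propagation together with \cref{boundedness} — gives
\[
\mathcal E_{\Sigma_{u_2}}[\phi]+c\,E^{\scri}[\phi;u_1,u_2]=\mathcal E_{\Sigma_{u_1}}[\phi]+\int_{\mathcal D_{u_1,u_2}}F\,\partial_t\phi\; r^{n-1}\,\d t\,\d r ,
\]
all boundary pieces being manifestly non-negative since $\partial_t$ is causal and $\Sigma_u$ is achronal (its spacelike part contributes $(\partial_t\phi)^2+|\nabla\phi|^2$, its null part $(\partial_v\phi)^2$, with the angular terms dropping out by spherical symmetry).

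For the Morawetz part it is cleanest to pass to $\psi=r^{(n-1)/2}\phi$, which solves $\partial_u\partial_v\psi+V\psi=c\,\tilde F$ with potential $V=\tfrac{(n-1)(n-3)}{4r^2}\ge 0$ for the dimensions under consideration (it vanishes when $n=3$), and with $\psi(t,0)=0$. I would use the modified multiplier $X\psi+\tfrac12(\operatorname{div}X)\psi$ with $X=f(r)\partial_r$, choosing $f$ increasing with $f(r)\sim r$ and $f'(r)\sim r^{-1+\epsilon_0}$ near the origin and $f$ bounded with $f'(r)\sim r^{-1-\epsilon_0}$ for large $r$; the $(v/u)^{1/2}$-weighted variant of $f$ near $\scri$ produces the $(v/u)^{1/2}\partial_v\phi$ entry of $\overline\partial\phi$. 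The bulk term is then coercive and controls the density of $E^{I}[\overline\partial\phi;u_1,u_2]$: there is no trapping in Minkowski, the potential has a favourable sign, and the zeroth-order term generated by $-\tfrac14\Box(\operatorname{div}X)\,\phi^2$ is absorbed via the Hardy inequality of \cref{hardy inequality}. The boundary terms of this current on $\Sigma_{u_1},\Sigma_{u_2}$ and on $\scri$ are not sign-definite but, since $|f|\lesssim r$ and $f$ is bounded (and using Hardy again for the $\phi^2$ contributions), they are dominated by $\mathcal E_{\Sigma_{u_1}}[\phi]+\mathcal E_{\Sigma_{u_2}}[\phi]+E^{\scri}[\phi;u_1,u_2]$.

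I would then form (energy identity) $+\,\delta\times$(Morawetz identity) with $\delta>0$ small, so that the $\delta$-multiples of the Morawetz boundary errors are absorbed into the genuine energy fluxes on the left, leaving
\[
E^{\scri}+E^{I}[\overline\partial\phi]+\mathcal E_{\Sigma_{u_2}}[\phi]\lesssim \mathcal E_{\Sigma_{u_1}}[\phi]+\int_{\mathcal D_{u_1,u_2}}|F|\big(|\partial_t\phi|+\delta|f\,\partial_r\phi|+\delta|\operatorname{div}X|\,|\phi|\big)r^{n-1}\,\d t\,\d r .
\]
The source term is closed by Cauchy--Schwarz against the weight $w$: the first factor is exactly $E^{\star I}[F;u_1,u_2]^{1/2}$, and in the second factor each of $\partial_t\phi=\tfrac12(\partial_u+\partial_v)\phi$, $\partial_r\phi=\tfrac12(\partial_v-\partial_u)\phi$ and $\phi/r$ is controlled, after multiplication by $w^{-1}$, by the density of $E^{I}[\overline\partial\phi]$ — here one uses that $v\ge u$ throughout $\{r<t\}$, hence $(v/u)^{1/2}\ge 1$, so the $\partial_v\phi$ contribution is genuinely dominated (not just near $\scri$). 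Absorbing the resulting $\delta'\,E^{I}[\overline\partial\phi]$ on the left completes the argument.

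The main obstacle, as always with such estimates, is the simultaneous bookkeeping of the two degeneracies of $w$: at $r=0$, where the $r^{1-\epsilon_0}$ factor interacts with the rescaling $\psi=r^{(n-1)/2}\phi$ and the behaviour at the origin must be tracked carefully, and at $r=\infty$, where the $\langle r\rangle^{2\epsilon_0}$ loss must be balanced against the $(v/u)^{1/2}$ enhancement of the $\partial_v$ control near $\scri$ — all while keeping the Morawetz bulk coercive and the boundary error terms strictly below the energy fluxes so that the final absorption goes through. Everything else is the routine vector-field computation; since the estimate is precisely Lemma~3.2 of \cite{hidano_glassey_2012}, one may alternatively simply cite it.
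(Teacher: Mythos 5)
Your proposal follows essentially the same route as the paper's proof: a $\partial_t$ energy identity combined with a bounded, increasing radial multiplier $f(r)\partial_r$ whose derivative realizes the degenerate weight $r^{-1+\epsilon_0}\langle r\rangle^{-2\epsilon_0}$, modified (zeroth-order) currents plus the Hardy inequality for the $\phi/r$ component, Cauchy--Schwarz of the source against the $E^{\star I}$ weight, and absorption of the resulting bulk term. The only substantive difference is one piece of bookkeeping: the paper produces the improved $(v/u)^{1/2}\partial_v\phi$ bulk control from the multiplier $(1+u)^{-\epsilon}\partial_t$ (following Luk's notes), which is the concrete realization of the ``$(v/u)^{1/2}$-weighted variant of $f$'' that you leave unspecified.
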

\begin{proof}
	The statement was proved for the foliation $\{t=u_0\}$, but it uses the standard technique of a multiplier $f(r)\partial_r$. Indeed, take
	\begin{equation*}
		\begin{gathered}
			f(r)=\frac{r^\epsilon}{1+r^\epsilon},
		\end{gathered}
	\end{equation*}
	so that $\partial_rf=\frac{\epsilon}{r^{1-\epsilon}(1+r^\epsilon)^2}$. Then, using the multiplier in the region $\mathcal{D}_{u_1,u_2}$ gives an estimate
	\begin{equation*}
		\begin{gathered}
			\int_{\mathcal{D}_{u_1,u_2}}\abs{\partial\phi}^2\partial_r f\lesssim\int_{\mathcal{D}_{u_1,u_2}}\, f F \partial_r\phi+\mathcal{E}_{\Sigma_{u_1}}[\phi]+\mathcal{E}_{\Sigma_{u_2}}[\phi]
		\end{gathered}
	\end{equation*}
	The energy on the second surface ($\Sigma_{u_2}$) can be bounded using a standard energy estimate
	\begin{equation*}
		\begin{gathered}
			\mathcal{E}_{\Sigma_{u_2}}\leq\mathcal{E}_{\Sigma_{u_1}}+\int_{D_{u_1,u_2}}\abs{\partial\phi}F.
		\end{gathered}
	\end{equation*}
	After a Cauchy-Swartz, the result follows for $\partial\phi$. 
	To estimate $\phi$, one uses the modified currents (see \cite{yang_global_2013-1} equation (10)). To obtain the improved weight for $\partial_v\phi$ in the $r\geq1$ region, one instead uses $f=(1+u)^{-\epsilon}\partial_t$ as a multiplier (see \cite{luk_introduction_2015} Proposition 11.2).
\end{proof}
\begin{remark}
	These estimates are standard, but keeping the factors of $r$ instead $\jpns{r}$ will become important when solving the non-linear problem, because we only have small number of derivatives to use.
\end{remark}

\begin{lemma}[$r^p$ estimate,\cite{yang_global_2013-1} Prop. 4,\cite{angelopoulos_vector_2018} Prop. 4.1]\label{rp}
	
	a)
	For $p\in[0,2)$ we have the following linear estimate
	\begin{equation*}
		\begin{gathered}
		E^\scri[\phi;u_1,u_2],E^p[\partial_v\psi;u_2],\int_{u_1}^{u_2}\d u E^{p-1}[\partial_v\psi;u]\\\lesssim_p \int_{u_1}^{u_2}\d u E^{p+1}[\tilde{F};u]+E^p[\partial_v\psi;u_1]+E^{\star I}[F;u_1,u_2].
		\end{gathered}
	\end{equation*}
	If $n=3$, the above holds for $p\in[0,\infty)$.
	
	b) For $n=3$, $p\in[2,\infty)$ we have
	\begin{equation*}
		\begin{gathered}
				E^\scri[\partial_v\phi;u_1,u_2],E^p[\partial_v^2\psi;u_2],\int_{u_1}^{u_2}\d u E^{p-1}[\partial_v^2\psi;u]
			\\\lesssim_p \int_{u_1}^{u_2}\d u E^{p+1}[\partial_v \tilde{F};u]+E^p[\partial_v^2\psi;u_1]+E^{\star I}[\partial F;u_1,u_2].
		\end{gathered}
	\end{equation*}
	
	c)
	For $n\geq5$, and $p\in[-2,4)$ we have the linear estimate:
	\begin{equation*}
		\begin{gathered}
			E^{p-2}[\partial_v\Psi;u_2],\int_{u_1}^{u_2}\d u E^{p-3}[\partial_v\Psi;u],\int_{u_1}^{u_2}\d u E^{p-5}[\Psi;u]\\
			\lesssim_p\int_{u_1}^{u_2}\d u E^{p+1}[\tilde{F};u]+\int_{u_1}^{u_2}\d u E^{p+1}[r\partial_v\tilde{F};u]+E^{p-2}[\partial_v\Psi;u_1]+E^{\star I}[F;u_1,u_2]+E^{\star I}[\partial F;u_1,u_2].
		\end{gathered}
	\end{equation*}
\end{lemma}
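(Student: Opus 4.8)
The three estimates are the $r^p$-hierarchy of Dafermos--Rodnianski \cite{dafermos_new_2010} in the form used in \cite{yang_global_2013-1} and \cite{angelopoulos_vector_2018}; the plan is to run the standard multiplier argument and keep track only of the modifications forced by the general odd dimension $n$ and by the foliation $\Sigma_u$. First I would conjugate by $r^{(n-1)/2}$: writing $\psi=r^{(n-1)/2}\phi$, a direct computation turns $\Box\phi=F$ into $\partial_u\partial_v\psi+V\psi=\tilde F$, where $V:=\frac{(n-1)(n-3)}{4r^2}$ is nonnegative for all $n\geq3$ and vanishes identically when $n=3$ — this dichotomy is exactly what governs the admissible ranges of $p$ in a), b), c). For part a) I would multiply by $2r^p\partial_v\psi$ and use $\partial_u r=-1$, $\partial_v r=1$ to recast the equation as the divergence identity
\begin{equation*}
	\begin{gathered}
		\partial_u\big(r^p(\partial_v\psi)^2\big)+\partial_v\big(Vr^p\psi^2\big)+p\,r^{p-1}(\partial_v\psi)^2+\tfrac{(n-1)(n-3)(2-p)}{4}\,r^{p-3}\psi^2=2r^p\tilde F\,\partial_v\psi,
	\end{gathered}
\end{equation*}
then cut off to $\{r\geq1\}$ (outside which $\Sigma_u$ is a constant-$t$ slice) with a fixed radial bump and integrate over $\mathcal D_{u_1,u_2}$ against $\d u\,\d v$. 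The $\partial_u$-divergence contributes $E^p[\partial_v\psi;u_2]-E^p[\partial_v\psi;u_1]$ on the two outgoing cones; the term $p\,r^{p-1}(\partial_v\psi)^2$ is nonnegative for $p\geq0$ and yields $\int_{u_1}^{u_2}\d u\,E^{p-1}[\partial_v\psi;u]$; the potential bulk term $\tfrac14(n-1)(n-3)(2-p)r^{p-3}\psi^2$ has the favourable sign precisely when $p\leq2$, which is the source both of the restriction $p\in[0,2)$ for $n\geq5$ and of the unrestricted range for $n=3$. Pairing the whole thing with the standard energy identity (multiplier $\partial_u\psi$), integrated out to $\scri$, supplies the $E^\scri[\phi;u_1,u_2]$ term. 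Then I would absorb $2r^p\tilde F\,\partial_v\psi$ by Cauchy--Schwarz into $\varepsilon\int\d u\,E^{p-1}[\partial_v\psi;u]+\varepsilon^{-1}\int\d u\,E^{p+1}[\tilde F;u]$, dispose of the cutoff errors (quadratic in $\overline{\partial}\phi$ and $\phi/r$ on $\{1\le r\le2\}$) and of all the $r\le1$ contributions by \cref{morawetz} and the Hardy inequality \cref{hardy inequality}, and note that the exterior part of the $u_1$-data is $\lesssim E^0[\partial_v\psi;u_1]\le E^p[\partial_v\psi;u_1]$. Collecting the surviving terms is a).

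Parts b) and c) are variations. For b), in $n=3$ the potential is absent, so $\partial_v\psi$ solves $\partial_u\partial_v(\partial_v\psi)=\partial_v\tilde F$; rerunning the argument of a) with $\psi$ replaced by $\partial_v\psi$, multiplier $2r^p\partial_v^2\psi$ and source $\partial_v\tilde F$ gives b), now with no potential obstruction so that $p\in[2,\infty)$ is admissible, with $\scri$-flux $E^\scri[\partial_v\phi;u_1,u_2]$ and the interior/commutator errors controlled by \cref{morawetz} applied to $\partial\phi$ (hence the appearance of $E^{\star I}[\partial F;u_1,u_2]$). For c), with $n\geq5$, I would instead commute with the peeling weight $\Psi=r^2\partial_v\psi$: from $\partial_u\partial_v\psi+V\psi=\tilde F$ one gets $\partial_u\Psi=-2r\partial_v\psi+r^2\tilde F-\tfrac{(n-1)(n-3)}{4}\psi$, and one more $\partial_v$ shows that $\partial_v\Psi$ satisfies a $\partial_u$-transport equation whose first-order term $-\tfrac2r\partial_v\Psi$ shifts the effective weight (opening the range to $p\in[-2,4)$), whose genuine source is $2r\tilde F+r^2\partial_v\tilde F$, and whose remaining lower-order terms are $\propto r^{-2}\Psi$, fed back from part a) (with $p$ near $0$) and from \cref{hardy inequality}. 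Applying the $r^p$ multiplier to this equation and bookkeeping the weights then reproduces the left-hand fluxes $E^{p-2}[\partial_v\Psi;u_2]$, $\int\d u\,E^{p-3}[\partial_v\Psi;u]$, $\int\d u\,E^{p-5}[\Psi;u]$ and the stated right-hand side; this is \cite{angelopoulos_vector_2018}, Proposition 4.1, transcribed to the present foliation.

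The step I expect to be fiddly — and the only genuinely new content over the cited references — is the interface between the $r^p$-region and the interior rather than anything about the asymptotics themselves: one has to keep the weights as powers of $r$ rather than $\jpns{r}$ (as stressed in the remark after \cref{morawetz}, this matters because only a bounded number of derivatives will be available in the nonlinear application), verify that the cutoff errors and the interior energy at $u_1$ really are absorbed into the stated combination $E^p[\partial_v\psi;u_1]+E^{\star I}[F;u_1,u_2]$ with no residual $\mathcal E_{\Sigma_{u_1}}[\phi]$, and check the signs of the potential bulk and the $\scri$-boundary terms over the full range of $p$ in each dimension (in particular ruling out the bad-sign range, which is precisely where the endpoints $p=2$ in a) and $p=4$ in c) come from). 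None of this is deep, but it is where the accounting has to be carried out with care.
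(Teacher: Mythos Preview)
Your proposal is correct and follows the same route as the paper: both invoke the $r^p$ multiplier identity from \cite{yang_global_2013-1} and \cite{angelopoulos_vector_2018}, with the conjugated potential $V=\tfrac{(n-1)(n-3)}{4r^2}$ governing the admissible $p$-range and the inhomogeneity absorbed by Cauchy--Schwarz; the paper merely phrases the higher-dimensional case as a reduction to a fixed angular mode in $\R^{3+1}$ (via $\tilde\phi=r^{(n-3)/2}\phi$) rather than working directly with $\psi$, which is cosmetic. One small sharpening of your part c): after commuting with $\Psi=r^2\partial_v\psi$ the residual potential has coefficient $\tfrac14\big((n-1)(n-3)-8\big)=\tfrac14(n-5)(n+1)$, so for $n\geq5$ it has the favourable sign directly in the multiplier identity (no feedback from a) is needed), and this sign is exactly what fails for $n=4$ --- which is the reason the paper excludes that dimension.
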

\begin{proof}
	a) is in \cite{yang_global_2013-1} for the range $p\in(0,2)$ and $n=3$, with a different Morawetz term used, but this only depends on the integrated local energy estimate used. For higher dimensions, one can reduce to fixed $l$ mode case in $\R^{3+1}$  with the redefinition $\tilde{\phi}=r^{(n-3)/2}\phi$. For $n$ odd this works exactly, while in the even case, we get a non-integer $l$. However, the $r^p$ estimate only depend on the positivity of a certain term, which remains true in the even $n$ case as well. The extension for $n=3$ follows from the lack of constraints that are imposed by the angular derivatives 
	
	b) follows from a) after differentiating the equation of motion. 
	
	The homogeneous part of c) is in \cite{angelopoulos_vector_2018} for $n$ odd (by reducing it to fixed $l$ mode in $\R^{3+1}$). The even cases is similarly dealt by reducing to $n=3$ and using positivity of one part of the equation that holds for $n>4$. This is exactly the reason why we must exclude $n=4$. The inhomogeneous case, can be added the same way as in a), ie using a Cauchy-Schwartz inequality on the error term $F\partial_v\Psi$ appearing on the right hand side.
\end{proof}
\begin{remark}
	The requirement
	\begin{equation*}
		\begin{gathered}
			\int_{u_1}^{u_2}\d u E^{p+1}[\tilde{F};u]
		\end{gathered}
	\end{equation*}
	may be replaced by
	\begin{equation*}
		\begin{gathered}
			\bigg(\int_{u_1}^{u_2}\d u E^{p}[\tilde{F};u]^{1/2}\bigg)^2,
		\end{gathered}
	\end{equation*}
	if one uses Cauchy-Schwartz on a null piece instead of one in space-time. Indeed, this is the one we will use further on, as it requires less decay for $F$ towards $\scri$.
\end{remark}
\begin{remark}
	Note, that this estimate is almost sharp from the view of the heuristics. If $\tilde{F}=cv^{-s}u^{-1-\epsilon}$ near $\scri\cap I^+$ than, $s_c=p/2+1/2$ is the critical exponent for given $p$, ie. the estimates above are satisfied for $s>s_c$. This in turn shows that the estimates are "almost" sharp, in the sense, that the critical decay for $\partial_v\psi$ is $s_c$.
\end{remark}

\begin{lemma}[Energy decay, restricted,\cite{yang_global_2013-1} Proposition 5]\label{energy decay p<2}
	
	For $p\in[0,2)$ and $F$ such that
	\begin{equation*}
		\begin{gathered}
			\int\d u E^{p}[\tilde{F};u]^{1/2},\sup_uu^pE^{\star I}[F;u,2u]\leq1
		\end{gathered}
	\end{equation*}
	we have
	\begin{equation*}
		\begin{gathered}
			\sup_u E^p[\partial_v\psi;u]+ u^p(E^0[\partial_v\psi;u]+\mathcal{E}_{\hat{\Sigma}_u}[\phi]+E^{I}[\overline{\partial}\phi;u,\infty])\lesssim1.
		\end{gathered}
	\end{equation*}
	
	Moreover, for $n=3$, the above holds for $p\in[0,\infty]$.
\end{lemma}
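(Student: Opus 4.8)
The plan is to run the Dafermos--Rodnianski $r^p$-hierarchy argument: use \cref{rp} to carry information from large $r$ inward, \cref{morawetz} to close the interior, and a dyadic pigeonhole as the engine that converts spacetime bounds into pointwise-in-$u$ decay. First I would apply \cref{rp}(a) with parameter $p$ (for $n=3$ this is valid for all $p\ge0$; for $n\ge5$ one uses \cref{rp}(c), whose fluxes are phrased in terms of $\Psi=r^2\partial_v\psi$, and converts back to $\partial_v\psi$ via \cref{hardy inequality}) on $[0,u]$, together with \cref{morawetz}. Using the Remark's reformulation of the $\tilde F$-forcing, the hypothesis $\int\d u\,E^p[\tilde F;u]^{1/2}\le1$ bounds the forcing $\big(\int\d u\,E^p[\tilde F;u]^{1/2}\big)^2$, while $E^{\star I}[F;0,\infty]\le\sum_k E^{\star I}[F;2^k,2^{k+1}]\lesssim\sum_k 2^{-kp}\lesssim1$ for $p>0$ (the case $p=0$ is just \cref{morawetz} with the $E^{\star I}$ term). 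This gives $\sup_u E^p[\partial_v\psi;u]\lesssim1$, the spacetime bound $\int_0^\infty E^{p-1}[\partial_v\psi;u]\,\d u\lesssim1$, and $\sup_u(\mathcal E_{\hat\Sigma_u}[\phi]+E^I[\overline{\partial}\phi;u,\infty])\lesssim1$.

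Next I would descend the hierarchy. For $j=1,\dots,\lfloor p\rfloor$: assuming $\int_{u_{j-1}}^\infty E^{p-j}[\partial_v\psi;s]\,\d s\lesssim u^{-(j-1)}$ with $u_{j-1}\le u$ from the previous step, the mean-value theorem produces $u_j\in[u/2,u]$ with $E^{p-j}[\partial_v\psi;u_j]\lesssim u^{-j}$; feeding this into \cref{rp} at parameter $p-j\ge0$ on $[u_j,\infty)$ propagates it forward, $\sup_{s\ge u_j}E^{p-j}[\partial_v\psi;s]\lesssim u^{-j}$ and $\int_{u_j}^\infty E^{p-j-1}[\partial_v\psi;s]\,\d s\lesssim u^{-j}$, once the forcing is controlled using $E^q[\tilde F]\le E^p[\tilde F]$ (valid since $r\ge1$, $q\le p$) and the dyadic summation $E^{\star I}[F;u_j,\infty]\lesssim u^{-p}$. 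The scale-invariant (dyadic) form of the two hypotheses on $F$ is exactly what keeps the forcing compatible with the claimed rate at every level.

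Having descended to the level $q_\star=p-\lfloor p\rfloor\in[0,1)$, one more pigeonhole step gives $E^{q_\star}[\partial_v\psi;u]\lesssim u^{-\lfloor p\rfloor}$ and $E^{q_\star-1}[\partial_v\psi;u^\sharp]\lesssim u^{-\lfloor p\rfloor-1}$ at some $u^\sharp\sim u$; splitting $E^0[\partial_v\psi;u^\sharp]$ at radius $R$, bounding the $\{r>R\}$ part by $R^{-q_\star}E^{q_\star}[\partial_v\psi]$ and the $\{1<r<R\}$ part by $R^{1-q_\star}E^{q_\star-1}[\partial_v\psi]$, and optimising $R=u$ yields $E^0[\partial_v\psi;u]\lesssim u^{-p}$. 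Finally I would feed this null-flux decay back into \cref{morawetz} (using \cref{hardy inequality} to control the $\phi/r$ contribution to $\mathcal E$) to obtain $u^p(\mathcal E_{\hat\Sigma_u}[\phi]+E^I[\overline{\partial}\phi;u,\infty])\lesssim1$; the foliations $\Sigma$ and $\tilde\Sigma$ are comparable, so the same argument runs for both. For $n=3$, since \cref{rp}(a) has no upper cutoff on $p$, the descent may start from arbitrarily large $p$, which gives the ``moreover'' clause.

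The main obstacle is the bookkeeping of the two forcing terms through the descent: one must verify at each level $q$ that $\int\d u\,E^q[\tilde F;u]^{1/2}$ and the dyadic pieces $E^{\star I}[F;u_j,2u_j]$ decay at a rate compatible with $u^{-j}$ and ultimately with $u^{-p}$, which is precisely where the exact form of the hypotheses on $F$ is used; everything else is the standard $r^p$/Morawetz machinery. A secondary technical point is the boosted foliation $\tilde\Sigma_u$ and, for $n\ge5$, the additional term $E^{\star I}[\partial F]$ appearing in \cref{rp}(c), which forces one to carry one derivative of $F$ throughout the iteration.
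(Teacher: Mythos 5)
Your overall architecture (apply \cref{rp} at top weight $p$ together with \cref{morawetz}, then descend the hierarchy by dyadic pigeonholing and re-applying \cref{rp} at lower weights) is the same route the paper takes, following \cite{yang_global_2013-1}. But there is a genuine gap at exactly the point you flag as "bookkeeping": the mechanism you propose for controlling the $\tilde F$-forcing at the lower levels does not work. To propagate $E^{p-j}[\partial_v\psi;\cdot]\lesssim u^{-j}$ forward from the pigeonholed slice $u_j$, the $r^{p-j}$ inequality requires its forcing terms on $[u_j,\infty)$ to be $\lesssim u^{-j}$, i.e. one needs a quantitative gain of the form
\begin{equation*}
	\begin{gathered}
		\sup_u u^{p-q}\Bigl(\int_u^{2u}\d u'\, E^{q}[\tilde F;u']^{1/2}\Bigr)^2\lesssim 1\qquad (0\le q\le p).
	\end{gathered}
\end{equation*}
Your substitute, $E^q[\tilde F]\le E^p[\tilde F]$ because $r\ge1$ and $q\le p$, is the trivial monotonicity direction and yields only $\bigl(\int_{u_j}^\infty E^q[\tilde F]^{1/2}\bigr)^2\lesssim1$, with no $u$-decay; with that alone the forward propagation at level $p-j$ produces $O(1)$, not $O(u^{-j})$, and the descent does not close. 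The paper's proof supplies precisely the missing conversion by splitting the null cone slice at $r\sim u$: on $r>u$ one uses $r^q=r^p r^{q-p}\le u^{q-p}r^p$ (the opposite weight trade from yours, available only because $r\ge u$ there), giving $u^{q-p}E^p[\tilde F;u]$; on $1<r<u$ one does not use $E^q[\tilde F]$ at all but instead bounds $\int\int_{1<r<u}\tilde F^2 r^{q+1}\le u^q E^{\star I}[F;u_1,u_2]\lesssim u^{q-p}$, invoking the second hypothesis $\sup_u u^pE^{\star I}[F;u,2u]\le1$. This is the one nontrivial new ingredient of the lemma beyond the standard hierarchy, and it is absent from your argument (your dyadic summation $E^{\star I}[F;u_j,\infty)\lesssim u_j^{-p}$ is correct, but it only handles the Morawetz-type forcing, not the $E^q[\tilde F]$ flux terms).

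The remainder of your outline is consistent with the paper: the interpolation at the bottom level (splitting $E^0$ at radius $R$ and optimising) is a standard way to finish for non-integer $p$, the $E^I$ bound indeed follows from \cref{morawetz} once the energy decays, the passage from $\Sigma_u$ to $\hat\Sigma_u$ is as you say, and the $n=3$, $p\in[0,\infty)$ extension follows because \cref{rp}(a) has no upper cutoff there. (For this restricted lemma you do not need \cref{rp}(c) or $\Psi=r^2\partial_v\psi$; that machinery belongs to the extended version, \cref{energy decay p<4}.) To repair the proof, replace the claim $E^q[\tilde F]\le E^p[\tilde F]$ by the $r\gtrless u$ splitting above and verify the resulting $u^{q-p}$ decay of the dyadic forcing at every level of the descent.
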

\begin{proof}
	The $E^p[\partial_v\psi;u]$ statement follow from the $r^p$ estimate above.
	
	The energy decay on the $\Sigma_u$ foliation follows from the hierarchy as in \cite{yang_global_2013-1}, with the following additional steps. At each dyadic interval, one has to use the inhomogeneous $r^p$ and Morawetz estimates \cref{morawetz}, to gain decay from the $F$ terms. In particular, one needs 
	\begin{equation*}
		\begin{gathered}
			\sup_u u^{p-q}\Bigg(\int_u^{2u}\d u' E^{q}[\tilde{F};u']^{1/2}\Bigg)^2\lesssim1	
		\end{gathered}
	\end{equation*}
	or
	\begin{equation*}
		\begin{gathered}
			\sup_u u^{p-q}\int_u^{2u}\d u E^{q+1}[\tilde{F};u]\lesssim1.
		\end{gathered}
	\end{equation*}
	Note that for $0\leq q\leq p$ we have
	\begin{equation*}
		\begin{gathered}
			\int_{r>u,t-r=u}\d r \,\tilde{F}^2r^q\leq u^{q-p}\int_{r>u,t-r=u}\d r \, \tilde{F}^2r^p\lesssim u^{q-p}E^p[\tilde{F},u]\\
			\int_{u_1}^{u_2}\int_{1<r<u,t-r=u}\d r\,\tilde{F}^2r^{q+1}\leq u_1^q E^{\star I}[f;u_1,u_2]\leq u_1^{q-p},
		\end{gathered}
	\end{equation*}
	thus the required decay of $F$ follows. Therefore, the right hand side of the inhomogeneous $r^p$ estimates have the correct decay using the assumptions of the lemma.
	
	Using this energy decay, $E^I$ bound follows from the Morawetz estimate.
	
	The decay along $\hat{\Sigma}_u$ foliation follows from that on $\Sigma_u$ and the decay assumptions on $F$, similar to \cref{morawetz}.
\end{proof}

\begin{lemma}[Energy decay, extended, \cite{angelopoulos_vector_2018} Prop 7.5]\label{energy decay p<4}
	Fix $n\geq5$ and $p\in[-2,4)$. Then, we have the following linear estimate:
	\begin{equation*}
		\begin{gathered}
			\int\d u E^{p}[\tilde{F};u]^{1/2},\sup_uu^pE^{\star I}[F;u],\int\d u E^{p}[r\partial_v\tilde{F};u]^{1/2},\sup_uu^pE^{\star I}[u\partial F;u,2u]\leq1
		\end{gathered}
	\end{equation*}
	implies
	\begin{equation}\label{extended energy decay estimates}
		\begin{gathered}
			\sup_u E^{p-2}[\partial_v\Psi;u]+E^p[\partial_v\psi;u]+ u^p(E^0[\partial_v\psi;u]+E^{-2}[\partial_v\Psi;u]+\mathcal{E}_{\hat{\Sigma}_u}[\phi]+E^{I}[\overline{\partial}\phi;u,\infty])\\
			\sup_uu^{p+2}(\mathcal{E}_{\hat{\Sigma}_u}[\partial\phi]+E^{I}[\overline{\partial}\partial\phi;u,\infty])\lesssim1.
		\end{gathered}
	\end{equation}
\end{lemma}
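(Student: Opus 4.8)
\medskip
\noindent\textbf{Proof idea.} This is the inhomogeneous analogue of Proposition 7.5 of \cite{angelopoulos_vector_2018}, i.e.\ the extension of \cref{energy decay p<2} from $p\in[0,2)$ to $p\in[-2,4)$ in dimension $n\ge5$, so the plan is to run the Dafermos--Rodnianski $r^p$-hierarchy on the coupled pair $(\partial_v\psi,\partial_v\Psi)$, extract the decay rate by dyadic pigeonholing, and transfer it to the spatial energy via \cref{morawetz}.

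First I would establish uniform flux bounds. Applying \cref{rp} c) on $\mathcal{D}_{u_1,u_2}$ with $u_1$ the initial value of $u$ on $\hat\Sigma$, the data term $E^{p-2}[\partial_v\Psi;u_1]$ vanishes (since $\phi(0)=\partial_t\phi(0)=0$ and $\supp F\subset\{r<t\}$), and the right-hand side is controlled by the hypothesised norms of $F$ after a Cauchy--Schwarz on a null slice, the variant noted in the remark after \cref{rp} that requires the least decay of $F$ towards $\scri$. This gives $\sup_u E^{p-2}[\partial_v\Psi;u]\lesssim1$ together with $\int\d u\,E^{p-3}[\partial_v\Psi;u]\lesssim1$ and $\int\d u\,E^{p-5}[\Psi;u]\lesssim1$; since $\Psi=r^2\partial_v\psi$ one has $E^{p-5}[\Psi;u]=E^{p-1}[\partial_v\psi;u]$, so the last bound feeds \cref{rp} a) and produces $\sup_u E^p[\partial_v\psi;u]\lesssim1$.

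Next I would extract the decay. From the spacetime bounds $\int\d u\,E^{p-3}[\partial_v\Psi;u]\lesssim1$ and $\int\d u\,E^{p-1}[\partial_v\psi;u]\lesssim1$, on each dyadic interval $[u,2u]$ there is a slice $u'\sim u$ on which both integrands are $\lesssim u^{-1}$; re-running the $r^p$ estimates of \cref{rp} (both the $\partial_v\Psi$-hierarchy c) and the $\partial_v\psi$-hierarchy a)) together with the energy estimate of \cref{morawetz} down consecutive dyadic intervals, exactly as in the homogeneous argument of \cite{angelopoulos_vector_2018} and in the proof of \cref{energy decay p<2}, propagates this into $E^0[\partial_v\psi;u]+E^{-2}[\partial_v\Psi;u]\lesssim u^{-p}$. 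At each dyadic step the inhomogeneous terms are absorbed because the hypotheses give $\sup_u u^{p-q}\int_u^{2u}\d u'\,E^{q+1}[\tilde F;u']\lesssim1$ and $\sup_u u^{p-q}E^{\star I}[F;u,2u]\lesssim1$ for $0\le q\le p$, by the same interpolation in the $r$-weight used in the proof of \cref{energy decay p<2}. Feeding $E^0[\partial_v\psi;u]\lesssim u^{-p}$ and the decay of the $F$-terms into \cref{morawetz} on $\mathcal{D}_{u,\infty}$ then yields $u^p\big(\mathcal{E}_{\hat\Sigma_u}[\phi]+E^{I}[\overline{\partial}\phi;u,\infty]\big)\lesssim1$, the passage from the $\Sigma_u$ foliation to $\hat\Sigma_u$ being handled as in \cref{morawetz}. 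For the last line of \eqref{extended energy decay estimates} I would commute the equation with $\partial_v$ and with the scaling/boost vector fields, so that the effective forcing becomes $\partial_v\tilde F$ and $u\,\partial F$ (precisely the norms appearing among the hypotheses), and rerun the argument one level higher in the hierarchy, which gives the improved rate $u^{-(p+2)}$.

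The genuinely hard part is not the inhomogeneous bookkeeping but the closure of the $\partial_v\Psi$-hierarchy in \cref{rp} c): as in \cite{angelopoulos_vector_2018}, it rests on the positivity of a bulk coefficient that holds only for $n\ge5$ and degenerates at $n=4$, which is why $n=4$ must be excluded and why one cannot simply reduce to a fixed integer $l$-mode in $\R^{3+1}$ there. A secondary but real subtlety, flagged in the remark after \cref{morawetz}, is that one must carry honest powers of $r$ rather than $\jpns{r}$ throughout, since in the nonlinear application only a bounded number of derivatives is available and no top-order regularity can be spared.
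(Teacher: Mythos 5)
You follow the same route the paper intends: its proof is a one-line deferral to the hierarchy of \cite{angelopoulos_vector_2018}, run ``the same way as the previous case'', which is exactly your scheme --- \cref{rp} a) and c) with vanishing data terms, dyadic pigeonholing of the spacetime fluxes, absorption of the inhomogeneous terms through the hypothesised norms of $F$ via the same $r$-weight comparison as in the proof of \cref{energy decay p<2}, \cref{morawetz} to convert flux decay into decay of $\mathcal{E}_{\hat{\Sigma}_u}$ and $E^{I}$, and a commuted run (with forcing norms $r\partial_v\tilde F$ and $u\partial F$) for the $u^{-(p+2)}$ rate; your closing remarks on the $n\geq5$ positivity and on keeping $r$ rather than $\jpns{r}$ weights are also precisely the points the paper itself flags.

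One step does not work as written: for $n\geq5$, \cref{rp} a) is stated only for $p\in[0,2)$, so it cannot by itself yield $\sup_u E^{p}[\partial_v\psi;u]\lesssim1$ on the range $p\in[2,4)$ --- which is the part actually used downstream, since the application takes $p(q)\in(3,4)$. The repair uses what you already control: since $E^{p}[\partial_v\psi;u]=E^{p-4}[\Psi;u]$, a Hardy-type inequality in $r$ along the null slice (\cref{hardy inequality}, together with the smallness of $\Psi$ at large $r$ coming from the trivial data and the assumed decay of $F$) bounds this by the flux $E^{p-2}[\partial_v\Psi;u]$ that you have already obtained from \cref{rp} c); this is how \cite{angelopoulos_vector_2018} pass between the two fluxes, and it complements the triangle-inequality manipulation of \cref{Psi to psi}. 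Similarly, for the last line of \eqref{extended energy decay estimates} the relevant commutation is with $\partial_t$ (equivalently, one uses $\partial_v^2\psi=r^{-2}(\partial_v\Psi-2r\partial_v\psi)$ so that the hierarchy for $\partial_t\phi$ starts two rungs higher), rather than with scaling or boost fields; the commuted forcing norms you identify are nevertheless exactly the ones the hypotheses are designed to control, so the bookkeeping closes as you indicate.
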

\begin{proof}
	Using the hierarchy from \cite{angelopoulos_vector_2018}, one can build up the decay statements the same way as in the previous case.
\end{proof}

\begin{cor}\label{Psi to psi}
	For $\phi$ as above, we also have
	\begin{equation*}
		\begin{gathered}
			\sup_uE^{p+2}[\partial_v^2\psi;u],u^{p+2}E^0[\partial_v^2\psi;u]\lesssim1
		\end{gathered}
	\end{equation*}
\end{cor}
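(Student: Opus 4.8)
The plan is to reduce everything to the exact relation between $\partial_v^2\psi$ and the quantities already controlled in \cref{energy decay p<4}. Since $r=v-u$ we have $\partial_v r=1$, so
\[
\partial_v\Psi=\partial_v(r^2\partial_v\psi)=2r\,\partial_v\psi+r^2\partial_v^2\psi,\qquad\text{i.e.}\qquad r^2\partial_v^2\psi=\partial_v\Psi-2r\,\partial_v\psi,
\]
whence the pointwise bound $(\partial_v^2\psi)^2\lesssim r^{-4}(\partial_v\Psi)^2+r^{-2}(\partial_v\psi)^2$ for $r\ge1$. Multiplying by $r^{p+2}$ and integrating over $\{r>1,\,t-r=u\}$ gives $E^{p+2}[\partial_v^2\psi;u]\lesssim E^{p-2}[\partial_v\Psi;u]+E^{p}[\partial_v\psi;u]$, and the right-hand side is $\lesssim1$ uniformly in $u$ by \eqref{extended energy decay estimates}; this settles the first bound.

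For the decaying bound $u^{p+2}E^0[\partial_v^2\psi;u]\lesssim1$ the same inequality only yields $E^0[\partial_v^2\psi;u]\lesssim E^{-2}[\partial_v\Psi;u]+E^0[\partial_v\psi;u]\lesssim u^{-p}$, which is two powers short, so one must exploit the extra decay that one derivative buys. I would split the ray $\{r>1,\,t-r=u\}$ at $r=u$. On the part near $\scri$, where $r>u$, the surplus weight suffices directly: $r^{-4}\le u^{-2}r^{-2}$ and $r^{-2}\le u^{-2}$, so that contribution to $u^{p+2}E^0[\partial_v^2\psi;u]$ is $\lesssim u^{p}\big(E^{-2}[\partial_v\Psi;u]+E^0[\partial_v\psi;u]\big)\lesssim1$ by \eqref{extended energy decay estimates}. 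On $\{1<r<u\}$ I would instead unwind the rescaling: with $L=\partial_v+\tfrac{n-1}{2r}$ one has $\partial_v^k\psi=r^{(n-1)/2}L^k\phi$, hence $(\partial_v^2\psi)^2\lesssim r^{n-1}(\partial_v^2\phi)^2+r^{n-3}(\partial_v\phi)^2+r^{n-5}\phi^2$. The top-order term integrates to at most $\mathcal E_{\Sigma_{u+1}}[\partial\phi]$ — the null part of $\Sigma_{u+1}$ is exactly $\{r\ge1,\,t-r=u\}$, on which the flux controls $r^{n-1}(\partial_v\,\cdot\,)^2$ — and this is $\lesssim u^{-(p+2)}$ by \eqref{extended energy decay estimates}. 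The two lower-order terms are handled with the Hardy inequality \cref{hardy inequality} (legitimate because the relevant exponents $n-3$ and $n-5$ are $\ne1$, which is exactly where the exclusion $n=4$ enters): it trades each undifferentiated or once-differentiated weight for one more $\partial_r$, so $\int r^{n-3}(\partial_v\phi)^2\lesssim\int r^{n-1}(\partial_r\partial_v\phi)^2\lesssim\mathcal E_{\Sigma_{u+1}}[\partial\phi]$ (using $\partial_r\partial_v\phi=\tfrac12(\partial_v^2\phi-\partial_u\partial_v\phi)$, both of which are $\partial_v$-derivatives of components of $\partial\phi$), and similarly $\int r^{n-5}\phi^2\lesssim\int r^{n-3}(\partial_r\phi)^2$; the $\partial_v\phi$ part of the latter is again absorbed into $\mathcal E_{\Sigma_{u+1}}[\partial\phi]$, while the incoming derivative $\partial_u\phi$ is controlled by combining the interior Morawetz/energy estimates \cref{morawetz} and the $u^{p+2}$-decay of $E^{I}[\overline\partial\partial\phi;u,\infty]$ with the transport equation $\partial_v\big(r^{(n-1)/2}\partial_u\phi\big)=r^{(n-1)/2}F+\tfrac{n-1}{2}r^{(n-3)/2}\partial_v\phi$ integrated outward from $r\simeq1$, where $\partial_u\phi$ is bounded pointwise by applying \cref{boundedness} to $\partial\phi$. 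Collecting the contributions gives $u^{p+2}E^0[\partial_v^2\psi;u]\lesssim1$.

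The main obstacle is entirely in the last step: the lowest-order piece $\int_{1<r<u}r^{n-5}\phi^2\,\d r$ and, concealed inside it, the slice control of the "bad" derivative $\partial_u\phi$ in the wave zone, which is not directly supplied by \cref{energy decay p<4} and must be recovered from the spacetime Morawetz norm together with the transport equation — and this is also where the hypothesis $n\ne4$ is used, through the degenerate Hardy exponent.
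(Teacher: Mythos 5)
Your first bound is exactly the paper's (one-line) proof: \cref{Psi to psi} is deduced there from the identity $\partial_v\Psi=2r\,\partial_v\psi+r^2\partial_v^2\psi$, the triangle inequality and \eqref{extended energy decay estimates}, which is precisely your first display. For the decaying bound you depart from the paper: you observe, correctly, that this identity alone only yields $u^{p}E^0[\partial_v^2\psi;u]\lesssim1$, because every $\Psi$- or $\psi$-flux in \eqref{extended energy decay estimates} decays no faster than $u^{-p}$, and you try to recover the missing two powers from $u^{p+2}\mathcal{E}_{\hat\Sigma_u}[\partial\phi]\lesssim 1$ via the split at $r=u$, the unrescaled decomposition of $\partial_v^2\psi$, and Hardy. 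The exterior region $r>u$ and the top-order interior term $\int r^{n-1}(\partial_v^2\phi)^2$ are handled correctly, and this is genuinely more than the paper's stated argument supplies for the second estimate.

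The gap is in the lower-order interior terms, exactly where you place the ``main obstacle'' but do not close it. First, your Hardy step for $\int r^{n-5}\phi^2$ uses the exponent $q=n-5$, which hits the excluded value of \cref{hardy inequality} at $n=6$, a dimension the corollary covers; and attributing the exclusion $n\neq4$ to these Hardy exponents is not consistent with the paper, where $n=4$ is already ruled out in \cref{rp}~c) and \cref{energy decay p<4} by a positivity requirement in the $\Psi$-hierarchy, so the corollary simply inherits that restriction. Second, and more seriously, after the Hardy steps you still need $\int_{1<r<u}r^{n-3}(\partial_u\phi)^2\,\d r\lesssim u^{-(p+2)}$ on the fixed slice $\{t-r=u\}$, and none of the quantities in \eqref{extended energy decay estimates} with $u^{-(p+2)}$ decay gives this directly: $\mathcal{E}_{\hat\Sigma_u}[\partial\phi]$ controls only $\partial_v$-derivatives of $\partial\phi$ on the null portion, $E^{I}[\overline{\partial}\partial\phi;u,\infty]$ is a spacetime integral (a pigeonhole yields a slice bound only along a dyadic sequence of $u$), and the pointwise bound $r^{n-2}(\partial_u\phi)^2\lesssim\mathcal{E}_{\Sigma_u}[\partial\phi]$ from \cref{boundedness} loses a logarithm once integrated against $r^{-1}$ over $1<r<u$. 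Your proposed repair (``transport equation for $r^{(n-1)/2}\partial_u\phi$ integrated outward from $r\simeq1$, using \cref{morawetz}'') is asserted rather than carried out, and it is precisely the nontrivial content of the decaying half of the statement; the paper's own one-sentence proof glosses over the same point, so it cannot be borrowed. A minor further remark: the one-dimensional Hardy inequality along $\{t-r=u\}$ involves the tangential derivative, i.e.\ $\partial_v$ at fixed $u$, not $\partial_r$ at fixed $t$; your variant still lands on controlled quantities, but the bookkeeping should be phrased with $\partial_v$.
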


\begin{proof}
	This is a consequence of triangle inequality and the estimate \cref{extended energy decay estimates}.
\end{proof}

\begin{lemma}[$L^\infty$ estimates]\label{l infty estimates from l2}
	For $\phi$ with $E^\scri[\phi;u,0]<\infty$, $\supp\phi\subset\{t-r>0\}$ and $\epsilon>0$, spherically symmetric we have 
	\begin{equation*}
		\begin{gathered}
			E^p[\partial_v\psi;u],u^pE^0[\partial_v\psi;u],u^p\mathcal{E}_{\Sigma_u}[\phi]\leq1\\
			\implies u^{(p-1-\epsilon)/2}\norm{\psi(v,u)}_{L^\infty(\Sigma_u\cap\{r>1\})}\lesssim 1
		\end{gathered}
	\end{equation*}
	
	\begin{equation*}
		\begin{gathered}
			E^p[\partial_v\psi;u],u^pE^0[\partial_v\psi;u],\sup_uE^{p+2}[\partial_v^2\psi;u],u^{p+2}E^0[\partial_v^2\psi;u],u^p\mathcal{E}_{\hat{\Sigma}_u}[\phi],u^{p+2}\mathcal{E}_{\hat{\Sigma}_u}[\partial\phi]\leq1\\
			\implies u^{(p+2)/2}\norm{r^{n/2-1}\partial\phi(v,u)}_{L^\infty(\hat{\Sigma}_u)},\norm{v^{(p+1-\epsilon)/2}\partial_v\psi(v,u)}_{L^\infty(\Sigma_u\cap \{v>2u\})}\lesssim 1
		\end{gathered}
	\end{equation*}	
\end{lemma}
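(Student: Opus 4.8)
The plan is to read both displays as weighted Sobolev (trace) inequalities on the hybrid hypersurfaces $\hat\Sigma_u$, which are spacelike near the axis and outgoing null near $\scri$, and to handle these two portions by different tools. On the spacelike portion I will use the radial Sobolev embedding \cref{radial Sobolev inequality} together with the Hardy inequality \cref{hardy inequality} and the fact that $\psi|_{r=0}=0$ (since $\psi=r^{(n-1)/2}\phi$ and $n\geq3$); on the null portion I will integrate the relevant component of $\partial\phi$, or of $\partial\psi$, along the generator and close with Cauchy--Schwarz, interpolating the two weighted $L^2$ bounds present in the hypotheses --- the weight-$p$ bound $E^p[\,\cdot\,;u]\leq1$ fixing the rate towards $\scri$ and the weight-$0$ bound $u^pE^0[\,\cdot\,;u]\leq1$ supplying the $u$-decay.

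For the bound on $\psi$: fixing $u$ and parametrising the null part of $\Sigma_u$ by $r\in(1,\infty)$, along which $d/dr=\partial_v$, I will write $\psi(r,u)=\psi(1,u)+\int_1^r\partial_v\psi\,dr'$, split the integral at $r'=u$, and apply Cauchy--Schwarz against $E^0[\partial_v\psi;u]\leq u^{-p}$ on $[1,u]$ and against $E^p[\partial_v\psi;u]\leq1$ on $[u,r]$; this gives $\bigl|\int_1^r\partial_v\psi\,dr'\bigr|\lesssim u^{-(p-1)/2}$, which already beats the claimed $u^{-(p-1-\epsilon)/2}$ (the $\epsilon$ is slack, relevant only if one prefers a fixed weight $r^{1+\epsilon}$ to the optimal split). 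The anchor $\psi(1,u)$ will be bounded using $\psi(0,u)=0$, the interior identity $\int_0^1(\partial_r\psi)^2\,dr=\int_0^1r^{n-1}(\partial_r\phi)^2\,dr+\tfrac{n-1}{2}\phi(1,u)^2-\tfrac{(n-1)(n-3)}{4}\int_0^1r^{n-3}\phi^2\,dr$, and the interior Morawetz/Hardy control of $\phi$ near the axis available alongside these hypotheses (cf.\ \cref{morawetz}, \cref{energy decay p<2}); the resulting contribution is no larger than $u^{-(p-1)/2}$, so $\|\psi\|_{L^\infty(\Sigma_u\cap\{r>1\})}\lesssim u^{-(p-1)/2}$.

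For the second display: I will first note that $\partial_v\psi$ and $\partial_v^2\psi$ lie in weighted $L^2$ spaces along the generator of $\Sigma_u$, which forces $\partial_v\psi\to0$ at $\scri$, hence $\partial_v\psi(v,u)=-\int_v^\infty\partial_v^2\psi\,dv'$; Cauchy--Schwarz against $E^{p+2}[\partial_v^2\psi;u]\leq1$ then yields $|\partial_v\psi(v,u)|\lesssim v^{-(p+1)/2}$, which beats $v^{-(p+1-\epsilon)/2}$. For $r^{n/2-1}\partial\phi$ on the spacelike part of $\hat\Sigma_u$ I will apply \cref{radial Sobolev inequality} with $s=1$ to $f=\partial\phi$, giving $\|r^{n/2-1}\partial\phi\|_{L^\infty}\lesssim\|\partial_r\partial\phi\|_{L^2(r^{n-1}dr)}\leq\mathcal{E}_{\hat\Sigma_u}[\partial\phi]^{1/2}\lesssim u^{-(p+2)/2}$. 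On the null part, where $r\gtrsim u$, I will write $r^{n/2-1}\partial\phi=r^{-1/2}\bigl(\partial\psi+O(r^{-1}\psi)\bigr)$: the error $O(r^{-3/2}\psi)$ is $\lesssim u^{-3/2}\cdot u^{-(p-1)/2}=u^{-(p+2)/2}$ by the first part; the $\partial_v$-component is $r^{-1/2}|\partial_v\psi|\lesssim r^{-1/2}v^{-(p+1)/2}\lesssim u^{-(p+2)/2}$; and the transversal $\partial_u\psi$ will be obtained by integrating the transport equation $\partial_v\partial_u\psi=-\tfrac14\tilde F+\tfrac{(n-1)(n-3)}{16}r^{-2}\psi$ along the generator from the junction, with the junction value controlled by the spacelike radial-Sobolev bound, the $r^{-2}\psi$ source by the first part, and the $\tilde F$ source by the integrated control of the forcing that accompanies these hypotheses in the applications (\cref{energy decay p<2}, \cref{energy decay p<4}).

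I expect the main obstacle to be twofold. First, anchoring the junction values of $\psi$ and $\partial\phi$ at $r\sim1$ (resp.\ $r\sim t/2$): the exact cancellation in the interior identity above forces one to use genuine interior decay (Morawetz/Hardy) rather than the flux $\mathcal{E}_{\hat\Sigma_u}[\phi]$ alone, together with $\psi|_{r=0}=0$ and the decay $\phi=r^{-(n-1)/2}\psi\to0$ at $\scri$, and one must check that this slower-decaying anchor term still fits under the stated rate. Second, the transversal derivative $\partial_u\psi$ on the null portion, which is invisible to the $\partial_v$-hierarchy and so must be routed through the equation, the source term $\tilde F$ then requiring the integrated-decay estimates on $F$; everything else --- the weighted Cauchy--Schwarz and the bookkeeping of the $\epsilon$-losses, which serve only to keep $\int r^{-1-\epsilon}\,dr$ and $\int v^{-1-\epsilon}\,dv$ finite when the split is not optimised --- is routine.
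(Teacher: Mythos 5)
Your exterior integrations are fine, and in one place you improve on the paper: bounding $\partial_v\psi$ by writing $\partial_v\psi(v,u)=-\int_v^\infty\partial_v^2\psi\,\d v'$ and applying Cauchy--Schwarz against $E^{p+2}[\partial_v^2\psi;u]$ is cleaner than the paper's route, which first anchors $\partial_v\psi$ at the junction $r=u$ (using $\partial_v\psi=r^{(n-1)/2}\partial_v\phi+\tfrac{n-1}{2}r^{(n-3)/2}\phi$ together with the already-proved bounds on $r^{n/2-1}\partial\phi$ and $\psi$) and then integrates the weighted quantity $r^{(p+1-\epsilon)/2}\partial_v\psi$ outward against $E^p$ and $E^{p+2}$. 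Likewise your split of $\int_1^r\partial_v\psi$ at $r'=u$, playing $u^pE^0[\partial_v\psi;u]\leq1$ against $E^p[\partial_v\psi;u]\leq1$, is equivalent to the paper's interpolation through $E^{1+\epsilon}[\partial_v\psi;u]$.

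There are, however, two genuine problems. First, the anchor $\psi(1,u)$. Your interior identity produces the boundary term $\tfrac{n-1}{2}\phi(1,u)^2=\tfrac{n-1}{2}\psi(1,u)^2$ with coefficient at least $1$ for $n\geq3$, so it cannot be absorbed and the argument is circular, as you half-acknowledge; and your fallback, interior Morawetz/Hardy decay, is not among the hypotheses of the lemma -- the only assumptions are the flux bound $u^p\mathcal{E}_{\Sigma_u}[\phi]\leq1$ and the weighted $L^2$ bounds on $\partial_v\psi$ (and, for the second display, on $\partial_v^2\psi$ and $\partial\phi$). The correct anchoring goes the other way: \cref{boundedness} is an integration \emph{from} $\scri$ (where $\phi\to0$ thanks to the flux and support assumptions), giving $r^{n-2}\phi^2(t,r)\lesssim\mathcal{E}_{\Sigma_{t-r}}[\phi]$ for $r\geq1$, hence $|\psi(1,u)|\lesssim u^{-p/2}$ directly from the stated hypotheses; this is exactly what the paper does. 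Second, the transversal derivative on the null portion: you route $\partial_u\psi$ through the wave equation and ``integrated control of the forcing'', but $F$ does not appear anywhere in the lemma, which is a statement about any spherically symmetric $\phi$ satisfying the listed norm bounds, so this input is simply not available. It is also unnecessary for the stated conclusions: the $\partial_v\psi$ estimate needs no $\partial_u$ information (by your own inward integration, or by the paper's outward one), and for the $r^{n/2-1}\partial\phi$ bound the paper invokes \cref{radial Sobolev inequality} with $f=\partial\phi$ together with the hypothesis $u^{p+2}\mathcal{E}_{\hat{\Sigma}_u}[\partial\phi]\leq1$. You are right that this one-line citation is terse about the null piece of $\hat{\Sigma}_u$, where the flux only sees tangential derivatives of $\partial\phi$, but whatever repair one makes there must use only the assumed energies (e.g.\ anchoring at the spacelike--null junction and integrating $\partial_v$ of the weighted quantity outward), not the equation.
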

\begin{proof}
	We proceed in order.
	
	From \cref{boundedness} and $\mathcal{E}$ bound, we get $u^{p/2}\psi|_{\{r=1\}}\leq1$. From here, we can integrate towards $\scri$ to get
	\begin{equation*}
		\begin{gathered}
			\norm{\psi(v,u)}_{L^\infty(\Sigma_u\cap\{r\geq1\})}\leq u^{-p/2}+\int_{\Sigma_u\cap\{r>1\}}\abs{\partial_v\psi}\lesssim_\epsilon u^{-p/2}+E^{1+\epsilon}[\partial_v\psi;u]^{1/2}\leq u^{-(p-1-\epsilon)/2}.
		\end{gathered}
	\end{equation*}

	The second one is exactly the estimate \cref{radial Sobolev inequality}, with $f=\partial\phi$.
	
	Finally, using the previous two results, we see that
	\begin{equation*}
		\begin{gathered}
			\norm{\partial_v\psi|_{r=u}}\lesssim u^{-(n-1)/2}\partial\phi+u^{-(n+1)/2}\phi\lesssim u^{-(p+1)/2}.
		\end{gathered}
	\end{equation*}
	Integrating from this point towards $\scri$, we get
	\begin{equation*}
		\begin{gathered}
			\norm{r^{(p+2-\epsilon)/2}\partial_v\psi}_{L^\infty(\Sigma_u\cap\{r>u\})}\lesssim 1+\int_{\Sigma_u\cap\{r>u\}}\abs{\partial_v(r^{(p+2-\epsilon)/2}\partial_v\psi)}\lesssim_\epsilon 1+E^p[\partial_v\psi;u]+E^{p+2}[\partial_v^2\psi;u].
		\end{gathered}
	\end{equation*}
\end{proof}	
	\subsection{$L^\infty$ theory in $\R^{1+1}$}\label{l infinite theory}
	\begin{lemma}
		For $\psi$ a solution to the inhomogeneous wave equation with trivial data (\eqref{linear}) in $\R^{1+1}$ with $\supp F\subset\{t-r>1\},\, \norm{F}_{L^\infty}<1$, $F$ spherically symmetric ($F(t,r)=F(t,-r)$) we have 
		\begin{equation*}
			\begin{gathered}
				\partial_v\psi(u,v)=\int_1^v\d u^\prime\, F(u^\prime,v)\\
				\partial_u\psi(u,v)=-\int_1^u\d u^\prime\, F(u^\prime,u)+\int_u^v\d v^\prime F(u,v^\prime)\\
				\psi(u,v)=\int_1^u\d u^\prime\int_u^v\d v^\prime  F(u^\prime,v^\prime)
			\end{gathered}
		\end{equation*}
	\end{lemma}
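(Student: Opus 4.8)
\emph{Proof proposal.} The plan is to pass to the null coordinates $u=\tfrac{t-r}{2}$, $v=\tfrac{t+r}{2}$ of the notation section — in which $\Box=\partial_t^2-\partial_r^2=\partial_u\partial_v$, so that \eqref{linear} becomes $\partial_u\partial_v\psi=F$ — and to integrate this identity twice along the null directions, pinning the two integration constants by the trivial Cauchy data on $\{t=0\}$ and by the vanishing of $\psi$ at the spatial origin $\{r=0\}=\{u=v\}$. The latter condition (which holds for the reduced radial problems $\psi=r^{(n-1)/2}\phi$ underlying \eqref{exact formula in n dim}, $\phi$ being regular) is the only non-formal input: it is what lets an ingoing ray carrying the data reflect at the origin, and it is responsible for the boundary term, and its sign, in the formula for $\partial_u\psi$. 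The support hypothesis $\supp F\subset\{t-r>1\}$, i.e. $F(u',v')=0$ when $u'\le 1$, then truncates all lower limits to $1$.

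Concretely, I would argue as follows. Since $\psi$ vanishes on $\{t=0\}=\{u'+v'=0\}$ its tangential derivative there vanishes, and combined with $\partial_t\psi|_{t=0}=0$ this gives $\partial_u\psi=\partial_v\psi=0$ along $\{t=0\}$. Fix $(u,v)$ with $1\le u\le v$ (the case $u\le 1$ being trivial). Integrating $\partial_{u'}\!\big(\partial_v\psi\big)(u',v)=F(u',v)$ in $u'$ along the ray $\{v'=v\}$, from its intersection $u'=-v$ with $\{t=0\}$ up to $u'=u$ — a segment inside the wedge $\{u'\le v\}$ — and using the data and the support of $F$ yields
\begin{equation*}
\partial_v\psi(u,v)=\int_{-v}^{u}F(u',v)\,\d u'=\int_{1}^{u}F(u',v)\,\d u'.
\end{equation*}
Specialising this at the origin and differentiating $\psi(s,s)\equiv 0$ in $s$ gives $\partial_u\psi(u,u)=-\partial_v\psi(u,u)=-\int_1^u F(u',u)\,\d u'$. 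Then integrating $\partial_{v'}\!\big(\partial_u\psi\big)(u,v')=F(u,v')$ in $v'$ along $\{u'=u\}$ from the origin $v'=u$ to $v'=v$ gives
\begin{equation*}
\partial_u\psi(u,v)=\partial_u\psi(u,u)+\int_{u}^{v}F(u,v')\,\d v'=-\int_1^u F(u',u)\,\d u'+\int_u^v F(u,v')\,\d v',
\end{equation*}
and finally integrating $\partial_{v'}\psi(u,v')=\int_1^u F(u'',v')\,\d u''$ in $v'$ from the origin, where $\psi=0$, up to $v'=v$ gives
\begin{equation*}
\psi(u,v)=\int_u^v\!\bigg(\int_1^u F(u'',v')\,\d u''\bigg)\d v'=\int_1^u\d u'\int_u^v\d v'\,F(u',v').
\end{equation*}
These are the asserted identities. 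As a consistency check, a direct Leibniz differentiation of the last display recovers $\partial_u\partial_v\psi=F$ together with trivial Cauchy data, so uniqueness for the wave equation in $\R^{1+1}$ gives the same formulas with no reference to characteristics.

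The computation itself is two applications of the fundamental theorem of calculus plus bookkeeping of $\supp F$; the one point that needs care, and the step I would treat most carefully, is the reflection at $r=0$: one must check that the relevant null segments stay in the physical wedge, invoke $\psi|_{r=0}=0$ to relate $\partial_u\psi$ and $\partial_v\psi$ there, and keep track of the resulting sign — which is exactly the term $-\int_1^u F(u',u)\,\d u'$ and the reason the $v'$-integrals start at $u$ rather than at $1$.
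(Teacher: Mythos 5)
Your argument is correct, and it is worth noting that the paper itself gives no proof of this lemma at all: it is asserted as the standard representation formula for the reflected $1{+}1$ problem, so your two characteristic integrations via the fundamental theorem of calculus supply precisely the omitted derivation. Two remarks on the comparison with the printed statement. First, your identity $\partial_v\psi(u,v)=\int_1^{u}F(u',v)\,\d u'$ (upper limit $u$) is the correct one: it is what differentiation of the double-integral formula gives, and it is the version the paper actually uses in the proof of the following decay lemma, where $\partial_v\psi$ is bounded by $\int_1^u\d u'\,v^{-\sigma}{u'}^{-z}$; the upper limit $v$ in the displayed lemma is a typo, so do not be troubled by the mismatch. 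Second, you are right to single out the reflection condition $\psi|_{\{u=v\}}=0$ as the one genuine input: under the symmetry hypothesis as literally written ($F(t,r)=F(t,-r)$, hence the even extension and $\partial_r\psi|_{r=0}=0$), the claimed identities would fail --- the third would acquire an additional term $2\int\int_{\{1\le u'\le v'\le u\}}F$ --- whereas they are exactly the Duhamel formula for the odd (Dirichlet) reflection, which is the setting in which the lemma is applied in \cref{gs gwp}, where $\psi=r\phi$ with $\phi$ regular forces $\psi|_{r=0}=0$. Your reading and justification of that boundary condition, the check that the null segments used stay in the physical wedge $\{u'\le v'\}$, the relation $\partial_u\psi(u,u)=-\partial_v\psi(u,u)$ obtained by differentiating $\psi(s,s)\equiv0$, and the truncation of the lower limits by $\supp F$ are all sound, so the proposal stands as a complete proof of the (correctly read) statement.
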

	
	\begin{lemma}
		For $\psi$ a solution to \eqref{linear} in $\R^{1+1}$ with $\supp F\subset\{t-r>1\}$ spherically symmetric and $z\neq1,\sigma\neq 1$, we have
		\begin{equation*}
			\begin{aligned}
				\norm{Fv^{\sigma}u^{z}}_{L^\infty}\leq1\implies \norm{\frac{\psi}{r}vu^{-\max(0,1-z)}(u^{1-\sigma}+v^{1-\sigma})^{-1}}_{L^\infty}\lesssim 1.
			\end{aligned}
		\end{equation*}
		Similarly for the derivatives
		\begin{equation*}
			\begin{gathered}
				\norm{Fv^{\sigma}u^{z}}_{L^\infty}\leq1\implies \norm{\partial_v\psi v^{\sigma}u^{-\max(0,1-z)}}_{L^\infty},\norm{\partial_u\psi u^{-1-\max(0,1-z)}(u^{1-\sigma}+v^{1-\sigma})^{-1}}_{L^\infty}\lesssim 1 \\
				\norm{Fv^{\sigma}u^{z}}_{L^\infty}+\norm{v^{\sigma}u^{z}(v\partial_v)F}_{L^\infty}\leq1\implies \norm{\frac{\partial_t\psi}{r}v u^{-1-\max(0,1-z)}(u^{1-\sigma}+v^{1-\sigma})^{-1}}_{L^\infty}\lesssim1
			\end{gathered}
		\end{equation*}
	\end{lemma}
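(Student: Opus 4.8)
The plan is to read everything off the explicit representation formulas of the preceding lemma. Write $M:=\norm{Fv^{\sigma}u^{z}}_{L^\infty}$, so that on $\supp F\subset\{t-r>1\}$ one has the pointwise bound $\abs{F}\le M\,v^{-\sigma}u^{-z}$ in the null coordinates, with $F$ vanishing for small retarded coordinate. Substituting this into the formulas for $\partial_v\psi$, $\partial_u\psi$ and $\psi$ turns each claimed inequality into the task of estimating elementary integrals of powers, and the hypotheses $z\neq1$, $\sigma\neq1$ guarantee that the relevant primitives are genuine power functions rather than logarithms. The two estimates that do all the work are, for $1\le a$ and $1\le u\le v$,
\[
\int_1^{a}s^{-z}\,\d s\ \lesssim_z\ a^{\max(0,1-z)},
\qquad
\Bigl\lvert\int_u^v s^{-\sigma}\,\d s\Bigr\rvert\ \lesssim_\sigma\ \frac{v-u}{v}\bigl(u^{1-\sigma}+v^{1-\sigma}\bigr).
\]
The first, applied with $a$ the retarded coordinate of the point (the retarded integration in each formula being cut off there by causality and the support condition), is responsible for the weight $u^{-\max(0,1-z)}$; the second, over a $v$-interval of length $r=v-u$, produces at once the factor $(u^{1-\sigma}+v^{1-\sigma})$ and the gain $\tfrac{v-u}{v}$ that is visible in the bounds for $\psi/r$ and $\partial_t\psi/r$.

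With these in hand the first three estimates are routine. For $\partial_v\psi$ only the retarded integral appears and one reads off $\abs{\partial_v\psi}\lesssim M\,v^{-\sigma}u^{\max(0,1-z)}$; for $\psi$, whose formula is a double integral, and for $\partial_u\psi$, whose formula has two terms, both elementary estimates enter, and after combining them and splitting into the four regimes $z\lessgtr1$, $\sigma\lessgtr1$ one recovers the stated weights. I would present this part as bookkeeping.

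The step that needs real care, and where the extra hypothesis $\norm{v^{\sigma}u^{z}(v\partial_v)F}_{L^\infty}\le1$ is used, is the bound for $\partial_t\psi/r$; this will be the main obstacle. It cannot be obtained from $\partial_t\psi=\tfrac12(\partial_u+\partial_v)\psi$ and the individual bounds on $\partial_u\psi,\partial_v\psi$, because neither of those vanishes at $r=0$ while the claim forces $\abs{\partial_t\psi}\lesssim\tfrac{v-u}{v}(\cdots)$: the vanishing of $\partial_t\psi$ on the axis $\{r=0\}$ is a cancellation between $\partial_u\psi$ and $\partial_v\psi$ which must be made quantitative. The plan is to integrate along a ray of constant retarded coordinate from the axis. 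Since $\partial_t\psi|_{r=0}=0$ and (using $\Box\psi=\partial_u\partial_v\psi=F$) $\partial_v\partial_t\psi=\tfrac12\bigl(F+\partial_v^{2}\psi\bigr)$, one has $\partial_t\psi(u,v)=\tfrac12\int_u^v\bigl(F(u,v')+\partial_v^{2}\psi(u,v')\bigr)\,\d v'$. Differentiating the representation for $\partial_v\psi$ once more in $v$ gives $\partial_v^{2}\psi(u,v')=\int_1^{u}\partial_v F(u'',v')\,\d u''$, up to a harmless boundary term, and the new hypothesis then yields $\abs{\partial_v^{2}\psi}\lesssim (v')^{-1-\sigma}u^{\max(0,1-z)}$ — one extra power of $v^{-1}$ over $\partial_v\psi$, which is precisely the additional $v$-weight in the conclusion. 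Feeding $\abs{\partial_v^{2}\psi}$ and $\abs{F(u,v')}\le (v')^{-\sigma}u^{-z}$ into the $v'$-integral and applying the second elementary estimate above gives the asserted bound. Finally, the excluded cases $z=1$, $\sigma=1$ are exactly those producing the logarithmic corrections of \cref{log from scri} and \cref{log from corner}; they are not needed in the sequel and are set aside.
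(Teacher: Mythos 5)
Your proposal is correct and is essentially the paper's own argument: plug the pointwise bound $\abs{F}\le v^{-\sigma}u^{-z}$ into the explicit representation formulas, evaluate the resulting power integrals (your two elementary estimates are exactly the factors $\frac{u^{1-z}-1}{1-z}$ and $\frac{v^{1-\sigma}-u^{1-\sigma}}{1-\sigma}\lesssim\frac{v-u}{v}(u^{1-\sigma}+v^{1-\sigma})$ the paper extracts), and use the $v\partial_vF$ hypothesis only for $\partial_t\psi$. Your treatment of $\partial_t\psi$ — integrating $\partial_v\partial_t\psi=F+\partial_v^{2}\psi$ from the axis where $\partial_t\psi$ vanishes — is, after Fubini, identical to the paper's cancellation $F(u',v)-F(u',u)=\int_u^v\partial_vF(u',v')\,\d v'$, so it yields the same two integrals and the same bound.
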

	\begin{proof}
		Using positivity of the fundamental solutions, we have
		\begin{equation*}
			\begin{gathered}
				\abs{\psi}\leq \int_1^u\d u^\prime\int_u^v\d v^\prime  \abs{F(u^\prime,v^\prime)}\leq \int_1^u\d u^\prime\int_u^v\d v^\prime {v^\prime}^{-\sigma}{u^\prime}^{-z}\\
				=\frac{u^{-z+1}-1}{1-z}\frac{v^{-\sigma+1}-u^{-\sigma+1}}{1-\sigma}.
			\end{gathered}
		\end{equation*}
		Now we end the proof by pulling out the appropriate number of $u,v$ factors and using that $\frac{\rho^\alpha-1}{\rho-1}\lesssim_\alpha1$ for $\rho\in[1,2]$:
		\begin{equation*}
			\begin{gathered}
				\abs{\psi}\leq u^{-\sigma+1}\frac{v-u}{v}\frac{u^{1-z}-1}{1-z}\frac{(\frac{v}{u})^{2-\sigma}-1}{(2-\sigma)(1-\frac{u}{v})}
			\end{gathered}
		\end{equation*}
		with the last term is bounded for $\sigma>1$. For $\sigma<1$, $\frac{(\frac{v}{u})^{1-\sigma}-1}{(1-\sigma)(1-\frac{u}{v})}v^{-\sigma}\lesssim_\sigma1$.
		
		The last derivative inequality is bounded similarly:
		\begin{equation*}
			\begin{gathered}
				\partial_t\psi=\int_1^u\d u^\prime (F(u^\prime,v)-F(u^\prime,u))+\int_u^v\d v^\prime F(u,v^\prime)=\int_1^u\d u^\prime\int_u^v\d v^\prime \partial_vF(u^\prime,v^\prime)+\int_u^v\d v^\prime F(u,v^\prime)\\
				\leq \int_1^u\d u^\prime\int_u^v\d v^\prime v^{-1-\sigma}u^{-z}+\int_u^v\d v^\prime {v^\prime}^{-\sigma}u^{-z}=\frac{v^{-\sigma}-u^{-\sigma}}{-\sigma}\frac{u^{1-z}-1}{1-z}+\frac{v^{1-\sigma}-u^{1-\sigma}}{1-\sigma}u^{-z}
			\end{gathered}
		\end{equation*}
		
		The first two derivative inequalities are easier to bound, as we do not need the above cancellation:
		\begin{equation*}
			\begin{gathered}
				\partial_v\psi\leq\int_1^u\d u^\prime\, v^{-\sigma}{u^\prime}^{-z}=v^{-\sigma}\frac{u^{1-z}-1}{1-z}\\
				\partial_u\psi\leq \int_1^u \d u^\prime u^{-\sigma}{u^\prime}^{-z}+\int_u^v\d v^\prime u^{-z}{v^\prime}^{-\sigma}=\frac{u^{-z+1}-1}{1-z}u^{-\sigma}+\frac{v^{1-\sigma}-u^{1-\sigma}}{1-\sigma}u^{-z}.
			\end{gathered}
		\end{equation*}
	\end{proof}
	\begin{remark}
		The $z=1$ and $\sigma=1$ cases yield logarithmic losses to the above estimates. This will never be crucial for us, as we don't consider borderline cases, thus such losses can be absorbed into other parts of the decay statement.
	\end{remark}
	
	Finally, some lower bounds
	\begin{lemma}[Lower bound]\label{lower bound}
		Let be $\phi$ the solution of
		\begin{equation*}
			\begin{gathered}
				\Box_{\R^{3+1}}\phi=F
			\end{gathered}
		\end{equation*}
		 with $F\geq0$, $F\in C_{rad}^0$ . If $F(v+u,v-u)\geq c v^{-q}$ for $u\in[u_0,u_0+\epsilon]$ with $c,\epsilon>0$, then, there exists $u_1$ sufficiently large such that
		\begin{equation*}
			\begin{gathered}
				\psi(t,r)\gtrsim_{\epsilon,q,c} t^{-1}u^{1-q}\qquad \forall t-r>u_1.
			\end{gathered}
		\end{equation*}
		Similarly, if $F(v+u,v-u)\geq c v^{-q}u^{-s}$ for $u>u_0$, than
		\begin{equation*}
			\begin{gathered}
				\psi(t,r)\gtrsim_{c,q,s} t^{-1}u^{1-q+\max(1-s,0)}\qquad \forall t-r>u_1.
			\end{gathered}
		\end{equation*}
	\end{lemma}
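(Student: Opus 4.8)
The plan is to reduce everything to the explicit, \emph{positive} representation of $\psi$. Under spherical symmetry $\psi=r\phi$ solves the $1{+}1$ wave equation $\partial_u\partial_v\psi=rF$ on $\{r>0\}$ with $\psi|_{r=0}=0$ and vanishing data, so, exactly as in the $\R^{1+1}$ representation formula recalled above (applied with source $rF=(v'-u')F$), one has
\[
	\psi(u,v)=\int_{u_*}^{u}\d u'\int_{u}^{v}\d v'\;(v'-u')\,F(u'+v',\,v'-u'),
\]
where $u_*$ is fixed by $\supp F$; equivalently this is Kirchhoff's formula for $\Box_{\R^{3+1}}\phi=F$ written in double--null coordinates. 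Because $F\ge0$ and $v'-u'=r'>0$ on the domain of integration, $\psi(u,v)$ is bounded below by the integral of the same nonnegative integrand over any subdomain, and the whole proof consists of choosing the subdomain on which the hypothesised pointwise lower bound for $F$ is available and evaluating an elementary integral.

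For the first assertion, the hypothesis reads $F(u'+v',v'-u')\ge c\,{v'}^{-q}$ on the thin cone $\{u'\in[u_0,u_0+\epsilon]\}$. Take $u_1$ a large constant with $u_1\ge 2(u_0+\epsilon)$ plus a fixed additive margin, so that at any evaluation point with $t-r>u_1$ we have $u=(t-r)/2>u_0+\epsilon$, $u\ge1$, and the cone sits inside $\{u_*\le u'\le u\}$. On the slab $u'\in[u_0,u_0+\epsilon]$, $v'\in[u,v]$ one has $v'-u'\ge v'-(u_0+\epsilon)\ge\tfrac12 v'$ once $u_1$ is large, hence
\[
	\psi(u,v)\ \ge\ \frac{c}{2}\int_{u_0}^{u_0+\epsilon}\d u'\int_{u}^{v}\d v'\;{v'}^{\,1-q}\ =\ \frac{c\epsilon}{2}\int_{u}^{v}{v'}^{\,1-q}\,\d v'\ \gtrsim_{q}\ u^{\,1-q}\,\min(u,r),
\]
using the elementary estimate $\int_u^v {v'}^{1-q}\,\d v'\gtrsim_q u^{1-q}\min(u,v-u)$, valid for $q\ne2$ (a harmless logarithm at $q=2$, which we never need). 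Since $\min(u,r)\ge1\ge t^{-1}$ wherever $r\ge1$ (in particular on $\mathcal N_+$), this is $\gtrsim_{\epsilon,q,c}t^{-1}u^{1-q}$, as claimed.

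For the second assertion one repeats the argument with $F(u'+v',v'-u')\ge c\,{v'}^{-q}{u'}^{-s}$ now available on the whole half--space $\{u'>u_0\}$. Restricting the $u'$--integration to $[u_0,u/2]$ (legitimate once $u>2u_0$, again a consequence of $t-r>u_1$) keeps $v'-u'\ge\tfrac12 v'$ on the relevant range, and Fubini gives
\[
	\psi(u,v)\ \gtrsim\ c\Big(\int_{u_0}^{u/2}{u'}^{\,-s}\,\d u'\Big)\Big(\int_{u}^{v}{v'}^{\,1-q}\,\d v'\Big)\ \gtrsim_{q,s}\ u^{\max(1-s,0)}\cdot u^{\,1-q}\,\min(u,r),
\]
because $\int_{u_0}^{u/2}{u'}^{-s}\,\d u'\gtrsim_s u^{\max(1-s,0)}$ for $s\ne1$ (logarithm at $s=1$, again irrelevant). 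This yields $\psi\gtrsim_{c,q,s}t^{-1}u^{1-q+\max(1-s,0)}$.

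The only genuinely delicate step is pinning down the domain of integration: one must justify that the representation is $\int_{u_*}^u\d u'\int_u^v\d v'$ and not the larger causal--past rectangle $\int_{u_*}^u\d u'\int_{-\infty}^v\d v'$ --- the difference being precisely the contribution that cancels against the image source reflected across the axis $r=0$ --- and one must check that for \emph{every} admissible evaluation point the support slab $\{u'\in[u_0,u_0+\epsilon]\}$ (resp.\ $\{u'>u_0\}$) meets $\{v'\in[u,v]\}$ in a set over which the $v'$--integral really is of order $u^{1-q}\min(u,r)$. Once this bookkeeping is in place, the remaining work is just the two power integrals above. Everything is insensitive to the parity of the space dimension; in even dimensions the extra Green's--function tail of \cref{even odd dimensions} only strengthens the lower bound.
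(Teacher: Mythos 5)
Your proposal is correct and follows essentially the same route as the paper: positivity of the explicit double--null (Duhamel) kernel for the radial problem, restriction of the integration domain to the slab where the lower bound on $F$ holds, and evaluation of the resulting elementary power integrals in $u'$ and $v'$. The only cosmetic differences are that you retain the weight $r'=v'-u'$ (bounding it below by $v'/2$), which gives a marginally stronger intermediate bound, whereas the paper simply drops this factor and divides by $v-u$ to bound $\phi$ directly via the bounded quotient $\frac{1-x^{q-1}}{1-x}$; your final conversion to the stated $t^{-1}u^{1-q}$ form should just be phrased for the solution $\phi=\psi/r$ (as in the paper's own proof) so that small $r$ is covered as well.
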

	\begin{proof}
		As the kernel is positive, we can bound the spherically symmetric part of the solution for $u>u_0+\epsilon$ as
		\begin{equation*}
			\begin{gathered}
				\phi\geq\frac{1}{v-u}\int_{u_0}^{u_0+\epsilon}\d u'\int_u^v\d v' c{v'}^{-q}=\frac{\epsilon c}{q-1}\frac{u^{-q+1}-v^{-q+1}}{v-u}=\frac{\epsilon c}{q-1}\frac{u^{1-q}}{v}\frac{1-(\frac{u}{v})^{q-1}}{1-\frac{u}{v}}
			\end{gathered}
		\end{equation*}
		for $q\neq1$. In the case $q>1$, restricting to the region $u>0$, we get the desired bound using $v\sim t$ and $\frac{1-x^{q-1}}{1-x}|_{x\in[0,1]}\gtrsim_q1$. As for $q<1$, may factor out $v^{1-q}$ and use $u\leq v$. For $q=1$, there is a logarithmic term, which gives even faster growth. 
		
		For the second estimate, we calculate similarly
		\begin{equation*}
			\begin{gathered}
				\phi\geq\frac{1}{v-u}\int_{u_0}^{u}\d u'\int_u^v\d v' c{v'}^{-q}{u'}^{-s}=\frac{\epsilon c}{q-1}\frac{u^{-q+1}-v^{-q+1}}{v-u}\frac{u^{-s+1}-u_0^{-s+1}}{1-s}
			\end{gathered}
		\end{equation*}
		for $q\neq1,s\neq1$. The other cases induce further logarithmic terms giving even more growth.
	\end{proof}

	\section{Testing the hunch: global existence}
	\subsection{$\partial_v\phi$ nonlinearity}
	\subsubsection{Existence of solution}
	Throughout this section, fix $\phi$ a spherically symmetric solution to \eqref{partial v} with $n\in\{3\}\cup\{n\geq5\}$, with $\supp\subset\{t-r>0\}$. As we have a heuristic for the critical power, let's fix $q_c(q_c-1)\frac{n+1}{2}=1$ with $q>q_c>0$, and $0<\epsilon,\epsilon_0\ll q-q_c$. We separate $\epsilon$  --loss due to almost sharp decay--  and $\epsilon_0$ --loss in Morwatz estimate-- for sake of clarity. Solving the quadratic gives $q_c=1+2/n-6/n^2+\mathcal{O}(1/n^3)$. Also, fix $p(q)=(n+1)q-n=3+\mathcal{O}(1/n)$, which will be the optimal power in $r^p$ estimates. Importantly $3<p(n)<4$ for all $n$.

	We proceed in two steps. First, we mimic (\cite{hidano_glassey_2012}) for a local existence theorem, which in turn gives a \textit{controlling norm} (as discussed in \cite{tao_nonlinear_2006}). Then, we perform a bootstrap to conclude that the controlling norm stays bounded, thus the solution is global.
	
	\begin{theorem}[Local existence]
	Let $\phi_0,\phi_1:\R^n\to\R\, ,\, \norm{\partial\phi_0,\phi_1}_{H^1\times H^1}<C$ be spherically symmetric initial data for \eqref{partial v} with $q\in(1+1/n,1+2/(n-2))$. Then $\exists\, \tau(C)$ such that \eqref{partial v} has a unique solution  $\phi\in C^0[[0,\tau];H_{rad}^2]\cap C^1[[0,\tau],H_{rad}^1]$.
	\end{theorem}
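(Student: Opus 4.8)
The plan is to run a Picard iteration in a function space tailored to the non-smooth derivative nonlinearity, closely mirroring the local theory of \cite{hidano_glassey_2012}. Set $\phi^{(0)}$ to be the solution of $\Box\phi^{(0)}=0$ with data $(\phi_0,\phi_1)$, and inductively let $\phi^{(k+1)}$ solve $\Box\phi^{(k+1)}=\abs{\partial_v\phi^{(k)}}^q$ with the same data. Fix a small $\tau>0$ and work on the slab $[0,\tau]\times\R^n$ in the complete metric space of spherically symmetric $\phi\in C^0([0,\tau];H^2)\cap C^1([0,\tau];H^1)$ with $\norm{\phi}_{X_\tau}\le 2C_0$, where
\begin{equation*}
	\norm{\phi}_{X_\tau}:=\sup_{0\le t\le\tau}\norm{\partial\phi(t)}_{H^1(\R^n)}+\norm{\partial\phi}_{S_\tau},
\end{equation*}
$S_\tau$ is an auxiliary, radially adapted weighted Strichartz norm of Georgiev--Lindblad--Sogge type (\cite{georgiev_weighted_1997}, \cite{del_santo_global_1997}), and $C_0\gtrsim\norm{\partial\phi_0,\phi_1}_{H^1\times H^1}$ bounds the free evolution, $\norm{\phi^{(0)}}_{X_\tau}\lesssim C_0$, via the energy inequality together with the Strichartz estimates of \cite{keel_endpoint_1998} and their radial/weighted refinements; these are available precisely because $q\in(1+1/n,1+2/(n-2))$ keeps the problem strictly subcritical relative to the scaling regularity $s_c(q)=\tfrac n2+\tfrac{q-2}{q-1}$.

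The heart of the argument is the nonlinear estimate showing that $\phi^{(k)}\mapsto\phi^{(k+1)}$ preserves the ball and contracts once $\tau$ is small. For the preservation step one bounds $\abs{\partial_v\phi}^q$ in the forcing space dual to $X_\tau$. Away from the spatial origin the radial Sobolev inequality (\cref{radial Sobolev inequality}, with $s=\tfrac12$) gives the pointwise bound $\abs{\partial_v\phi(t,r)}\lesssim r^{-(n-1)/2}\norm{\partial\phi(t)}_{H^1}$, so $\abs{\partial_v\phi}^{q-1}$ is bounded there and the nonlinearity is controlled by $\norm{\partial\phi}_{H^1}^{q-1}$ times one more derivative of $\partial\phi$; near the origin one instead uses $H^1(\R^n)\hookrightarrow L^{2n/(n-2)}$ and $q<1+2/(n-2)=\tfrac n{n-2}$ to place $\abs{\partial_v\phi}^q$ in $L^2_x$, and feeds the remaining derivative through the weighted Strichartz inequality (which, unlike the plain energy estimate, does not require $\partial_v\phi\in L^\infty_x$). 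A Hölder inequality in $t$ supplies a factor $\tau^\theta$, $\theta>0$, closing the contraction. The difference estimate is the same, using $\bigl|\abs a^q-\abs b^q\bigr|\lesssim\abs{a-b}\bigl(\abs a^{q-1}+\abs b^{q-1}\bigr)$, and $\bigl|\abs a^q-\abs b^q\bigr|\lesssim\abs{a-b}^{q-1}\bigl(\abs a+\abs b\bigr)^{\,\cdot}$ when $q<2$, where $q>1+1/n>1$ guarantees the exponent $q-1$ is positive so the map is at least Hölder on bounded sets. Persistence of the $H^2\times H^1$ regularity of the limit, and uniqueness in the class, then follow from the standard energy estimate applied to the now-bounded solution together with a Gronwall argument.

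The main obstacle is precisely this nonlinear estimate at $H^2\times H^1$ regularity: the nonlinearity $\abs{\partial_v\phi}^q$ is not smooth, and for spherically symmetric $H^2$ functions $\partial_v\phi$ fails to be bounded near $r=0$, so a purely energy-based iteration does not close; the two constraints $q>1+1/n$ and $q<1+2/(n-2)$ are exactly the thresholds making the weighted-Sobolev/Strichartz bookkeeping (and the Hölder exponent $q-1$ in the difference estimate) valid. As in \cite{tao_nonlinear_2006}, the upshot is a controlling norm --- here $\norm{\partial\phi}_{S_\tau}$, essentially a weighted $L^q_tL^\infty_x$ norm of $\partial\phi$ --- whose finiteness on $[0,T)$ forces the $H^2$ solution to extend past $T$, and the subsequent bootstrap is devoted to keeping this norm bounded for all time.
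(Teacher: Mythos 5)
Your overall strategy (Picard iteration, an auxiliary space--time norm alongside the energy, radial Sobolev control of $\abs{\partial_v\phi}^{q-1}$, and a small power of $\tau$ from H\"older in time) is the same in spirit as the paper's, which mimics \cite{hidano_glassey_2012}. But the concrete mechanism you propose has a gap exactly at the hardest point you yourself identify: the second-order estimate near $r=0$. Your near-origin treatment only places $\abs{\partial_v\phi}^q$ in $L^2_x$ via $H^1\hookrightarrow L^{2n/(n-2)}$, which closes the first-order energy inequality but not the $\dot H^2$ one: there you must control $\abs{\partial_v\phi}^{q-1}\,\abs{\partial\partial\phi}$, and for radial $H^1$ data $\abs{\partial_v\phi}^{q-1}$ behaves like $r^{-(n-1)(q-1)/2}$ near the origin, so it is not bounded and cannot simply multiply the $L^\infty_tL^2_x$ energy of $\partial^2\phi$. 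Saying the remaining derivative is ``fed through the weighted Strichartz inequality'' of Georgiev--Lindblad--Sogge type does not resolve this: those estimates are tailored to the undifferentiated Strauss nonlinearity and do not obviously absorb a singular weight against $\partial^2\phi$. The paper instead pairs the energy with the Morawetz/KSS bulk norm of \cref{morawetz} and closes the quadratic--in--$\partial^2\phi$ term by splitting the KSS weight: a tuned application of \cref{radial Sobolev inequality} (exponent $s_i$ chosen so that the weight $r^{(1+\alpha)(1-\epsilon_0)/(2(q-1))}$ is admissible) bounds the $\abs{\partial\phi}^{2(q-1)}$ factor in weighted $L^\infty$, and the rest is interpolated between the energy and the Morawetz bulk, producing the factor $t^{1-\alpha}$. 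The requirement that such an $\alpha\in(0,1)$ exist, namely $\frac{1+\alpha}{2(q-1)}\in(\frac n2-1,\frac n2)$, is precisely what forces $q-1\in(1/n,2/(n-2))$; your explanation that $q>1+1/n$ merely ensures $q-1>0$ misses the actual role of the lower bound, which signals that the constraint mechanism is not captured by your bookkeeping.

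A second, smaller gap: you set up the fixed point in the $H^2$-level ball but your difference estimate only treats $\abs{a}^q-\abs{b}^q$ for $a,b=\partial_v\phi$, i.e.\ a first-order quantity. Contraction in the full $H^2$-level norm would require differentiating the difference of nonlinearities, which involves $\abs{\partial_v\phi}^{q-1}-\abs{\partial_v\tilde\phi}^{q-1}$ and is only H\"older continuous when $q<2$; this is why the paper contracts in the weaker first-order norm $W$ (energy plus Morawetz bulk of first derivatives) while only propagating boundedness in the stronger norm $Z$, and then passes to the limit. You should either adopt that two-norm scheme explicitly or explain how your strong-norm contraction survives the non-smoothness of $x\mapsto\abs{x}^q$.
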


	Note, that $q_c\in (1+1/n,1+2/(n-2))$.
	\begin{proof}
	In the local existence, it doesn't matter what type of derivative falls on $\phi$, so we'll use $\partial$ instead $\partial_v$.
	
	Let's define a solution iteratively 
	\begin{equation*}
		\begin{gathered}
			\Box\phi^{(n)}=\abs{\partial\phi^{(n-1)}}^q\\
			\phi^{(n)}(0)=\phi_0,\,\partial_t\phi^{(n)}(0)=\phi_1,
		\end{gathered}
	\end{equation*}
	with $\phi^{(-1)}=0$. Recalling the Morawetz estimate \cref{morawetz}, we know 
	\begin{equation}\label{local existence iteration}
		\begin{gathered}
			\norm{\phi^{(n)}(t)}_{\dot{H}^1}+\int_0^t\frac{r^{n-1}(\partial\phi^{(n)})^2}{r^{1-\epsilon_0}\jpns{r}^{2\epsilon_0}}\lesssim_{\epsilon_0}\norm{\phi^{(n)}(0)}_{\dot{H}^1}+\Bigg(\int_0^t r^{n-1}r^{1-\epsilon_0}\jpns{r}^{2\epsilon_0}\abs{\partial\phi^{(n-1)}}^{2q}\Bigg)^{1/2}\\
			\norm{\phi^{(n)}(t)}_{\dot{H}^2}+\int_0^t\frac{r^{n-1}(\partial^2\phi^{(n)})^2}{r^{1-\epsilon_0}\jpns{r}^{2\epsilon_0}}\lesssim_{\epsilon_0}\norm{\phi^{(n)}(0)}_{\dot{H}^2}+\Bigg(\int_0^t r^{n-1}r^{1-\epsilon_0}\jpns{r}^{2\epsilon_0}\abs{\partial\phi^{(n-1)}}^{2(q-1)}(\partial^2\phi^{(n-1)})^2\Bigg)^{1/2}.
		\end{gathered}
	\end{equation}
	Let's define $\norm{\cdot}_{Z}$ to be the sum of the norms on the left hand side of both equations and $\norm{\cdot}_W$ to be the sum in the first equation.
	
	\textit{Boundedness:} There exist $T$ sufficiently small $c>0$ such that for all $0\leq t<T$, we have $\norm{\phi^{(n)}}_{Z}$ is bounded by $c\delta=c(\norm{\phi_0,\phi_1}_{\dot{H^2}\cap\dot{H}^1\times\dot{H}^1\cap\dot{H}^0})$: The square of non-linear part of the right hand side of \eqref{local existence iteration} may be bounded using \cref{radial Sobolev inequality} and interpolation:
	\begin{equation*}
		\begin{gathered}
			\int_0^t r^{n-1}r^{1-\epsilon_0}\jpns{r}^{2\epsilon_0}(\partial^2\phi^{(m)})^2\abs{\partial\phi^{(m)}}^{2(q-1)}\\
			\lesssim_{\alpha}\norm{\partial\phi^{(m)} r^{\frac{(1+\alpha)(1-\epsilon_0)}{2(q-1)}}\jpns{r}^{\frac{(1+\alpha)\epsilon_0}{q-1}}}_{L^\infty}^{2(q-1)}\int r^{n-1}(\partial^2\phi^{(m)})^2 \Bigg(\frac{1}{r^{1-\epsilon_0}\jpns{r}^{2\epsilon_0}}\Bigg)^\alpha\\ \lesssim_{s_i,\alpha}\max_i\bigg(\norm{\phi^{(m)}}_{\dot{H}^1}^{s_i}\norm{\phi^{(m)}}_{\dot{H}^2}^{1-s_i}\bigg)^{2(q-1)}t^{2(1-\alpha)}\norm{\phi^{(m)}}_{\dot{H}^2}^{2(1-\alpha)}\Bigg(\int\frac{r^{n-1}(\partial^2\phi^{(m)})^2}{r^{1-\epsilon_0}\jpns{r}^{2\epsilon_0}}\Bigg)^{\alpha}\lesssim t^{1-\alpha}\norm{\phi^{(m)}}_Z^{2q}
		\end{gathered}
	\end{equation*}
	with $\alpha,s_i\in(0,1)$ for $i\in\{1,2\}$ and $s_i=\frac{n}{2}-\frac{(1+\alpha)(1+\epsilon_0(-1+2i))}{2(q-1)}$. For $s_i$ to satisfy the constraints,  we need to choose $\alpha$ such that $\frac{1+\alpha}{2(q-1)}\in(n/2-1,n/2)$. A bit of algebra shows that this choice is possible for $q-1\in(1/n,2/(n-2))$. Afterwards, $\epsilon_0$ can be chosen sufficiently small such that $s_i$ satisfy the bounds.	
	
	Choosing $t$ sufficiently small -- based on initial data ($\delta$) -- we see that the solution is indeed bounded in $Z$.
	
	\textit{Contraction:}
	Let $\chi^{(m)}=\phi^{(m)}-\phi^{(m-1)}$. Using \eqref{local existence iteration} again, we get
	\begin{equation*}
		\begin{gathered}
			\norm{\chi^{(m)}}_W^2\lesssim_{\epsilon_0}\int_0^t(\abs{\partial\phi^{(m-1)}}^q-\abs{\partial\phi^{(m-2)}}^q)^2r^{1-\epsilon_0}\jpns{r}^{2\epsilon_0}r^{n-1}\\
			\lesssim_q \max_{j\in\{m-1,m-2\}}\int_0^t(\partial\chi^{(m-1)})^2\abs{\partial\phi^{(j)}}^{2(q-1)}r^{n-\epsilon_0}\jpns{r}^{2\epsilon_0}\\ \lesssim\norm{\chi^{(n-1)}}_W^2t^{1-\alpha}\norm{\partial\phi^{(j)}r^{\frac{(1+\alpha)(1-\epsilon_0)}{2(q-1)}}\jpns{r}^{\frac{(1+\alpha)2\epsilon_0}{2(q-1)}}}_{L^\infty}^{2(q-1)}
			\lesssim\norm{\chi^{(n-1)}}_W^2t^{1-\alpha}\norm{\phi^{(n)}}_Z^{2(q-1)}
		\end{gathered}
	\end{equation*}
	where correct $\alpha$ can be chosen under the same conditions on $q$ as before. We see, that for $t$ sufficiently small,  we get a contraction: $\norm{\chi^{(m)}}_W\leq1/2\norm{\chi^{(m-1)}}_W$. Therefore, $\phi^{m}$ have a limit and by boundedness in $Z$, the limit is also in $Z$.
	\end{proof}
	
	To understand the global behaviour of the solution, it suffices to control the following norms:
	\begin{taggedsubequations}{linear}\label{linear bootstrap}
		\begin{equation}\label{key}
			\begin{gathered}
				\sup_uE^{p-\epsilon}[\partial_v\psi;u],u^{p-\epsilon}E^0[\partial_v\psi;u]
			\end{gathered}
		\end{equation}
		\begin{equation}\label{key}
			\begin{gathered}
				\sup_uE^{p+2-\epsilon}[\partial_v^2\psi;u],u^{p+2-\epsilon}E^0[\partial_v^2\psi;u]
			\end{gathered}
		\end{equation}
		\begin{equation}\label{key}
			\begin{gathered}
				\sup_uu^{p-\epsilon}\mathcal{E}_{\hat{\Sigma}_u}[\phi],u^{p+2-\epsilon}\mathcal{E}_{\hat{\Sigma}_u}[\partial\phi]
			\end{gathered}
		\end{equation}
		\begin{equation}\label{key}
			\begin{gathered}
				\sup_uu^{p-\epsilon}E^{I}[\bar{\partial}\phi,u],u^{p+2-\epsilon}E^{I}[\bar{\partial}\partial\phi,u]
			\end{gathered}
		\end{equation}
		\begin{equation}
			\sup_u\mathcal{E}_{\{t=u\}}[\phi],\mathcal{E}_{\{t=u\}}[\partial\phi]
		\end{equation}
	\end{taggedsubequations}
where $p=p(q)$ defined in the beginning of this section and the definition of each expression is contained in \cref{definitino of E norms}.

	\begin{definition}\label{x norm definition}
		For $\mathcal{A}\subset \R^{n+1}$, let's define $\norm{\phi}_{X(\mathcal{A})}$ to be the sum of all the norms above (with $\psi=r^{(n-1)/2}\phi$ implicit) restricted to $\mathcal{A}$ and $\norm{\cdot}_{X}=\norm{\cdot}_{X(\{t\geq0\})}$. 
	\end{definition}
	These in turn will imply the boundedness of the following norms for the nonlinearity:
	
	\begin{taggedsubequations}{non-linear}\label{nonlinear bootstrap}
		\begin{equation}\label{key}
			\begin{gathered}
				\int \d u\,E^{p-\epsilon}[\tilde{F};u]^{1/2},\int \d u\,E^{p-\epsilon}[r\partial_v \tilde{F};u]^{1/2}
			\end{gathered}
		\end{equation}
		\begin{equation}\label{key}
			\begin{gathered}
				\sup_u u^{p}(E^{\star I}[F;u],E^{\star I}[u\partial F;u])
			\end{gathered}
		\end{equation}
		
\end{taggedsubequations}
	\begin{definition}\label{y norm definition}
		For $\mathcal{A}\subset\R^{n+1}$ let $\norm{F}_{Y(\mathcal{A})}$ be the sum of the above norms with integrals taken at a restriction to $\mathcal{A}$.
	\end{definition}
	\begin{remark}
		Note, that for both norms, we have
		\begin{equation*}
			\begin{gathered}
				\norm{f+g}_i\leq\norm{f}_i+\norm{g}_i\qquad i\in\{X,Y\}
			\end{gathered}
		\end{equation*}
	since all quantities are based on $L^2$ norms.
	\end{remark} 
	
	The local existence can be extend to a global result using the bootstrap  \cref{bootstrap}:
	
	\begin{theorem}[Global existence]\label{partival v main theorem}
		For $q_c<q<\frac{2}{n-2}+1$, $n\in\{3\}\cup\{n\geq5\}$ there exists $\delta,c>0$ such that the following holds. All spherically symmetric initial data $\norm{\phi_0,\phi_1}_{H^2\times H^1}\leq\delta$ with $\supp\phi_0,\phi_1\subset B_1$ for \eqref{partial v}, has global solution $\phi\in C^0[[0,\tau];H_{rad}^2]\cap C^1[[0,\tau],H_{rad}^1] $ with $\norm{\phi}_X\leq c\delta$.
	\end{theorem}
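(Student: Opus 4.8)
The plan is to run a standard continuity/bootstrap argument that upgrades the local solution produced by the previous theorem to a global one, using the decay estimates of \cref{L2 technical estimates} to close a nonlinear estimate in the norm $\norm{\cdot}_X$. First I would fix the bootstrap assumption $\norm{\phi}_{X(\{0\le t\le u\})}\le 2c\delta$ on a maximal time interval, and show that under this assumption one in fact recovers $\norm{\phi}_{X(\{0\le t\le u\})}\le c\delta$, which by the local existence theorem and a continuity argument forces the interval to be $[0,\infty)$. The core of the argument is therefore the implication $\norm{\phi}_X\lesssim\delta+\norm{F}_Y$ (from \cref{energy decay p<4}, \cref{Psi to psi}, \cref{l infty estimates from l2} applied with $p=p(q)$, noting $3<p(q)<4$ so the estimates are in their range of validity) together with the nonlinear estimate $\norm{F}_Y\lesssim \norm{\phi}_X^q$ when $F=\abs{\partial_v\phi}^q$.

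The key steps, in order: (i) From the bootstrap assumption, \cref{l infty estimates from l2} gives the pointwise bounds $\abs{r^{n/2-1}\partial\phi}\lesssim \delta\, u^{-(p+2)/2}$ on $\hat\Sigma_u$ and $\abs{\partial_v\psi}\lesssim \delta\, v^{-(p+1-\epsilon)/2}$ on $\Sigma_u\cap\{v>2u\}$; translating back to $\phi$ this yields $\abs{\partial_v\phi}\lesssim \delta\, t^{-(n-1)/2}v^{-1+\cdots}$ near $\scri$ and faster decay near $I^+$. (ii) Feed these into the definition of $\norm{F}_Y$: one must bound $\int\d u\, E^{p-\epsilon}[\tilde F;u]^{1/2}$, $\int\d u\, E^{p-\epsilon}[r\partial_v\tilde F;u]^{1/2}$ and $\sup_u u^p(E^{\star I}[F;u]+E^{\star I}[u\partial F;u])$. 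Each is a weighted $L^2$ (or $L^2$-in-$u$ of $L^2$-in-$r$) norm of $\abs{\partial_v\phi}^q$ or $\partial(\abs{\partial_v\phi}^q)=q\abs{\partial_v\phi}^{q-1}\partial\partial_v\phi$; estimating by Hölder, pulling out $q-1$ (or $q$) factors of $\partial_v\phi$ in $L^\infty$ with the decay from (i) and keeping one factor of $\partial_v\psi$ (resp. $\partial_v^2\psi$, $\partial\phi$) in $L^2$ controlled by $\norm{\phi}_X$, one arrives at $\norm{F}_Y\lesssim (c\delta)^q$, where the crucial arithmetic is that the exponent $p(q)=(n+1)q-n$ was chosen precisely so that the $u$- and $v$-weights balance — this is exactly the content of the heuristic $q(q-1)\tfrac{n+1}{2}>1$ and is where the strict inequality $q>q_c$ buys the small positive power that makes the $u$-integrals converge. (iii) Combine with the linear estimate to get $\norm{\phi}_X\lesssim \delta+(c\delta)^q$, and since $q>1$, choosing $\delta$ small and $c$ large (but fixed) gives $\norm{\phi}_X\le c\delta$, closing the bootstrap. (iv) Finally, the controlling norm $\norm{\phi}_X$ dominates the local-existence norm $Z$ on each slab $\{t\in[u,2u]\}$ (via \cref{morawetz} and the $\dot H^1,\dot H^2$ content of $\mathcal{E}_{\hat\Sigma}[\phi],\mathcal{E}_{\hat\Sigma}[\partial\phi]$), so its boundedness prevents blow-up and the solution extends globally with the stated regularity.

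The main obstacle I expect is step (ii), specifically the bookkeeping needed so that only $q-1$ factors of $\partial_v\phi$ are placed in $L^\infty$ while the remaining factor (and, for the $\partial F$ terms, the single factor of $\partial\partial_v\phi$) sits in a norm genuinely controlled by $X$ with the right weights — the subtlety is that $E^{p-\epsilon}[\tilde F;u]$ involves an $r$-weight $r^{p-\epsilon}$ that must be split as $r^{(p-\epsilon)/2}\cdot r^{(p-\epsilon)/2}$ with one half absorbed into $\partial_v\psi$ (giving $E^{p-\epsilon}[\partial_v\psi;u]^{1/2}\lesssim c\delta$ directly) and the other half forcing the $L^\infty$ factor to carry weight $r^{(p-\epsilon)/2(q-1)}$ per copy, which is admissible only in the precise range $q_c<q<\tfrac{2}{n-2}+1$; keeping track of the interior region $\{r<t/2\}$ versus the wave zone $\{r>t/2\}$ separately (using $E^{\star I}$, $E^I$ in the former and the $r^p$-hierarchy in the latter), and handling the mild $\epsilon,\epsilon_0$ losses without destroying convergence, is the delicate part. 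A secondary technical point is justifying that the bootstrap set is open and closed — openness from the local theory and continuity of $u\mapsto\norm{\phi}_{X(\{t\le u\})}$, closedness from the improved estimate — which is routine but needs the a priori control of $\mathcal{E}_{\{t=u\}}[\phi],\mathcal{E}_{\{t=u\}}[\partial\phi]$ included as the last line of \cref{linear bootstrap}.
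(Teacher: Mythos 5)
Your proposal is correct and follows essentially the same route as the paper: a continuity/bootstrap argument resting on the linear implication $\norm{F}_{Y}\lesssim1\Rightarrow\norm{\phi}_{X}\lesssim1$ (via the energy-decay hierarchy, $r^p$ estimates and \cref{l infty estimates from l2} at $p=p(q)=(n+1)q-n$) combined with the nonlinear estimate $\norm{\abs{\partial_v\phi}^q}_{Y}\lesssim\norm{\phi}_{X}^q$, obtained by placing $q-1$ factors in $L^\infty$ and one in $L^2$, with $q>q_c$ providing the convergence of the $u$-integrals, exactly as in the paper's \cref{bootstrap}, \cref{linear part of bootstrap} and \cref{non-linear part of bootstrap}. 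The only technical device you do not mention is the paper's extension of the forcing past $t=T$ (\cref{F extension}) so the global-in-$u$ linear estimates apply on a finite slab, followed by finite speed of propagation, but this does not alter the structure of your argument.
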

	\begin{proof}
		Let $\delta_0$ be the constant appearing in \cref{bootstrap}. Let $\delta\in(0,\delta_0)$ such that the linear wave with initial data $\phi_0,\phi_1$ has $\norm{\phi^{(lin)}}_X<\delta_0$. Such delta exist by the homogeneous part of \cref{linear part of bootstrap}.
		
		Let's set $I=\{ t\geq0 :\norm{\phi(t)}_{X[0,t]}\leq10\delta \}$. By continuity and local existence, we know that $I$ has non empty interior and is closed.  By the bootstrap and continuity, we know it's open, thus $I=[0,\infty)$.
	\end{proof}
	
	\begin{prop}\label{bootstrap}
		Fix $q_c<q<\frac{2}{n-2}+1$. There exists $\delta>0$ such that the following holds.
		
		For $\phi$ a spherically symmetric solution to \eqref{partial v} in $\mathcal{A}=[0,T]\times\R^n$ ($n\in\{3\}\cup\{n\geq5\}$) and $\phi_{lin}$ the solution to the wave equation with the same initial data we have the following bootstrap estimate
		\begin{equation*}
			\begin{gathered}
				\norm{\phi_{lin}}_{X}\leq \delta,\, \norm{\phi}_{{X}(\mathcal{A})}\leq 10\delta\implies\norm{\phi}_{{X}(\mathcal{A})}\leq 2\delta.
			\end{gathered}
		\end{equation*}
		with $\norm{\cdot}_{X}$ are as in \cref{x norm definition}.
	\end{prop}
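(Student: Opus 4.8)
The plan is to run a standard continuity/bootstrap argument built entirely on the linear estimates of the previous section, closing the non-linear terms via the controlling $L^\infty$ bounds. Assume $\norm{\phi}_{X(\mathcal A)}\le 10\delta$ on $\mathcal A=[0,T]\times\R^n$ and that the linear evolution satisfies $\norm{\phi_{lin}}_X\le\delta$. The goal is to upgrade the bootstrap constant to $2\delta$. First I would split $\phi=\phi_{lin}+\phi_{nl}$, where $\phi_{nl}$ solves $\Box\phi_{nl}=\abs{\partial_v\phi}^q=:F$ with trivial data. By the triangle inequality for the $X$-norm (all pieces are $L^2$-based), it suffices to show $\norm{\phi_{nl}}_{X(\mathcal A)}\lesssim \delta^q$, since then $\norm{\phi}_{X(\mathcal A)}\le\delta+C\delta^q\le 2\delta$ for $\delta$ small depending on $C$. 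The estimate $\norm{\phi_{nl}}_{X(\mathcal A)}\lesssim\norm{F}_{Y(\mathcal A)}$ is exactly the content of \cref{energy decay p<4} together with \cref{Psi to psi} and \cref{l infty estimates from l2} (for $n=3$ one uses the corresponding $p\in[0,\infty)$ statements from \cref{energy decay p<2}, \cref{rp}b and \cref{morawetz}), with $p=p(q)=(n+1)q-n\in(3,4)$ being admissible precisely in the stated range.

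The heart of the matter is therefore the nonlinear estimate $\norm{F}_{Y(\mathcal A)}\lesssim\norm{\phi}_{X(\mathcal A)}^q$, i.e.\ controlling $\int\d u\,E^{p-\epsilon}[\tilde F;u]^{1/2}$, $\int\d u\,E^{p-\epsilon}[r\partial_v\tilde F;u]^{1/2}$, and $\sup_u u^{p}(E^{\star I}[F;u]+E^{\star I}[u\partial F;u])$ in terms of the $X$-norm. Here $\tilde F=r^{(n-1)/2}\abs{\partial_v\phi}^q$, and $\partial_v\phi=r^{-(n-1)/2}\partial_v\psi - \tfrac{n-1}{2}r^{-1}\phi$, so schematically $\tilde F\sim r^{-(n-1)(q-1)/2}\abs{\partial_v\psi}^q + (\text{lower order in the region }r>1)$, plus the interior piece $r<1$ where everything is controlled by $\mathcal E_{\{t=u\}}$. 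In the exterior region $r>1$ I would peel off $q-1$ powers of $\partial_v\psi$ in $L^\infty$ using the pointwise bounds from \cref{l infty estimates from l2} — namely $\abs{\partial_v\psi}\lesssim v^{-(p+1-\epsilon)/2}$ on $\{v>2u\}$ and $\abs{\partial_v\psi}\lesssim u^{-(p+1)/2}$ near $r\sim u$ — and retain one factor of $\partial_v\psi$ in $L^2$, bounded by $E^{p-\epsilon}[\partial_v\psi;u]^{1/2}$. The arithmetic that must work out is that $(q-1)\cdot\tfrac{p+1}{2} - \tfrac{(n-1)(q-1)}{2}$ exceeds $1$ by a definite amount, giving an integrable-in-$u$ tail; this is where the defining relation $q_c(q_c-1)\tfrac{n+1}{2}=1$ enters, and $q>q_c$ provides the $\epsilon$-room. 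For the $r\partial_v\tilde F$ and $u\partial F$ terms one differentiates, landing a factor $\partial_v^2\psi$ or $r^{n/2-1}\partial^2\phi$, which is absorbed by the second-derivative $L^2$ bounds $E^{p+2-\epsilon}[\partial_v^2\psi;u]^{1/2}$ and $u^{-(p+2)/2}$-type pointwise control of $\partial\phi$; the weights are arranged (via the choice $p=(n+1)q-n$) so that the $r$-powers match.

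I would organize the region-by-region bookkeeping using the foliations $\Sigma_u,\tilde\Sigma_u$ and the three zones $r<1$, $u<r<2u$ (near the light cone / $\tilde\Sigma$), and $r>2u$ (near $\scri$), as in \cite{hidano_glassey_2012} and \cite{angelopoulos_vector_2018}: in the innermost zone one uses the (non-decaying) standard energy $\mathcal E_{\{t=u\}}[\phi],\mathcal E_{\{t=u\}}[\partial\phi]$ together with \cref{radial Sobolev inequality} to handle the nonlinearity on a fixed-size ball, then propagate to $\hat\Sigma_u$; in the outer zones one uses the $r^p$-hierarchy and the decay estimates above. The main obstacle — and the only genuinely delicate point — is the borderline integrability in $u$: because $p$ sits strictly between $3$ and $4$ and one is at $H^2$ regularity, there is essentially no slack beyond the $\epsilon$ gained from $q>q_c$, so every weight in $r$ and every power in $u$ must be tracked exactly, and one must verify that the $\epsilon$-losses from \cref{l infty estimates from l2} (the $v^{(p+1-\epsilon)/2}$ factor) and the $\epsilon_0$-loss from the Morawetz estimate \cref{morawetz} can both be absorbed, which forces $\epsilon,\epsilon_0\ll q-q_c$ as stipulated. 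Once the nonlinear estimate $\norm{F}_{Y(\mathcal A)}\lesssim(10\delta)^q$ is in hand, combining with the linear estimates and choosing $\delta$ small enough that $C(10\delta)^{q-1}\le 1$ closes the bootstrap.
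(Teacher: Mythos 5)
Your proposal follows essentially the same route as the paper: split $\phi=\phi_{lin}+\hat\phi$, bound the forcing $F=\abs{\partial_v\phi}^q$ in the $Y$ norm by peeling $q-1$ factors in $L^\infty$ (via \cref{l infty estimates from l2}) and keeping one factor in the $L^2$-based quantities, region by region, with the choice $p=(n+1)q-n$ and the margin $q>q_c$ absorbing the $\epsilon,\epsilon_0$ losses, then feed this into the linear decay machinery and close with the triangle inequality and $\delta$ small. This is exactly how the paper organizes the argument, via \cref{non-linear part of bootstrap} and \cref{linear part of bootstrap}.

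The one step you gloss over is the assertion that $\norm{\hat\phi}_{X(\mathcal A)}\lesssim\norm{F}_{Y(\mathcal A)}$ is \emph{exactly} the content of \cref{energy decay p<2}, \cref{energy decay p<4}, \cref{Psi to psi} and \cref{l infty estimates from l2}. As stated, those lemmas take a forcing controlled for all retarded times (the hypotheses involve $\int\d u$ and $\sup_u$ over the full range, and the $r^p$ hierarchy is summed over dyadic intervals out to infinity), whereas on the slab $\mathcal A=[0,T]\times\R^n$ you only control $F$ up to time $T$. The paper resolves this by explicitly extending $F$ past $t=T$ by a reflected, cut-off copy, see \eqref{F extension}, checking that $\norm{\tilde F}_{Y}\lesssim\norm{F}_{Y[0,T]}$ (only $\partial_v$ derivatives are glued in $r\geq1$, which are precisely the ones controlled), applying the global linear lemma to the solution with forcing $\tilde F$, and then invoking finite speed of propagation to identify it with $\hat\phi$ on $t<T$; this is what makes the implicit constant $T$-independent. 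Either you reproduce this extension-plus-domain-of-dependence device, or you must re-derive the linear decay estimates on a finite slab with uniform constants; as written, your proof has a (fixable but genuine) gap at this point. The rest of your outline, including the bookkeeping by zones $r<1$, $u<r<2u$, $r>2u$ and the role of the second-derivative norms for the $r\partial_v\tilde F$ and $u\partial F$ terms, matches the paper's proof of \cref{non-linear part of bootstrap}.
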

	
	This proposition is immediate given the following two parts:
	\begin{lemma}[Linear part]\label{linear part of bootstrap}
		For a solution $\phi$ to \cref{linear} with $\norm{F}_{Y[0,T]}\leq1,\,\supp F\subset\{t-r>0\}$ , we have $\norm{\phi}_{X([0,T])}\lesssim1$. Importantly, the implicit constant doesn't depend on $T$. Moreover we can also have initial data $\norm{\phi_0,\phi_1}_{H^2\times H^1}\leq1$ with $\supp\phi_0,\phi_1\{r<1\}$.
	\end{lemma}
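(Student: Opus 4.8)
This statement carries no information about the particular nonlinearity, so the proof is purely an assembly of the linear estimates of \cref{L2 technical estimates}. Split $\phi=\phi_{hom}+\phi_{inh}$, where $\phi_{hom}$ solves the homogeneous equation with the given $H^2\times H^1$ data supported in $B_1$, and $\phi_{inh}$ solves \cref{linear} with the forcing $F$. The key structural fact used throughout is that $p=p(q)=(n+1)q-n\in(3,4)$ for the admissible range $q_c<q<\tfrac{2}{n-2}+1$, so every weight appearing in $\norm{\cdot}_X$ (namely $p-\epsilon$ and $p+2-\epsilon$) lies inside the admissible ranges of the estimates below, and the exclusion of $n=4$ in \cref{rp}c, \cref{energy decay p<4} is exactly what makes the higher-dimensional $\Psi$-hierarchy available.

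\textbf{Step 1: initial data.} For $\phi_{hom}$, finite speed of propagation confines the solution to $\{t-r\le1\}$; in odd dimensions strong Huygens' principle confines it further to $\{\abs{t-r}\le1\}$, and in even dimensions $n\ge6$ the Green's-function tail decays like $t^{-(n-1)/2}$ with $(n-1)/2>\tfrac{p}{2}+1$, hence faster than every weight in $\norm{\cdot}_X$ (cf. \cref{even odd dimensions}). Therefore each quantity in \cref{linear bootstrap} evaluated on $\phi_{hom}$ is either eventually zero or decays strictly faster than demanded, and is controlled by the conserved energies $\mathcal{E}_{\{t=0\}}[\phi_{hom}]+\mathcal{E}_{\{t=0\}}[\partial\phi_{hom}]\lesssim\norm{\phi_0,\phi_1}_{H^2\times H^1}^2\le1$. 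Thus $\norm{\phi_{hom}}_{X([0,T])}\lesssim1$, uniformly in $T$.

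\textbf{Step 2: the weighted quantities for $\phi_{inh}$.} The hypothesis $\norm{F}_{Y[0,T]}\le1$ says precisely that $\int\d u\,E^{p-\epsilon}[\tilde F;u]^{1/2}$, $\int\d u\,E^{p-\epsilon}[r\partial_v\tilde F;u]^{1/2}$, $\sup_u u^pE^{\star I}[F;u]$ and $\sup_u u^pE^{\star I}[u\partial F;u]$ are $\lesssim1$. Applying \cref{energy decay p<2} (for $n=3$, where the weight range is unrestricted) or \cref{energy decay p<4} (for $n\ge5$, using $p-\epsilon<4$) \emph{at weight $p-\epsilon$} — the loss from $p$ to $p-\epsilon$ is forced because the $Y$-norm only provides the $\scri$-flux of $F$ at this weight, while $u^{p-\epsilon}E^{\star I}[F;u,2u]\le u^pE^{\star I}[F;u]$ and likewise for $u\partial F$ and $r\partial_v\tilde F$ — yields the boundedness of $E^{p-\epsilon}[\partial_v\psi;u]$, $u^{p-\epsilon}E^0[\partial_v\psi;u]$, $u^{p-\epsilon}\mathcal{E}_{\hat\Sigma_u}[\phi]$, $u^{p-\epsilon}E^{I}[\bar\partial\phi;u]$, and (from the second line of \cref{energy decay p<4}, resp. from applying \cref{energy decay p<2} to $\partial\phi$ at weight $p+2-\epsilon$ when $n=3$) of $u^{p+2-\epsilon}\mathcal{E}_{\hat\Sigma_u}[\partial\phi]$ and $u^{p+2-\epsilon}E^{I}[\bar\partial\partial\phi;u]$. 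Finally \cref{Psi to psi} (with the same weight $p-\epsilon$) converts the $\partial_v\Psi$-bounds into $\sup_uE^{p+2-\epsilon}[\partial_v^2\psi;u]$ and $u^{p+2-\epsilon}E^0[\partial_v^2\psi;u]$, exhausting parts (a)–(d) of \cref{linear bootstrap}.

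\textbf{Step 3: flat slices, and conclusion.} The remaining quantities $\sup_u\mathcal{E}_{\{t=u\}}[\phi]$, $\sup_u\mathcal{E}_{\{t=u\}}[\partial\phi]$ follow from the standard energy identity: $\mathcal{E}_{\{t=u\}}[\phi]\lesssim\mathcal{E}_{\{t=0\}}[\phi]+\int\abs{\partial\phi}\abs{F}$ and similarly with $\partial\phi,\partial F$, closing the error by Cauchy--Schwarz against the Morawetz-controlled $\int r^{n-1}(\bar\partial\phi)^2 r^{-1+\epsilon_0}\jpns{r}^{-2\epsilon_0}$ (resp. its once-differentiated version in \cref{morawetz}), the extra $u^{-1}$ from writing $u\partial F=u\cdot\partial F$ being absorbed by the $u^{p-\epsilon}$-gain already in hand. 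Adding Steps 1--3 gives $\norm{\phi}_{X([0,T])}\lesssim1$ with a constant independent of $T$.

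\textbf{Main obstacle.} The one genuinely delicate point — as opposed to the bookkeeping over the quantities in \cref{definitino of E norms} — is verifying that the even-dimensional Green's-function tails, both of $\phi_{hom}$ and of the interior-supported part of $\phi_{inh}$, decay strictly faster than the slowest weight $t^{-(p-\epsilon)/2}$, so that they may legitimately be discarded as in \cref{even odd dimensions}; this, together with checking that $p(q)\in(3,4)$ indeed sits inside every admissible range invoked (where $q<\tfrac{2}{n-2}+1$ and the exclusion $n\neq4$ are used), and ordering the hierarchy so that undifferentiated weights are established before differentiated ones, is the whole content of the argument.
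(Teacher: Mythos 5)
Your argument follows the paper's own route: the paper likewise proves this lemma by extending $F$, feeding the $Y$-norm hypotheses into \cref{energy decay p<2} and \cref{energy decay p<4}, converting to the second-order quantities via \cref{Psi to psi}, and closing the flat-slice energies with a standard energy estimate, so the overall structure of your Steps 2--3 is the same as the paper's.

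Two concrete points should be repaired. First, the decay lemmas you invoke are stated for solutions of \cref{linear} with the forcing controlled for \emph{all} retarded times (the hypotheses are $\sup_u$ and $\int\d u$ over the whole evolution), whereas the lemma only assumes $\norm{F}_{Y[0,T]}\leq1$; applying them verbatim with data on $[0,T]$ is not justified as written. The paper's proof begins precisely by extending $F$ past $t=T$ via \eqref{F extension}, checking $\norm{\tilde F}_{Y}\lesssim1$ (the extension is designed so that only quantities the $Y$ norm controls are glued), and then using finite speed of propagation to identify the two solutions on $[0,T]$; you need either this extension or an explicit argument that the dyadic decay hierarchy localizes causally to $[0,T]$ -- this is exactly where the uniformity in $T$ is earned. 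Second, in Step 1 your inequality $(n-1)/2>p/2+1$ for the even-dimensional Green's-function tail fails at $n=6$ (there $(n-1)/2=2.5$ while $p/2+1>2.5$); the discard of the homogeneous tail is still legitimate, but the correct comparison is against the decay actually demanded by the $X$ norm (energy decay of order $u^{-(p+2-\epsilon)}$ for $\partial\phi$ and weighted fluxes $E^{p+2-\epsilon}$, which the tail satisfies since $n\geq p$), or, more simply, one can run the homogeneous version of the same $r^p$/energy-decay hierarchy with the compactly supported $H^2\times H^1$ data on the right-hand side rather than arguing through pointwise tail rates.
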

	
	\begin{lemma}[Non-linear part]\label{non-linear part of bootstrap}
		For $\phi\in\R^{n+1}$ spherically symmetric with $\supp\phi\subset\{t-r>0\}$ and $\norm{\phi}_{X[0,T]}\leq\delta$, we have $\norm{\abs{\partial_v\phi}^q}_{Y[0,T]}\lesssim \delta^q$.
	\end{lemma}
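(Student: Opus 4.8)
The plan is to verify each of the two nonlinear norms appearing in \cref{nonlinear bootstrap} directly from the hypothesis $\norm{\phi}_{X[0,T]}\le\delta$, using the $L^\infty$ bounds extracted from the $X$-norm (that is, \cref{l infty estimates from l2}) to control the top-order derivatives in the worst factor of the nonlinearity, and the weighted energy quantities in $X$ to absorb the remaining $L^2$ factor. The key input is the pointwise decay
\begin{equation*}
u^{(p-1-\epsilon)/2}\norm{\psi}_{L^\infty(\Sigma_u\cap\{r>1\})}\lesssim\delta,\qquad u^{(p+2)/2}\norm{r^{n/2-1}\partial\phi}_{L^\infty(\hat\Sigma_u)}\lesssim\delta,\qquad \norm{v^{(p+1-\epsilon)/2}\partial_v\psi}_{L^\infty(\Sigma_u\cap\{v>2u\})}\lesssim\delta,
\end{equation*}
which hold since the hypotheses of \cref{l infty estimates from l2} are exactly the $X$-norm bounds. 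Translating back, $\partial_v\phi\sim r^{-(n-1)/2}\partial_v\psi + (\text{lower order})$ has the decay $|\partial_v\phi|\lesssim\delta\, r^{-(n-1)/2}v^{-(p+1-\epsilon)/2}$ in the far region $\{v>2u\}$ and $|\partial_v\phi|\lesssim\delta\, r^{-(n-1)/2}u^{-(p+1-\epsilon)/2}$ near the cone, with the interior piece $\{t/2<r\}$ controlled by $r^{n/2-1}\partial\phi\lesssim\delta u^{-(p+2)/2}$. These are precisely the decay rates predicted by the heuristic computation in \cref{finding critical exponents} for the critical exponent $q_c$, so the exponents will close with room to spare once $q>q_c$.

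First I would bound $\int\d u\, E^{p-\epsilon}[\tilde F;u]^{1/2}$ where $\tilde F = r^{(n-1)/2}|\partial_v\phi|^q$. On a slice $\Sigma_u$ we have $E^{p-\epsilon}[\tilde F;u]=\int_{r>1,t-r=u}\d r\, r^{p-\epsilon}\,r^{n-1}|\partial_v\phi|^{2q}$. I would split the $r$-integral into $\{1<r<2u\}$ (near cone) and $\{r>2u\}$ (near $\scri$). In the near-cone region, pull out $q-1$ factors of $|\partial_v\phi|$ in $L^\infty$ using the $u^{-(p+1-\epsilon)/2}$ decay, leaving one factor $\int_{\Sigma_u}\d r\, r^{p}r^{n-1}|\partial_v\phi|^2\sim E^{p}[\partial_v\psi;u]$, which is bounded by $\delta^2$ (up to the $\epsilon$-loss built into $p$). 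This produces $E^{p-\epsilon}[\tilde F;u]^{1/2}\lesssim \delta^q u^{-(q-1)(p+1-\epsilon)/2}$; integrability in $u$ requires $(q-1)(p+1)/2>1$, i.e. $(q-1)\frac{n+1}{2}q>1$ after substituting $p=p(q)=(n+1)q-n$, which is exactly $q>q_c$. In the region $r>2u$ I would instead use the $v^{-(p+1-\epsilon)/2}$ pointwise decay for $q-1$ of the factors and the weighted quantity $E^{p}[\partial_v\psi;u]$ for the last one; since $v\gtrsim r$ there, the extra $r$-weights are consumed and one gains integrability in $u$ by the same arithmetic. The term $\int\d u\, E^{p-\epsilon}[r\partial_v\tilde F;u]^{1/2}$ is handled identically after noting $r\partial_v(|\partial_v\phi|^q)\sim q|\partial_v\phi|^{q-1}r\partial_v^2\phi + (\text{lower})$; the top factor $r\partial_v^2\phi\sim r^{-(n-1)/2+?}\partial_v^2\psi$ is controlled in $L^2$ by $E^{p+2}[\partial_v^2\psi;u]$ from the second line of \cref{linear bootstrap}, and the $q-1$ undifferentiated factors again by the $L^\infty$ decay, so the same exponent count closes.

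Next I would bound $\sup_u u^p E^{\star I}[F;u]$ and $\sup_u u^p E^{\star I}[u\partial F;u]$, which are spacetime integrals over $\mathcal D_{u,\infty}\cap\{r<t\}$ weighted by $r^{n-1}r^{1-\epsilon_0}\jpns r^{2\epsilon_0}$. Here the interior region $\{r<t/2\}$ is the relevant one, and I would use the $L^\infty$ bound $r^{n/2-1}\partial\phi\lesssim\delta u^{-(p+2)/2}$ (hence $|\partial_v\phi|\lesssim \delta r^{1-n/2}u^{-(p+2)/2}$ on $\hat\Sigma$) together with $E^I[\bar\partial\phi;u]\lesssim\delta^2 u^{-(p-\epsilon)}$ from \cref{linear bootstrap}: pulling out $q-1$ factors in $L^\infty$ and keeping one factor inside the Morawetz-type integral $E^{I}$ gives $E^{\star I}[F;u]\lesssim \delta^{2q} u^{-(q-1)(p+2)} u^{-(p-\epsilon)}$, and since $q\ge1$ and $p>0$ this certainly decays faster than $u^{-p}$, giving the required bound with room. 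The $u\partial F$ term is the same with $\partial^2\phi$ in the $L^2$ slot, controlled by $u^{p+2-\epsilon}E^I[\bar\partial\partial\phi;u]\lesssim\delta^2$. A technical point is to check that the powers of $\jpns r$ versus $r$ in the Morawetz weight match the powers coming from converting $\partial\phi$ to $\partial_v\phi$ and from the radial Sobolev inequality \cref{radial Sobolev inequality}; this is why the lemma is stated keeping $r$ rather than $\jpns r$ in the definitions (\cref{definitino of E norms}).

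\textbf{Main obstacle.} The routine part is the exponent bookkeeping; the delicate point is the \emph{near-cone / interface} region $\{r\sim u\}$ where neither the $v$-decay nor the interior $r^{n/2-1}\partial\phi$ bound is individually sharp, and where one must interpolate between the two pointwise estimates of \cref{l infty estimates from l2} — combined with the fact that $p(q)\in(3,4)$ forces us to use the \emph{extended} $r^p$ hierarchy (the $\Psi=r^2\partial_v\psi$ quantities and \cref{energy decay p<4}, \cref{Psi to psi}) in dimensions $n\ge5$, which is exactly why $n=4$ must be excluded. Ensuring the $q-1$ factors taken in $L^\infty$ never cost more decay than the available slice-decay $u^{-(p+1-\epsilon)/2}$ provides, uniformly across the three subregions $\{r<t/2\}$, $\{t/2<r<2u\}$, $\{r>2u\}$, and with the $\epsilon$ and $\epsilon_0$ losses kept strictly smaller than $q-q_c$, is where the real care is needed.
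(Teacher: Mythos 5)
Your overall strategy (split into regions, use the $L^\infty$ decay from \cref{l infty estimates from l2} on $q-1$ factors and an $L^2$ quantity from the $X$-norm on the last) is in the spirit of the paper's proof, but as written the exponent count does not close in the regime the lemma is actually about, namely $q$ close to $q_c$. In your near-cone estimate you keep one factor in the \emph{non-decaying} quantity $\sup_u E^{p}[\partial_v\psi;u]\lesssim\delta^2$, so the only $u$-decay comes from the $q-1$ pointwise factors, giving $E^{p-\epsilon}[\tilde F;u]^{1/2}\lesssim\delta^q u^{-(q-1)(p+1-\epsilon)/2}$ and the integrability requirement $(q-1)(p+1)>2$. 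Your claim that this equals $(q-1)\tfrac{n+1}{2}q>1$ after substituting $p=(n+1)q-n$ is an algebra error: $(q-1)(p+1)=(n+1)q(q-1)-(n-1)(q-1)$, which at $q=q_c$ equals $2-(n-1)(q_c-1)<2$. So for $q$ slightly above $q_c$ your $u$-integral diverges. The paper avoids this loss: for the worst (undifferentiated) piece it puts \emph{all} $2q$ factors in $L^\infty$ and uses that $p$ is chosen so the leftover $v$-weight is exactly $v^{-1-\epsilon}$, retaining the full $u$-decay $u^{-q(p-1)}$ with $q(p-1)=(n+1)q(q-1)$, which is the critical condition; alternatively one can repair your scheme by using the decaying flux $u^{p-\epsilon}E^{0}[\partial_v\psi;u]\lesssim\delta^2$ and converting the excess $r$-weight into $u$-weight in $\{r\lesssim u\}$, which restores the missing factor $u^{-(p-1)+\ldots}$ and reproduces $q(p-1)$.

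There are two further gaps tied to the same oversight. First, you treat the $\psi/r$ part of $r^{(n-1)/2}\partial_v\phi=\partial_v\psi-c\,\psi/r$ as ``lower order'' and assign the whole nonlinearity the rate $v^{-(p+1-\epsilon)/2}$ for $r>2u$; but $\psi$ has no $v$-decay on a fixed $u$-slice, so $\psi/r$ decays only like $v^{-1}u^{-(p-1-\epsilon)/2}$ there, which is far slower than $v^{-(p+1)/2}$ (recall $p\in(3,4)$). This undifferentiated term is precisely the dominant contribution near $\scri$ and is why the paper performs the splitting explicitly and estimates $\abs{\psi}^q$, $\abs{r\partial_v\psi}^q$, and the mixed second-derivative terms separately. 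Second, for the $E^{\star I}$ norms the wave zone $t/2<r<t$ is not negligible (the weight is $r^{n+\epsilon_0}$, and the paper devotes a separate far-region argument to it), and in the interior your bound ignores the weight mismatch $r^{2+2\epsilon_0}$ between $E^{\star I}$ and $E^{I}$ together with the $r$-powers from the radial Sobolev bound $r^{n/2-1}\partial\phi$; the correct count gives $E^{\star I}[F;u]\lesssim u^{-p}u^{2-(p+n)(q-1)+\mathcal{O}(\epsilon,\epsilon_0)}$, whose gain $2-(p+n)(q-1)=2-(n+1)q(q-1)$ vanishes exactly at $q=q_c$ — so this estimate is borderline at criticality rather than having ``room to spare.'' Until these three points are repaired, the proposal does not establish the lemma on the full range $q>q_c$.
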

	
	\begin{proof}[Proof of \cref{bootstrap}]
		Let $\hat{\phi}=\phi-\phi_{lin}$ denote the non-linear part of the solution. Then, it satisfies an inhomogeneous wave equation
		\begin{equation*}
			\begin{gathered}
				\Box\hat{\phi}=\abs{\partial_v(\phi_{lin}+\hat{\phi})}^q
			\end{gathered}
		\end{equation*}
		with trivial data for $t\in[0,T]$. Let $F=\abs{\partial_v(\phi_{lin}+\hat{\phi})}^q$. Then, \cref{non-linear part of bootstrap} and the assumptions of the propositions imply that $\norm{F}_{Y[0,T]}\lesssim\delta^q$: 
		\begin{equation*}
			\begin{gathered}
				\norm{\phi_{lin}+\hat{\phi}}_X\leq\norm{\phi_{lin}}_X+\norm{\hat{\phi}}_X\leq 11\delta\quad\xRightarrow[\cref{non-linear part of bootstrap}]\quad \norm{F}_{Y[0,T]}\lesssim \delta^q.
			\end{gathered}
		\end{equation*}
		
		Pick cut-off function $\chi\in\mathcal{C}^\infty(\R^+,[0,1])$ with $\chi|_{\{r<1/2\}}=1, \chi|_{\{r>1\}}=0$ and set
		\begin{equation}\label{F extension}
			\begin{gathered}
				\tilde{F}(t,r)=\begin{cases}
					F(t,r) & t\leq T\\
					F(2T-t,r+T-t)\chi(T-t) & \epsilon\in[0,1],r>1,t>T\\
					F(2T-t,r-(1-\chi(r))(T-t))\chi(T-t) & \epsilon\in[0,1],r\leq1\\
					0 & \text{else}.
				\end{cases}
			\end{gathered}
		\end{equation}
		As the $Y$ norm only includes first derivatives of $F$, it's clear that $\norm{F}_{Y([0,T])}\lesssim\delta^q\implies\norm{\tilde{F}}_{Y}\lesssim\delta^q$. This is clear in the region $r\geq1$, because only $v$ derivatives are glued, and these are exactly the ones on which we have control. For $r\leq1$ we have bounds on all derivatives, thus the fact that the gluing mixes these is no problem. 
		
		Therefore by \cref{linear part of bootstrap}, $\tilde{\phi}$ solving the inhomogeneous wave equation with forcing $\tilde{F}$ satisfies $\norm{\tilde{\phi}}_{X}\lesssim \delta^q$. Using finite speed of propagation, we have $\hat{\phi}=\tilde{\phi}$ for $t<T$. Furthermore, using triangle inequality we get 
		\begin{equation*}
			\begin{gathered}
				\norm{\phi}_{X[0,T]}\leq \delta+C \delta^q,
			\end{gathered}
		\end{equation*}
		where $C$ only depends on parameters of the equation and not $\delta$. Choosing $\delta$ sufficiently small yields the result.
	\end{proof}
	\subsubsection{Linear estimates, proof of \cref{linear part of bootstrap}}
	This part essentially follows from the work done and quoted in \cref{L2 technical estimates}. First, we extend $F$ to $\tilde{F}$ as in \eqref{F extension} with $\norm{\tilde{F}}_Y\lesssim1$. Then, using \cref{energy decay p<2}, we get decay statements for first derivatives: 
	\begin{equation*}
		\begin{gathered}
			\sup_uE^{2-\epsilon}[\partial_v\psi;u],u^{2-\epsilon}E^0[\partial_v\psi;u]\lesssim1\\
			\sup_uu^{2-\epsilon}\mathcal{E}_{\hat{\Sigma}_u}[\phi],	\sup_uu^{2-\epsilon}E^{I}[\tilde{\partial}\phi,u],E^\scri[\phi;0]\lesssim1.
		\end{gathered}
	\end{equation*}
	We can use \cref{energy decay p<4} to get decay right up to $p-\epsilon$ for $\Psi$ and using \cref{Psi to psi} and boundedness of radiation at $\scri$ to get optimal decay statements as required by the $X$ norm. Finally, an energy estimate implies the boundedness of $L^2$ norms on constant time slices.
	\subsubsection{Non-linear estimates, proof of \cref{non-linear part of bootstrap}}

	\paragraph{\textit{Far region:}} In this part, we make the implicit assumption that $r>t/2$ implying $r\sim v\sim t$. All integrals are to be interpreted in this region.
	
	In the exterior region, we must split our nonlinearity as discussed before $r^{(n-1)/2}\partial_v\phi=\partial_v\psi-\psi\frac{n-1}{4r}$ and its derivative $r^{(n-1)/2}r\partial_v^2\phi=r\partial_v^2\psi-\partial_v\psi\frac{n-1}{2r}+\psi\frac{(n-1)(n-3)}{16r^2}$. The undifferentiated term is expected to decay much slower than the rest towards $\scri$\footnote{thus creating the tail for the solution}, but the derivative terms will present more problems with respect to regularity issues. This splitting is captured by
	\begin{lemma}
		For $f$ sufficiently regular, $q>1$
		\begin{equation*}
			\begin{gathered}
				\abs{f-r\partial_vf}^q\lesssim_q \abs{f}^q+\abs{r\partial_v f}^q\\
				\abs{r\partial_v\abs{f-r\partial_vf}^q}\lesssim_q \abs{f}^q+\abs{r\partial_vf}^q+\abs{r^2\partial_v^2f}\abs{r\partial_vf}^{q-1}+\abs{f}^{q-1}\abs{r^2\partial_v^2f}
			\end{gathered}
		\end{equation*}
	\end{lemma}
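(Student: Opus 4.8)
The plan is to treat the two inequalities separately, both being elementary pointwise estimates once one differentiates carefully. For the first inequality I would simply combine the triangle inequality $\abs{f-r\partial_v f}\leq\abs{f}+\abs{r\partial_v f}$ with the convexity bound $(a+b)^q\leq 2^{q-1}(a^q+b^q)$, valid for $a,b\geq0$ and $q\geq1$, to get $\abs{f-r\partial_v f}^q\leq 2^{q-1}(\abs{f}^q+\abs{r\partial_v f}^q)$. Nothing further is needed here.

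For the second inequality I would set $g:=f-r\partial_v f$ and differentiate. Since $q>1$, the map $s\mapsto\abs{s}^q$ is $C^1$ with derivative $q\abs{s}^{q-1}\sgn(s)$, so the chain rule gives $r\partial_v\abs{g}^q=q\,\abs{g}^{q-1}\sgn(g)\,(r\partial_v g)$ and hence $\abs{r\partial_v\abs{g}^q}=q\,\abs{g}^{q-1}\,\abs{r\partial_v g}$. The key simplification is the computation of $r\partial_v g$: in the coordinates $v=(t+r)/2$, $u=(t-r)/2$ one has $r=v-u$, so $\partial_v r=1$ and $r\partial_v(r\partial_v f)=r\partial_v f+r^2\partial_v^2 f$; therefore $r\partial_v g=r\partial_v f-r\partial_v(r\partial_v f)=-r^2\partial_v^2 f$, and $\abs{r\partial_v\abs{g}^q}=q\,\abs{f-r\partial_v f}^{q-1}\,\abs{r^2\partial_v^2 f}$.

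It then remains to distribute $\abs{f-r\partial_v f}^{q-1}$. Applying the analogue of the first step at exponent $q-1\geq0$ — namely subadditivity of $s\mapsto s^{q-1}$ when $1<q\leq2$, and $(a+b)^{q-1}\leq 2^{q-2}(a^{q-1}+b^{q-1})$ when $q\geq2$ — yields $\abs{r\partial_v\abs{g}^q}\lesssim_q\abs{f}^{q-1}\abs{r^2\partial_v^2 f}+\abs{r\partial_v f}^{q-1}\abs{r^2\partial_v^2 f}$, which is already contained in the claimed right-hand side. If one does not want to normalize $\partial_v r=1$, the formula for $r\partial_v g$ carries an additional term proportional to $r\partial_v f$; after distributing $\abs{g}^{q-1}$ this produces $\abs{f}^{q-1}\abs{r\partial_v f}$ and $\abs{r\partial_v f}^q$, and $\abs{f}^{q-1}\abs{r\partial_v f}\lesssim_q\abs{f}^q+\abs{r\partial_v f}^q$ by Young's inequality with exponents $\tfrac{q}{q-1}$ and $q$, so the extra terms $\abs{f}^q+\abs{r\partial_v f}^q$ on the right-hand side are exactly what absorbs them.

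There is no genuine obstacle here; the argument is a few lines of calculus. The only points deserving a word of care are that ``sufficiently regular'' should be read as $f\in C^2$ so that $\abs{g}^q$ is $C^1$ and the chain rule is legitimate (the derivative $q\abs{g}^{q-1}\sgn(g)$ is continuous precisely because $q>1$), and the harmless case distinction $1<q\leq2$ versus $q\geq2$ in the step where a power is split across a sum.
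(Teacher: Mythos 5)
Your proposal is correct and follows essentially the same route as the paper, whose proof is exactly ``differentiation and triangle inequality'': you differentiate $\abs{f-r\partial_v f}^q$ via the chain rule (legitimate since $q>1$ and $f$ is $C^2$), then split powers of sums using subadditivity/convexity and Young's inequality. Your extra observation that with the normalization $\partial_v r=1$ the term $r\partial_v(f-r\partial_v f)$ collapses to $-r^2\partial_v^2 f$, so the $\abs{f}^q+\abs{r\partial_v f}^q$ terms are only needed for other normalizations, is a correct refinement but not a different method.
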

	\begin{proof}
		Follows from differentiation and triangle inequality.
	\end{proof}
	We turn to the estimates:
	\begin{lemma}		For $F=\abs{\partial_v\phi}^q$,  $\norm{\phi}_{X(u,\infty)}\leq\delta^2\implies\norm{F}_{Y(u,\infty)}\lesssim\delta^q$.
	\end{lemma}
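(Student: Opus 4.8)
\emph{Proof strategy.} The plan is to expand $F=\abs{\partial_v\phi}^q$ in terms of the rescaled field $\psi=r^{(n-1)/2}\phi$ and to bound every $L^2$-type quantity entering $\norm{\cdot}_{Y}$ by an $L^\infty\times L^2$ split: place $q-1$ of the $q$ factors of $\partial_v\phi$ in $L^\infty$ using the pointwise decay estimates of \cref{l infty estimates from l2}, and absorb the remaining quadratic factor into the $r^p$ / energy quantities controlled by $\norm{\cdot}_{X}$. Since $r^{(n-1)/2}\partial_v\phi=\partial_v\psi-\tfrac{n-1}{4r}\psi$, we have $\tilde F=r^{(n-1)/2}F=r^{-(q-1)(n-1)/2}\abs{\partial_v\psi-\tfrac{n-1}{4r}\psi}^q$, so the elementary inequality $(a+b)^q\lesssim_q a^q+b^q$ gives $\tilde F\lesssim_q r^{-(q-1)(n-1)/2}\big(\abs{\partial_v\psi}^q+\abs{\psi/r}^q\big)$, and the pointwise splitting lemma above yields the analogous bound for $r\partial_v\tilde F$, with two extra terms $\abs{r^2\partial_v^2\psi}\,\abs{r\partial_v\psi}^{q-1}$ and $\abs{\psi}^{q-1}\abs{r^2\partial_v^2\psi}$; here $r^2\partial_v^2\psi$ is controlled by $\partial_v\Psi$ via \cref{Psi to psi}. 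Throughout one uses $r\sim v\sim t$ in the far region $\{r>t/2\}$.

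For the null fluxes $\int\d u\,E^{p-\epsilon}[\tilde F;u]^{1/2}$ and $\int\d u\,E^{p-\epsilon}[r\partial_v\tilde F;u]^{1/2}$ I would proceed term by term. For the undifferentiated piece a computation of exponents using $p=(n+1)q-n$ gives the exact identity $r^{p-\epsilon}\,r^{-(q-1)(n-1)}\abs{\psi/r}^{2q}=r^{-1-\epsilon}\abs{\psi}^{2q}$, hence $E^{p-\epsilon}[\tilde F;u]\big|_{\psi}\lesssim_\epsilon\norm{\psi}_{L^\infty(\Sigma_u\cap\{r>1\})}^{2q}\,u^{-\epsilon}\lesssim_\epsilon u^{-q(p-1-\epsilon)-\epsilon}$ by the first bound of \cref{l infty estimates from l2}. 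For the $\partial_v\psi$ piece the crude $L^2$ split is not quite sufficient near $q_c$, so I would instead use the pointwise bound $\abs{\partial_v\psi}\lesssim v^{-(p+1-\epsilon)/2}$ (again \cref{l infty estimates from l2}, supplemented on the strip $u<r<\tfrac{3}{2}u$ by $\abs{\partial_v\psi}\lesssim u^{-(p+1)/2}$ on the inner edge $\{r=u\}$), note that $r^{2q-1-\epsilon}\le r^{p-\epsilon}$ for $q\ge1$, and integrate in $r$ to get $E^{p-\epsilon}[\tilde F;u]\big|_{\partial_v\psi}\lesssim_\epsilon u^{-(n+1)q(q-1)+\mathcal{O}(\epsilon)}$. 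After taking square roots, the $u$-integrand decays like $u^{-(n+1)q(q-1)/2+\mathcal{O}(\epsilon)}$ (the derivative terms of $r\partial_v\tilde F$ decaying strictly faster, since each extra $\partial$ gains a power of $u$ in $\norm{\cdot}_{X}$), which is integrable precisely when $q(q-1)\tfrac{n+1}{2}>1$, i.e. when $q>q_c$; the $\epsilon,\epsilon_0$ losses are harmless because $q-q_c$ is large compared to them. Controlling $\partial_v\Psi$ in the derivative terms is exactly where the extended hierarchy of part (c) of \cref{rp} and of \cref{energy decay p<4} is used, which is why the range is $n\in\{3\}\cup\{n\ge5\}$.

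The spacetime pieces $\sup_u u^{p}\big(E^{\star I}[F;u]+E^{\star I}[u\partial F;u]\big)$ are handled in the same spirit: the interior weight $r^{1-\epsilon_0}\jpns{r}^{2\epsilon_0}$ combines with the $q$-fold product, and pulling $q-1$ factors into $L^\infty$ via \cref{l infty estimates from l2} leaves an integrand bounded by a weighted version of $E^{I}[\overline{\partial}\phi;u,\infty]$ (respectively $E^{I}[\overline{\partial}\partial\phi;u,\infty]$) from $\norm{\cdot}_{X}$; on $\{r<1\}$ one invokes \cref{boundedness}, and any surviving undifferentiated $\psi$ is traded for $\partial_v\psi$ by the Hardy inequality \cref{hardy inequality}. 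The same numerology $q>q_c$ then yields the $u^{-p}$ decay with room to spare, and by the $q$-homogeneity of $F$ in $\phi$ together with $\norm{\phi}_{X(u,\infty)}\le\delta^2$ the resulting bound is $\lesssim\delta^q$.

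The main obstacle is the derivative (regularity) terms in $r\partial_v\tilde F$ and $u\partial F$. When $q<2$ the factor $\abs{r\partial_v\psi}^{q-1}$ is only H\"older continuous, so it must be placed in $L^\infty$ and never differentiated; moreover its companion factor $r^2\partial_v^2\psi$ is only controlled through the $\Psi$-hierarchy of \cref{rp} and \cref{energy decay p<4}, which degenerates exactly at $n=4$ --- this is the structural reason that dimension is excluded. A secondary delicate point is the $r$-integrability towards $\scri$ of the slowest-decaying, undifferentiated term: it closes only because the weights conspire to the borderline exponent $-1-\epsilon$, which both pins down $p=(n+1)q-n$ as the sharp choice in the $r^p$ estimates and leaves no slack in $q$ beyond the critical value $q_c$.
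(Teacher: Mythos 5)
Your proposal follows essentially the same route as the paper: normalize to $\delta=1$ by homogeneity, split $r^{(n-1)/2}\partial_v\phi$ into $\partial_v\psi$ and $\psi/r$ (plus $r^2\partial_v^2\psi$ pieces for $r\partial_v\tilde F$ and $u\partial F$), put the low-order factors in $L^\infty$ via \cref{l infty estimates from l2}, keep the top-order factor in the weighted $L^2$ quantities of the $X$ norm, and use the identity coming from $p=(n+1)q-n$ that makes the $v$-integrand borderline $r^{-1-\epsilon}$, so that the $u$-integrals close exactly when $q(q-1)\tfrac{n+1}{2}>1$; your exponent arithmetic for the $|\psi|^q$ and $|\partial_v\psi|^q$ contributions ($u^{-(n+1)q(q-1)+\mathcal{O}(\epsilon)}$ before the square root) matches the paper's.

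Two small corrections. First, your dismissal of the mixed second-order terms is not justified as stated: it is not true that they decay \emph{strictly} faster because "each extra $\partial$ gains a power of $u$". In the paper's computation the terms $r^2\partial_v^2\psi\,\abs{r\partial_v\psi}^{q-1}$ and $r^2\partial_v^2\psi\,\abs{\psi}^{q-1}$, estimated by placing the $(q-1)$-power factor in $L^\infty$ and $\partial_v^2\psi$ in the flux $E^{p+2-\epsilon}[\partial_v^2\psi;u]$ (available in $X$ via \cref{Psi to psi} and \cref{energy decay p<4}), come out at the \emph{same} rate $u^{-q(p-1)+\mathcal{O}(\epsilon)}$ as the undifferentiated terms; this is still summable for $q>q_c$, so the lemma survives, but the step must actually be carried out rather than waved away with an incorrect monotonicity claim. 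Second, the statement you are proving is the far-region lemma, where all integrals are restricted to $r>t/2$ (so $r\sim v\sim t$); the remarks about $\{r<1\}$, \cref{boundedness} and the Hardy inequality \cref{hardy inequality} belong to the separate near-region lemma and play no role here.
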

	\begin{proof}
		By scaling it suffices to prove the above for $\delta=1$. The $Y$ norm includes the following two estimates:
		
		a) For $\tilde{F}\in r^{-\frac{n-1}{2}(q-1)-q}\{\abs{r\partial_v\psi}^q,\abs{\psi}^q,r^2\partial_v^2\psi \abs{r\partial_v\psi}^{q-1},r^2\partial_v^2\psi \abs{\psi}^{q-1}\}$
		\begin{equation*}
			\begin{gathered}
				\norm{\phi}_{X(u,\infty)}\leq 1 \implies \int_u^\infty\d u'\, E^{p-\epsilon}[\tilde{F};u']^{1/2}\lesssim1.
			\end{gathered}
		\end{equation*}
		
		b) 	For $F\in \{\abs{\partial_v\phi}^q,u\abs{\partial_v\phi}^{q-1}\abs{\partial\partial_v\phi}\}$
		\begin{equation*}
			\begin{gathered}
				\norm{\phi}_{X(u,\infty)}\leq 1 \implies  u^pE^{\star I}[F;u]\lesssim1
			\end{gathered}
		\end{equation*}
		
		Let's start with a).

		The least decaying term towards $\scri$ is $\abs{\psi}^q$, so we start with that estimate. For the rest, we have a trade between $u$ and $v$, but the decay in the region $t/2<r<3t/4$ is same for all quantities.
		
		Using the $L^\infty$ estimate \cref{l infty estimates from l2}, we get
		\begin{equation*}
			\begin{gathered}
				\int_{v>u}\d v\, r^{(n-1)(1-q)-2q} r^{p-\epsilon}\abs{\psi}^{2q} \lesssim\int_{v>u}\d v v^{(n-1)(1-q)-2q+p-\epsilon}u^{2q((1-p+2\epsilon)/2)}\lesssim u^{q(1-p)+2(q-1)\epsilon}
			\end{gathered}
		\end{equation*}
		since $n-1+p-q(n+1)+1=0$.\footnote{Note that an $\epsilon$ loss is only necessary for closing the $r^p$ estimate bound on the nonlinearity.} The $2\epsilon$ loss for $u$ decay of $\psi$ comes first from $E^{p-\epsilon}$ control and \cref{l infty estimates from l2}. For $\epsilon$ sufficiently small taking square root and integrating in $u$ yields the $E^{p-\epsilon}$ estimate as $-q(1-p)/2>1$. 
		
		Let's next analyse $\abs{r\partial_v\psi}^q$. The $L^\infty$ estimate gives $\norm{r\partial\psi}_{L^\infty}\lesssim v^{(1-p+2\epsilon)/2}$ instead $\norm{\psi}_{L^\infty}\lesssim u^{(1-p+2\epsilon)/2}$. As $v\geq u$, we can use the same estimates as before. 

		For $F=r^2\partial_v^2\psi \abs{r\partial_v\psi}^{q-1},r^2\partial_v^2\psi \abs{\psi}^{q-1}$ we again use the $L^\infty$ estimate on the lower order term, but we need to use $L^2$ bounds on 2nd order part:
		\begin{equation*}
			\begin{gathered}
				\int _{v>2u}\d v\,r^{(n-1)(1-q)-2q} r^{p-\epsilon}(r^2\partial_v^2\psi)^2u^{-(q-1)(p-1)}\lesssim_{\alpha}u^{-(q-1)(p-1-2\epsilon)}\int_{v>2u}\d v\, v^{-1+\mathcal{O}(\epsilon)}(r^2\partial_v^2\psi)^2\\
				\lesssim u^{-(q-1)(p-1)-(p-1)+\mathcal{O}(\epsilon)}\int_{v>2u}\d v\, v^{p+2+\mathcal{O}(\epsilon)}(\partial_v^2\psi)^2\lesssim u^{-q(p-1)+\mathcal{O}(\epsilon)}E^{p+2-\epsilon}[\partial_v^2\psi;u]\lesssim u^{-q(p-1)+\mathcal{O}(\epsilon)},
			\end{gathered}
		\end{equation*}
		where we used $(n-1)(1-q)-2q+p=-1$, $p>0$.
		As before, taking square root and integrating in $u$ yields the required estimates.
		
		b)Recall that
		\begin{equation*}
			\begin{gathered}
				E^{\star I}[F;u]=\int_{\mathcal{D}_{u_1,u_2}} \d t \d r\,r^{n-1}F^2r^{1-\epsilon_0}\jpns{r}^{2\epsilon_0}.
			\end{gathered}
		\end{equation*}
		
		Using \cref{l infty estimates from l2}, it follows that $\abs{\partial_v\phi}^q\lesssim_\epsilon u^{-(p-1-\epsilon)/2}v^{-(n+1-\epsilon)/2}$, thus we get
		\begin{equation*}
			\begin{gathered}				
				\int_{v>2u} \d v r^{n-1}r^{1+\epsilon_0}\abs{\partial_v\phi}^{2q}\lesssim\int_{v>2u}\d v v^{(n-1)(1-q)-2q+1+\mathcal{O}(\epsilon_0)}\lesssim u^{1-p+q(1-p)+\mathcal{O}(\epsilon,\epsilon_0)},
			\end{gathered}
		\end{equation*}
		where we used $p>1$ and $\epsilon$ sufficiently small. Integrating this expression in $u$ yields the $E^{\star I}$ estimate using $1+q(1-p)<-1$ and $\epsilon,\epsilon_0$ sufficiently small.

		For $F=u\abs{\partial\partial_v\phi}\abs{\partial_v\phi}^{q-1}$, we need to use spacetime $L^2$ estimate. Let's integrate over the region $\mathcal{D}=\{v>2u,u\in[u_0,2u_0]\}$
		\begin{equation*}
			\begin{gathered}
				\int_{\mathcal{D}}\d v \d u u^2\abs{\partial\partial_v\phi}^2\abs{\partial_v\phi}^{2(q-1)} r^{n+\epsilon_0}\\
				\lesssim\int_{\mathcal{D}}\d v \d u\,v^{n+\epsilon_0}\abs{\partial\partial_v\phi}^2v^{-(n+1-\epsilon)(q-1)}u^{2-(p-1-2\epsilon)(q-1)}\\
				\lesssim\int_{\mathcal{D}}\d v \d u\,\frac{v^{n-1+\epsilon_0}}{u}\abs{\partial\partial_v\phi}^2 v^{1-(n+1)(q-1)+\mathcal{O}(\epsilon)}u^{3-(p-1)(q-1)+\mathcal{O}(\epsilon)}\\
				\lesssim \int_{\mathcal{D}}\d v \d u\,\frac{v^{n-1+\epsilon_0}}{u}\abs{\partial\partial_v\phi}^2 v^{2-p+\mathcal{O}(\epsilon)}u^{p+2-(p-1)q+\mathcal{O}(\epsilon)}\\
				\lesssim u_0^{2+2-(p-1)q+\mathcal{O}(\epsilon_0)} E^I[\partial\partial_v\phi;u_0,2u_0] \lesssim u^{2-(p-1)q-p+\mathcal{O}(\epsilon_0)},
			\end{gathered}
		\end{equation*}
		where we used $p>2$. Using again that $q(p-1)>2$ we can sum this expression over dyadic intervals to obtain the desired bound.
	
	\end{proof}

	\paragraph{\textit{Near region:}}
	In this part, we only need to work in the region $t/2>r$. Furthermore, without explicitly stating, all integrals are restricted to this region. Also, for this part, there is no difference between $\partial_v$ and $\partial_u$ derivatives in terms of decay, so we'll use a general derivative $\partial$ throughout.
	\begin{lemma}
		For $F=\abs{\partial\phi}^q$,  $\norm{\phi}_{X(\mathcal{A})}\leq\delta^2\implies\norm{F}_{Y(\mathcal{A})}\lesssim\delta^q$.
	\end{lemma}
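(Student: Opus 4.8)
The statement to prove is the "near region" analogue of the non-linear estimate already established in the far region: for $F=\abs{\partial\phi}^q$ and $\norm{\phi}_{X(\mathcal A)}\le\delta^2$, one has $\norm{F}_{Y(\mathcal A)}\lesssim\delta^q$, where all quantities are restricted to the interior region $\{t/2>r\}$. The plan is to unwind the definition of $\norm{\cdot}_{Y}$ from \cref{y norm definition}: in the near region the relevant pieces are the space-time norms $\sup_u u^p E^{\star I}[F;u]$ and $\sup_u u^p E^{\star I}[u\partial F;u]$, together with the null-flux contributions $\int \d u\, E^{p-\epsilon}[\tilde F;u]^{1/2}$ and $\int\d u\, E^{p-\epsilon}[r\partial_v\tilde F;u]^{1/2}$ — but the latter are supported on $\{r>t/2\}$ and so are vacuous here, leaving only the two $E^{\star I}$ bounds to check. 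Since in the near region $r\le t/2$ there is no distinction between $\partial_v,\partial_u$ and $r\sim\jpns r$ does not help, I will simply use a generic derivative $\partial$ and the weights $r^{n-1}r^{1-\epsilon_0}\jpns r^{2\epsilon_0}$ appearing in $E^{\star I}$.

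The key inputs are the $L^\infty$ bounds and $L^2$ energy bounds encoded in the $X$-norm. From \cref{l infty estimates from l2}, restricted to $\{t=u,\,r<1\}$ (or $\tilde\Sigma_u$ intersected with the near region), one gets $\norm{r^{n/2-1}\partial\phi}_{L^\infty}\lesssim u^{-(p+2)/2}\delta^2$, and in particular on the unit ball $\norm{\partial\phi}_{L^\infty(\{r<1,t=u\})}\lesssim u^{-(p+2)/2}\delta^2$. To estimate $E^{\star I}[F;u]=\int_{\mathcal D_{u,2u}}\d t\d r\, r^{n-1}\abs{\partial\phi}^{2q}r^{1-\epsilon_0}\jpns r^{2\epsilon_0}$ in the near region, I would split the $r$-integral: for $r\le 1$ use the $L^\infty$ bound directly, picking up a harmless volume factor and a factor $u$ from the $t$-integration over the dyadic slab; for $1\le r\le t/2$ interpolate, writing $\abs{\partial\phi}^{2q}=\abs{\partial\phi}^{2(q-1)}\abs{\partial\phi}^2$ and bounding $2(q-1)$ powers in $L^\infty$ (via the radial Sobolev embedding, which gains $r^{n/2-1}$ decay) and the remaining square against $\int r^{n-1}\abs{\bar\partial\phi}^2 r^{-1+\epsilon_0}\jpns r^{-2\epsilon_0} \sim E^I[\bar\partial\phi;u]$, which the $X$-norm controls by $u^{-(p-\epsilon)}\delta^2$. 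Counting powers, each gives $u^{-p+\mathcal O(\epsilon,\epsilon_0)}$ times $\delta^{2q}\ge\delta^q$ (for $\delta<1$), as required; the derivative version $u\partial F = q\,u\abs{\partial\phi}^{q-1}\partial\partial\phi$ is handled identically, now using $E^I[\bar\partial\partial\phi;u]\lesssim u^{-(p+2-\epsilon)}\delta^2$ and the $L^\infty$ bound on $\partial\phi$ (not $\partial^2\phi$), with the extra factor $u$ absorbed by the extra $u^{-2}$ decay of the second-derivative energy.

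The main obstacle is the same regularity bookkeeping that appeared in the far region: one must arrange the interpolation exponents in the radial Sobolev inequality (\cref{radial Sobolev inequality}) so that $2(q-1)$ factors of $\partial\phi$ can be placed in $L^\infty$ with an admissible Sobolev exponent $s\in(0,1]$ while only \emph{one} derivative is available — this is exactly where the restriction $q<1+2/(n-2)$ enters, and it must be verified that $q_c<1+2/(n-2)$ so the window is nonempty. A secondary point to check carefully is that the $\epsilon_0$-weight $r^{1-\epsilon_0}\jpns r^{2\epsilon_0}$ (growing in $r$) does not spoil convergence: in the near region $r\le t/2\sim u$, so this weight costs at most $u^{1+\epsilon_0}$, which is compensated by the polynomial decay $u^{-(p-\epsilon)}$ coming from the energy bounds since $p>2$. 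Once the powers are tallied and the dyadic-in-$u$ sum converges (using $p>2$, hence $p-1>1$), the estimate follows, and the two lemmas together complete the proof of \cref{non-linear part of bootstrap}.
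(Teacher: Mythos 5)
Your treatment of the two bulk terms $\sup_u u^p E^{\star I}[F;u]$ and $\sup_u u^p E^{\star I}[u\partial F;u]$ follows essentially the paper's route (weighted $L^\infty$ control of $2(q-1)$ factors of $\partial\phi$ via \cref{l infty estimates from l2}, the remaining square placed in the Morawetz bulk norms $E^I[\bar{\partial}\phi]$ resp. $E^I[\bar{\partial}\partial\phi]$, then dyadic summation using $2-(p+n)(q-1)<0$), and that part of the power counting is sound.

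However, there is a genuine gap: you declare the null-flux components $\int\d u\, E^{p-\epsilon}[\tilde F;u]^{1/2}$ and $\int\d u\, E^{p-\epsilon}[r\partial_v\tilde F;u]^{1/2}$ ``vacuous'' in the near region on the grounds that they are supported on $\{r>t/2\}$. That is false. By \cref{definitino of E norms}, $E^{p-\epsilon}[\tilde F;u]$ integrates over $\{r>1,\,t-r=u\}$, and the near region $\{r<t/2\}$ meets each such cone in the nontrivial portion $1<r\lesssim u$; the far-region lemma, by its own standing assumption $r>t/2$ (where $r\sim v\sim t$ is used throughout), does not cover this piece. Since \cref{linear part of bootstrap} (through \cref{energy decay p<2} and \cref{energy decay p<4}) needs these fluxes over the full cone $r>1$, the near-region contribution must be estimated, and it is not trivial: the weight $r^{p-\epsilon}$ can be as large as $u^{p-\epsilon}$ there, and the paper spends two of its four displayed estimates on exactly these terms (after Cauchy--Schwarz in $u$ over $\mathcal{D}_{u_1,2u_1}\cap\{r>1\}$, bounding $\norm{r^{3+\epsilon_0}r^p\abs{\partial\phi}^{2(q-1)}}_{L^\infty}$ with $r\le u$ and using $E^I[\partial^2\phi;u_1,2u_1]$), which is where the additional condition $3+p>(n-2)(q-1)$ enters. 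Your existing toolkit would in fact handle these terms by the same counting, but as written the proposal omits them, so the proof of \cref{non-linear part of bootstrap} is incomplete. (A minor further remark: the restriction $q<1+2/(n-2)$ and the interpolation window in \cref{radial Sobolev inequality} are what drive the local theory; in this lemma the relevant constraints appear instead as the inequalities $2-(n-2)(q-1)>0$ and $2-(p+n)(q-1)<0$, which you should state and verify explicitly.)
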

	\begin{proof}
		Using the appropriate scaling it suffices to prove the above for $\delta=1$.
		
		Let's start with $E^{\star I}[\partial F;u]$. In the equation below, all norm are restricted to the region ${\mathcal{D}_{u_0,2u_0}}$. We use \cref{l infty estimates from l2} and the bounds of the $X$ norm to get
		\begin{equation*}
			\begin{gathered}
				E^{\star I}[u\partial F;u,2u]=\int\d t\d r\, r^{n-1} r^{1-\epsilon_0}\jpns{r}^{2\epsilon_0}(u\partial^2\phi\abs{\partial\phi}^{q-1})^2\\\leq u_0^2\norm{r^{2-2\epsilon_0}\jpns{r}^{4\epsilon_0}\abs{\partial\phi}^{2q-2}}_{L^\infty}\int\d t \d r\,\frac{r^{n-1}(\partial^2\phi)^2}{r^{1-\epsilon_0}\jpns{r}^{2\epsilon_0}}
				\lesssim u_0^{4+2\epsilon_0-(n-2)(q-1)}\norm{r^{n/2-1}\partial\phi}^{2q-2}_{L^\infty}E^{I}[\partial^2\phi;u_0,2u_0]\\\lesssim u_0^{4+2\epsilon_0-(n-2)(q-1)}u_0^{-(p+2)(q-1)}u_0^{-(p+2-\epsilon)}\\\lesssim u_0^{-p}u^{2+2\epsilon_0+\epsilon-(p+n)(q-1)}\leq u_0^{-p}u_0^{-\epsilon_0}
			\end{gathered}
		\end{equation*}
		where we used $2-(n-2)(q-1)-2\epsilon_0>0$ and  $2-(p+n)(q-1)<0$\footnote{Note that $2-(p_c+n)(q_c-1)=0$}.  By choosing $u_0$ as a dyadic sequence, we may sum all these different contributions to get boundedness.
		
		Similarly for the other derivative estimate, in the region ${\mathcal{D}_{u_1,2u_1}}\cap\{r>1\}$ we have
		\begin{equation*}
			\begin{gathered}
				\bigg(\int_{u_1}^{2u_1}\d u E^{p-\epsilon}[r\partial_v\tilde{F}]^{1/2}\bigg)^2\lesssim\int\d t\d r\,u^{1+\epsilon_0}r^{p+2+(n-1)-\epsilon}(\partial^2\phi)^2\abs{\partial\phi}^{2(q-1)}\\
				\lesssim u_1^{1+\epsilon_0}\norm{r^{3+\epsilon_0}r^p\abs{\partial\phi}^{2(q-1)}}_{L^\infty}\int\d t\d r\,\frac{r^{n-1}(\partial^2\phi)^2}{r^{1+\epsilon_0}}\\
				\lesssim  u_1^{1+\epsilon_0}u^{3+\epsilon_0+p-(n-2)(q-1)}\norm{r^{n/2-1}\partial\phi}^{2(q-1)}_{L^\infty}E^{I}[\partial^2\phi;u_1,2u_1]\\
				\lesssim u_1^{1+\epsilon_0}u_1^{3+\epsilon+p-(q-1)(p+3-\epsilon)}u_1^{-(p+2-\epsilon)}
				\lesssim u_1^{2-(p+n)(q-1)+\mathcal{O}(\epsilon,\epsilon_0)}\lesssim u_1^{-\epsilon_0}
			\end{gathered}
		\end{equation*}
		with $3+p>(n-2)(q-1)$ and $\epsilon,\epsilon_0$ sufficiently small. 
		
		Undifferentiated terms are bounded similarly:
		\begin{equation*}
			\begin{gathered}
				E^{\star I}[F;u,2u]=\int\d t\d r\, r^{n-1} r^{1-\epsilon_0}\jpns{r}^{2\epsilon_0}\abs{\partial\phi}^{2q}\leq\norm{r^{4-2\epsilon_0}\jpns{r}^{4\epsilon_0}\abs{\partial\phi}^{2(q-1)}}\int\d t\d r\, \frac{r^{n-1}(\partial\phi)^2}{r^{1-\epsilon_0}\jpns{r}^{2\epsilon_0}}\frac{1}{r^2}\\
				\lesssim u^{4+2\epsilon_0-(n-2)(q-1)}\norm{r^{n/2-1}\partial\phi}^{2q-2}_{L^\infty}E^I[\tilde{\partial}\partial\phi;u]\lesssim u^{-p}u^{2+2\epsilon_0+\epsilon-(p+n)(q-1)}\leq u^{-p}u^{-\epsilon_0}.
			\end{gathered}
		\end{equation*}
		where all norms are restricted to the region ${\mathcal{D}_{u_1,2u_1}}$.
		
		\begin{equation*}
			\begin{gathered}
				\bigg(\int_{u_1}^{2u_1}\d u E^{p-\epsilon}[\tilde{F}]^{1/2}\bigg)^2\lesssim\int\d t\d r\,u^{1+\epsilon_0}r^{p+(n-1)-\epsilon}(\partial\phi)^2\abs{\partial\phi}^{2(q-1)}\\
				\lesssim u_1^{1+\epsilon_0}\norm{r^{3+\epsilon_0}r^p\abs{\partial\phi}^{2(q-1)}}_{L^\infty}\int\d t\d r\,\frac{r^{n-1}(\partial\phi)^2}{r^{1+\epsilon_0}}\frac{1}{r^2}\lesssim u_1^{-\epsilon_0}.
			\end{gathered}
		\end{equation*}
		where all norms are restricted to the region ${\mathcal{D}_{u_1,2u_1}}\cap\{r>1\}$.
	\end{proof}
	
	\subsection{Glassey Strauss system}\label{gs gwp}
	To recap, we're to prove global existence for
	\begin{equation*}
		\begin{gathered}
			\Box\phi_1=\abs{\phi_2}^{q_1}\\
			\Box\phi_2=\abs{\partial_t\phi_1}^{q_2}
		\end{gathered}
	\end{equation*}
	with 
	\begin{equation}\label{gs conditions}
		\begin{gathered}
			q_1<q_{Glassey}=\frac{n+1}{n-1},\,1<q_2q_1(b+q_2a)
		\end{gathered}
	\end{equation}
	
	where $a=\frac{n-1}{2}(q_1-1)-1<0,\,b=\frac{n-1}{2}(q_2-1)-1>0$. The heuristic suggests the following behaviour
	\begin{equation}\label{Glassey Strauss system asymptotic behaviour}
		\begin{aligned}
			&\psi_1|_{\mathcal{N}_+}\sim t^{-s_1} && s_1=a+\min(0,q_1s_2-1)\\
			&\psi_1|_{\mathcal{N}_\scri}\sim t^{-a}\\
			&\psi_2|_{\mathcal{N}_+}\sim t^{-s_2} && s_2=b+q_2a\\
			&\psi_2|_{\mathcal{N}_\scri}\sim1\\
			&\partial_v\psi_2|_{\mathcal{N}_\scri}\sim t^{-(s_2+1)}.
		\end{aligned}
	\end{equation}
	The condition for existence of solution is that the behaviour of $\psi_1$ at $I^+$ creates no stronger tail than its behaviour at $\scri$ for $\psi_2$. 
	
	\begin{remark}[Limitation of current $r^p$ method]
		Note, that the $r^p$ method understands dispersion via decay of energy on a certain foliation. However, for the above system, we expect global solutions even when the usual energy associated to $\phi_1$ doesn't decay, even worse, it isn't finite. Using the asymptotics from \eqref{Glassey Strauss system asymptotic behaviour}, we get optimal value $p_1=2(a+1)-1=(q_1-1)(n-1)-1$. This must be positive for the $r^p$ method to be applicable, so we get the \textit{technical} condition \footnote{as opposed to physical, thereby we might be able to get around this barrier} $q_1>1+\frac{1}{n-1}$. Indeed, doing a simple energy estimate in $\mathcal{D}_{u_1,u_2}$ shows that this requirement on $q_1$ is necessary and sufficient for the energy to be bounded (assuming control on $\phi_2$). To overcome these, one needs to introduce (negatively) weighted energy, eg. via the multiplier $1/t^\alpha\partial_t,\,\alpha>0$. To overcome similar problems, see \cite{keir_weak_2018}.
	\end{remark}
	
	We restrict to $n=3$ and spherically symmetry, thus the system simplifies to a system of nonlinear wave equations in $\R^{1+1}$:
	\begin{equation}\label{glassey strauss 1+1}
		\begin{gathered}
			\Box\psi_1=r\abs{\frac{\psi_2}{r}}^{q_1}\\
			\Box\psi_2=r\abs{\frac{\partial_t\psi_1}{r}}^{q_2}.
		\end{gathered}
	\end{equation}
	
	As such, we can use $L^\infty$ estimates, moreover, there is a  \textit{gain of derivative}, as the first nonlinearity is undifferentiated. This will play a role behind the scenes. The norms that we are going to control are
	\begin{equation*}
		\begin{gathered}
			\norm{\frac{\partial_t\psi_1}{r}\jpns{v}^{1+a}\jpns{u}^{s_1+1-a}}_{L^\infty},\norm{\frac{\psi_2}{r}\jpns{v}\jpns{u}^{s_2}}_{L^\infty},
			\norm{\partial_v\psi_2 \jpns{v}^{s_2+1}}_{L^\infty}.
		\end{gathered}
	\end{equation*}
	Let's call the sum of all three $\norm{\psi_1,\psi_2}_X$ and $X_i$ the ones restricted to either component. The restriction to some subset $\mathcal{A}\subset\R^{3+1}$ will be called $X(\mathcal{A})$.

	\begin{lemma}\label{x bound for linear}
		For $\psi=r\phi$  a solution to
		\begin{equation*}
			\begin{gathered}
				(\partial_t^2-\partial_r^2)\psi_i=0\\
				\partial_t^{(j)}\phi_i(0)=\phi_i^{(j)}\qquad i\in\{1,2\},j\in\{0,1\}
			\end{gathered}
		\end{equation*}
		with $\phi_i^{(j)}$ supported in $\{r<1\}$ we have 
		\begin{equation*}
			\begin{gathered}
				\norm{\psi_1,\psi_2}_X\lesssim \norm{\phi_i^{(0)},\phi_i^{(1)}}_{W^{2,\infty}\times W^{1,\infty}}.
			\end{gathered}
		\end{equation*}
	\end{lemma}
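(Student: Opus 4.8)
The plan is to solve the two homogeneous equations explicitly with d'Alembert's formula, use the strong Huygens' principle to confine the support of every quantity entering $\norm{\cdot}_X$ to a region where the $\jpns{u}$-weights are bounded, and then read off the three bounds by elementary manipulations. First I would record the explicit solution: since each $\phi_i$ is spherically symmetric, $\psi_i=r\phi_i$ solves the $\R^{1+1}$ wave equation on $\{r>0\}$ with the Dirichlet condition $\psi_i|_{r=0}=0$; extending $\Phi_i^{(0)}(r):=r\phi_i^{(0)}(r)$ and $\Phi_i^{(1)}(r):=r\phi_i^{(1)}(r)$ as odd functions of $r$ and applying d'Alembert's formula gives
\begin{equation*}
	\begin{gathered}
		\psi_i(t,r)=G_i(t+r)-G_i(t-r),\qquad G_i(y)=\tfrac12\Phi_i^{(0)}(y)+\tfrac12\int_0^y\Phi_i^{(1)}(s)\,\d s.
	\end{gathered}
\end{equation*}
The functions $\Phi_i^{(0)},\Phi_i^{(1)}$ are supported in $\{\abs{y}<1\}$, so $G_i'=\tfrac12(\Phi_i^{(0)})'+\tfrac12\Phi_i^{(1)}$ is supported in $\{\abs{y}\le1\}$ and $G_i$ is constant on each of $\{y>1\}$, $\{y<-1\}$. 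A direct computation using only $\supp\phi_i^{(j)}\subset\{r<1\}$ yields $\norm{G_i'}_{L^\infty}+\norm{G_i''}_{L^\infty}\lesssim\norm{\phi_i^{(0)}}_{W^{2,\infty}}+\norm{\phi_i^{(1)}}_{W^{1,\infty}}=:D_i$, and it is in bounding $G_i''$ that the two derivatives on $\phi_i^{(0)}$ and one on $\phi_i^{(1)}$ are consumed.

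Next I would localise. From $\partial_v\psi_i=2G_i'(t+r)$, $\partial_u\psi_i=-2G_i'(t-r)$, and the fact that $G_i$ is locally constant off $\{\abs{y}\le1\}$, each of $\partial_t\psi_1/r$, $\psi_2/r$, $\partial_v\psi_2$ vanishes outside $\mathcal R:=\{\abs{t-r}\le1\}\cup\{t\le2\}$; indeed $\psi_i\equiv0$ on $\{r<t-1\}$ by strong Huygens and outside the light cone by finite speed. On $\mathcal R$ one has $\abs{u}\le1$, hence $\jpns{u}\sim1$ and every $\jpns{u}$-weight in $\norm{\cdot}_X$ is harmless; on $\mathcal R\cap\{t\ge2\}$ one has $r\in[t-1,t+1]$, so $r\sim t\sim v$ and $r\ge1$, while on $\{t\le2\}$ finite speed forces $r\le3$, so there $\jpns{v}\sim\jpns{u}\sim1$.

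Then the three bounds. On $\{t\le2\}$ all three quantities are $\lesssim D_1+D_2$, using $\psi_i=\int_{t-r}^{t+r}G_i'$ (hence $\abs{\psi_i}/r\le2\norm{G_i'}_{L^\infty}$), $\partial_t\psi_i=\int_{t-r}^{t+r}G_i''$ (hence $\abs{\partial_t\psi_i}/r\le2\norm{G_i''}_{L^\infty}$), and $\abs{\partial_v\psi_2}\le2\norm{G_2'}_{L^\infty}$, together with $\jpns{v}\sim1$. On $\mathcal R\cap\{t\ge2\}$: $\partial_v\psi_2=2G_2'(t+r)=0$ since $t+r>1$; $\abs{\psi_2}/r\lesssim\norm{G_2'}_{L^\infty}/r\lesssim D_2\jpns{v}^{-1}$; and $\abs{\partial_t\psi_1}/r\le2\norm{G_1'}_{L^\infty}/r\lesssim D_1\jpns{v}^{-1}\le D_1\jpns{v}^{-(1+a)}$, the last step because $1+a=\tfrac{n-1}{2}(q_1-1)<1$ (so $a<0$) by $q_1<q_{\mathrm{Glassey}}$, see \cref{gs conditions}. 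Multiplying by the weights $\jpns{v}$, $\jpns{v}^{s_2+1}$, $\jpns{v}^{1+a}$ respectively and using $\jpns{u}\sim1$ on $\mathcal R$, each term is $\lesssim D_1+D_2$; summing over the three terms and the two components gives $\norm{\psi_1,\psi_2}_X\lesssim\sum_{i=1}^2\big(\norm{\phi_i^{(0)}}_{W^{2,\infty}}+\norm{\phi_i^{(1)}}_{W^{1,\infty}}\big)$.

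The only step that is not pure book-keeping is controlling $\partial_t\psi_1/r$ near the symmetry axis $\{r=0\}$, where the crude bound $\abs{\partial_t\psi_1}/r\lesssim\norm{G_1'}_{L^\infty}/r$ blows up: one must instead use $\abs{\partial_t\psi_1}=\abs{G_1'(t+r)-G_1'(t-r)}\le2r\norm{G_1''}_{L^\infty}$, i.e. spend one more derivative on the data. This is precisely why the hypothesis is stated as $W^{2,\infty}\times W^{1,\infty}$ rather than $W^{1,\infty}\times L^\infty$ (which would suffice for the $\psi_2/r$ and $\partial_v\psi_2$ parts alone).
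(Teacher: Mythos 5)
Your proof is correct and follows essentially the same route as the paper: the explicit $1{+}1$-dimensional representation of the solution (your single profile $G_i$ from the odd extension is just a repackaging of the paper's exact formulas), strong Huygens' principle to confine the support to $\abs{t-r}\le1$ so the $\jpns{u}$-weights are harmless, and the divided-difference bound $\abs{G_1'(t+r)-G_1'(t-r)}\le 2r\norm{G_1''}_{L^\infty}$ near $r=0$, which is exactly the paper's observation that the $\partial_t\psi_1/r$ term forces the $W^{2,\infty}$ hypothesis on $\phi_1^{(0)}$. No gaps.
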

	\begin{proof}
		The support of $\psi_i$ will be in $u\in[-1,1]$ due to the strong Huygens principle, thus we can drop the $u$ weights. All estimates follow from 
		\begin{equation*}
			\begin{gathered}
				\sup_{x,y} \abs{\frac{f(x+y)-f(x-y)}{y}}\leq \norm{f}_{W^{1,\infty}}
			\end{gathered}
		\end{equation*}
		and the exact solution
		\begin{equation*}
			\begin{gathered}
				\partial_t\psi_1(t,r)=\begin{cases}
					\partial_v\psi_1(0,t+r)+\partial_u\psi_1(0,r-t)\qquad t\leq r\\
					\partial_v\psi_1(0,t+r)-\partial_v\psi_1(0,t-r)\qquad t\geq r
				\end{cases}\\
			 \partial_v\psi_2(t,r)=\partial_v\psi_2(0,t+r)\\
			 2\psi_2(t,r)=\psi_2(0,t-r)+\psi(0,t+r)+\int_{r-t}^{r+t}\partial_t\psi_2(0,x).
			\end{gathered}
		\end{equation*}
	We show it for $\psi_1$, the rest follows similarly. For $t\leq r$ region we use $\frac{1}{r}\leq\frac{1}{t}$ and $v\leq1$ to get 
	\begin{equation*}
		\begin{gathered}
			\norm{\frac{\partial_t\psi_1}{r}\jpns{v}^{1+a}}_{L^\infty}\lesssim \norm{\frac{1}{r}(\phi_1(r+t)-\phi_1(r-t))}_{L^\infty}+\norm{(\partial_t\phi_1(0,r\pm t),\partial_r\phi_1(0,r\pm t))}_{L^\infty}.
		\end{gathered}
	\end{equation*}
	This is clearly bounded by $W^{1,\infty}$ norm. For $t\geq r\geq1$ the the same estimate holds, but we need to use the cancellation $v\sim r\sim t$ in $\supp\psi_1$. For $t\geq r,r\leq1$, we get $v\leq1$, so
	\begin{equation*}
		\begin{gathered}
			\norm{\partial_t\psi_1/r}_{L^\infty}\lesssim \norm{\frac{1}{r}(\partial\phi_1(0,t+r)-\partial\phi_1(t-r))}_{L^\infty}+\norm{\phi_1^{(0)},\phi_1^{(1)}}_{W^{1,\infty}\times W^{0,\infty}},
		\end{gathered}
	\end{equation*}
	where we grouped terms that are bounded as before in the second term. Note, that in order to bound the first quantity, we need to control second derivative of $\phi_1$. 
	\end{proof}
	\begin{remark}
		One may exchange compactly supported initial data, with data such that the $X$ norm of the solution is of size $\delta$.
	\end{remark}

The main iteration step to bound the nonlinear terms is the following:
	\begin{prop}\label{gs iteration}
		For $q_1,q_2$ satisfying \cref{gs conditions} and $(\psi_1^{(0)},\psi^{(0)}_2)$ with finite $X$ norm, there is a unique solution $(\psi_1^{(1)},\psi^{(1)}_2)$ to
		\begin{equation}\label{iterative equation}
			\begin{gathered}
				\Box\psi_1^{(1)}=r\abs{\frac{\psi^{(0)}_2}{r}}^{q_1}\\
				\Box\psi^{(1)}_2=r\abs{\frac{\partial_t\psi^{(0)}_1}{r}}^{q_2}\\
				\psi^{(1)}_1(0)=\psi^{(1)}_2(0)=\partial\psi^{(1)}_1(0)=\partial_t\psi^{(1)}_2(0)=0
			\end{gathered}
		\end{equation}
		with estimate
		\begin{equation*}
			\begin{gathered}
				\norm{\psi_1^{(1)},\psi_2^{(1)}}_{X}\lesssim\norm{\psi_1^{(0)},\psi_2^{(0)}}_X^{q_1}
			\end{gathered}
		\end{equation*}
		provided the right hand side is smaller or equal to 1.
	\end{prop}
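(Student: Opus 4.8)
The plan is to solve the two equations in \cref{iterative equation} separately --- given the inputs $(\psi_1^{(0)},\psi_2^{(0)})$ they are decoupled, each an inhomogeneous linear wave equation in $\R^{1+1}$ with a prescribed continuous source and trivial data --- and then read off the $X$-norm bounds from weighted $L^\infty$ estimates for the source. Existence and uniqueness are immediate from the explicit representation formula of \cref{l infinite theory} (uniqueness because the difference of two solutions solves the homogeneous equation with zero data). By the scaling $\psi_i^{(0)}\mapsto\lambda\psi_i^{(0)}$, under which the sources rescale by $\lambda^{q_1}$, resp. $\lambda^{q_2}$, and --- by linearity --- the outputs $\psi_1^{(1)},\psi_2^{(1)}$ rescale the same way, it suffices to prove $\norm{\psi_1^{(1)},\psi_2^{(1)}}_X\lesssim1$ under the normalisation $\norm{\psi_1^{(0)},\psi_2^{(0)}}_X\le1$; the power $q_1$ reappears on undoing the rescaling, because \cref{gs conditions} forces $q_2>2/(q_1-1)>2>q_1$ (so in particular $q_1\in(1,2)$ and the weight exponent $1+a=q_1-1$ is positive), whence $\lambda^{q_2}\le\lambda^{q_1}$ for $\lambda\le1$.

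For $\psi_2^{(1)}$, whose source is $r\abs{\partial_t\psi_1^{(0)}/r}^{q_2}$, I would insert the first component of the $X$-norm, $\abs{\partial_t\psi_1^{(0)}/r}\lesssim\jpns{v}^{-(1+a)}\jpns{u}^{-(s_1+1-a)}$, and use $r\le\jpns{v}$ together with the $n=3$ identities $q_2(1+a)=s_2+2$ and $s_1+1-a=\min(1,q_1s_2)$ (recall $a=q_1-2$, $b=q_2-2$, $s_2=b+q_2a$) to get $r\abs{\partial_t\psi_1^{(0)}/r}^{q_2}\lesssim\jpns{v}^{-(s_2+1)}\jpns{u}^{-q_2\min(1,q_1s_2)}$. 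The crucial observation is $q_2\min(1,q_1s_2)>1$: when $q_1s_2\ge1$ this reads $q_2>2$, and when $q_1s_2<1$ it reads $q_1q_2s_2>1$, which is exactly \cref{gs conditions}. Feeding this into the $\psi$- and $\partial_v\psi$-estimates of \cref{l infinite theory} (with $\sigma=s_2+1$, using $s_2>0$, again from \cref{gs conditions}) gives $\abs{\psi_2^{(1)}/r}\lesssim\jpns{v}^{-1}\jpns{u}^{-s_2}$ and $\abs{\partial_v\psi_2^{(1)}}\lesssim\jpns{v}^{-(s_2+1)}$, i.e. precisely the two $X$-bounds for $\psi_2$.

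For $\psi_1^{(1)}$, I would write the source as $r\abs{\phi_2^{(0)}}^{q_1}$ with $\phi_2^{(0)}=\psi_2^{(0)}/r$ (manifestly regular at $r=0$); the $X$-bound on $\psi_2^{(0)}$ gives $r\abs{\phi_2^{(0)}}^{q_1}\lesssim\jpns{v}^{1-q_1}\jpns{u}^{-q_1s_2}$, and the weighted derivative $\jpns{v}\,\abs{\partial_v(r\abs{\phi_2^{(0)}}^{q_1})}$ obeys the same bound since $\partial_vr=1$ and $r\partial_v\phi_2^{(0)}=\partial_v\psi_2^{(0)}-\psi_2^{(0)}/r$ is $X$-controlled --- this is the one place both derivative components of $X$ are used. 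Applying the $\partial_t\psi/r$ estimate of \cref{l infinite theory} (with $\sigma=q_1-1$, $z=q_1s_2$), or the explicit $\partial_t\psi_1^{(1)}$ formula where the $\partial_t$-gain near $I^+$ is needed, produces $\abs{\partial_t\psi_1^{(1)}/r}\lesssim\jpns{v}^{-(1+a)}\jpns{u}^{-(s_1+1-a)}$; here the exponent $s_1=a+\min(0,q_1s_2-1)$ records the worse of the $\scri$-inherited contribution and the genuine tail at $I^+$, and the two cases $q_1s_2\gtrless1$ are exactly \cref{min max lemma} applied to the coupled pair $(s_1,s_2)$. This yields $\norm{\psi_1^{(1)},\psi_2^{(1)}}_X\lesssim1$, hence the claim.

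The hard part will be the exponent bookkeeping at the endpoint. Since $q_1<q_{Glassey}$ we have $a<0$, so the radiation field of $\psi_1$ \emph{grows} towards $\scri$, and the $\scri$-localised part of the source $r\abs{\partial_t\psi_1^{(0)}/r}^{q_2}$ then has only a marginal $u$-decay; one must check it still beats $\jpns{u}^{-1}$, so that the representation formula of \cref{l infinite theory} produces a \emph{finite} tail at $I^+$ for $\psi_2^{(1)}$ --- this is precisely where the quantitative hypothesis $q_1q_2(b+q_2a)>1$ of \cref{gs conditions} enters, and where one sees it is sharp. The remaining points --- $T$-independence of the implicit constants, the near region $r<t/2$ (handled uniformly by the same formulae), and the absorbable arbitrarily small losses from $\jpns{\cdot}$ versus pure powers and from the borderline $\sigma,z=1$ cases --- are routine, as in the proofs of \cref{x bound for linear} and \cref{partival v main theorem}.
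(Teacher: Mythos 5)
Your proposal is correct and follows essentially the same route as the paper's proof: normalise by scaling (using $q_1<2<q_2$ so the smaller power $q_1$ survives), convert the $X$-norm bounds on $(\psi_1^{(0)},\psi_2^{(0)})$ into weighted $L^\infty$ bounds on the two decoupled sources (including the $\partial_v$-derivative of the $\abs{\psi_2^{(0)}/r}^{q_1}$ source, exactly the paper's $\alpha\in\{0,1\}$ claim), and then apply the $\R^{1+1}$ representation-formula estimates of \cref{l infinite theory}. Your exponent identities $q_2(1+a)=s_2+2$, $s_1+1-a=\min(1,q_1s_2)$ and the key condition $q_2\min(1,q_1s_2)>1$ are precisely the conditions $(1+a)q_2>2$, $s_2=(1+a)q_2-2$ and $(1+s_1-a)q_2>1$ that the paper verifies.
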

	\begin{remark}
		Since we work in the small data regime, the $\norm{\cdot}_X<1$ is not a real restriction, but in this linear case, its needed because we have two distinct powers $q_1,q_2$.
	\end{remark}
	\begin{proof}
		Using scaling, it suffices to prove the statement for normalised data.
		
		\textit{Part 1: Non-linear estimates}
		We claim that
		\begin{equation*}
			\begin{gathered}
				\norm{\psi^{(0)}_1,\psi^{(0)}_2}_{X[0,T]}\leq1\\
				\implies\norm{\partial_v^\alpha\bigg(r\abs{\frac{\psi^{(0)}_2}{r}}^{q_1}\bigg)\jpns{v}^{q_1-1+\alpha}\jpns{u}^{s_2q_1}}_{L^\infty[0,T]},\norm{r\abs{\frac{\partial_t\psi^{(0)}_1}{r}}^{q_2}\jpns{v}^{(1+a)q_2-1}\jpns{u}^{(1+s_1-a)q_2}}_{L^\infty[0,T]}\lesssim1,
			\end{gathered}
		\end{equation*}
		for $\alpha\in\{0,1\}$. Indeed this follows using some algebra and the fact $r\leq v$.
		
		\textit{Part 2: Linear estimates}
		Using the estimates from \cref{l infinite theory}, we conclude that first
		\begin{equation*}
			\begin{gathered}
				\norm{\psi_2}_{X_2}\lesssim1
			\end{gathered}
		\end{equation*}
		using 
		\begin{equation*}
			\begin{gathered}
				(1+s_1-a)q_2>1\\
				(1+a)q_2>2\\
				s_2=(1+a)q_2-2.
			\end{gathered}
		\end{equation*}
		The first condition defines the critical curve. The second follows from it by algebra (and guarantees the existence of radiation field for $\psi_2$). The third one is simply the definition of $s_2$.
		
		Similarly, the non-linear estimates on $\psi_1$ imply
		\begin{equation*}
			\begin{gathered}
				\norm{\psi_1}_{X_1}\lesssim1
			\end{gathered}
		\end{equation*}
	using
	\begin{equation*}
		\begin{gathered}
			1+a=q_1-1\\
			s_1+1-a=1-\max(0,1-s_2q_1).
		\end{gathered}
	\end{equation*}

	\end{proof}
	
	It's a simple exercise to prove global existence from here using a contraction mapping
	\begin{theorem}\label{strauss glassey main theorem}
		There exists $\epsilon>0$ sufficiently small such that for all initial data supported in $\{r<1\}$ and $\norm{\phi_1,\phi_2}_{W^{2,\infty}}<\epsilon$, \eqref{glassey strauss 1+1} has global solution $\phi_i=\psi_i/r$ in the sense
		\begin{equation}\label{gs weak sol}
			\begin{gathered}
				\psi_1=\psi_1^{(0)}+\int_{\mathcal{D}_{t,r}}\d r'\d t' r'^{1-q_1}\abs{\psi_2}^{q_1}\\
				\psi_2=\psi_2^{(0)}+\int_{\mathcal{D}_{t,r}}\d r'\d t' r'^{1-q_2}\abs{\partial_t\psi_1}^{q_2}
			\end{gathered}
		\end{equation}
	where $\mathcal{D}_{t,r}=\{(r',t'):t-r\leq t'+r'\leq t+r\}$ and $\phi_i^{(0)}=\psi_i^{(0)}/r$ are the linear solutions with the same data.
	\end{theorem}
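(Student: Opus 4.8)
The plan is to close a Picard iteration in the weighted $L^\infty$ space measured by $\norm{\,\cdot\,}_X$, feeding \cref{x bound for linear} for the homogeneous evolution into \cref{gs iteration} for the Duhamel term. Concretely, I would write $\psi_i=\psi_i^{(0)}+\hat\psi_i$ with $\psi_i^{(0)}$ the linear solution with the given data and $\hat\psi_i$ carrying trivial data, and define iterates $\psi_i^{[k]}=\psi_i^{(0)}+\hat\psi_i^{[k]}$, where $(\hat\psi_1^{[k]},\hat\psi_2^{[k]})$ solves \eqref{iterative equation} with the previous iterate as input and $\psi_i^{[0]}=\psi_i^{(0)}$. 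First I would record that \cref{x bound for linear} gives $\norm{\psi_1^{(0)},\psi_2^{(0)}}_X\le C_0\epsilon$, and that \cref{gs conditions} forces $q_2(q_1-1)>2$, hence $q_1>1$. Then \cref{gs iteration} yields, as long as the $X$-norm of the input is $\le 1$, the estimate $\norm{\psi^{[k]}}_X\le C_0\epsilon+C\norm{\psi^{[k-1]}}_X^{q_1}$; since $q_1>1$, choosing $\epsilon$ small so that $2C_0\epsilon\le 1$ and $C(2C_0\epsilon)^{q_1}\le C_0\epsilon$ makes the ball $\{\norm{\,\cdot\,}_X\le 2C_0\epsilon\}$ invariant under the iteration.

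Next I would establish the contraction. Setting $\chi_i^{[k]}=\psi_i^{[k]}-\psi_i^{[k-1]}$, each $\chi_i^{[k]}$ solves the $\R^{1+1}$ inhomogeneous wave equation with trivial data and forcing given by the difference of the two nonlinear terms evaluated at consecutive iterates. Using the elementary bound $\big||a|^q-|b|^q\big|\lesssim_q|a-b|(|a|^{q-1}+|b|^{q-1})$ valid for $q>1$, the uniform bound from the previous step, and the weighted $L^\infty$ estimates of \cref{l infinite theory} applied exactly as in the proof of \cref{gs iteration}, I expect $\norm{\chi^{[k]}}_X\le C'(C_0\epsilon)^{\min(q_1,q_2)-1}\norm{\chi^{[k-1]}}_X$. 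Shrinking $\epsilon$ further to make this factor $\le\tfrac12$ gives a Cauchy sequence in $\norm{\,\cdot\,}_X$, whose limit $(\psi_1,\psi_2)$ inherits $\norm{\psi_1,\psi_2}_X\le 2C_0\epsilon$.

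Finally I would identify the limit with the integral formulation \eqref{gs weak sol} and prove uniqueness. The exponent bookkeeping underlying \cref{gs iteration} --- in particular $(1+s_1-a)q_2>1$ and $(1+a)q_2>2$, which are equivalent to being on the stable side of \cref{gs conditions} --- ensures that for any pair with finite $X$-norm the Duhamel integrals $\int_{\mathcal{D}_{t,r}}r'^{1-q_1}|\psi_2|^{q_1}$ and $\int_{\mathcal{D}_{t,r}}r'^{1-q_2}|\partial_t\psi_1|^{q_2}$ converge absolutely; dominated convergence then lets me pass to the limit $k\to\infty$ in the Duhamel formula for $\psi_i^{[k]}$, giving \eqref{gs weak sol}, and the same difference estimate applied to two solutions in the class $\{\norm{\,\cdot\,}_X<\infty\}$ gives uniqueness. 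Undoing $\phi_i=\psi_i/r$ and the spherical reduction of \eqref{Strauss Glassey system} to \eqref{glassey strauss 1+1} then completes the argument.

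The part I expect to require the most care is the contraction step, for two linked reasons: because $q_1,q_2<2$ the nonlinearities are not $C^2$, so the difference estimate must never call for more than one derivative of $\chi_i$ --- which is exactly what $\norm{\,\cdot\,}_X$ controls, since only $\partial_t\psi_1/r$, $\psi_2/r$ and $\partial_v\psi_2$ enter --- and the weighted $L^\infty$ bounds of \cref{l infinite theory} must be invoked with precisely the $(u,v)$ weights used in \cref{gs iteration}, with the slack $\epsilon^{\min(q_1,q_2)-1}$ needed to close the loop supplied by the strict inequalities in \cref{gs conditions}. Everything else is the routine bookkeeping already carried out for \cref{gs iteration}.
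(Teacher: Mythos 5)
Your boundedness step and the final passage to the limit match the paper, but the contraction step contains a genuine gap: the iteration map is \emph{not} Lipschitz in the $X$ norm, so the asserted bound $\norm{\chi^{[k]}}_X\le C'(C_0\epsilon)^{\min(q_1,q_2)-1}\norm{\chi^{[k-1]}}_X$ cannot be obtained the way you describe. The $X_1$ component, $\norm{\tfrac{\partial_t\psi_1}{r}\jpns{v}^{1+a}\jpns{u}^{s_1+1-a}}_{L^\infty}$, is produced by the third estimate of \cref{l infinite theory}, and that estimate requires a weighted $L^\infty$ bound on $(v\partial_v)F$ for the forcing $F=r\abs{\psi_2/r}^{q_1}$ (this is also why Part 1 of \cref{gs iteration} controls $\partial_v^\alpha$ of the nonlinearity for $\alpha\in\{0,1\}$). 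In the difference of two forcings this derivative falls on the non-smooth map $x\mapsto\abs{x}^{q_1}$ with $q_1<2$: one is left with a term of the form $\big(\abs{f/r}^{q_1-1}\sgn f-\abs{g/r}^{q_1-1}\sgn g\big)\partial_v(f/r)$, which is only H\"older of order $q_1-1$ in $f-g$, not linear. So the map is at best H\"older continuous on the $X$-ball, and a H\"older ``contraction'' with exponent $q_1-1<1$ does not yield a Cauchy sequence. The point is not, as you suggest, how many derivatives of $\chi_i$ the norm sees, but how many derivatives of the \emph{forcing} the linear estimates demand in order to recover the strong $v$-weight and the $1/r$ gain in $X_1$.

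This is exactly why the paper's proof runs the contraction in a strictly rougher space $Y$, with $\norm{\psi_1}_{Y_1}=\norm{\partial_t\psi_1\jpns{v}^{a}\jpns{u}^{s_1+1-a}}_{L^\infty}$ (no $1/r$ gain, one power of $v$ less): at that level $\partial_t\psi_1$ is estimated from the $\partial_u\psi,\partial_v\psi$ inequalities of \cref{l infinite theory}, which need only undifferentiated weighted $L^\infty$ bounds on the forcing, so the elementary inequality $\abs{\abs{a}^{q}-\abs{b}^{q}}\lesssim\abs{a-b}(\abs{a}+\abs{b})^{q-1}$ suffices and gives a genuine Lipschitz factor $\epsilon^{q_1-1}$. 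One then uses uniform boundedness in $X$ (your first step, which is fine) to place the $Y$-limit back in $X$, and convergence in the intermediate space $Z$ together with dominated convergence to verify the integral formulation \eqref{gs weak sol}. To repair your argument you should adopt this two-norm scheme (or some equivalent device); as written, the $X$-contraction, and the uniqueness claim you base on it, do not go through.
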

	\begin{proof}
		\textit{Boundedness:}
		Let's set $\psi_i^{(0)}$ to be a solution to the linear equation with the above data.\cref{x bound for linear} says that $\norm{\psi_1^{(0)},\psi_2^{(0)}}_X\leq c_1\epsilon$. Let's iteratively define $\hat{\psi}^{(m)}_i$ as the solution to \eqref{iterative equation}, with the right hand side replaced by $\psi_i^{(0)}+\hat{\psi}_i^{(m-1)}$ terms. By \cref{gs iteration}, it is clear that $\norm{\hat{\psi}_1^{(1)},\hat{\psi}_2^{(1)}}_X\leq c_2 (\epsilon^{q_1}+\epsilon^{q_2})$. Picking $\epsilon$ sufficiently small, we know that $\norm{\hat{\psi}_1^{(m)},\hat{\psi}_2^{(m)}}_X\leq c_3\epsilon\,\forall m$.

		\textit{Contraction:} We study convergence in the rougher space
		\begin{equation*}
			\begin{gathered}
				\norm{\psi_1}_{Y_1}=\norm{\partial_t\psi_1\jpns{v}^{a}\jpns{u}^{s_1+1-a}}_{L^\infty}\\
				\norm{\psi_2}_{Y_2}=\norm{\frac{\psi_2}{r}\jpns{v}\jpns{u}^{s_2}}_{L^\infty}.
			\end{gathered}
		\end{equation*}
		The upshot, is that we can use the estimate on $\partial_v\psi,\partial_u\psi$ proved in \cref{l infinite theory} to get
		\begin{equation*}
			\begin{gathered}
				\norm{\partial_t\hat{\psi}_1^{(m)}}_{Y_1}\lesssim\norm{r\abs{\frac{\hat{\psi}_2^{(m-1)}}{r}}^{q_1}\jpns{v}^{q1-1}\jpns{u}^{s_2q_1}}_{L^\infty}\lesssim\norm{\hat{\psi}^{(m-1)}_2}_{X_2}^{q_1}.
			\end{gathered}
		\end{equation*}
		Importantly, we do not need to take $\partial_v$ derivative.
		
		We have for all $f,g\in\mathcal{C}^\infty$
		\begin{equation*}
			\begin{gathered}
				\norm{r\bigg(\abs{\frac{f}{r}}^{q_1}-\abs{\frac{g}{r}}^{q_1}\bigg)\jpns{
						v}^{q_1-1}\jpns{u}^{s_2q_1}}_{L^\infty}\lesssim
				\norm{r\bigg(\abs{\frac{f}{r}}^{q_1-1}+\abs{\frac{g}{r}}^{q_1-1}\bigg)\bigg(\abs{\frac{f}{r}}-\abs{\frac{g}{r}}\bigg)\jpns{
						v}^{q_1-1}\jpns{u}^{s_2q_1}}_{L^\infty}\\\lesssim(\norm{f}_{X_2}+\norm{g}_{X_2})^{q_1-1}\norm{f-g}_{Y_2}
			\end{gathered}
		\end{equation*}
		and similarly
		\begin{equation*}
			\begin{gathered}
				\norm{r\bigg(\abs{\frac{\partial_tf}{r}}^{q_2}-\abs{\frac{\partial_tg}{r}}^{q_2}\bigg)}_{L^\infty}\lesssim\norm{(\partial_tf-\partial_tg)\bigg(\abs{\frac{\partial_tf}{r}+\frac{\partial_tg}{r}}\bigg)^{q_2-1}}_{L^\infty}\\
				\lesssim\norm{f-g}_{Y_1}(\norm{f}_{X_1}+\norm{g}_{X_1})^{q_2-1}.
			\end{gathered}
		\end{equation*}
		These two estimates together with \cref{l infinite theory} show that 
		\begin{equation*}
			\begin{gathered}
				\norm{\hat{\psi}_1^{(m)}-\hat{\psi}^{(m-1)}_1,\hat{\psi}_2^{(m)}-\hat{\psi}^{(m-1)}_2}_Y\lesssim\epsilon^{q_1-1}\norm{\hat{\psi}_1^{(m-1)}-\hat{\psi}_1^{(m-2)},\hat{\psi}_2^{(m-1)}-\hat{\psi}^{(m-2)}_2}_Y.
			\end{gathered}
		\end{equation*}
		Picking $\epsilon$ sufficiently small and using the contraction mapping theorem, we get a convergent sequence in $Y$ space $\hat{\psi}_i^{(m)}\underset{Y}{\to}\hat{\psi}_i$. By boundedness in $X$, the limit must also lie in the $X$ space.
		
		\textit{Solving the equation:} As $\hat{\psi}^{(m)}$ converges in $Y$ and is bounded in $X$, thus it also converges in 
		\begin{equation*}
			\begin{gathered}
				\norm{\hat{\psi_1},\hat{\psi}_2}_Z=	\norm{\frac{\partial_t\psi_1}{r^(1-1/q_1)}\jpns{v}^{1+a}\jpns{u}^{s_1+1-a}}_{L^\infty}+\norm{\frac{\psi_2}{r}\jpns{v}\jpns{u}^{s_2}}_{L^\infty}.
			\end{gathered}
		\end{equation*}
		Therefore, by dominated convergence theorem we can take the limit in \cref{gs weak sol} inside the integral, to conclude that the limit also solve \cref{gs weak sol}.
	\end{proof}

	\subsection{$n$-Strauss system}
 	In this section, we will prove that the equation \cref{strauss system} has global solutions if \cref{n variable Strauss condition} has a solution. More precisely, we will require that this property is stable under small perturbations of the $c_{ij}$. Furthermore, we will impose the constraint $c_{ij}>2$ and work in $\R^{3+1}$. These simplify $\cref{n variable Strauss condition}$ to 
 	\begin{equation}\label{nStraussCondition3}
 		\begin{gathered}
 			s_i=\min_j(c_{ij}-2+\min(0,c_{ij}s_j-1):a_{ij}\neq 0).
 		\end{gathered}
 	\end{equation}
 
 Before starting the PDE estimates, we need to understand \cref{nStraussCondition3} a bit more. 
 
 \begin{lemma}\label{minmax convergence}
 	For $\alpha_{ij},\beta_{ij},\gamma_{ij}\in\R$ with $\gamma_{ij}>0$ if the equation
 	\begin{equation}\label{min max equation}
 		\begin{gathered}
 			x_{i}=F_i[x] \qquad\text{where} \quad 	F_i[y]=\min_j(\alpha_{ij},\alpha_{ij}+\beta_{ij}+\gamma_{ij}y_j),
 		\end{gathered}
 	\end{equation}
 	has a solution, than it has a maximal one. That is there exists $x_i$ solution to \cref{min max equation} with the property that for any other $y_i$ solving \cref{min max equation} $x_i\geq y_i$. Moreover,  $x=\lim_nF^n[\hat{\alpha}]$, $\hat{\alpha}=\min_j{\alpha_{ij}}$.
 \end{lemma}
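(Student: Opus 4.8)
The plan is to exploit monotonicity of the operator $F=(F_i)$ together with the hypothesis that a solution exists. First I would observe that each $F_i$ is monotone non-decreasing in every coordinate $y_j$: indeed $y\mapsto\alpha_{ij}+\beta_{ij}+\gamma_{ij}y_j$ is increasing since $\gamma_{ij}>0$, and the minimum of monotone functions is monotone. Next I would record the pointwise bound $F_i[y]\le\hat\alpha_i:=\min_j\alpha_{ij}$ for all $y$, which is immediate from the definition (the term $\alpha_{ij}$ already appears in the minimand). In particular any solution $x$ of \cref{min max equation} satisfies $x_i=F_i[x]\le\hat\alpha_i$, i.e. $x\le\hat\alpha$ componentwise.

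The key step is then to iterate $F$ starting from $\hat\alpha$. Set $z^{(0)}=\hat\alpha$ and $z^{(k+1)}=F[z^{(k)}]$. Since $z^{(1)}=F[\hat\alpha]\le\hat\alpha=z^{(0)}$, monotonicity of $F$ gives by induction $z^{(k+1)}\le z^{(k)}$, so $(z^{(k)})_k$ is a non-increasing sequence in each coordinate. Moreover, if $y$ is any solution, then $y\le\hat\alpha=z^{(0)}$, and applying $F$ repeatedly and using monotonicity together with $F[y]=y$ yields $y=z^{(k)}\!\!\restriction$ — more precisely $y\le z^{(k)}$ for every $k$. Hence the sequence $z^{(k)}$ is bounded below by (the supremum of all) solutions $y$, which exists by hypothesis; being monotone and bounded below, $z^{(k)}$ converges to some limit $x_\infty$. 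Because each $F_i$ is continuous (a finite min of affine functions), passing to the limit in $z^{(k+1)}=F[z^{(k)}]$ gives $x_\infty=F[x_\infty]$, so $x_\infty$ is a solution. From $y\le z^{(k)}$ for all $k$ we get $y\le x_\infty$, so $x_\infty$ is the maximal solution, and by construction $x_\infty=\lim_k F^k[\hat\alpha]$.

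The main obstacle, and the only place requiring genuine care, is the lower bound step: one must be sure the decreasing sequence $z^{(k)}$ does not diverge to $-\infty$, and this is exactly where the existence hypothesis is used — any solution $y$ furnishes a floor $y\le z^{(k)}$. Without the existence assumption the iteration can decrease without bound (e.g. a single equation $x=\min(\alpha,\alpha+\beta+\gamma x)$ with $\gamma>1$ and $\beta+\gamma\alpha<0$), which is consistent with \cref{min max lemma}. A minor technical point is that $F$ acts on $\R^d$ and all inequalities are meant componentwise; monotonicity must be applied coordinatewise, but since $F_i$ depends on $y$ only through the full vector via the various $y_j$, the componentwise order is preserved under $F$, so the induction goes through verbatim. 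Finally, continuity of $F$ is harmless since finite minima of affine maps are Lipschitz, so the limit passage is justified with no further estimates.
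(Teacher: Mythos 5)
Your proof is correct, and it takes a somewhat different route than the paper's. The paper splits the argument in two: it first shows a maximal solution $\bar\alpha$ exists by arguing that the componentwise maximum of two solutions is again a solution (asserting $\min_j(\alpha_{ij},\alpha_{ij}+\beta_{ij}+\gamma_{ij}\max(x_j,y_j))=\max(F_i[x],F_i[y])$) and that the solution set is closed and bounded above; only afterwards does it bring in the iteration $F^n[\hat\alpha]$, using monotonicity to see it stays above $\bar\alpha$ and decreases, hence converges to $\bar\alpha$. You instead obtain everything from the monotone iteration alone: $z^{(k)}=F^k[\hat\alpha]$ is non-increasing because $F[y]\le\hat\alpha$ and $F$ is order-preserving, it is bounded below by any solution (the only place existence is used), hence converges, and continuity of $F$ makes the limit a fixed point dominating every solution. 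This single argument yields both the existence of the maximal solution and the limit formula, so you never need the max-of-two-solutions step; that step, as stated in the paper, relies on an equality between a minimum of maxima and a maximum of minima which in general is only an inequality ($\min_j\max(a_j,b_j)\ge\max(\min_ja_j,\min_jb_j)$), so your construction is arguably the more robust of the two, at the negligible cost of spelling out monotonicity and continuity of $F$. The stray ``$y=z^{(k)}$'' in your write-up, immediately corrected to $y\le z^{(k)}$, is a typo and does not affect the argument.
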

 \begin{proof}
 	All solutions satisfy $x_i\leq \alpha_i$. For two solutions, $x_i,\,y_i$, $\max(x_i,y_i)$ solve \cref{min max equation}. Indeed
 	\begin{equation*}
 		\begin{gathered}
 			\min_j(\alpha_{ij},\alpha_{ij}+\beta_{ij}+\gamma_{ij}\max(x_j,y_j))\\
 			=\max(\min_j(\alpha_{ij},\alpha_{ij}+\beta_{ij}+\gamma_{ij}x_j),\min_j(\alpha_{ij},\alpha_{ij}+\beta_{ij}+\gamma_{ij}y_j))=\max(x_i,y_i).
 		\end{gathered}
 	\end{equation*}
 	Therefore the maximum must be unique. As the equation defines a closed set with upper bound on all $x_i$, the maximum $\bar{\alpha}$ must be attained.
 	
 	For the second part, let $F_i[z]$ be the right hand side of \cref{min max equation} evaluated at $z_i$. Then, given any solution of $x_i=F_i[x]$ with $x_i\leq z_i$, $x_i\leq F_i[z]$, as $\gamma_{ij}\geq0$. In particular $F_i^n[\hat{\alpha}]\geq \bar{\alpha}$. As $F_i$ is monotone decreasing in all components, and there is at least one solution, therefore $F_i^n[\alpha]$ must converge to the maximal solution.
 \end{proof}

\begin{definition}
	Given a solution $x$ to \cref{min max equation}, we may define a directed graph $G_x$ as follows. For each index $i$, let $G_x$ have a vertex, and an edge $j\to i$ if the minimum $\min_k(\alpha_{ik},\alpha_{ik}+\gamma_{ik}s_k+\beta_{ik})$ is attained at $k=j$ with the second expression.
\end{definition}

\begin{lemma}\label{loop lemma}
	Fix $\alpha_{ij},\beta_{ij},\gamma_{ij}$ and $\epsilon>0$ such that \cref{min max equation} has a solution for all $\tilde{\alpha}_{ij}\in B_\epsilon{\alpha_{ij}},$ $\tilde{\beta}_{ij}\in B_\epsilon{\beta_{ij}},$ $\tilde{\gamma}_{ij}\in B_\epsilon{\gamma_{ij}}$. Than $G_{\bar\alpha}$ corresponding to the maximum solution of \cref{min max equation} with $\alpha_{ij},\beta_{ij},\gamma_{ij}$ and $\gamma_{ij}>1$ cannot have a loop.
\end{lemma}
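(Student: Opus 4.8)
The plan is to argue by contradiction: assume $G_{\bar\alpha}$ has a loop and, after relabelling, write it as $1\to2\to\cdots\to m\to1$. By the definition of $G_{\bar\alpha}$ and since $\bar\alpha$ solves \cref{min max equation}, for each $\ell$ (read cyclically, $m+1\equiv1$) the minimum defining $\bar\alpha_{\ell+1}=F_{\ell+1}[\bar\alpha]$ is attained at the index $\ell$ through the second expression, so
\begin{equation*}
	\bar\alpha_{\ell+1}=\alpha_{\ell+1,\ell}+\beta_{\ell+1,\ell}+\gamma_{\ell+1,\ell}\bar\alpha_\ell,\qquad \gamma_{\ell+1,\ell}\bar\alpha_\ell+\beta_{\ell+1,\ell}\le0,
\end{equation*}
the inequality holding because the minimum is also over the first expression $\alpha_{\ell+1,\ell}$. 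Composing these $m$ affine relations around the loop gives $\bar\alpha_1=A+C\bar\alpha_1$ with $C=\prod_{\ell=1}^m\gamma_{\ell+1,\ell}>1$ and $A$ the induced constant, so $\bar\alpha_1=A/(1-C)$ is pinned.

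Next I would reduce to a single scalar equation of the type handled by \cref{min max lemma}. Substituting the loop relations for $\ell=1,\dots,m-1$ into $\bar\alpha_1=F_1[\bar\alpha]=\min_j(\alpha_{1j},\alpha_{1j}+\beta_{1j}+\gamma_{1j}\bar\alpha_j)$, the term coming from the loop edge $m\to1$ becomes an affine function $A+Cx$ of $x:=\bar\alpha_1$, while every remaining term is a constant; writing $K$ for the minimum of those constants, we get $x=\min(K,A+Cx)$ with the minimum attained at $A+Cx$, i.e.\ $\bar\alpha_1=A/(1-C)\le K$. Rewriting $x=K+\min(0,(A-K)+Cx)$ and invoking \cref{min max lemma} (with $c=C>1$), solvability forces the \emph{closed} condition $A+(C-1)K\ge0$. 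This leaves a dichotomy. If $A+(C-1)K>0$ strictly, then $\bar\alpha_1=A/(1-C)<K$, and I would contradict maximality of $\bar\alpha$: the capped value $x'=K$ also solves the scalar equation, and propagating it around the loop via $\bar\alpha'_{\ell+1}=\alpha_{\ell+1,\ell}+\beta_{\ell+1,\ell}+\gamma_{\ell+1,\ell}\bar\alpha'_\ell$ yields loop components $\bar\alpha'_\ell\ge\bar\alpha_\ell$ (strictly, since $\gamma_{\ell+1,\ell}>0$), giving an admissible larger solution — the $1$-equation now realising its minimum at $K$, the loop equations still holding at the enlarged loop inputs. If instead $A+(C-1)K=0$, then I would use robustness: lowering one loop coefficient $\beta_{\ell+1,\ell}$ by $\delta\in(0,\epsilon)$ drops $A$ to $A-c'\delta$ with $c'>0$ while leaving $K$ untouched, so the perturbed system violates $A+(C-1)K\ge0$ and has no solution, contradicting the hypothesis. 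Both cases echo the scalar phenomenon inside \cref{min max lemma}, where the self-referential fixed point $\tfrac{\alpha+\beta}{1-\gamma}$ sits below the capped value $\alpha$ precisely when $\gamma>1$.

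The main obstacle is that the scalar reduction conceals the mutual dependence of the components of $\bar\alpha$: in the strict branch one must check that the ``loop-broken'' assignment really is a solution of the full system (the enlarged loop values could overshoot a non-loop option at some vertex, forcing a further re-capping and re-propagation), and in the degenerate branch one must control $K$, which depends on non-loop components, under the perturbation. I would handle both by first passing to a \emph{minimal} loop (no chords) and to the strongly connected component it generates, so that the only forced relations among the indices in play are the loop relations themselves, and by exploiting the characterisation $\bar\alpha=\lim_n F^n[\hat\alpha]$ from \cref{minmax convergence} together with the monotonicity of each $F_i$ in its arguments, so that enlarging inputs along the active edges and re-running the iteration cannot decrease the limit below $\bar\alpha$ while it does increase the $1$-component. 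Turning this monotone-iteration bookkeeping into a clean argument is the part that needs genuine care; the rest is linear algebra along the loop plus \cref{min max lemma}.
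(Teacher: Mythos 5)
The serious gap is in your strict branch, where you try to contradict maximality alone, without ever invoking the perturbation-robustness hypothesis. The enlarged assignment obtained by setting $x_1'=K$ and propagating the affine loop relations is in general \emph{not} admissible, because the intermediate loop vertices have their own caps $\alpha_{\ell+1,j}$ (including the first expression $\alpha_{\ell+1,\ell}$ of the active edge itself) which the enlarged values can violate; and this is not a repairable bookkeeping issue, since under the strict-branch hypotheses there need not exist any solution larger than $\bar\alpha$ at all. Concretely, take $d=2$, $\gamma_{12}=\gamma_{21}=2$, $\alpha_{12}=0$, $\beta_{12}=3$, $\alpha_{21}=-2$, $\beta_{21}=2$, i.e.
\begin{equation*}
	x_1=\min\bigl(0,\;3+2x_2\bigr),\qquad x_2=\min\bigl(-2,\;2x_1\bigr).
\end{equation*}
The unique (hence maximal) solution is $(\bar\alpha_1,\bar\alpha_2)=(-1,-2)$; both second expressions attain the minima, so $G_{\bar\alpha}$ contains the loop $1\to2\to1$, and at vertex $1$ you are exactly in your strict branch ($K=0>\bar\alpha_1=-1$), yet no larger solution exists. (Robustness fails for this system — lowering $\beta_{21}$ slightly destroys solvability — but your strict branch never uses robustness, so it cannot exclude this configuration.) Passing to a chordless loop or to a strongly connected component does not help: this loop is already chordless; the obstruction is the tight cap at vertex $2$. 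Your strict-branch argument only works when \emph{every} loop vertex has strict slack, in which case $\bar\alpha+\eta$ on the loop components lies in $\{x:x_i\le F_i[x]\}$ for small $\eta$ and iterating $F$ as in \cref{minmax convergence} contradicts maximality; a loop with mixed strict and tied vertices escapes both of your branches.

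The degenerate branch is also not airtight as written: the criterion $A+(C-1)K\ge0$ from \cref{min max lemma} was derived from the structure of the particular solution $\bar\alpha$ (its active edges and the frozen non-loop values entering $K$), so its violation does not by itself show the perturbed system unsolvable — a perturbed solution could have different active edges and a different ``$K$''. The paper's proof resolves both issues at once and needs no case split: any $x$ with $x_i\le F_i[x]$ is dominated by $\bar\alpha$ (iterate the monotone map $F$ from $x$ and use \cref{minmax convergence}); a solution $y$ of the system with one loop coefficient, say $\beta$ on the edge into vertex $1$, lowered by $\delta\in(0,\epsilon)$ still satisfies the original inequalities, hence $y\le\bar\alpha$; subtracting the active-edge identities for $\bar\alpha$ from the corresponding inequalities for $y$ and chaining around the loop gives $y_1-\bar\alpha_1\le\bigl(\prod_\ell\gamma_{\ell+1,\ell}\bigr)(y_1-\bar\alpha_1)-\delta$, which is impossible when $y_1-\bar\alpha_1\le0$ and the product exceeds $1$. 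So the robustness hypothesis must carry the whole argument; if you reroute your degenerate branch through this domination-plus-chaining mechanism you will find the strict/degenerate dichotomy unnecessary, and your loop-composition computation ($\bar\alpha_1=A+C\bar\alpha_1$, $C>1$) survives only as motivation.
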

\begin{proof}
	Due to the monotonicity of $F$, \cref{min max equation} has a solution iff 
	\begin{equation}\label{minmax inequality}
		\begin{gathered}
			x_i\leq F_i[x].
		\end{gathered}
	\end{equation}
	Indeed a solution of \cref{min max equation} solves \cref{minmax inequality}. For the other direction, note that \cref{minmax inequality} is defined by the finite intersection of hyperplanes $\mathcal{A}$. Thus, given $x\in\mathcal{A}$, we can run a simplex algorithm to find the maximal solution $\hat{\alpha}$ as defined in \cref{minmax convergence}. 
	
	Furthermore, note that the requirement that equations with coefficients close to the original must have solutions is simply the statement that $\mathcal{A}$ is open.
	
	The maximal solution is characterised by $\bar{\alpha}\cdot v\geq x\cdot v $ $\forall x\in\mathcal{A}$ and $v_i\geq0$, furthermore inequality holds when $v_i>0$. Assume $G_{\bar\alpha}$ has a loop over indices $\{1,2,...,d'\}$. As $\mathcal{A}$ is open, there is $\epsilon>0$ such that $x^{(\epsilon)}=\bar\alpha-\epsilon v\in\mathcal{A}$ for $v_i>0$. Note, however, that the existence of such a loop implies
	\begin{equation*}
		\begin{gathered}
			x^{(\epsilon)}_1\leq x^{(0)}_1+\gamma_{12}\gamma_{23}...\gamma_{d'1}(x^{(\epsilon)}_1-x^{(0)}_1).
		\end{gathered}
	\end{equation*}
	As $\gamma_{ij}>1$, this inequality has no solution for $x^{(\epsilon)}<x^{(0)}$, which is a contradiction.
\end{proof}
 
	\begin{theorem}\label{n strauss main theorem}
		Consider the equation \eqref{n variable Strauss}. Fix $a_{ij},c_{ij}$ and $\hat{\epsilon}>0$ such that \eqref{n variable Strauss condition} has a solution for all $\tilde{c}_{ij}\in[c_{ij}-\hat{\epsilon},c_{ij}+\hat{\epsilon}]$, $c_{ij}>2$. Then, there exists $\delta>0$ such that \eqref{n variable Strauss} has global solution for all initial data supported in $\{\abs{x}<1\}$ with $\norm{\phi_i}_{H^{2}}\leq\delta$.
	\end{theorem}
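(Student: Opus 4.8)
The plan is to run the scheme already used for \eqref{partial v} and \eqref{glassey strauss 1+1}: reduce to an iteration, extract a decay-type controlling norm from a local existence theorem, prove a linear estimate ($X\lesssim Y$) and a nonlinear estimate ($\norm{\text{source}}_Y\lesssim\norm{\cdot}_X^{c_{ij}}$), and close a bootstrap. The genuinely new ingredient is that the decay rates are now dictated by the \emph{maximal} solution of the algebraic system \eqref{nStraussCondition3}, and the combinatorial structure of that solution — encoded in the graph $G_{\bar s}$ — is precisely what makes the bootstrap closable. We work in $\R^{3+1}$, where \cref{rp} holds for all $p\geq0$ (and in part b) for $p\geq2$), so the quoted estimates of \cref{L2 technical estimates} are available, commuting with rotations where the $L^\infty$-from-$L^2$ step needs it.

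First I would set up the algebra. By \cref{minmax convergence} the stability hypothesis on $c_{ij}$ yields a maximal solution $\bar s_i$ of \eqref{nStraussCondition3}, and by \cref{loop lemma} the associated directed graph $G_{\bar s}$ has no loop, so its vertices can be topologically ordered; relabel the fields so that every active edge $j\to i$ has $j<i$. For each $i$ set $p_i=2\bar s_i+1-\epsilon$ — the weight which, as in the remark after \cref{rp} and as verified for $p(q)=(n+1)q-n$ in the $\partial_v\phi$ section, sits just below the threshold for a $\psi_i\sim t^{-\bar s_i}$ tail — and let $X_i$ be the weighted energy/$L^\infty$ hierarchy of \eqref{linear bootstrap} for $\psi_i=r^{(n-1)/2}\phi_i$ with this power, $Y_i$ the source hierarchy \eqref{nonlinear bootstrap}, and $X=\sum_i X_i$, $Y=\sum_i Y_i$. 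The stability of the algebraic condition is exactly the $O(\epsilon,\epsilon_0)$ room that the $r^p$-and-Morawetz machinery demands. (If some $\bar s_i<1/2$, so that $p_i<2$ and \cref{rp}b) is unavailable, one runs instead the negatively weighted energy, via a $t^{-\alpha}\partial_t$ multiplier, as in the Glassey–Strauss remark; I would record this as the one genuinely technical caveat.) Local well-posedness in $\prod_i(H^2_{rad}\times H^1_{rad})$ follows by the iteration-plus-\cref{morawetz} argument of the Local existence theorem for \eqref{partial v}, since $c_{ij}>2$ lies in the required range, and $X$ plays the role of controlling norm; the linear estimate $\norm{\phi_i}_{X_i}\lesssim\norm{F_i}_{Y_i}+(\text{data})$, uniform in $T$, is a direct application of \cref{energy decay p<2,energy decay p<4,Psi to psi,l infty estimates from l2}.

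The crux is the nonlinear estimate: for each active pair one must show $\norm{a_{ij}\abs{\phi_j}^{c_{ij}}}_{Y_i}\lesssim\norm{\phi_j}_{X_j}^{c_{ij}}$. As in the far-region analysis for \eqref{partial v}, write $r^{(n-1)/2}\abs{\phi_j}^{c_{ij}}=r^{(1-c_{ij})(n-1)/2}\abs{\psi_j}^{c_{ij}}$ (the nonlinearity is undifferentiated, so no $\partial_v\psi$ versus $\psi/r$ splitting is needed for the dominant $Y$-pieces) and feed in the $L^\infty$ bounds of \cref{l infty estimates from l2}: near $\scri$ one has $\abs{\psi_j}\lesssim1$ — the $\sigma_j=0$ branch, forced by $c_{ij}>2$ — so the source decays like $v^{-(n-1)(c_{ij}-1)/2}$; near $I^+$ one has $\abs{\psi_j}\lesssim u^{-\bar s_j}$, contributing an extra $u^{-c_{ij}\bar s_j}$. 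Finiteness of the $Y_i$-norm at weight $p_i=2\bar s_i+1-\epsilon$ is then \emph{exactly} the inequality $\bar s_i\leq c_{ij}-2+\min(0,c_{ij}\bar s_j-1)$: the two cases of the inner $\min$ correspond to the source decay being governed by its $\scri$ rate (the $0$-branch, where there is slack in the $u$-integral) or by its $I^+$ rate (the $c_{ij}\bar s_j-1<0$ branch), and the \emph{strict} inequality supplied by the stability room is what turns a borderline logarithmically divergent $u$-integral into a convergent one. The topological ordering from \cref{loop lemma} is used twice here: $\bar s_j$ is already pinned when the $i$-th source is estimated, and — crucially — no field ever needs its own rate $\bar s_i$ in a self-referential, marginal way; this is precisely the mechanism whose failure produces the growing logarithms of the hierarchical systems in \cref{null condition}c). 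The near region $r<t/2$ is handled exactly as for \eqref{partial v}, by radial Sobolev ($L^\infty$ from $L^2$) and summation over dyadic $u$-shells, and does not see the algebraic condition.

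With both estimates in hand the conclusion is routine: one proves $\norm{\phi_{lin}}_X\leq\delta$, $\norm{\phi}_{X(\mathcal A)}\leq10\delta\implies\norm{\phi}_{X(\mathcal A)}\leq2\delta$ using the cutoff-and-extension device of \cref{bootstrap} (noting $\norm{a_{ij}\abs{\phi_j}^{c_{ij}}}_Y\lesssim\delta^{c_{ij}}$ with $c_{ij}>2>1$), and a continuity argument upgrades the local solution to a global one with $\norm{\phi}_X\lesssim\delta$; persistence of $H^2$ regularity is as in \cref{partival v main theorem}. I expect the main obstacle to be the nonlinear estimate of the preceding paragraph — balancing the $\epsilon,\epsilon_0$ losses against the strict-inequality margin, confirming the weight ranges (and dealing with $\bar s_i<1/2$ via weighted energies), and checking that the ordering from \cref{loop lemma} really lets every source term be estimated using only fields already under control.
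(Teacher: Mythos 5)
Your plan takes a genuinely different and much heavier route than the paper, and as written it has two concrete gaps. For context: the paper's proof of \cref{n strauss main theorem} never touches the $r^p$/energy hierarchy. It runs a pure weighted $L^\infty$ bootstrap on $\sum_i\norm{v^{\alpha}u^{\beta_i}\phi_i}_{L^\infty}$ using John's pointwise estimate \cref{john l infty} (positivity of the $\R^{3+1}$ kernel, no symmetry required), gets local well-posedness in $H^2$ simply from $H^2\subset L^\infty$, and closes with the observation that this weighted $L^\infty$ norm is a controlling norm: once it stays bounded, an energy estimate propagates the $H^2$ norm and the local theory continues the solution. This matters for your first gap: the $L^2$ machinery you import from \cref{L2 technical estimates} (radial Sobolev, the $r^p$ lemmas, \cref{l infty estimates from l2}) is stated and proved for spherically symmetric solutions, while \cref{n strauss main theorem} carries no symmetry assumption; your parenthetical ``commuting with rotations'' is not available at the stated regularity, since the data is only $H^2$ and $\abs{\phi_j}^{c_{ij}}$ with non-integer $c_{ij}>2$ has barely more than two derivatives (cf.\ \cref{regularity vs decay}). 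So even if completed, your argument yields only the radial case of the theorem.

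The second gap is in how you use \cref{loop lemma}. The acyclic graph consists only of the \emph{active} edges, i.e.\ those $j\to i$ at which the minimum in \eqref{nStraussCondition3} is attained on the branch $c_{ij}s_j-1<0$; the PDE coupling graph ($a_{ij}\neq0$) may perfectly well contain loops (the two-field Strauss system does, with an empty active-edge graph), so you cannot relabel the fields so that ``every source term is estimated using only fields already under control'' --- in the bootstrap all decay rates are fixed simultaneously in any case. Moreover, at the maximal solution the relations in \eqref{nStraussCondition3} hold with \emph{equality}, so there is no ``strict inequality supplied by the stability room'' ready to absorb your $\epsilon,\epsilon_0$ losses. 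What the acyclicity actually buys --- and what the paper does --- is a hierarchical allocation of the losses: with $\mathcal{N}(i)$ the level of $i$ in the active-edge graph and $A=\max_{ij}c_{ij}$, one perturbs the exponents to $\beta_i=s_i-A^{1+2\mathcal{N}(i)}\epsilon$, $\gamma_{ij}=c_{ij}(1-\epsilon)$, $\delta_{ij}=c_{ij}\beta_j$, so that the loss incurred at level $\mathcal{N}(i)$ dominates the accumulated losses from strictly lower levels and the inequalities demanded by \cref{john l infty} (respectively by your $Y_i$-norms) close with definite margin. Your proposal needs this cascade, or an equivalent device, spelled out; without it the borderline $u$-integrals you flag do not converge. (A smaller point: in $n=3$ the restriction you worry about for $p_i<2$ concerns only the second-derivative estimate \cref{rp}b), which an undifferentiated nonlinearity largely avoids, so the negatively weighted energy detour is likely unnecessary.)
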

	\begin{remark}
		Note, that the restriction $c_{ij}>2$ restricts all field to have a radiation field ($\sigma_i=0\, \forall i$).
	\end{remark}
	\begin{proof}
		Let's consider the graph $G_x$ for the equation \cref{nStraussCondition3}, and let $s_i$ be a maximal solution. \cref{loop lemma} implies in particular, that there are no loops in $G_x$.
		
		Therefore, we can associate to each vertex an integer $\mathcal{N}(i)$ that is its maximum distance on a directed path, starting with $0$. Fix $\epsilon>0$ sufficiently small, fixed only later, and $A=\max_{ij}c_{ij}$. Let
		\begin{equation}\label{n Strauss decay coefficients}
			\begin{gathered}
				\alpha=1-\epsilon\\
				\beta_i=s_i-A^{1+2\mathcal{N}(i)}\epsilon\\
				\gamma_{ij}=c_{ij}(1-\epsilon)\\
				\delta_{ij}=c_{ij}\beta_j
			\end{gathered}
		\end{equation}
		Restrict $\epsilon$, such that $c_{ji}s_i>1\implies\delta_{ji}>1$.  This implies, that a directed graph constructed from $\delta_{ij}$ has the same edges as one from $c_{ij}s_j$.
		
		It's easy to see that the problem is locally well posed in $H^{2}\times H^{1}$, because $H^{2}\subset L^\infty$. Let's say a solution to \eqref{n variable Strauss} exists in some time slice $\mathcal{D}_T=[0,T]$. Then, John's $L^\infty$ estimate \cref{john l infty} says
		\begin{equation*}
			\begin{gathered}
				\norm{v^{\alpha}u^{\beta_i}\phi_i}_{L^\infty(\mathcal{D}_T)}\lesssim c_{data}+\sum_j\norm{v^{\gamma_{ij}}u^{\delta_{ij}}\abs{\phi_j}^{c_{ij}}}_{L^\infty}
			\end{gathered}
		\end{equation*}
		if
		\begin{equation*}
			\begin{gathered}
				\alpha<1\\
				\beta_i\leq\min_j(\gamma_{ij}-2+\min(0,\delta_{ij}-1)).
			\end{gathered}
		\end{equation*}
		One checks that using the definitions in \eqref{n Strauss decay coefficients}, the above conditions hold. Indeed
		\begin{equation*}
			\begin{gathered}
				\min_j(\gamma_{ij}-2+\min(0,\delta_{ij}-1))+\epsilon A^{1+2\mathcal{N}(i)}\\
				\geq \min(\{c_ij-2\}\cup\{c_{ij}-3+c_{ij}s_j:\delta_{ij}<1\})-\epsilon(\max_j c_{ij}+\max_{\mathcal{N}(j)<\mathcal{N}(i)}A^{1+2\mathcal{N}(j)})+\epsilon A^{1+2\mathcal{N}(i)}\geq s_i.
			\end{gathered}
		\end{equation*}
		
		This allows all $L^\infty$ norms to be bounded via a bootstrap. More precisely, the norm
		\begin{equation*}
			\begin{gathered}
				\norm{\phi}_X=\sum_i \norm{v^{\alpha}u^{\beta_i}\phi_i}_{L^\infty}
			\end{gathered}
		\end{equation*}
		will stay bounded throughout the evolution if it is small enough for the linear problem.
		
		Since $L^\infty$ is a controlling norm, that is, if it stays bounded during the maximal time of existence, the solution must be global \footnote{for details on this concept, see \cite{tao_nonlinear_2006} chapter 3.3}. Indeed, assume that the maximal solution $\phi_i$ is only defined on $[0,T)$ for $T<\infty$, with $\norm{\phi_i}_{L^\infty([0,T)}\leq c$. Then, a simple energy estimate will yield that the $H^2$ norm is bounded up to time $T$. As the system is well posed in $H^2$, we can extend past $T$.
	\end{proof}

	\section{Blow-up results}\label{blow up results}
	All the global ill-posedness results will be proved in $\mathcal{C}^2(\R^{3+1})$ using the techniques of John \cite{john_blow-up_1979} and Yang, Zhou \cite{yang_revisit_2016}. We will further restrict to spherical symmetry, but this condition may be dropped using spherical averages as done in both the above works. We will also impose more conditions on the initial data to make the proofs easier, but we believe that these can be weakened in many scenarios. In any case, the restrictions are in line with the general philosophy, that to disprove global well-posedness its sufficient to create specific examples. 
	
	Furthermore, note that for equations which have non-smooth non-linearity, $\mathcal{C}^2$ global ill-posedness is not always the most satisfactory statement, as even local well-posedness may not be available for any space including $\mathcal{C}^2$. This is however not a major problem, and these global ill-posedness results survive in weaker spaces using the techniques from \cite{hidano_life_2016}. The reason being, that the ill-posedness is proved via integrated quantities, thus regularity of the solution can be weakened significantly.
	
	The non-existence results will all depend on an extension of generalised Gronwall inequality from \cite{yang_revisit_2016}.
	\begin{lemma}[Generalised Gronwall inequality]\label{gronwall}
		Consider the system of ordinary differential inequalities
		\begin{equation*}
			\begin{gathered}
				\partial_tx_i(t)\geq c_i t^{-\alpha_i}x_{i-1}^{p_i}(t)
			\end{gathered}
		\end{equation*}
		for $i\in\{1,2,...,n\}$ with $t>1,\,c_i>0,\,p_i\geq1\,\Pi_jp_j>1,\,\Sigma_i\alpha_i\leq n$ and convention $x_{0}:=x_n$. The above has no global solution if all initial data are positive.
	\end{lemma}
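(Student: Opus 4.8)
The plan is to run the iteration scheme of Yang--Zhou \cite{yang_revisit_2016} (in the spirit of John \cite{john_blow-up_1979}), adapted to the cyclic coupling, and to reach a contradiction by evaluating an unbounded family of \emph{a priori} lower bounds at one fixed time. Assume, for contradiction, that $(x_1,\dots,x_n)$ is a positive solution on all of $(1,\infty)$. Since $x_{i-1}>0$ and $t^{-\alpha_i}>0$ on $(1,\infty)$, every right-hand side is strictly positive, so each $x_i$ is strictly increasing; fixing $T>1$ and $m:=\min_i x_i(T)>0$ gives $x_i(t)\ge m$ for all $t\ge T$ and all $i$ (indices read modulo $n$ throughout).

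First I would set up the bootstrap. Starting from the trivial bound $x_i(t)\ge m$ (``homogeneity degree'' $0$), I would feed $x_{i-1}$ into the $i$-th inequality and integrate from $T$: from $x_{i-1}(t)\ge A\,t^{\mu}(\log t)^{\nu}$ one gets, for $t$ sufficiently large,
\[
	x_i(t)\ \ge\ x_i(T)+c_i A^{p_i}\int_T^t s^{p_i\mu-\alpha_i}(\log s)^{p_i\nu}\,\d s\ \gtrsim\ A'\,t^{\mu'}(\log t)^{\nu'},
\]
where $(\mu',\nu')$ is read off from the elementary asymptotics of that integral: if $p_i\mu-\alpha_i>-1$ then $(\mu',\nu')=(p_i\mu-\alpha_i+1,\,p_i\nu)$; if $p_i\mu-\alpha_i=-1$ then $(\mu',\nu')=(0,\,p_i\nu+1)$; and if $p_i\mu-\alpha_i<-1$ the integral converges and one keeps only the floor $x_i(T)$, i.e.\ $(\mu',\nu')=(0,0)$. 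Iterating around the cycle $i=1,2,\dots,n,1,2,\dots$ and carrying the multiplicative constants along produces, for every pass $k$, a bound $x_i(t)\ge A_i^{(k)}\,t^{\mu_i^{(k)}}(\log t)^{\nu_i^{(k)}}$ with $\mu_i^{(k)},\nu_i^{(k)}\ge0$.

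Next I would show that these exponents run away. The hypothesis $\sum_i\alpha_i\le n$ forbids the iteration from being permanently pinned at $(\mu,\nu)=(0,0)$: were it so, every update would have to truncate, i.e.\ $\alpha_i>1$ for all $i$, forcing $\sum_i\alpha_i>n$. So at some index with $\alpha_i\le1$ one gains either polynomial growth ($\mu_i\ge1-\alpha_i>0$ when $\alpha_i<1$) or a logarithm ($\nu_i\ge1$ when $\alpha_i=1$), and the gain is regenerated on every subsequent pass. Using the affine lower bound $\mu_i^{(k+1)}\ge p_i\mu_{i-1}^{(k)}-\alpha_i+1$ (valid since $\max(0,\cdot)\ge\cdot$) and summing around one loop gives $\sum_i\mu_i^{(k+1)}\ge\sum_i p_i\mu_i^{(k)}+\bigl(n-\sum_i\alpha_i\bigr)\ge\sum_i\mu_i^{(k)}$, using $p_i\ge1$ and $\sum_i\alpha_i\le n$; this is strict as soon as some $\mu_i^{(k)}>0$ with $p_i>1$, and once the degrees clear the finitely many truncation thresholds the composed loop map is affine with spectral radius $\prod_j p_j>1$, so the $\mu_i^{(k)}$ diverge geometrically. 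In the residual degenerate regime where the polynomial degrees stay bounded, the same factor $\prod_j p_j>1$ forces the logarithmic degrees $\nu_i^{(k)}$ to diverge geometrically instead. Either way one obtains a sequence along which, for every \emph{fixed} $t$ large enough (depending only on $c_i,\alpha_i,p_i$), $A_i^{(k)}\,t^{\mu_i^{(k)}}(\log t)^{\nu_i^{(k)}}\to\infty$ as $k\to\infty$: taking logarithms, the divergent exponent times $\log t$ (or $\log\log t$) dominates $\log A_i^{(k)}$, which decays at most geometrically. Fixing such a $t$ contradicts the finiteness of $x_i(t)$.

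The genuinely delicate point -- and the one where the strict inequality $\prod_j p_j>1$ (not merely $\ge1$) and the freedom to enlarge $t$ are both essential -- is this last step: one must verify that the constants $A_i^{(k)}$, which get raised to the $p_i$-th power and divided by the growing new exponents at each update, do not shrink faster than the exponents grow. The remaining ingredients -- the two families of elementary integral asymptotics, the bookkeeping of the $A_i^{(k)}$, and the case split at the truncation thresholds -- are routine and parallel \cite{yang_revisit_2016}. An alternative organisation, which I might use instead, is to first eliminate $x_2,\dots,x_n$ by iterated integration and reduce everything to a single scalar ``delay'' integral inequality for $x_1$; the analysis of the exponents is unchanged.
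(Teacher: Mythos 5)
Your route is genuinely different from the paper's. The paper does not iterate at all: it substitutes $s=\log t$, passes to the product $x=\prod_i x_i$, applies a weighted arithmetic--geometric mean (Young) inequality with weights $q_i=p_1^{i/n}$ to the sum $\sum_i x_{i-1}^{p_i}/x_i$, and thereby reduces the whole system to a single scalar inequality $\partial_s x\gtrsim e^{\beta s}x^{1+c/Q}$ with $\beta=\sum_i(1-\alpha_i)\geq 0$, after which finite-time blow-up follows from ODE comparison in a few lines. Your John/Yang--Zhou style bootstrap of polynomial--logarithmic lower bounds is a legitimate alternative and would even give more refined intermediate rates, but as written it is a plan rather than a proof, for two concrete reasons.

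First, the divergence of the exponents is not established by the argument you give: monotonicity of $\sum_i\mu_i^{(k)}$ does not imply divergence; the clause ``strict as soon as some $\mu_i^{(k)}>0$ with $p_i>1$'' needs $\mu_{i-1}^{(k)}>0$ at an index where $p_i>1$ (only the product of the $p_j$ is assumed $>1$); and ``once the degrees clear the truncation thresholds the loop map is affine with slope $\prod_j p_j$'' overlooks that an affine map of slope $>1$ can have a \emph{positive} fixed point at which the truncated, monotone iteration stalls. What you actually need is the dichotomy: the update $\mu_i\mapsto\max(0,p_i\mu_{i-1}-\alpha_i+1)$, started from $0$, is monotone, and either it admits no fixed point (automatic when $\sum_i\alpha_i<n$, by summing the fixed-point relations), so some $\mu_i^{(k)}\to\infty$; or, which can only happen when $\sum_i\alpha_i=n$, it stabilises at a fixed point at which necessarily no strict truncation occurs and some index sits exactly at the logarithmic threshold $p_i\mu_{i-1}-\alpha_i=-1$, whence the $\nu_i^{(k)}$ satisfy an untruncated affine recursion with slope $\prod_j p_j>1$ and positive constant and diverge. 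This is provable, but it is not what your text argues. Second, and more seriously, the step you yourself label ``the genuinely delicate point'' --- tracking the constants $A_i^{(k)}$, which are raised to the $p_i$-th power and divided by the growing exponents at each update, and showing $\log A_i^{(k)}\gtrsim -C\Lambda^k$ with the \emph{same} ratio $\Lambda=\prod_j p_j$ as the exponent growth, so that $\mu_i^{(k)}\log t+\log A_i^{(k)}\to+\infty$ (respectively $\nu_i^{(k)}\log\log t+\log A_i^{(k)}\to+\infty$) at one fixed large $t$ --- is precisely the quantitative content of the lemma in this approach, and it is left undone. Until that bookkeeping is carried out, the contradiction at fixed $t$ is not obtained; the paper's product/Young reduction is the way it sidesteps all of this.
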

	\begin{proof}
		\textit{Change of variables:}
		Without loss of generality, assume that $p_n\leq p_{n-1}\leq...\leq p_1$. The conditions of the lemma imply $p_1>1$.
		
		Using variable $s=\log(t)$  we get (by abusing notation)
		\begin{equation*}
			\begin{gathered}
				\partial_sx_i(s)\geq c_ie^{s\beta_i}x_{i-1}^{p_i}(s)
			\end{gathered}
		\end{equation*}
		with $\beta_1=1-\alpha_i$ and $\sum_i\alpha_i<n\implies\beta=\sum_i\beta_i\geq0$. A short computation yields
		\begin{equation}\label{ode in gronwall}
			\begin{gathered}
				\partial_sx\geq xce^{s\beta} \sum_i \frac{x_{i-1}^{p_i}}{x_i}.
			\end{gathered}
		\end{equation}
		where $x=\prod_i x_i,\,c=\prod_i c_i$.

		Fix $q_i=p_1^{\frac{i}{n}}>1$ and $Q=\sum_iq_i$. so that $q_ip_i-q_{i-1}>p_1^{1/n}-1=c$ for all $i\in{1,2..,n}$. Therefore, by Young's inequality we get		
		\begin{equation*}
			\begin{gathered}
				\sum_i \frac{x_{i-1}^{p_i}}{x_i}\geq \sum_i \frac{q_i}{Q}\frac{x_{i-1}^{p_i}}{x_i}\geq \big(\prod_i x_{i-1}^{p_{i}q_i-q_{i-1}}\big)^{1/Q}\geq \big(\prod_i x_{i-1}\big)^{c/Q}.
			\end{gathered}
		\end{equation*}
		Substituting this into \cref{ode in gronwall} we get
		\begin{equation*}
			\begin{gathered}
				\partial_s x\geq c e^{s\beta}x^{1+c/Q}.
			\end{gathered}
		\end{equation*}
		A usual comparison argument against 
		\begin{equation*}
			\begin{gathered}
				\partial_sy=cy^{1+c/Q}
			\end{gathered}
		\end{equation*}
		yields finite time blow-up.
	\end{proof}

	\subsection{$n$-Strauss system}\label{strauss section}
	For a solution $\bar{\phi}_i$ for \eqref{n variable Strauss}, let's split it to linear ($\tilde{\phi}$) and nonlinear part ($\phi$):
	\begin{equation}\label{split}
		\begin{gathered}
			\Box\tilde{\phi}_i=0\\
			\Box\phi_i=\Box\bar{\phi}_i\\
			\phi_i(0)=\partial\phi_i(0)=0,\,\tilde{\phi}_i(0)=\bar{\phi}_i(0),\,\partial_t\tilde{\phi}_i(0)=\partial_t\bar{\phi}_i(0).
		\end{gathered}
	\end{equation}

	\begin{definition}\label{sufficiently positive data}
		A spherically symmetric initial data $\bar{\phi}_i^{(0)},\bar{\phi}_i^{(1)}\in\mathcal{C}^2(B_1\to\R)$ such that the corresponding linear solution \eqref{split} satisfies
		\begin{equation*}
			\begin{gathered}
				(r\tilde{\phi}_i)\in[c,2c] \qquad t+r>1,t-r\in(-1,-1+\delta)
			\end{gathered}
		\end{equation*}
		for some $c,\delta>0$ is called sufficiently positive data.
	\end{definition}
	
	\begin{theorem} Fix $a_{ij}\geq 0$ and $c_{ij}>2$. Then, \label{blow up for n strauss theorem}
		\cref{n variable Strauss} has no global $\mathcal{C}^2$ solutions starting from sufficiently positive data if \cref{n variable Strauss condition} does not hold.
	\end{theorem}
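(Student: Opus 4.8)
The plan is to run the iterative lower bound argument of John and Yang–Zhou for \eqref{n variable Strauss}, reducing the blow-up statement to the non-solvability of \eqref{n variable Strauss condition} via the generalised Gronwall inequality \cref{gronwall}. First I would fix sufficiently positive data in the sense of \cref{sufficiently positive data} and split the solution as in \eqref{split} into a linear part $\tilde\phi_i$ and a nonlinear part $\phi_i$; since $a_{ij}\ge0$ and $c_{ij}>2$, the positivity of the fundamental solution of $\Box_{\R^{3+1}}$ together with \cref{lower bound} (applied in the spherically symmetric reduction to $\R^{1+1}$) guarantees that all $r\bar\phi_i$ stay non-negative and in fact are bounded below by explicit powers of $t$ and $u$ in the interior region $\{t-r>u_1\}$. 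Concretely, the linear parts already provide $r\tilde\phi_i\gtrsim t^{-1}u^{-(\text{data rate})}$ near $\scri$, and feeding this through the Duhamel formula repeatedly — each time using the second estimate of \cref{lower bound}, $F\gtrsim v^{-q}u^{-s}\implies \psi\gtrsim t^{-1}u^{1-q+\max(1-s,0)}$ — produces a sequence of lower bounds $r\bar\phi_i(t,r)\gtrsim t^{-1}u^{\mu_i^{(k)}}$ with the exponents $\mu_i^{(k)}$ governed exactly by the iteration map $F_i$ of \eqref{n variable Strauss condition}/\eqref{min max equation}.

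The key structural input is \cref{minmax convergence} and \cref{loop lemma}: if \eqref{n variable Strauss condition} has \emph{no} solution, then the iteration $F^k[\hat\alpha]$ diverges, i.e. the exponents $\mu_i^{(k)}$ decrease without bound (equivalently, after finitely many steps the relevant "integral is non-integrable in $u$" alternative is forced along some cycle in the associated graph). Once this happens, I would localize to a forward light cone emanating from a point with $u=u_0$ large, integrate the nonlinear equations along the characteristic directions, and extract a closed system of ordinary differential inequalities for the quantities
\begin{equation*}
	x_i(t)=\int_{B_{t/2}} \bar\phi_i(t,r)\,w(r)\,r^2\,\d r
\end{equation*}
(or a suitable radial average / characteristic integral), of the form $\partial_t x_i \ge c_i t^{-\alpha_i} x_{i-1}^{p_i}$ with $p_i = c_{i,\sigma(i)}$ running over a cycle $\sigma$ in the graph $G$, and $\sum_i\alpha_i \le n$ precisely because \eqref{n variable Strauss condition} fails (this is where the arithmetic of $a,b$, the weights $t^{\frac{n-1}{2}(1-c_{ij})}$, and \cref{min max lemma} all feed in). Then \cref{gronwall} applies directly and yields finite-time blow-up, contradicting the assumed global $\mathcal{C}^2$ existence.

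The main obstacle I anticipate is organizational rather than conceptual: turning the graph-theoretic failure of \eqref{n variable Strauss condition} into a genuine \emph{cyclic} chain of ODE inequalities with the right $\alpha_i$'s. One must argue that when the max–min fixed point problem has no solution, there is a cycle $i_1\to i_2\to\cdots\to i_m\to i_1$ along which every edge realizes the "$\min(0,\cdot)=\,$(second argument)" branch, so that $\prod_j p_{i_j} = \prod_j c_{i_j i_{j+1}} > 1$ and the accumulated weight exponents satisfy the borderline condition $\sum \alpha_{i_j} \le n$; this is essentially the contrapositive of \cref{loop lemma} combined with \cref{min max lemma}, and some care is needed because the graph depends on the (non-existent) solution, so one should instead run the simplex/iteration argument of \cref{minmax convergence} until a cycle with $\prod c > 1$ appears and the partial sums of weights violate integrability. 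A secondary technical point is propagating the lower bounds from $\scri$ into the interior uniformly enough that the $x_i$ are genuinely positive at the initial time $t=1$ of the ODE system — this is handled by the sufficiently positive data hypothesis and finite speed of propagation, exactly as in \cite{yang_revisit_2016}. Dropping spherical symmetry would require spherical averages as in \cite{john_blow-up_1979}, \cite{yang_revisit_2016}, but we do not pursue that here.
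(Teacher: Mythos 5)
Your overall strategy is the same as the paper's: iterate \cref{lower bound} from sufficiently positive data to produce tails whose exponents are exactly the iterates $s_i^{(m)}$ of \eqref{n variable Strauss condition}, use \cref{minmax convergence} to see that failure of the condition forces these iterates to diverge along a cycle with $a_{i(i+1)}\neq0$, and then feed Kato-type spatial averages into the generalised Gronwall inequality \cref{gronwall}. The gap is in the final quantitative step, which you assert rather than verify, and which fails as stated. If you keep the full powers $p_i=c_{i\sigma(i)}>2$ in the ODE system, the only time weight produced by H\"older over the ball of radius $\sim t$ is $t^{-3(q_i-1)}$, so $\sum_i\alpha_i\geq 3\sum_i(q_i-1)>3M$, which violates the hypothesis $\sum_i\alpha_i\leq n$ of \cref{gronwall} (here $n$ is the number of ODEs in the cycle, namely $2M$ once the second-order inequalities $\partial_t^2H_i\gtrsim t^{-3(q_i-1)}H_{i-1}^{q_i}$ are rewritten as a first-order chain with trivial links $\partial_t y_{2i}=y_{2i+1}$; note also that the wave equation gives you $\partial_t^2$ of the average, not $\partial_t$, so your first-order system needs these intermediate links anyway). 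Failure of \eqref{n variable Strauss condition} does not shrink these $\alpha_i$; what it does is drive the tail exponents $s_i^{(m)}\to-\infty$, and the paper's proof uses precisely this: it splits $H_{i-1}^{q_i}= H_{i-1}^{q_i-1-\epsilon}H_{i-1}^{1+\epsilon}$ and absorbs $H_{i-1}^{q_i-1-\epsilon}\gtrsim t^{(q_i-1-\epsilon)(2-s^{(N+1)})}$ into the time weight, reducing every power to $1+\epsilon$ and turning the net exponent into $(q_i-1)(-1-s^{(N+1)})$ up to $O(\epsilon)$, whose sum over the cycle exceeds $-2M$ by the monotonicity of the iterates. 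Without this absorption step your ODE system simply does not satisfy the hypotheses of \cref{gronwall}, so blow-up does not follow as written; with it, your argument becomes the paper's.

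A smaller point: the cycle should not be extracted from \cref{loop lemma}, which concerns the graph of the maximal solution when a solution \emph{exists} (and under perturbation-stability of the coefficients) and is used in the existence theorem \cref{n strauss main theorem}. In the blow-up proof the cycle is found by tracing back which index forces $s_{j}^{(m)}\to-\infty$ through the branch $c_{ij}s_j^{(m-1)}-1$ and closing the loop by pigeonhole; this construction also delivers the inequality $\sum_{i\in J}s_i^{(m+1)}\leq\sum_{i\in J}s_i^{(m)}$ that is exactly what the exponent count in the Gronwall step needs. Your suggestion to run the iteration of \cref{minmax convergence} until a cycle appears points in the right direction, but it should be made precise along these lines rather than by appeal to the contrapositive of \cref{loop lemma}.
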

	\begin{remark}
		Note, that some restriction on initial data is necessary as seen from
		\begin{equation*}
			\begin{gathered}
				\Box\phi_1=\abs{\phi_2}^{q_1}+\abs{\phi_3}^{q_2}\\
				\Box\phi_2=\abs{\phi_1}^{q_1}\\
				\Box\phi_3=0
			\end{gathered}
		\end{equation*}
		with $q_1=2.5,q_2=2.1$. This equation has global small data solution if the initial data for $\phi_3$ is trivial (\cite{del_santo_global_1997}), but as we'll see, not for all small data. Alternatively, we could require that the system doesn't decouple to subsystems, as $\phi_3$ in the above example.
	\end{remark}
		
	We will prove the above theorem by contradiction. First, let's create the sufficient tails for the system.
	
	\begin{definition}
		Let's define the size of the tails that are iteratively generated:
		\begin{equation}\label{iteratively defined tails}
			\begin{gathered}
				s^0_i=\min_j\{c_{ij}-2\} \\
				s^{(m)}_i=\min_j\{c_{ij}-2+\min(0,c_{ij}s_j^{(m-1)}-1):a_{ij}\neq 0\} \qquad m\geq 1
			\end{gathered}
		\end{equation}
	\end{definition}

	The tail creation for the system now follows an iterative procedure:
	\begin{lemma}\label{iterative tail generation}
		For sufficiently positive initial data $a_{ij}>0$ and  $\forall \,m\geq0$, any global solution to \eqref{n variable Strauss} satisfies
		\begin{equation}
			\begin{gathered}
				r\phi\gtrsim_{c,m} u^{-s^{(m)}_i} \qquad u>u_0.
			\end{gathered}
		\end{equation}
		for sufficiently large $u_0$ with constant that depends on $c$ in \cref{sufficiently positive data} and $m$.
	\end{lemma}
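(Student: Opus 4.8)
\emph{Strategy.} The statement is proved by induction on $m$: at each stage a lower bound on the nonlinear components is turned into a lower bound on the forcing, and then into a new lower bound on $r\phi_i$ through the positivity estimate \cref{lower bound}. Two structural facts are used throughout. First, since $a_{ij}\geq 0$, the nonlinear part $\phi_i$ from \eqref{split} solves $\Box\phi_i=\sum_j a_{ij}\abs{\bar\phi_j}^{c_{ij}}\geq 0$ with vanishing data, so positivity of the $\R^{3+1}$ forward fundamental solution gives $\phi_i\geq 0$ everywhere; in particular every summand of the forcing is nonnegative, so the forcing may always be bounded below by a single term $a_{ij_0}\abs{\bar\phi_{j_0}}^{c_{ij_0}}$. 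Second, by the strong Huygens principle the linear parts $\tilde\phi_j$ are supported in $\{\abs{t-r}<1\}$, hence $\bar\phi_j=\phi_j$ on $\{t-r>1\}$, while on the strip $\{t+r>1,\ t-r\in(-1,-1+\delta)\}$ furnished by the sufficient-positivity assumption on the data one has $r\bar\phi_j\geq r\tilde\phi_j\geq c$ with $r\sim v$.

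\emph{Base case $m=0$.} Fix $i$ and pick $j_0$ attaining $s_i^{(0)}=\min_j\{c_{ij}-2:a_{ij}\neq 0\}$. On the strip above $\abs{\bar\phi_{j_0}}^{c_{ij_0}}\gtrsim_c v^{-c_{ij_0}}$, so $\Box\phi_i\gtrsim_c v^{-c_{ij_0}}$ there. The first estimate of \cref{lower bound} with $q=c_{ij_0}$ then generates a tail for $\psi_i=r\phi_i$ at exactly the rate given by the $F|_{\mathcal{N}_\scri}$ row of \cref{table:decays}, namely $r\phi_i\gtrsim_c u^{-(c_{ij_0}-2)}=u^{-s_i^{(0)}}$ for $u>u_0$; taking $u_0$ beyond the finitely many thresholds produced (one per index $i$) gives the claim for $m=0$.

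\emph{Inductive step.} Assume $r\phi_j\gtrsim_{c,m-1}u^{-s_j^{(m-1)}}$ for all $j$ and all $u>u_{m-1}$. Fix $i$ and pick $j_0$ attaining the minimum defining $s_i^{(m)}$ in \eqref{iteratively defined tails}. For $u>\max(u_{m-1},1)$ we have $\bar\phi_{j_0}=\phi_{j_0}$, so $\phi_{j_0}\gtrsim r^{-1}u^{-s_{j_0}^{(m-1)}}$, which near $\scri$ reads $\phi_{j_0}\gtrsim v^{-1}u^{-s_{j_0}^{(m-1)}}$; hence
\begin{equation*}
	\Box\phi_i\geq a_{ij_0}\abs{\phi_{j_0}}^{c_{ij_0}}\gtrsim_{c,m-1}v^{-c_{ij_0}}u^{-c_{ij_0}s_{j_0}^{(m-1)}}.
\end{equation*}
Feeding this into the second estimate of \cref{lower bound} with $q=c_{ij_0}$ and $s=c_{ij_0}s_{j_0}^{(m-1)}$ produces, for $u$ large, $r\phi_i\gtrsim_{c,m}u^{-\left(c_{ij_0}-2+\min(0,\,c_{ij_0}s_{j_0}^{(m-1)}-1)\right)}=u^{-s_i^{(m)}}$, the exponent being exactly the one dictated by the recursion \eqref{iteratively defined tails}. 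Raising $u_0$ past the new thresholds and absorbing the (finitely many, for fixed $m$) constants closes the induction.

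\emph{Where the work is.} The delicate point is the \emph{range} of validity. \cref{lower bound} is designed for forcing supported on a strip $\{u>u_0\}$ near $\scri$, and the reason its output controls $\psi_i$ uniformly in $v$ — so that the inductive hypothesis can be carried as a bound on all of $\{t-r=u>u_0\}$, not merely along $\mathcal{N}_+$ — is that the weight $r\sim v$ multiplying $F$ in the $(1+1)$-reduction exactly compensates the $v$-decay that a second null integration would otherwise cost; this is also what makes the iterated rate match the heuristic of \cref{table:decays} rather than a strictly weaker one. The remaining bookkeeping, that each stage loses a constant and raises $u_0$, is harmless: one only ever runs the iteration to a fixed finite $m$, chosen — when \eqref{n variable Strauss condition} fails, so that the $s_i^{(m)}$ do not stabilise — large enough that the accumulated tails $u^{-s_i^{(m)}}$ drive the associated system of radial ordinary differential inequalities into the regime of \cref{gronwall}, forcing blow-up.
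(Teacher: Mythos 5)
Your proposal is correct and is essentially the paper's own argument: the paper proves this lemma in one line ("iterated application of \cref{lower bound}"), and you simply spell out the induction (positivity of the nonlinear part, Huygens support of the linear part, base case from the sufficiently positive strip, inductive step feeding $v^{-c_{ij_0}}u^{-c_{ij_0}s_{j_0}^{(m-1)}}$ back into the positive kernel). Your remark that one must retain the $r\sim v$ weight in the $(1+1)$ reduction — i.e.\ use the sharpened form of \cref{lower bound}, since its stated conclusion $t^{-1}u^{1-q}$ would only yield the exponent $c_{ij_0}-1+\min(0,c_{ij_0}s_{j_0}^{(m-1)}-1)$ rather than $s_i^{(m)}$ — is exactly the point needed for the rates to match \eqref{iteratively defined tails}, and is the detail the paper's one-line proof glosses over.
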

	\begin{proof}
		This follows from iterated application of \cref{lower bound}. 	
	\end{proof}
	
	Now, we can finish the proof with Gronwall lemma

	\begin{proof}[Proof of \cref{blow up for n strauss theorem}] 
		Assume there exists a global solution ($\bar{\phi}$). 
		
		Note the following. As \cref{n variable Strauss condition} is assumed not to have a solution, by \cref{minmax convergence}, we know that $s^{(m)}$ does not converge in $\R^d$. Furthermore, from the iteration scheme, it follows that $s^{(m)}_i$ is monotone decreasing for each index $i$. Therefore, there exist some $j_1\in\{1,2,...,d\}$ such that $s_{j_1}^{(m)}\underset{m\to\infty}{\to}-\infty$. Observing the iteration scheme, there must be some $j_2$ for which $a_{j_1j_2}\neq0$ and $s_{j_2}^{(m)}\underset{m\to\infty}{\to}-\infty$. Repeat this process until we find $j_l\in\{j_1,j_2,..,j_{l-1}\}$. Restricting to a subset and renaming indices, we proved that there is a subset $J=\{1,2,...,d'\}$ such that $a_{12},a_{23},....,a_{d'1}\neq0$ and $s_j\to -\infty$ for $j\in[1,d']$. In particular, because $c_{ij}>1$ this means that for $N$ sufficiently large $\forall m\geq N$
		\begin{equation*}
			\begin{gathered}
				\sum_{i\in J}s_i^{(m+1)}\leq \sum_{i\in J}s_i^{(m)}.
			\end{gathered}
		\end{equation*}
		
		From now on, restrict indices to $J$, fix the convention that $0$ as an index for $\psi_i,c_{ij},a{ij}$ means $d'$, and $q_i=c_{i(i-1)}$. As $a_{i(i+1)}>0$, we will set all of them equal to 1, as it will not change the argument.\footnote{alternatively, this step may be justified by rescaling $\psi_i$} Define
		\begin{equation*}
			\begin{gathered}
				H_i(t)=\int \d x\, \bar{\phi}_i(x,t).		
			\end{gathered}
		\end{equation*}
		Using the lower bounds, we get 
		\begin{equation*}
			\begin{gathered}
				H_i(t)=\int \d x \, \phi(x,t)+\int\ d x\, \tilde{\phi}(x,t)\gtrsim t^{2-s_i^{(N+1)}}-c
			\end{gathered}
		\end{equation*}
		for $t>t_0$ with $t_0$ large enough and $c$ that depends on the $W^{1,\infty}$ norm of initial data. As $s_i^{(m_0+1)}<0$, we know that for $t_0$ sufficiently large, we can drop $c$. Furthermore, using Holder inequality, we have
		\begin{equation}\label{n H bound}
			\begin{gathered}
				\partial_t^2 H_i(t)=\int\d x\, \abs{\bar{\phi}_{i-1}}^{q_i}\gtrsim (1+t)^{-3(q_i-1)}H_{i-1}^{q_i}\gtrsim (1+t)^{(q_i-1)(-1-s_i^{(N+1)})+c_\epsilon}H_{i-1}^{1+\epsilon}\\
				\partial_t^2H_i(t)\gtrsim (1+t)^{3-q_1-q_1s_1^{(N+1)}}=(1+t)^{-s_i^{(N+2)}}
			\end{gathered}
		\end{equation}
		for sufficiently large $t$ and $c_\epsilon=\mathcal{O}(\epsilon)$. The second inequality and $s_i^{(N+2)}\leq s_i^{(N+1)}\leq1/q_{i+1}<1$ means
		\begin{equation*}
			\begin{gathered}
				\partial_tH_i\gtrsim t^{1-s_i^{(N+2)}}
			\end{gathered}
		\end{equation*}
		for $t$ sufficiently large. Therefore, there exists $t_0$ such that $H_i(t_0),\partial_tH_i(t_0)>0$.
		
		Consider the system
		\begin{equation*}
			\begin{gathered}
				\partial_t y_{2i+1}=c_i(1+t)^{-(q_i-1)(1+s_i^{(N+1)})+c_\epsilon}y_{2(i-1)}^{1+\epsilon}\\
				\partial_ty_{2i}=y_{2i+1}\\
				y_{2i}(t_0)=H_i(t_0),\, y_{2i+1}(t_0)=\partial_tH_i(t_0)
			\end{gathered}
		\end{equation*}
		where $c_i$ are the implicit constants from \cref{n H bound} and $i\in\{1,2,...,d'\}$ with the convention $y_{i+2M}=y_i$. The positivity of initial data and inequalities for $H$ (using Gronwall) imply that $H_i(t)\geq y(t)$ for $t>t_0$ as long as both exist. Then, we have
		\begin{equation*}
			\begin{gathered}
				\sum_{i=1}^M -(q_i-1)(1+s_i^{(N+1)})=-M+\Sigma_{i=1}^M (2-q_i+s_i^{(N+1)})-q_is_i^{(N+1)}\\
				=-M+\Sigma_{i=1}^M (-1+q_is_i^{(N)})=-2M+\Sigma_{i=1}^Mq_i(s^{(N)}_i-s^{(N+1)}_{i})>-2M.
			\end{gathered}
		\end{equation*}
		Using the strict inequality, we can choose $\epsilon$ sufficiently small such that the same conclusion holds with $c_\epsilon$ included. Therefore, using generalised Gronwall lemma \cref{gronwall}, we get that the $y$ system blows up in finite time. Due to the upper bounds, the same holds for $H$, which is in contradiction with global $C^2$ solution.
	\end{proof}

	\subsection{Strauss Glassey system}
	
	\begin{theorem}\label{blow up for Strauss Glassey system}
		The equation \eqref{Strauss Glassey system}:
		\begin{equation*}
			\begin{gathered}
				\Box\bar{\phi}_1=\abs{\bar{\phi}_2}^{q_1}\\
				\Box\bar{\phi}_2=\abs{\partial_t\bar{\phi}_1}^{q_2}.
			\end{gathered}
		\end{equation*}
		in $\R^{3+1}$ has no global spherically symmetric $\mathcal{C}^2$ solution if the data is compactly supported and $q_1<2,q_1q_2(q_2-2+q_2(q_1-2))<1$.
	\end{theorem}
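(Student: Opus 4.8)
\smallskip
\noindent\emph{Proof plan.} The plan is to argue by contradiction along the lines of \cref{blow up for n strauss theorem}, i.e.\ the John \cite{john_blow-up_1979}--Yang--Zhou \cite{yang_revisit_2016} method, the one genuinely new point being the treatment of the derivative in the second source. Suppose $\bar\phi=(\bar\phi_1,\bar\phi_2)$ is a global spherically symmetric $\mathcal C^2$ solution. Since it is enough to exhibit one compactly supported datum for which no global solution exists, I would take the data so that the linear evolution $\tilde\phi_1$ in the splitting \eqref{split} has $r\,\partial_t\tilde\phi_1$ bounded below by a positive constant on a short interval $u\in[u_0,u_0+\delta]$ of $\scri$ (this replaces the hypothesis of \cref{sufficiently positive data} and is easy to arrange). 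Writing $\psi_i=r\bar\phi_i$ the system becomes $\partial_u\partial_v\psi_1=\tilde F_1$, $\partial_u\partial_v\psi_2=\tilde F_2$ on $\R^{1+1}$ with $\tilde F_1=r|\bar\phi_2|^{q_1}\ge0$ and $\tilde F_2=r|\partial_t\bar\phi_1|^{q_2}\ge0$, so by positivity of the fundamental solution the nonlinear parts of $\psi_1,\psi_2$ are nonnegative.

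\smallskip
The first step is to produce a divergent hierarchy of pointwise lower bounds. With $a=q_1-2<0$, $b=q_2-2$ ($n=3$) define
\[
s_2^{(m)}=(b+q_2 a)+\min\!\bigl(0,\,q_1q_2\,s_2^{(m-1)}-1\bigr),\qquad s_1^{(m)}=a+\min\!\bigl(0,\,q_1 s_2^{(m)}-1\bigr),
\]
starting from the tail $s_2^{(0)}$ produced by the first nonlinear iterate. By \cref{min max lemma} the first recursion has a fixed point iff $-1+q_1q_2(b+q_2a)\ge0$; since the hypothesis reads $q_1q_2(q_2-2+q_2(q_1-2))=q_1q_2(b+q_2 a)<1$, the sequences $s_i^{(m)}$ have no limit, and being monotone they tend to $-\infty$. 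I would then show by induction that any global solution satisfies $r\bar\phi_2\gtrsim_m u^{-s_2^{(m)}}$ and $r\bar\phi_1\gtrsim_m u^{-s_1^{(m)}}$ on $\{u>u_0(m)\}$ via iterated use of \cref{lower bound}: given the round-$(m-1)$ bounds, lower-bounding $\tilde F_1$ along outgoing rays and feeding it into the explicit representation of \cref{l infinite theory} yields $|\partial_t\psi_1|=r|\partial_t\bar\phi_1|\gtrsim r^{2-q_1}u^{-q_1 s_2^{(m-1)}}$ (here $q_1<2$ enters), hence $\tilde F_2\gtrsim v^{-(q_1q_2-q_2-1)}u^{-q_1q_2 s_2^{(m-1)}}$ near $\scri$; applying \cref{lower bound} once promotes $\bar\phi_2$ to $s_2^{(m)}$, and once more (now to the undifferentiated $\tilde F_1$) promotes $\bar\phi_1$ to $s_1^{(m)}$.

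\smallskip
The second step turns the tails into blow-up. Put $H_i(t)=\int_{\R^3}\bar\phi_i(t,x)\,dx$ (supported in $r\le t+1$), so that $\partial_t^2 H_1=\int|\bar\phi_2|^{q_1}\,dx$ and $\partial_t^2 H_2=\int|\partial_t\bar\phi_1|^{q_2}\,dx$ (the Laplacian integrates to zero). The pointwise bounds give $H_i\gtrsim_N t^{2-s_i^{(N)}}$, and convexity ($\partial_t^2H_i\ge0$) gives $H_i'(t)\gtrsim t^{1-s_i^{(N)}}>0$ for large $t$. H\"older's inequality on balls yields $\partial_t^2H_2\gtrsim t^{-3(q_2-1)}(H_1')^{q_2}$, and --- splitting off a factor $|\bar\phi_2|^{q_1-1-\epsilon}$, bounding it below by the tail $r\bar\phi_2\gtrsim u^{-s_2^{(N)}}$, and applying H\"older to the rest --- $\partial_t^2H_1\gtrsim t^{-(q_1-1)(1+s_2^{(N)})+\mathcal O(\epsilon)}H_2^{1+\epsilon}$. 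The triple $x=(H_1',H_2',H_2)$ then satisfies a cyclic system $\partial_t x_i\ge c_i t^{-\alpha_i}x_{i-1}^{p_i}$ with $\prod p_i=(1+\epsilon)\,q_2>1$ and $\sum\alpha_i=(q_1-1)(1+s_2^{(N)})+3(q_2-1)+\mathcal O(\epsilon)$; since $s_2^{(N)}\to-\infty$, for $N$ large this sum is $<3$ (the number of equations in \cref{gronwall}), while the $H_i$ and $H_i'$ are positive at some $t_0$. Hence \cref{gronwall} forces finite-time blow-up of the $H_i$, contradicting global existence.

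\smallskip
I expect the main obstacle to be the pointwise lower bound on $r|\partial_t\bar\phi_1|$. From \cref{l infinite theory} one has $\partial_t\psi_1=\int_u^v\tilde F_1(u,v')\,dv'+\int_1^u\bigl(\tilde F_1(u',v)-\tilde F_1(u',u)\bigr)\,du'$; the first, outgoing-ray integral is nonnegative and carries the tail, but the second has no evident sign, and showing it is of lower order requires an a priori (far from sharp) upper bound on $\bar\phi_2$ --- obtained from the assumed global $\mathcal C^2$ solution together with the energy and Morawetz estimates of \cref{L2 technical estimates}, or, under spherical symmetry, directly from \cref{l infinite theory}. The remaining points --- that the $\epsilon$-losses in the H\"older steps and in \cref{lower bound} are harmless and that the $s_i^{(m)}$ stay in the admissible ranges --- follow from the strict inequality in the hypothesis and the divergence of the $s_i^{(m)}$, exactly as in \cref{blow up for n strauss theorem}.
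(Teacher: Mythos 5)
Your overall skeleton (contradiction, positivity of the $\R^{1+1}$ kernel, tail creation via \cref{lower bound}, then the functionals $H_i$, H\"older with the power-reduction trick, and \cref{gronwall}) is the paper's, but the step you yourself flag as the main obstacle is a genuine gap, and the way you compensate for it elsewhere makes it load-bearing. The paper never iterates the pointwise tails: it performs a \emph{single} round of tail creation on a fixed retarded-time strip $u\approx u_1$. There the sign-indefinite term in the representation of $\partial_t\psi_1$ from \cref{l infinite theory}, namely $-\int_{-1}^{u_1}r|\bar\phi_2(u',u_1)|^{q_1}\,du'$, is a fixed constant, while the outgoing-ray integral $\int_{u_1}^{v}r|\bar\phi_2(u_1,v')|^{q_1}\,dv'\gtrsim\int^v r^{1-q_1}dv'$ diverges as $v\to\infty$ precisely because $q_1<2$; no upper bound on $\bar\phi_2$ is needed. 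One application of \cref{lower bound} then gives $\bar\phi_2\gtrsim t^{-1}\jpns{u}^{-s_2}$ with $s_2=q_2-2+q_2(q_1-2)$, and the hypothesis $q_1q_2s_2<1$ enters \emph{only} through the exponent condition $\sum_i\alpha_i\le 3$ in \cref{gronwall}, after the power-reduction trick is applied to \emph{both} H\"older inequalities (for $\partial_t^2H_1$ using the tail of $H_2$, and for $\partial_t^2H_2$ using the resulting growth of $\partial_tH_1$). Your version reduces only the first equation, keeping $p_2=q_2$ and $\alpha_2=3(q_2-1)$, and with the single-round tail $s_2=b+q_2a$ the sum $(q_1-1)(1+s_2)+3(q_2-1)$ is in general larger than $3$ under the hypothesis (e.g. $q_1=1.9$, $q_2=2$), so your bookkeeping does not close unless $s_2^{(N)}\to-\infty$. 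But that divergent iteration is exactly what you cannot justify: beyond the first round you need lower bounds on $|\partial_t\bar\phi_1|$ on growing regions of $u$, where the negative term is no longer a constant, and your proposed remedy --- a priori decay upper bounds on $\bar\phi_2$ from energy/Morawetz estimates for an arbitrary global $\mathcal C^2$ solution --- is not available: there is no smallness, no coercive structure (the sources are positive), and bounding the inhomogeneities in those estimates presupposes the decay you are trying to prove. So either repair the ODE bookkeeping (reduce both powers, gaining the weight $(q_2-1-\epsilon)(4-s_2q_1-q_1)$ as in the paper, which makes the condition $\sum\alpha_i\le3$ exactly equivalent to $q_1q_2s_2<1$) and drop the iteration entirely, or find a genuinely new argument for the iterated lower bounds; as written the proof does not go through.

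A secondary point: the theorem is a statement about \emph{all} compactly supported data, so "it is enough to exhibit one datum" is not a proof of it. The paper gets the seed for tail creation not from prepared data but from the contradiction hypothesis itself, via \cref{compact support lemma}: if the global solution did not vanish outside $\Gamma^-(0,1)$ there would be a point with $(\partial_t\phi_1,\phi_2)\neq(0,0)$, and either component serves as the seed (cf.\ \cref{creation of tails}). If you keep your route you prove only instability of the vacuum for specially prepared data, which is weaker than the stated result.
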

	
	Let's split the solution into a linear
	\begin{equation*}
		\begin{gathered}
			\Box\tilde{\phi}_i=0\\
			\tilde{\phi}_i(0)=f_i,\,\partial_t\tilde{\phi}_i(0)=g_i\in\mathcal{C}^\infty_c
		\end{gathered}
	\end{equation*}
	and non-linear part
	\begin{equation*}
		\begin{gathered}
			\Box\phi_1=\abs{\phi_2}^{q_1}\\
			\Box\phi_2=\abs{\partial_t\phi_1}^{q_2}
			\phi_i=\partial_t\phi_i=0
		\end{gathered}
	\end{equation*}		
	for $i\in\{1,2\}$.

	As in \cite{john_blow-up_1979} and \cite{yang_revisit_2016}, \cref{blow up for Strauss Glassey system} follows from the lemma
	\begin{lemma}\label{compact support lemma}
		Fix $q_1<2,q_1q_2(q_2-2+q_2(q_1-2))<1$. Any spherically symmetric global solution to \cref{Strauss Glassey system}:
		\begin{equation*}
			\begin{gathered}
				\Box\bar{\phi}_1=\abs{\bar{\phi}_2}^{q_1}\\
				\Box\bar{\phi}_2=\abs{\partial_t\bar{\phi}_1}^{q_2}
			\end{gathered}
		\end{equation*}
		in $\R^{3+1}$ from data supported in ball of radius one satisfies $\supp\phi_i\subset\Gamma^-(0,1):=\{(r,t):r<1-t\}$.
	\end{lemma}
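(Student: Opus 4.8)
The plan is to obtain the support statement by finite speed of propagation for the coupled system, peeling off the free evolutions and treating them separately via the strong Huygens principle in $\R^{3+1}$. This is exactly the structure of the support step in John's and Yang--Zhou's blow-up arguments, adapted to the two--field system.

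First I would reduce to a statement about the full solution. Since $\phi_i=\bar\phi_i-\tilde\phi_i$ and $\tilde\phi_i$ solves the homogeneous wave equation with Cauchy data supported in $\{r<1\}$, finite speed of propagation gives $\supp\tilde\phi_i\subset\Gamma^-(0,1)$, so it suffices to show $\supp\bar\phi_i\subset\Gamma^-(0,1)$. It is also worth recording that, after the usual reduction $\psi_i=r\bar\phi_i$ to the $1{+}1$-dimensional problem of \cref{l infinite theory}, the linear part $r\tilde\phi_i$ is confined to $\{|t-r|\le1\}$, because the incoming radiation passes through the origin and becomes outgoing; this sharper localization is what the subsequent blow-up argument exploits, but is not needed for the lemma itself.

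Second I would propagate the vanishing of $\bar\phi_1,\bar\phi_2$ outside $\Gamma^-(0,1)$ by a continuity argument. Set $\Omega=\R^{3+1}\setminus\Gamma^-(0,1)$ and $T_\star=\sup\{T\ge 0:\bar\phi_1\equiv\bar\phi_2\equiv0\text{ on }\Omega\cap\{t\le T\}\}$, and suppose $T_\star<\infty$. The key geometric point is that the backward light cone from any point of $\Omega$ stays inside $\Omega$, so on the slab $\Omega\cap\{T_\star\le t\le T_\star+\delta\}$ the Kirchhoff/Duhamel representation expresses $\bar\phi_i$ in terms of its Cauchy data (vanishing in $\Omega$) and of the sources $|\bar\phi_2|^{q_1}$, $|\partial_t\bar\phi_1|^{q_2}$ integrated over backward cones lying in $\Omega$. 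Since $q_1,q_2>1$ the sources are Lipschitz near $0$ with Lipschitz constant vanishing there, so a Gronwall inequality for $\sup_{\Omega\cap\{t=s\}}\big(|\bar\phi_1|+|\bar\phi_2|+|\partial_t\bar\phi_1|\big)$ with zero initial value at $s=T_\star$ forces $\bar\phi_1\equiv\bar\phi_2\equiv0$ on that slab, contradicting maximality. Hence $T_\star=\infty$, i.e.\ $\supp\bar\phi_i\subset\Gamma^-(0,1)$, and with the first step $\supp\phi_i\subset\Gamma^-(0,1)$.

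The hard part will be purely the bookkeeping under the coupling and the low regularity of the nonlinearity: $\bar\phi_1$ and $\bar\phi_2$ must be propagated simultaneously (a single continuity parameter monitoring both fields handles this), and $|\cdot|^{q_j}$ is only $\mathcal{C}^1$, so one cannot quote classical smooth-data uniqueness — the decisive structural input being $q_1,q_2>1$, which supplies the local Lipschitz bound needed to close the Gronwall step. I would also stress that the hypotheses $q_1<2$ and $q_1q_2(q_2-2+q_2(q_1-2))<1$ are irrelevant to this lemma; they enter only afterwards, when the localization is fed through Hölder's inequality (producing the weight $(1+t)^3$, or $(1+t)^2$ when one restricts to the light-cone neighbourhood where the dominant linear part lives) and the generalized Gronwall inequality \cref{gronwall} to force finite-time blow-up.
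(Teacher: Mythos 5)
Your proposal misreads the region and therefore proves the wrong statement. The set $\Gamma^-(0,1)=\{(r,t):r<1-t\}$ is the \emph{backward} light cone with vertex $(t,r)=(1,0)$: it is a compact triangle for $t<1$ and empty for $t\geq1$. The lemma thus asserts that the nonlinear part of any global solution vanishes everywhere outside this backward cone, in particular identically for $t\geq1$ — a statement that cannot follow from domain-of-dependence reasoning. Concretely, two of your key claims are false: (i) finite speed of propagation does \emph{not} give $\supp\tilde\phi_i\subset\Gamma^-(0,1)$; strong Huygens puts the linear solution in a neighbourhood of the outgoing cone $\{|t-r|\leq1\}$ (as you yourself note in the same paragraph), which is not contained in $\{r<1-t\}$ and is nonempty for all $t$; (ii) the backward light cone of a point of $\Omega=\R^{3+1}\setminus\Gamma^-(0,1)$ does \emph{not} stay in $\Omega$ — e.g.\ from $(t_0,r_0)=(1/2,3/5)\in\Omega$ the backward cone reaches $\{t=0,\,r<1\}$, where the data need not vanish. $\Omega$ is future-closed, not past-closed, so your Duhamel--Gronwall continuity argument only reproduces ordinary finite speed of propagation, i.e.\ vanishing on $\{r>1+t\}$, not the lemma.

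Relatedly, your assertion that the hypotheses $q_1<2$ and $q_1q_2(q_2-2+q_2(q_1-2))<1$ are irrelevant to the lemma inverts the logic of the paper: these conditions are the entire content, and the lemma is where the blow-up mechanism lives (the deduction of \cref{blow up for Strauss Glassey system} from it is the soft step). The paper argues by contradiction: if $(\partial_t\phi_1,\phi_2)\neq(0,0)$ at some point outside $\Gamma^-(0,1)$, then positivity of the radial solution operator (\cref{lower bound}) generates $\phi_2\gtrsim c/\langle r\rangle$ on a characteristic strip, the derivative representation together with $q_1<2$ upgrades this to $r\partial_t\phi_1\gtrsim t^{1-q_1}$ and then to $\phi_2\gtrsim t^{-1}\langle t-r\rangle^{-s_2}$ with $s_2=q_2-2+q_2(q_1-2)$; feeding these lower bounds into the averaged quantities $H_j(t)=\int\bar\phi_j\,\d x$ via H\"older and the generalised Gronwall inequality \cref{gronwall}, the subcriticality condition $q_1q_2s_2<1$ forces finite-time blow-up of the $H_j$, contradicting globality. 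Indeed, a version of the lemma without exponent conditions would be false: above the critical curve the system admits global small-data solutions (cf.\ \cref{strauss glassey main theorem}) whose nonlinear part certainly does not vanish for $t\geq1$. So the proposal as written does not establish the lemma; you would need to rebuild it around the tail-generation plus Kato/Gronwall blow-up argument.
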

	\begin{proof}
		
		Without loss of generality suppose there is a point $(r_1,t_1)\ni\Gamma^-(0,1)$ such that $(\partial_t\phi_1,\phi_2)|_{r=r_1,t=t_1}\neq(0,0)$. Assume, that $\partial_t\phi_1\neq0$, for the other case, see \cref{creation of tails}. Using continuity of $\partial_t\phi_1$, we get that $\abs{\partial_t\phi_1}\gtrsim1$ on $B_{\epsilon}(r_1,t_1)$ for some $\epsilon$. Then, using the positivity of the solution operator (\cref{lower bound}), we get
		\begin{equation}\label{sg lower 1}
			\begin{gathered}
				\phi_2|_{t-r\in B_\epsilon(u_1)}\geq \frac{c}{\jpns{r}}
			\end{gathered}
		\end{equation}
		for $t_1+\abs{r_1}$ $c>0$. From now on, we will work in the region $\{t-r\geq u_1\}\subset\Gamma^+(r_0,t_0)$. Therefore, we can use the vanishing of the linear solution in this region to get $\abs{\phi_2}= \abs{\bar{\phi}_2}$.

		Now, we use the solution operator for the derivative (in $\R^{1+1}$) to obtain
		\begin{equation*}
			\begin{gathered}
				r\partial_t\phi_1|_{t-r= u_1}=\int_{u_1}^v\d v^\prime\,r\abs{\bar{\phi_2}(u,v^\prime)}^{q_1}+\int_{-1}^{u_1}\d u^\prime r\abs{\bar{\phi_2}(u^\prime,v)}^{q_1}-\int_{-1}^{u_1}\d u^\prime r\abs{\bar{\phi_2}(u^\prime,u)}^{q_1}
			\end{gathered}
		\end{equation*}
		where $u=t-r,v=t-r$ and we abused notation to denote $\bar{\phi}$ evaluated at specific $u,v$ coordinates.
		Note, that using \eqref{sg lower 1} the first integral does not converge as $v\to\infty$, therefore, for $v$ sufficiently large
		\begin{equation*}
			\begin{gathered}
				r\partial_t\phi_1|_{t-r\in B_\epsilon(u_1)}\gtrsim t^{1-q_1}.
			\end{gathered}
		\end{equation*}
		This estimate is strong, because $q_1<2$. 
		
		Using this improved bound on $\partial_t\phi_1$, we get
		\begin{equation*}
			\begin{gathered}
				\phi_2|_{t-r\geq u_2}\gtrsim t^{-1} \jpns{t-r}^{-s_2}
			\end{gathered}
		\end{equation*}
		where $s_2=q_2-2+q_2(q_1-2)$ for $u_2$ sufficiently large.
		
		\begin{remark}\label{creation of tails}
			It doesn't matter if we provide a seed for $\partial_t\phi_1$ or $\phi_2$, the above generation will produce the lower bounds stated above.
		\end{remark}


		It suffices to show, that a solutions with these tails cannot exist. Let's define
		\begin{equation*}
			\begin{gathered}
				H_j(t)=\int\d x \bar{\phi}_j(x,t)
			\end{gathered}
		\end{equation*}
		for $j\in\{1,2\}$. Therefore, using Holder inequality
		\begin{equation}\label{H1 bound}
			\begin{gathered}
				\partial_t^2H_1(t)=\int\d x \abs{\bar{\phi}_2}^{q_1}(x,t)\gtrsim (t+1)^{-3(q_1-1)}\Bigg(\int\d x \abs{\bar{\phi}_2(x,t)}\Bigg)^{q_1}\gtrsim (t+1)^{-3(q_1-1)}\abs{H_2(t)}^{q_1}
			\end{gathered}
		\end{equation}
		similarly
		\begin{equation}\label{H2 bound}
			\begin{gathered}
				\partial_t^2 H_2(t)=\int\d x \abs{\partial_t\bar{\phi}_1}^{q_2}(x,t)\gtrsim(1+t)^{-3(q_2-1)}\abs{\partial_tH_1(t)}^{q_2}.
			\end{gathered}
		\end{equation}
		
		The lower bound we established for $\phi_2$ gives
		\begin{equation*}
			\begin{gathered}
				H_2(t)=\int \phi_2(x,t)+\int\tilde{\phi}_2(x,t)\gtrsim \int_{t-r>u_2}\d x t^{-1}\jpns{t-r}^{-s_2}-c\gtrsim t^{2-s_2}-c
			\end{gathered}
		\end{equation*}
		where $c$ depends on initial data and $t$ sufficiently large. Since $q_1q_2s_2<1$, we see that for sufficiently large $t$, the non-linear part will overwhelm any linear contribution and we can drop $c$.
		
		Since $q_1<2$ and $q_1q_2s_2<1$ (the second being the critical curve) we have $(2-s_2)q_1-3(q_1-1)>0$, thus \eqref{H1 bound} yields
		\begin{equation*}
			\begin{gathered}
				\partial_t H_1(t)\gtrsim t^{(2-s_2)q_1-3(q_1-1)+1}
			\end{gathered}
		\end{equation*}
		for $t$ sufficiently large. In particular, there exists a time $t_0$ such that $\partial_tH_1(t_0),H_2(t_0),\partial_tH_2(t_0)>0$. Consider the system
		\begin{equation*}
			\begin{gathered}
				\partial_ty_1=c_1 t^{-3(q_1-1)+(q_1-1-\epsilon)(2-s_2)}y_2^{1+\epsilon}\\
				\partial_ty_2=y_3
				\partial_ty_3=c_2 t^{-3(q_2-1)+(q_2-1-\epsilon)(4-s_2q_1-q_1)}y_1^{1+\epsilon}\\
				y_1(t_0)=H_1(t_0),\,y_2(t_0)=H_2(t_0),\,y_3(t_0)=\partial_tH_2(t_0)
			\end{gathered}
		\end{equation*}
		with constants $c_i$ taken from \eqref{H1 bound}, \eqref{H2 bound}. Note, that the monotonicity and inequalities imply $y_1\leq H_1,\,y_2\leq H_2,\,y_3\leq \partial_tH_2$. Using generalised Gronwall lemma and 
		\begin{equation*}
			\begin{gathered}
				-3(q_1-1)+(q_1-1)(2-s_2)-3(q_2-1)+(q_2-1)(4-s_2q_1-q_1)>-3\iff q_1q_2s_2<1
			\end{gathered}
		\end{equation*}
		we know that the $y$ system blows up in finite time, therefore, the same holds for $H$.
		
	\end{proof}
	
	\subsection{A critical problem}\label{critical section}
	
	In this section, we are going to show the global ill posedness of \cref{critical problem}:
	\begin{equation*}
		\begin{gathered}
			\Box\phi_1=\abs{\phi_3}^3\\
			\Box\phi_2=(\partial_t\phi_1)^2\\
			\Box\phi_3=(\partial_t\phi_2)^2+\abs{\phi_1}^3\\
			\partial_t^j\phi_i(0)=\phi_i^{(j)}:\R^3\to\R.
		\end{gathered}
	\end{equation*}	
	\begin{theorem}\label{critical eq}
		Let $\phi_i^{(j)}$ be spherically symmetric smooth initial data with support in $\{r\leq1\}$ such that there exists $r_0\leq1$ with $(r\phi^{(1)}_1-\partial_r r\phi^{(0)}_1)|_{r=r_0}>0$.
		Than \cref{critical problem} has no global $\mathcal{C}^2$ solution.
	\end{theorem}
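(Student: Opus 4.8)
The plan is to argue by contradiction: suppose a global $\mathcal{C}^2$ solution $\bar\phi=(\bar\phi_1,\bar\phi_2,\bar\phi_3)$ exists. As in the other blow-up results I would split $\bar\phi_i=\phi_i^{\mathrm{lin}}+\phi_i$ into the free evolution of the data and the Duhamel term with trivial data; by strong Huygens in $\R^{3+1}$ each $\phi_i^{\mathrm{lin}}$ is supported in $\{|t-r|\le 1\}$. Since the three right-hand sides $|\bar\phi_3|^3$, $(\partial_t\bar\phi_1)^2$, $(\partial_t\bar\phi_2)^2+|\bar\phi_1|^3$ are all nonnegative, positivity of the fundamental solution (\cref{lower bound}) gives $\phi_i\ge 0$; moreover, writing $\psi_i=r\phi_i$ and using $\Box=\partial_u\partial_v$ in the radial $1{+}1$ reduction, the Duhamel parts satisfy $\partial_u\partial_v\psi_i\ge 0$ with $\psi_i,\partial_u\psi_i,\partial_v\psi_i$ vanishing on the initial curve, hence $\partial_u\psi_i\ge 0$ and $\partial_v\psi_i\ge 0$ throughout $\{t<r\}$.

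The seed. The hypothesis $(r\phi_1^{(1)}-\partial_r r\phi_1^{(0)})|_{r=r_0}>0$ is precisely $\partial_u\psi_1^{\mathrm{lin}}|_{t=0,\,r=r_0}>0$; as $\partial_u\partial_v\psi_1^{\mathrm{lin}}=0$, this value is transported unchanged along the outgoing ray $u=u_0:=-r_0/2$ out to $\scri$, and by continuity $\partial_u\psi_1^{\mathrm{lin}}\ge c>0$ on a null strip $u\in[u_0,u_0+\epsilon]$. The incoming part $\partial_v\psi_1^{\mathrm{lin}}$ has compact $v$-support, so it vanishes near $\scri$; there $\partial_t\psi_1^{\mathrm{lin}}=\tfrac12\partial_u\psi_1^{\mathrm{lin}}\ge c/2$, and together with $\partial_u\psi_1,\partial_v\psi_1\ge 0$ this gives $\partial_t\bar\phi_1\gtrsim r^{-1}$. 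Integrating in $u$ from the edge $u=-\tfrac12$ of the support, where $\bar\psi_1$ vanishes, and choosing the endpoint of the strip suitably, I would upgrade this to a genuine pointwise bound $|\bar\phi_1|\gtrsim r^{-1}$ on a set $\{u\in[a,a+\epsilon'],\ v\ge v_1\}$. Honestly ruling out that the a priori unsigned nonlinear contribution to $r\bar\phi_1$ cancels the linear seed is the first place that needs care.

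From the seed I would run an iterated-logarithm scheme around the loop $\bar\phi_1\xrightarrow{(\partial_t\cdot)^2}\bar\phi_2\xrightarrow{(\partial_t\cdot)^2}\bar\phi_3\xrightarrow{|\cdot|^3}\bar\phi_1$, together with the direct link $\bar\phi_1\xrightarrow{|\cdot|^3}\bar\phi_3$; the total power around the loop is $>1$. This is the loop analogue of the critical-exponent blow up for $\Box\phi=|\phi|^{q_c}$ (cf.~the discussion in \cref{initial data critical exponent}). The key point is that $q_1=q_2=3$ lies exactly on the critical curve determined in \cref{finding critical exponents} (cf.~\cref{fig: hidano nullsystem} and \cref{table:decays}): if $\bar\phi_i\gtrsim\kappa\,t^{-1}(\log t)^m$ on a fixed region near $I^+$, then each nonlinear source decays at precisely the borderline rate in $\R^{3+1}$, of the form $v^{-q}u^{-1}$ times a power of $\log t$, so inverting $\Box=\partial_u\partial_v$ against it --- via a sharpened version of \cref{lower bound}, and using the logarithm-creating mechanisms of \cref{log from scri} and \cref{log from corner} --- gains one further power of $\log t$ while losing no power of $t$. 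Iterating, one would obtain, at a fixed point $(t_0,r_0)$ in the interior of the future of the seed strip and for every $k$,
\begin{equation*}
	\bar\phi_1(t_0,r_0)\ \gtrsim\ \kappa^{N^{k}}(\log t_0)^{m_k},\qquad m_k\to\infty,\quad m_k\sim c'N^{k},
\end{equation*}
where $N>1$ is the power of the chosen cycle; taking $t_0$ so large that $\log t_0$ exceeds the relevant power of $\kappa^{-1}$ makes the right side tend to $+\infty$ with $k$, contradicting $\bar\phi_1\in\mathcal{C}^2$.

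The step I expect to be the main obstacle is exactly this bookkeeping at the critical exponent: one must verify that every trip around the loop gains a full power of $\log t$, and that the log gain $m_k\mapsto Nm_k+\mathcal{O}(1)$ and the constant loss $\kappa\mapsto\kappa^{N}$ proceed at the same geometric rate, so that at the fixed very-late time $t_0$ the competition $\kappa^{N^k}(\log t_0)^{m_k}$ is won by the logarithm as $k\to\infty$ --- this is precisely where $q_1=q_2=3$ being \emph{on}, not above, the critical curve enters, and where dropping any single nonlinear term would break the argument. Subsidiary difficulties: the crude lower bound \cref{lower bound} manufactures spurious extra $u$-decay, so one needs the sharpened version that preserves the rate $t^{-(n-1)/2}$ with its logarithm and propagates it around the loop; and one has to check that the free-wave contributions of the $\phi_i^{\mathrm{lin}}$ are eventually negligible next to the iterated bounds. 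Regularity plays no role here, since $(\partial_t\phi)^2$ is smooth and $|\phi|^3\in\mathcal{C}^2$.
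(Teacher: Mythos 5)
Your opening moves coincide with the paper's: argue by contradiction, reduce radially via $\psi_i=r\phi_i$, use positivity of the $1{+}1$ kernel, and read the hypothesis as $\partial_u\psi_1(0,r_0)>0$. One remark: the cancellation you flag at the seed stage is a non-issue, because in \cref{critical problem} every source is nonnegative ($|\phi_3|^3$, $(\partial_t\phi_1)^2$, $(\partial_t\phi_2)^2+|\phi_1|^3$), and the explicit formulas of \cref{l infinite theory} express $\partial_t\psi_1$ along an outgoing ray as the transported datum plus manifestly nonnegative integrals, so $\partial_t\psi_1\geq\epsilon$ on the strip comes for free. Where you genuinely depart from the paper is after the finite chain of kernel estimates: you propose an infinite self-improving iteration of pointwise bounds $\kappa\,t^{-1}(\log t)^{m}$ around the loop, aiming at divergence at one fixed point, whereas the paper stops after three applications of the kernel (obtaining only $\psi_3\gtrsim\epsilon^2\log^2 t$ in the interior), passes to the averaged quantities $H_i(t)=\int \d r\,\psi_i$, derives $H_1''\gtrsim t^{-4}H_3^3$ and $H_3''\gtrsim t^{-4}H_1^3$ by H\"older, and gets finite-time blow-up of these ODE minorants from the generalised Gronwall inequality \cref{gronwall} --- a contradiction that requires no critical-exponent bookkeeping at all.

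As it stands, your iteration scheme has two genuine gaps. First, the loop as formulated transports lower bounds on the fields $\bar\phi_i$ near $I^+$, but two of the three links need lower bounds on \emph{time derivatives}, which do not follow from lower bounds on the fields; worse, the quadratic links are only borderline through the radiation-field behaviour near $\scri$ (where $\partial_t\psi_1\gtrsim\epsilon$ and $\partial_t\psi_2\gtrsim\log v$ on a fixed $u$-strip), not through the interior: feeding $\bar\phi_2\sim t^{-1}\log^m$ at $I^+$ into $(\partial_t\bar\phi_2)^2$ produces a source of size at best $t^{-4}\log^{2m}$, which is strictly subcritical and \emph{does} lose a power of $t$, so the claim that every trip around the quadratic loop gains a log at fixed $t$-rate is false. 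The cycle that actually gains logarithms is the cubic two-cycle $\phi_1\leftrightarrow\phi_3$, and only because the tail created by the strip-supported $(\partial_t\psi_2)^2$ has no $u$-decay; the quadratic chain should be used once, as a seed, exactly as in the paper. Second, even with the corrected cycle, your contradiction requires all iterated bounds to hold at a common fixed $(t_0,r_0)$ with controlled constants: each log-integration costs a factor of order $1/m_k$, and each application of \cref{lower bound} is only valid for retarded times beyond a new threshold, so unless you prove these degradations are geometric and the thresholds stabilise, the conclusion is only ``for every $k$ the solution is large somewhere'', which does not contradict global existence (a global $\mathcal{C}^2$ solution need not be bounded). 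This is precisely the bookkeeping you identify as the main obstacle, and it is left unresolved; the paper's passage to $H_i$ and \cref{gronwall} is the device that closes the argument without it.
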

	\begin{remark}
		We believe that the constraint on initial data can be substantially weakened in the small data regime\footnote{Note, that small data is not actually a restriction, as one may restrict attention to the domain of dependence of $\mathcal{A}_{r_0}=\{r>r_0\}$ and using compact support and continuity, we get $\norm{\cdot}_{\mathcal{A}_{r_0}}\to0$ as $r_0\to1$. }. Indeed, by a similar analysis as shown in the appendix of \cite{keir_weak_2018}, the above system has semi-global well posedness, ie. the solution exists for some finite retarded time $u\leq u_0$. Than, the conclusion of the theorem will follow if one has some $u_1\leq u_0$ such that $\lim_{r\to\infty}\partial_t\psi_3(r,r+u_1)\neq 0$.
	\end{remark}
	\begin{corollary}\label{failure of weak null condition}
		Let $\phi_j^{(i)}$ be spherically symmetric smooth initial data with support in $\{r\leq1\}$ such that there exists $r_0\leq1$ with $(r\phi^{(1)}_1-\partial_r r\phi^{(0)}_1)|_{r=r_0}>0$ and $\phi^{(j)}_i\geq0$ for $i\in\{1,2,3\},\,j\in\{0,1\}$. Than
		\begin{equation}\label{critical problem without abs}
			\begin{gathered}
				\Box\phi_1=\phi_3^3\\
				\Box\phi_2=(\partial_t\phi_1)^2\\
				\Box\phi_3=(\partial_t\phi_2)^2+\phi_1^3\\
				\partial_t^j\phi_i(0)=\phi_j^{(i)}
			\end{gathered}
		\end{equation}
		has no global $C^2$ solution.
	\end{corollary}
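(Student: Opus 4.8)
The plan is to reduce Corollary~\ref{failure of weak null condition} to Theorem~\ref{critical eq}: under the extra hypothesis $\phi_i^{(j)}\ge0$, a global $C^2$ solution of \eqref{critical problem without abs} is, in the region that drives the blow-up of Theorem~\ref{critical eq}, also a solution of \eqref{critical problem}, so the non-existence statement for the latter applies. The only point to check is that the absolute values may be dropped, i.e. that $\phi_1\ge0$ and $\phi_3\ge0$ on $\{t-r>u_0\}$ for $u_0$ large.

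First I would split $\phi_i=\tilde\phi_i+\hat\phi_i$ into the homogeneous evolution of the data and the zero-data Duhamel term. Strong Huygens' principle in $\R^{3+1}$ and $\supp\phi_i^{(j)}\subset\{r\le1\}$ give $\supp\tilde\phi_i\subset\{|t-r|\le1\}$, so $\phi_i=\hat\phi_i$ on $\{t-r>1\}$, all nonlinearities live in $\{t-r\ge-1\}$, and on the shell $\{|t-r|\le1\}$ one has $\psi_i=r\phi_i=O(1)$ (bounded, by the tail computations of \cref{role of asymptotics}, on any strip of bounded retarded time), hence $\phi_i=O(1/r)$ there. Next, since the retarded fundamental solution in $\R^{3+1}$ has nonnegative kernel and $\Box\hat\phi_2=(\partial_t\phi_1)^2\ge0$, we get $\hat\phi_2\ge0$, so $\phi_2\ge0$ on $\{t-r>1\}$ unconditionally. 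It remains to control $\phi_1$ and $\phi_3$, coupled through the cubic loop $\Box\hat\phi_1=\phi_3^3$, $\Box\hat\phi_3=(\partial_t\phi_2)^2+\phi_1^3$: I would run a bootstrap on $\sup\max(0,-\psi_1)$ and $\sup\max(0,-\psi_3)$ over $\{t-r>u_0\}$, using that the only possibly negative source contributions come either from the shell — where $|\phi_i|^3=O(r^{-3})$ contributes $O(u_0^{-1})$ to each $\psi$ after the Duhamel integration — or from the negative parts themselves, which feed back with a comparable gain $O(u_0^{-1})$; taking $u_0$ large closes the bootstrap and forces $\phi_1,\phi_3\ge0$ on $\{t-r>u_0\}$. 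Combined with the cascade lower bounds of Theorem~\ref{critical eq} (seeded only by the unconditionally nonnegative quadratic term $(\partial_t\phi_2)^2$ via \cref{lower bound}), this gives $\phi_1^3=|\phi_1|^3$ and $\phi_3^3=|\phi_3|^3$ on $\{t-r>u_0\}$, and the generalised Gronwall argument of Theorem~\ref{critical eq} — whose input quantities $H_j(t)=\int\phi_j\,\d x$ differ from those of the $|\,\cdot\,|$-system by a contribution supported on $\{t-r\le u_0\}$ of size $O(t)$, hence subdominant — applies verbatim to yield a contradiction with global existence.

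The main obstacle is precisely this bootstrap through the cubic $\phi_1\leftrightarrow\phi_3$ coupling on the initial-data shell: without $\phi_i^{(j)}\ge0$ the homogeneous evolutions $\tilde\phi_i$ change sign on $\{|t-r|\le1\}$ and the cubic right-hand sides can inherit this, so one would have to track the interaction of that shell with the whole cascade — which is exactly what the nonnegativity hypothesis is there to avoid, as the remark preceding the statement anticipates. Everything downstream of "$\phi_1,\phi_3\ge0$ on $\{t-r>u_0\}$" is a direct appeal to Theorem~\ref{critical eq}.
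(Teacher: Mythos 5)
Your overall strategy --- drop the absolute values by a positivity argument and reduce to \cref{critical eq} --- is the same as the paper's, but the execution has genuine gaps. The paper's proof is essentially one line: because the initial data are nonnegative and the (spherically reduced, retarded) solution kernel is positive, the solution of \cref{critical problem without abs} is nonnegative, hence is also a solution of \cref{critical problem}, and \cref{critical eq} applies \emph{globally}. You instead try to prove nonnegativity of $\phi_1,\phi_3$ only on $\{t-r>u_0\}$ by a bootstrap whose input is an a priori bound $\psi_i=r\phi_i=O(1)$ (so $\phi_i=O(1/r)$) on the strip of bounded retarded time. That bound is not available: the corollary assumes no smallness of the data, a global $C^2$ solution carries no uniform bounds on noncompact sets, and the ``tail computations'' of \cref{role of asymptotics} are heuristics for the linear problem, not estimates for a hypothetical large solution; what you are invoking is in effect a quantitative semi-global existence estimate (cf.\ the remark following \cref{critical eq}), which is exactly the kind of statement that would need proof. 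Moreover, the feedback step of your bootstrap does not close as stated near the axis: a bound on the negative part of $\psi_3$ controls $\psi_3^3/r^2=r\phi_3^3$ only away from $r=0$, and the corresponding characteristic integrals are not finite with the bound $\delta^3/r^2$ you would be using, so the ``comparable gain $O(u_0^{-1})$'' is not justified.

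The second gap is the final reduction: the Gronwall argument of \cref{critical eq} does \emph{not} apply ``verbatim'' when positivity holds only on $\{t-r>u_0\}$. That proof uses pointwise nonnegativity of the integrands over the whole support in several places: to get $H_3''\ge0$, to run the H\"older inequalities $H_1''\gtrsim H_3^3t^{-4}$ and $H_3''\gtrsim H_1^3t^{-4}$, and in the tail-propagation via \cref{lower bound}. With your partial positivity, the region $\{-1\le t-r\le u_0\}$ contributes to $H_3(t)$ a term of unknown sign and of size $O(u_0\,t)$ (even granting your $O(1/r)$ bound there), and this is not ``subdominant'': it swamps the seed lower bound $H_3\gtrsim\log^2(t/R)$ from which the whole cascade in the proof of \cref{critical eq} is launched, and it also destroys the sign of $H_3''$. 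A telling symptom is that your argument never actually uses the hypothesis $\phi_i^{(j)}\ge0$; if the sketch were correct it would prove the corollary without that assumption, a strictly stronger statement which the paper deliberately does not claim. The intended (and much shorter) route is to use the nonnegativity of the data together with positivity of the kernel to conclude $\phi_1,\phi_3\ge0$ everywhere, so that \cref{critical problem without abs} and \cref{critical problem} coincide along the given solution and \cref{critical eq} gives the contradiction directly.
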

	\begin{proof}
		Since the kernel of $\Box_{\R^{3+1}}$ is positive, and the initial data is non-negative, any solution of \cref{critical problem without abs} also solves \cref{critical problem}. Using the previous theorem, the result follows.
	\end{proof}
	\begin{proof}[Proof of \cref{critical eq}]
		We work by contradiction. Let's assume that there is such a solution.
		
		As always, we first want to prove lower bounds on the solution using positivity properties and then use Gronwall to exclude the possibility of such solutions. Rescaling $\phi_i=\psi_i/r$, we get
		\begin{equation*}
			\begin{gathered}
				\Box_{\R^{1+1}}\psi_1=\frac{1}{r^2}\abs{\psi_3}^3\\
				\Box_{\R^{1+1}}\psi_2=\frac{1}{r}(\partial_t\psi_1)^2\\
				\Box_{\R^{1+1}}\psi_3=\frac{1}{r}(\partial_t\psi_2)^2+\frac{1}{r^2}\abs{\psi_3}^3.
			\end{gathered}
		\end{equation*}		
		Since there exist $r_0<1$ such that $\partial_u\psi_1|_{\{t=0\}}(r_0)=(r\phi^{(1)}_1-\partial_r r\phi^{(0)}_1)|_{r_0}>0$, we get that $\partial_u\psi_1|_{\{t=0,r\in B_\epsilon(r_0)\}}>\epsilon$ for some $\epsilon>0$. Using the solution kernel for $\psi_1$ (\cref{l infinite theory}), we get
		\begin{equation*}
			\begin{gathered}
				\partial_t\psi_1(s,r+s)=\int_{0}^{s}\d x \frac{\abs{\psi_3(x,x+r)}^3}{(x+r)^2}+\int_{0}^{(1-r)/2}\frac{\abs{\psi_3(s-x,s+x+r)}^3}{(s+x+r)^2}+\partial_u\psi_1(0,r)\geq\epsilon
			\end{gathered}
		\end{equation*}
		for $r\in B_{\epsilon}(r_0)$. We may substitute this lower bound for $\psi_2$ to get
		\begin{equation*}
			\begin{gathered}
				\partial_t\psi_2(s,r+s)\geq\int_0^s\d x \frac{\epsilon^2}{x+r}+\partial_u\psi_2(0,r).
			\end{gathered}
		\end{equation*}
		Note, that the first integral does not converge, so there exists $R$ sufficiently large, such that for all $s>R,r\in B_\epsilon(r_0)$ $\partial_t\psi_2(s,r+s)\geq \epsilon^2\log((s+r)/R)$. Without loss of generality assume also that $R>10$. Using this lower bound for the nonlinear part of $\psi_3$ equation, we get
		\begin{equation*}
			\begin{gathered}
				\psi_3^{nl}(r,t)\geq\epsilon^4 \int_{r_0-\epsilon}^{r_0+\epsilon}\d u'\int_t^{t+r}\d v' \frac{\log(v'/R)^2}{v'-u'}\gtrsim \epsilon^5 \int_t^{t+r} \d v'\frac{\log^2(v'/R)}{v'}\\
				\geq \epsilon^5\bigg(\log^3(\frac{t+r}{R})-\log^3(\frac{t}{R})\bigg)\geq\epsilon^5\log^2(t/R)\big(\log(1+\frac{r}{t})-\log(1-\frac{r}{t})\big)
			\end{gathered}
		\end{equation*}
		for $t>R$ and $t-r>1$. Let's define
		\begin{equation*}
			\begin{gathered}
				H_i(t)=\int\d r \psi_i(r,t).
			\end{gathered}
		\end{equation*}
	The just derived lower bound for $\psi_3^{nl}$ tells us that for $t>R$
	\begin{equation*}
		\begin{gathered}
			\int_0^{t-1} \d r \psi_3^{nl}(r,t)\geq \int_0^{t-1}\epsilon^2\log(t/R)^2\frac{\log(1+r/t)-\log(1-r/t)}{r}\gtrsim \epsilon^2\log^2(t/R).
		\end{gathered}
	\end{equation*}
	Using the splitting of $\psi_3$ into linear and nonlinear part $\psi_3=\psi_3^{(\text{lin})}+\psi_3^{(\text{nl})}$, we see that the contribution of $\psi_3^{(\text{lin})}$ is bounded - as $\psi_3^{lin}$ is bounded with $\supp\psi_3^{lin}\in\{-1<t-r<1\}$ -  and $\psi_3^{(\text{nl})}$ has positive sign. Therefore, we conclude 
	\begin{equation*}
		\begin{gathered}
			H_3(t)\gtrsim \epsilon^2\log^2(t/R).
		\end{gathered}
	\end{equation*}
	
	Using the equation of motion \cref{critical problem} and Holder inequality, as in the previous sections, we derive the following relations
	\begin{equation}\label{critical H quantities}
		\begin{gathered}
			H_1''(t)\geq \int \d r \frac{\abs{\psi_3}^3}{r^2}\gtrsim H_3^3t^{-4}\\
			H_3''(t)\geq\int\d r \frac{\abs{\psi_1}^3}{r^2}\gtrsim H_1^3t^{-4}.
		\end{gathered}
	\end{equation}
	By positivity of the second derivative $H_3''$ and unboundedness of $H_3$, it follows by the mean value theorem that there exists $T_1>0,\epsilon_1>0$  such that $H_3'|_{t>T_1}>\epsilon_1$. This in turn implies $H_3|_{t>T_2}>(t-T_2)\epsilon_1$, for eg. $T_2=\max(T_1,R)$. Using \cref{critical H quantities}, we find that $H_1''$ is not integrable , thus $H_1'$ will grow at least logarithmically and $H_1$ linearly. In particular, there exists $T>0$ such that
	\begin{equation*}
		\begin{gathered}
			H_1'(T),H_1(T),H_3'(T),H_3(T)>0.
		\end{gathered}
	\end{equation*}
	By \cref{gronwall}, we conclude that any solution to \cref{critical H quantities} must blow up in finite time.
	\end{proof}

	\subsection{Quadratic weak null problem}

	Remember, we want to prove global ill-posedness for
	\begin{equation*}
		\begin{gathered}
			\Box\phi_1=0\\
			\Box\phi_2=\phi_1^2\\
			\Box\phi_3=\phi_2^2\\
			\Box\phi_4=\phi_3^2\\
			(1\pm\partial_v\phi_4)\Box\eta=(\partial_v\phi_4)^2
		\end{gathered}
	\end{equation*}
	with the sign choice in the last equation to be determined and initial data supported in $B=\{r\leq1\}$. Lower bounds for $\phi_i$ will be easy to establish, as the kernel is positive. In particular, we will prove that $\abs{\partial_v\phi_4}>1$ at some spacetime point, thus \cref{counter example to WNC} cannot have global smooth $\eta$. Note, that $\phi_i$ satisfy inhomogeneous wave equations, $\phi_i$ have unique global solutions.
	
	\begin{lemma}\label{phi growth}
		Given initial data for \cref{counter example to WNC} with $\norm{\phi_1|_{t=0}}_{L^\infty}\geq 1$ and $\partial_t\phi_1|_{t=0}=0$, we have 
		\begin{equation*}
			\begin{gathered}
				\frac{u^{3+\epsilon}}{v}\gtrsim_\epsilon\phi_4\gtrsim \frac{u^3}{v},
			\end{gathered}
		\end{equation*}
		for $u\geq2$.
	\end{lemma}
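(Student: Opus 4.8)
The plan is to pass to the reduced $\R^{1+1}$ problem and iterate the explicit representation of \cref{l infinite theory} together with the positivity of its kernel (\cref{lower bound}). Set $\psi_k:=r\phi_k$ and use null coordinates $u=(t-r)/2,\ v=(t+r)/2$, so $r=(v-u)/2$ and, for $k\in\{1,2,3\}$,
\[
\Box_{\R^{1+1}}\psi_{k+1}\;=\;r\phi_k^2\;=\;\frac{\psi_k^2}{r}\;.
\]
Because the data is supported in $B_1$, strong Huygens puts the linear parts $\phi_k^{\mathrm{lin}}$ inside $\{\abs{u}\le1\}$; hence on $\{u\ge2\}$ — the region in the statement — each $\phi_k$ coincides with its nonlinear part, which is non-negative, so it suffices to iterate the representation for $\Box_{\R^{1+1}}\psi_{k+1}=\psi_k^2/r$ with trivial data. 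For $\phi_1$ itself, $\partial_t\phi_1|_{t=0}=0$ gives $\psi_1(u,v)=\tfrac12(g(v)+g(-u))$, where $g$ is the odd extension of $r\mapsto r\phi_1^{(0)}(r)$ and $\supp g\subset[-1,1]$; in particular $\psi_1\equiv0$ on $\{u\ge2\}$, $\norm{\psi_1}_{L^\infty}\lesssim1$, and $g\not\equiv0$ since $\norm{\phi_1^{(0)}}_{L^\infty}\ge1$, so that $c_1:=\int g^2>0$. This positive constant is the sole quantitative input from the hypothesis.

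First I would compute $\psi_2$. For $u\ge2$ one has $v'\ge u\ge2$ throughout the representation, so $g(v')=0$ and the source $\psi_1^2/r'$ reduces to $\tfrac14 g(-u')^2/r'$, independent of $v'$; substituting and using $\supp g\subset[-1,1]$ gives, up to a fixed positive factor,
\[
\psi_2(u,v)\;=\;\int_{-1}^{1}g(-u')^2\,\log\frac{v-u'}{u-u'}\,\d u',
\]
whence $\psi_2(u,v)\lesssim c_1\log(ev)$ and $\psi_2(u,v)\gtrsim c_1$ as soon as $v\ge8u$. Then I would feed this back in twice, each time integrating the square of the previous iterate against $2/(v'-u')$ over the region $\{-1\le u'\le u,\ u<v'\le v\}$ produced by the representation formula (together with the odd reflection at $r=0$). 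To keep the lower bounds alive, restrict the $u'$-integration to $u'\in[2,u/8]$, where automatically $v'\ge u\ge8u'$, so the previous iterate is already bounded below on the whole slab. From $\psi_2\gtrsim c_1$ on $\{v'\ge8u'\}$ one gets $\psi_3(u,v)\gtrsim c_1^2\,u\,\log(v/u)$, using $\int_2^{u/8}\d u'\int_u^v\tfrac{\d v'}{v'-u'}\ge(u/8-2)\log(v/u)$; hence $\psi_3(u',v')\gtrsim c_1^2 u'$ on $\{v'\ge8u'\}$, and feeding this in once more,
\[
\psi_4(u,v)\;\gtrsim\;c_1^4\int_{2}^{u/8}(u')^2\,\d u'\cdot\log(v/u)\;\gtrsim\;c_1^4\,u^3\,\log(v/u)
\]
for $u$ past a fixed constant, the cubic power coming exactly from $\int^u(u')^2\,\d u'$. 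Since $\log(v/u)\ge(v-u)/v$, dividing by $r=(v-u)/2$ yields $\phi_4\gtrsim c_1^4\,u^3/v$; the remaining compact range $u\in[2,u_0]$ is handled by the same chain (it produces $\phi_4\gtrsim c_1^4/v$ there) and absorbed into the implicit constant.

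For the matching upper bound one runs the same three integrations starting from $\psi_1\lesssim1$ and obtains $\psi_2\lesssim\log(ev)$, $\psi_3\lesssim u\log^3(ev)$, $\psi_4\lesssim u^3\log^7(ev)$; dividing by $r$ and absorbing the logarithm into an arbitrarily small power of $v$ gives $\phi_4\lesssim_\epsilon u^{3+\epsilon}/v$. The only delicate point here is near the timelike axis $r\to0$, where the crude bounds on $\psi_k$ blow up when divided by $r$; one uses that $\psi_k=r\phi_k$ carries an extra factor $\sim r$ near $\{r=0\}$ (equivalently, that the integrands $\psi_k^2/(v'-u')$ are integrable there) to see that $\phi_4=\psi_4/r$ stays controlled. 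I expect the main obstacle to be precisely this: keeping the lower bound from degenerating. Every $\psi_k$ vanishes on the timelike axis and is large only near $\scri$, so one cannot bound it below by a constant everywhere; the device is to evaluate the previous iterate only on the dyadic slab $\{v'\ge8u'\}$, where a clean positive lower bound is available, and that positive lower bound traces back entirely to the non-vanishing of the radiation field of $\phi_1$, i.e.\ to $c_1>0$. Everything after that is bookkeeping with the explicit iterated integrals, the powers of $u$ in $\psi_1,\psi_2,\psi_3,\psi_4$ propagating as $0\to0\to1\to3$, and the logarithmic losses being irrelevant thanks to the $\epsilon$.
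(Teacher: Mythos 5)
Your lower bound is sound and is in substance the paper's own route: the paper establishes it by iterating the positivity lemma \cref{lower bound} as in \cref{iterative tail generation}, and your iteration of the reflected $\R^{1+1}$ representation from \cref{l infinite theory} is exactly that argument made explicit in spherical symmetry. In fact your version is sharper, since you keep the $r'$ weight and integrate over the expanding slabs $u'\in[2,u/8]$ (a verbatim application of \cref{lower bound}, whose conclusion is only $\phi\gtrsim t^{-1}u^{1-q}$, would not by itself produce the $u^3$ growth), and the monotonicity observation $\log\frac{v-u'}{u-u'}\ge\log\frac vu$ correctly keeps the bound alive down to the axis so that the division by $r=(v-u)/2$ is legitimate there. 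The glosses in this half (the range $u\in[2,u_0]$ really uses a fixed slab such as $u'\in(1,2)$, where the linear parts of $\phi_2,\phi_3$ still vanish by Huygens, rather than $[2,u/8]$; and the bound $\psi_3\gtrsim c_1^2u'$ for $u'$ below the threshold where $u'/8-2\gtrsim u'$ needs its own line) are harmless.

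The upper bound, however, has a genuine gap, and it is precisely where your route departs from the paper's. Your chain controls only $\psi_k=r\phi_k$ in sup norm; $\psi_4\lesssim u^3\log^7(ev)$ divided by $r$ degenerates as $r\to0$, and the one-line repair you offer (integrability of $\psi_k^2/(v'-u')$ near the corner) only shows that $\psi_4$ is continuous and vanishes on the axis, not that it vanishes at the rate $\psi_4\lesssim r\,u^{2+\epsilon}$ which the claimed bound requires there. To get that rate from the representation one must write $\phi_4(u,v)=\frac{2\psi_4(u,v)}{v-u}\le 2\sup_{v'\in[u,v]}\int_{-1}^{u}\frac{\psi_3^2(u',v')}{v'-u'}\,\d u'$ and, on the piece $u'\in[u-1,u]$ where $\frac{\psi_3^2}{v'-u'}\sim r'\phi_3^2$, one needs a sup bound on $\phi_3$ itself near the axis (say $\phi_3\lesssim u^{1+\epsilon}$), which in turn needs the analogous bound for $\phi_2$; in other words the induction must carry weighted bounds on the fields $\phi_k$, not only on $\psi_k$. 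This is exactly what the paper's proof does in one stroke by iterating John's weighted $L^\infty$ estimate \cref{john l infty}, which bounds $\jpns{v}^\alpha\jpns{u}^\beta\phi$ directly by weighted norms of the source, so no division by $r$ ever occurs. Your scheme is repairable by adding this extra inductive layer, but as written the near-axis case of the upper bound is unproved. A smaller bookkeeping point: the $\log^7(ev)$ can only be absorbed into a small power of $v$, not of $u$ (at fixed $u$ your own lower bound shows $v\phi_4/u^3\gtrsim\log(v/u)\to\infty$), so the chain really yields $u^3v^\epsilon/v$; this coincides with $u^{3+\epsilon}/v$ only where $v\lesssim u^{C}$, which is the regime $r\le t/2$ in which \cref{lemma asymptotic of phi4} uses the bound, and the paper's statement shares this convention, so flag it but do not lose sleep over it.
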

	\begin{proof}
		The lower bound is established using the same argument as in \cref{iterative tail generation}. The upper bounds follow from \cref{john l infty} iterated application.
	\end{proof}

	\begin{lemma}\label{lemma asymptotic of phi4}
		Fix initial data for the $\phi$ part of \cref{counter example to WNC} with non-vanishing data for $\phi_1$. The corresponding $\phi_i$ solutions are global and moreover we have the asymptotic expansion in the region $\frac{r}{t}\leq1/2$
		\begin{equation}\label{asymptotics of phi4}
			\begin{gathered}
				\phi_4(t)|_{r\leq t/2}=t^3\log^j(t)f(r/t)+\mathcal{O}(t^3 \log^{j-1}(t))
			\end{gathered}
		\end{equation}
		in $W^{1,\infty}$, where $f$ is a non-vanishing smooth function. Moreover 
		\begin{equation}\label{asymptotics of dv phi4}
			\begin{gathered}
				\partial_v\phi_4(t)|_{r\leq t/2}=t^2\log^j(t)g(r/t)+\mathcal{O}(t^2 \log^{j-1}(t))
			\end{gathered}
		\end{equation}
		for a non-vanishing smooth function $g$.
	\end{lemma}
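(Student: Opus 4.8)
The plan is to compute the asymptotics of $\phi_4$ by solving the hierarchy $\Box\phi_1=0$, $\Box\phi_k=\phi_{k-1}^2$ ($k=2,3,4$) one equation at a time and reading off the leading behaviour in the interior cone $\{r\le t/2\}$ at each stage. Global existence needs no work: each $\phi_k$ solves a \emph{linear} inhomogeneous wave equation whose source $\phi_{k-1}^2$ is an already-constructed smooth function, so the unique global smooth solution is produced by three successive Duhamel integrations. All the content is in the expansion, and the natural tools are the radial $\R^{1+1}$ reduction $\psi_i=r\phi_i$ together with the representation formulae of \cref{l infinite theory}, supplemented (as quoted after \cref{exceptional cancellation}) by the fact that conormality/polyhomogeneity on the compactified spacetime propagates through such systems: once $\phi_{k-1}$ is polyhomogeneous, so is $\phi_{k-1}^2$, and hence so is $\phi_k$.

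The inductive step runs as follows. Strong Huygens in $\R^{3+1}$ gives $\phi_1\equiv 0$ on $\{r\le t/2\}$ for $t$ large, with a bounded, compactly-supported-in-$u$ radiation field, so $\phi_1^2$ is a source concentrated near $\scri$; the tail-generation analysis of \cref{role of asymptotics} (row $\mathcal{N}_\scri$ of \cref{table:decays}) then gives that $\phi_2$ acquires a definite smooth interior profile in $\rho=r/t$. Inductively, if $\phi_{k-1}$ has interior leading term $t^{d_{k-1}}\log^{m_{k-1}}t\,f_{k-1}(\rho)$, then $\phi_{k-1}^2$ has leading homogeneity $2d_{k-1}$ with $\log$-power $2m_{k-1}$, and inverting $\Box$ — which raises homogeneity by two (invert the normal operator at $I^+$, or integrate the explicit kernel, picking up at most one extra power of $\log t$ when the resulting homogeneity is indicial) — yields the next iterate. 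At each step one checks that the part of the source supported near $\scri$ feeds only into \emph{lower}-order interior terms, so the leading profile stays a smooth function of $\rho$ alone; carrying this bookkeeping through $k=2,3,4$ produces exactly the leading term of \eqref{asymptotics of phi4}, with $j$ the surviving $\log$-power, while the remainder is bounded uniformly on $\{r\le t/2\}$ by the same kernel estimates, giving the stated $\mathcal{O}$-error.

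Non-vanishing of $f$ is where the expansion must be paired with a lower bound. Every source in the hierarchy is a square, hence $\ge 0$, and the retarded Green's function of $\Box$ on $\R^{3+1}$ is non-negative, so the nonlinear part of each $\phi_k$ is $\ge 0$; since the data for $\phi_1$ is non-trivial, $\phi_1^2$ is strictly positive on an open set, and propagating this forward exactly as in \cref{lower bound}, \cref{iterative tail generation} and \cref{phi growth} bounds the leading coefficient of $\phi_4$ below by a positive constant on $\{0<\rho\le 1/2\}$. Together with the polyhomogeneous structure this forces $f$ to be a strictly positive smooth (and, by spherical symmetry, even) function of $\rho$, and it also pins down $j$ as the $\log$-power carried by this positive term.

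Finally, \eqref{asymptotics of dv phi4} follows by differentiation: a polyhomogeneous function may be differentiated term by term along the $b$-vector fields, which include $\partial_v$ after the rescaling, so $\partial_v\phi_4$ has the expansion obtained by applying $\partial_v=\partial_t+\partial_r$ to that of $\phi_4$; since $\partial_v$ lowers homogeneity by one, the leading coefficient $g$ is a fixed linear combination of $f$ and $(1-\rho)f'$. To see $g\not\equiv 0$ — which is not automatic from $f>0$ — I would extract it directly from the transport equation $\partial_u(\partial_v\psi_4)=r\phi_3^2\ge 0$, whose solution $\partial_v\psi_4(u,v)=\int_1^u \psi_3(u',v)^2/(v-u')\,du'$ is manifestly positive with a strictly positive interior leading order by the same positivity-propagation argument; the $W^{1,\infty}$ claim is then just that the two expansions, with uniform remainders, hold throughout $\{r\le t/2\}$. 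The main obstacle is making the polyhomogeneous expansion and, above all, the \emph{uniform} remainder bounds precise across three nonlinear iterations: squaring doubles $\log$-powers and $\Box^{-1}$ can inject a $\log$ at a resonant homogeneity, so the log bookkeeping must be done carefully and the near-$\scri$ contributions shown not to contaminate the interior leading term. Once the expansion is secured, the non-vanishing of $f$ and $g$ is a soft consequence of the positivity already encoded in \cref{phi growth}.
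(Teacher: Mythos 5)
Your treatment of the expansion itself is broadly in the spirit of the paper: the paper also obtains \eqref{asymptotics of phi4} by invoking polyhomogeneity (via Lemmas 7.6--7.8 and the argument of Theorem 7.1 of \cite{hintz_stability_2020}) and then pins down the leading exponent and the surviving $\log$-power by squeezing between the upper and lower bounds of \cref{phi growth}, exactly as you propose with your positivity-propagation bookkeeping. The genuine gap is in your argument for $g\not\equiv 0$ in \eqref{asymptotics of dv phi4}, which is the delicate point of the lemma. You correctly note that $g$ is the combination $3f+(1-\rho)f'$, so it can vanish identically precisely when $f$ solves that first-order ODE, i.e.\ when $f$ is a specific power of $(1-\rho)$; this degenerate case must be excluded, and your proposed mechanism cannot do it. Since $r=v-u$ gives $\partial_v r=1$, one has $\partial_v\psi_4=r\,\partial_v\phi_4+\phi_4$, so the leading interior coefficient of $\partial_v\psi_4$ is $\rho\,g(\rho)+f(\rho)$. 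Positivity (even strict positivity) of $\partial_v\psi_4$, which indeed follows from the transport identity $\partial_u\partial_v\psi_4=\psi_3^2/(v-u)\ge 0$, is therefore automatic once $f>0$ and carries no information whatsoever about $g$: the case $g\equiv 0$ is perfectly compatible with everything you derive. Recovering $\partial_v\phi_4=(\partial_v\psi_4-\phi_4)/r$ is a difference of two terms of the same leading order, and the possible cancellation there is exactly the danger you needed to rule out.

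The paper closes this hole by a different, sharper use of positivity: it writes $\Box$ in the coordinates $\tau=t$, $\rho=r/t$, matches leading orders to get the ODE
\begin{equation*}
	6f-2\Big(2\rho+\frac{1}{\rho}\Big)f'-(1-\rho^2)f''=F\geq 0,
\end{equation*}
where $F$ is the (nonnegative) leading profile of the source $\phi_3^2$, and then substitutes the unique profile annihilating $3f+(1-\rho)f'$ into the left-hand side, obtaining a strictly negative quantity and hence a contradiction. In other words, the positivity that does the work is positivity of the source tested against the leading-order \emph{interior equation} (the normal operator at $I^+$), not positivity of the incoming-radiation quantity $\partial_v\psi_4$. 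To repair your proof you would need to add this step (or some equivalent argument that excludes $f\propto$ the degenerate $(1-\rho)$-power), since nothing in your current outline distinguishes that profile from any other positive $f$.
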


	\begin{proof}
		The expansion \cref{asymptotics of phi4} is a consequence of polyhomogeneity of the solution, see Lemma 7.6-7.8 of \cite{hintz_stability_2020}. Using these lemmas, one infers in a similar manner as proof of Theorem 7.1 in \cite{hintz_stability_2020} the  polyhomogeneity statement $\phi_4\in\mathcal{A}_{phg}^{\mathcal{E}_+,\mathcal{E}_\scri}$ \footnote{for definition of such spaces see \cite{hintz_stability_2020}. Note that such a statement does not follow from \cite{baskin_asymptotics_2018}, as in this work the authors only consider compactly supported force term}. This in turn means that in the region $r<t/2$ one has an expansion
		\begin{equation*}
			\begin{gathered}
				\phi_4(t,r)=\sum_{i=1}^N\sum_ja_{i,j}\big(\frac{r}{t}\big) t^{\alpha_i}\log^j(t)+\mathcal{O}(t^{\alpha_0-1})
			\end{gathered}
		\end{equation*}
		with $a_{i,j}$ smooth functions. The upper bound from \cref{phi growth} implies that $\alpha_1\leq3$, while the lower bound implies $\alpha_1\geq3$ and that there is $a_{1,j}\neq0$. Keeping only the leading order term implies the \cref{asymptotics of phi4}.
		
		The leading term of $\partial_v\phi_4$ is
		\begin{equation*}
			\begin{gathered}
				(\partial_t+\partial_r)t^3\log^j(t)f(r/t)=t^2\log^{j}(t)\Big(3f+(1-\frac{r}{t})f'\Big)+\mathcal{O}(t^2\log^{j-1}(t)).
			\end{gathered}
		\end{equation*}
		Therefore, unless $3f+(1-\frac{r}{t})f'=0\implies f(\rho)=(1-\rho)^{-3}$, there exists $\rho\in[0,1/2]$ such that 
		\begin{equation*}
			\begin{gathered}
				 \partial_v\phi_4|_{r=t\rho}=ct^2\log^j(t)+\mathcal{O}(t^2\log^{j-1}(t))
			\end{gathered}
		\end{equation*}
		for $c\neq0$. We will prove that this is indeed the case.
		
		Writing $\Box$ with respect to coordinates $\tau=t,\,\rho=r/t$, we get
		\begin{equation*}
			\begin{gathered}
				\Box\phi_4=\bigg(\partial_\tau^2-\frac{2\rho}{\tau}\partial_\tau\partial_\rho-\frac{1-\rho^2}{\tau^2}\partial_\rho^2+\frac{2}{\tau^2}(\rho-\frac{1}{\rho})\partial_\rho\bigg)\phi_4=\phi_3^2.
			\end{gathered}
		\end{equation*}
		Matching the leading order terms, we get 
		\begin{equation*}
			\begin{gathered}
				6f-2(2\rho+\frac{1}{\rho})f'-(1-\rho^2)f''=F\geq0,
			\end{gathered}
		\end{equation*}
		where $F$ is the leading order behaviour of $\phi_3$, at the particular order. Substituting in $(1-\rho)^{-3}$ in place of $f$, we get
		\begin{equation*}
			\begin{gathered}
				-\frac{6(1+\rho+6\rho^2)}{\rho(1-\rho)^4}<0.
			\end{gathered}
		\end{equation*}
		This is a contradiction, thus $f(\rho)\neq(1-\rho)^{-3}$.
	\end{proof}

	As $\abs{\partial_v\phi_4}$ grows polynomially in time, provided that $\phi_1$ has non-trivial initial data, there must be a point where it reaches size 1, given not-trivial data for $\phi_1$ of any size. At this point, $\Box\eta$ cannot be a smooth function at least for one of the signs. Therefore, we conclude
	
	\begin{cor}\label{WNC cor}
		Fix smooth initial data for \cref{counter example to WNC}, such that $\phi_1$ is non-trivial. Then, at least one of the signs in \cref{counter example to WNC} admits no smooth global solutions.
	\end{cor}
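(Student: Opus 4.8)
The plan is to exploit Lemma~\ref{phi growth} and Lemma~\ref{lemma asymptotic of phi4}, which already pin down the qualitative behaviour of $\partial_v\phi_4$, and then observe that the fifth equation of \eqref{counter example to WNC} is a linear wave equation for $\eta$ whose top-order coefficient $1\pm\partial_v\phi_4$ degenerates as soon as $\partial_v\phi_4$ crosses $\mp1$. Since $\phi_1,\dots,\phi_4$ solve a cascade of inhomogeneous wave equations with smooth data and smooth forcing $\phi_{i-1}^2$, they are globally smooth and unique, so $\partial_v\phi_4\in C^\infty(\R^{3+1})$ is a fixed continuous function of the data, and by hypothesis $\phi_1$ is non-trivial, so Lemma~\ref{lemma asymptotic of phi4} applies.

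First I would record two facts about the continuous function $\partial_v\phi_4$ on the connected set $\R^{3+1}$. By finite speed of propagation the data of all $\phi_i$ and all forcing terms are supported in $\{r\le 1+t\}$ for $t\ge 0$, hence $\phi_4\equiv 0$ in $\{r>1+t\}$, so $\partial_v\phi_4$ attains the value $0$. On the other hand Lemma~\ref{lemma asymptotic of phi4} gives $\partial_v\phi_4(t,\rho t)=t^2\log^j(t)\,g(\rho)+\mathcal{O}(t^2\log^{j-1}(t))$ uniformly for $\rho\in[0,1/2]$ with $g$ smooth and nowhere vanishing, hence of a single sign on $[0,1/2]$; evaluating along $r=t/4$ shows $\partial_v\phi_4$ is unbounded above if $g>0$ and unbounded below if $g<0$. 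By the intermediate value theorem the range of $\partial_v\phi_4$ contains $[0,\infty)$ in the first case and $(-\infty,0]$ in the second, so there is a spacetime point $p$ at which $\partial_v\phi_4(p)=1$ or $\partial_v\phi_4(p)=-1$. Choosing the sign in \eqref{counter example to WNC} so that the coefficient of $\Box\eta$ is $1-\partial_v\phi_4$ in the first case and $1+\partial_v\phi_4$ in the second, evaluating the $\eta$-equation at $p$ yields $0\cdot\Box\eta(p)=(\partial_v\phi_4(p))^2=1$, which is impossible for any $\eta$ with $\Box\eta$ defined at $p$; since $p$ occurs at finite time, that sign choice admits no global smooth $\eta$, which is the assertion of the corollary.

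The only genuine subtlety internal to this argument is that $\partial_v\phi_4$ must \emph{equal} $\pm1$ somewhere rather than merely growing without bound, which is why it is essential to combine the polynomial growth from Lemma~\ref{lemma asymptotic of phi4} with the vanishing of $\partial_v\phi_4$ outside the light cone of $B_1$ and connectedness of $\R^{3+1}$. The genuinely hard analytic input — the polyhomogeneity of $\phi_4$ and the non-vanishing of the leading profile $g$ (which in turn rests on the matching argument and the sign computation for the candidate profile $(1-\rho)^{-3}$) — is already carried out in Lemma~\ref{lemma asymptotic of phi4} via \cite{hintz_stability_2020}, so within the present statement there is nothing further to prove.
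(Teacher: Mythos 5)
Your proposal is correct and follows essentially the same route as the paper: the paper likewise combines the polynomial growth of $\partial_v\phi_4$ from \cref{lemma asymptotic of phi4} with continuity to find a point where the coefficient $1\pm\partial_v\phi_4$ vanishes while the right-hand side equals $1$, forcing breakdown for one sign. Your only addition is to spell out the intermediate value argument (vanishing outside the light cone plus unboundedness along $r=\rho t$), which the paper leaves implicit in the phrase ``there must be a point where it reaches size 1.''
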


	\bibliographystyle{alpha}
	\bibliography{allBib}
\end{document}